\definecolor{LightCyan}{rgb}{0.88,1,1}
\newcommand{\s}{\mathsf}
\newcommand{\g}{\mathfrak}
\newcommand{\R}{\mathbb{R}}
\newcommand{\Ad}{\operatorname{Ad}}
\newcommand{\C}{\mathbb{C}}
\newcommand{\vecspan}{\operatorname{span}}
\newcommand{\Exp}{\operatorname{Exp}}
\newcommand{\spann}{\operatorname{span}}
\renewcommand{\H}{\mathbb{H}}
\renewcommand{\hat}{\widehat}
\renewcommand{\tilde}{\widetilde}
\newtheorem{theorem}{Theorem}[section]
\newtheorem{lemma}[theorem]{Lemma}
\newtheorem{corollary}[theorem]{Corollary}
\newtheorem{proposition}[theorem]{Proposition}
\newtheorem{thmx}{Theorem}
\newtheorem{corx}[thmx]{Corollary}
\theoremstyle{definition}
\newtheorem{remark}[theorem]{Remark}
\newtheorem{example}[theorem]{Example}
\renewcommand{\arraystretch}{1.2}
\title[Totally geodesic submanifolds of the homogeneous nearly Kähler 6-manifolds]{Totally geodesic submanifolds of the\\ homogeneous nearly Kähler 6-manifolds\\ and their $\mathsf{G}_2$-cones}
\author[J.~M. Lorenzo-Naveiro]{Juan Manuel Lorenzo-Naveiro}
\address{Citmaga, Universidade de Santiago de Compostela, Spain}
\email{jm.lorenzo@usc.es}
\author[A. Rodr\'{\i}guez-V\'{a}zquez]{Alberto Rodr\'{\i}guez-V\'{a}zquez}
\address{Universit\'{e} Libre de Bruxelles, Belgium}
\email{alberto.rodriguez.vazquez@ulb.be}
\thanks{Both authors have been supported by PID2022-138988NB-I00 funded by MICIU/AEI/10.13039/501100011033 and by ERDF (European Union).
The first author is supported by the FPI program (Spain) and the research project ED431C 2023/31 (Xunta de Galicia, Spain). The second author has been supported by the FWO Postdoctoral grant with project number 1262324N, and by the Horizon Europe research and innovation programme under Marie Sklodowska Curie Actions with grant agreement 101149711 - HOLYFLOW}
\subjclass[2010]{Primary 53C30, Secondary 53C35, 53C40}
\keywords{Totally geodesic, homogeneous space, Riemannian cone, nearly Kähler, holonomy $\mathsf{G}_2$}
\begin{document}
	\begin{abstract}
	In this article we classify totally geodesic submanifolds of homogeneous nearly Kähler $6$-manifolds, and of the $\mathsf{G}_2$-cones over these $6$-manifolds. To this end, we develop new techniques for the study of totally geodesic submanifolds of analytic Riemannian manifolds, naturally reductive homogeneous spaces, and  Riemannian cones.
	In particular, we obtain an example of a totally geodesic submanifold with self-intersections in a simply connected homogeneous space.
	\end{abstract}
\maketitle

\section{Introduction}

An almost Hermitian structure on a Riemannian manifold $(M,g)$ is a $(1,1)$ tensor field $J$ preserving the Riemannian metric $g$ and satisfying the identity $J^2=-\operatorname{Id}$. Gray and Hervella~\cite{grayhervella} showed that there are 16 natural classes of almost Hermitian structures.
A nice example of those are Kähler structures, which are characterized by the equation $\nabla J=0$, where $\nabla$ denotes the Levi-Civita connection of $(M,g)$. Among the non-integrable Hermitian structures, the nearly Kähler structures are particularly noteworthy.
An almost Hermitian structure $J$ on a Riemannian manifold $(M,g)$ is \textit{nearly K\"ahler} if it satisfies
\[ (\nabla_X J)(Y) =-(\nabla_Y J)(X) \quad \textrm{for all vector fields $X$ and $Y$ of $M$}. \]
The study of nearly Kähler geometry is particularly interesting in dimension $6$, as strictly nearly Kähler $6$-manifolds (that is, those which are not Kähler) are automatically Einstein, and their Riemannian cones are 7-manifolds with holonomy groups contained in $\s{G}_2$, see~\cite{bar}.   Indeed, in~\cite{bryant}, Bryant constructed the first examples of manifolds with holonomy exactly equal to $\s{G}_2$, one of which is a Riemannian cone over the flag manifold $\s{F}(\C^3)$
equipped with its homogeneous nearly Kähler metric.

Although investigations about nearly Kähler manifolds began in the 1950s, significant progress has been made in recent decades.
In 2005, Butruille~\cite{Butruille} classified all simply connected, homogeneous, strictly nearly Kähler manifolds of dimension six. These are:
\[
\begin{aligned}
\mathsf{S}^6&=\s{G}_2/\s{SU}(3),  &\qquad \C\s{P}^3&=\s{Sp}(2)/(\s{U}(1)\times\s{Sp}(1)), \\ \mathsf{F}(\C^3)&=\s{SU}(3)/\s{T}^2, &\qquad \s{S}^3\times\s{S}^3&=\s{SU}(2)^3/\Delta \s{SU}(2).
\end{aligned}
\]

More recently, in 2017, Foscolo and Haskins~\cite{foscolo} produced the first inhomogeneous nearly Kähler structures on $\s{S}^6$ and $\s{S}^3\times\s{S}^3$.
These structures are of cohomogeneity one and are $\s{SU}(2)\times\s{SU}(2)$-invariant.
The aforementioned examples constitute all the known simply connected  strictly nearly Kähler $6$-manifolds so far.

The main goal of this article is to classify totally geodesic submanifolds in homogeneous strictly nearly Kähler $6$-manifolds. For this purpose, we develop some general tools for the study of totally geodesic submanifolds of naturally reductive homogeneous spaces.
Moreover, we also classify maximal totally geodesic submanifolds of the $\s{G}_2$-cones over homogeneous strictly nearly Kähler $6$-manifolds.

The theory of totally geodesic submanifolds in homogeneous spaces is especially fruitful due to the large isometry groups of these spaces.  Symmetric spaces are a particular type of homogeneous spaces with an isometric geodesic reflection at each point.
Totally geodesic submanifolds of symmetric spaces have been extensively studied, see~e.g.~\cite{BO1,chen1,chen2,exceptional}.
In this setting, it turns out that totally geodesic submanifolds arise as orbits of isometric actions and can be characterized algebraically by means of the so-called Lie triple systems.
That being said, the classification of totally geodesic submanifolds in classical symmetric spaces of rank greater than two remains widely open.
In contrast, in a non-symmetric Riemannian homogeneous space we lack a nice algebraic characterization of totally geodesic submanifolds.
Furthermore, although these submanifolds are intrinsically homogeneous, they are not necessarily orbits of Lie groups acting isometrically, see~\cite{nikolayevskytg,RodriguezVazquezOlmos}. 

Another relevant aspect is that in the (non-symmetric) homogeneous setting, it is natural to consider totally geodesic immersions that are not necessarily injective.
The theory of totally geodesic immersions, developed in \cite{Tsukada}  and \cite{HeintzeLiuOlmos}, is central to this work, and in this article we aim to further build upon it, see~Section~\ref{sec:tganalytic}.
It is well known that in homogeneous spaces, every one-dimensional totally geodesic submanifold is injectively immersed (in other words, every geodesic loop is a closed geodesic), and in symmetric spaces, all totally geodesic submanifolds are injectively immersed.
In this work, we present what is, up to the best of our knowledge, the first example of a totally geodesic submanifold  in a simply connected homogeneous space with self-intersections, see~Subsection~\ref{subsec:projplanerealflag}.

In this article, we introduce the class of \textit{$D$-invariant totally geodesic submanifolds} of a reductive homogeneous space, where $D$ denotes the difference between the Levi-Civita connection $\nabla$ and the canonical connection $\nabla^c$, see Subsection~\ref{subsec:homspaces}.
In the setting of naturally reductive homogeneous spaces, which includes symmetric spaces,   $D$-invariant totally geodesic submanifolds are orbits of Lie groups acting isometrically, and they admit a nice algebraic description similar to that of Lie triple systems in symmetric spaces.
Furthermore, totally geodesic submanifolds of symmetric spaces are trivially $D$-invariant.
Thus, the class of $D$-invariant totally geodesic submanifolds seems to be a natural generalization of totally geodesic submanifolds of symmetric spaces.
As we will see, not all totally geodesic submanifolds in a naturally reductive homogeneous space are $D$-invariant.
It will follow from our classification that all maximal totally geodesic submanifolds---that is, those that are not contained in any other proper totally geodesic submanifold---in the homogeneous nearly Kähler $6$-manifolds are $D$-invariant. 

As we have seen, there are exactly four examples of  simply connected homogeneous strictly nearly Kähler $6$-manifolds.
The sphere $\s{S}^6=\s{G}_2/\s{SU}(3)$, whose nearly Kähler structure is induced by octonionic multiplication, was the first one to appear.
Since $\s{S}^6=\s{G}_2/\s{SU}(3)$ is isotropy irreducible, it carries a unique $\s{G}_2$-invariant metric up to scaling, which is precisely the round one.
Thus, its totally geodesic submanifolds are open parts of intersections of vector subspaces of $\R^7$ passing through the center of the unit sphere $\s{S}^6\subseteq\R^7$.
Each of the remaining examples appears as the total space of a homogeneous fibration
$F=\s{K}/\s{H}\rightarrow M=\s{G}/\s{H}\rightarrow B=\s{G}/\s{K}$
induced by a triple of compact Lie groups $\s{H}\subseteq \s{K}\subseteq \s{G}$.
In the cases of $\C\s{P}^3$ and $\s{F}(\C^3)$, these fibrations are also examples of twistor fibrations.
A \textit{twistor fibration} of an oriented Riemannian  $4$-manifold $N$ is a fiber bundle $\pi\colon M\rightarrow N$, where each fiber over $p\in N$ is equal to the set of complex structures of $T_p N$ which preserve the orientation and the Riemannian metric of $N$. It turns out that $\pi$ is an $\s{S}^2$-bundle over $N$, and the twistor space $M$ admits two different natural almost Hermitian structures. One of them is called the Atiyah--Hitchin--Singer structure and it is integrable if and only if the $4$-manifold $N$ is self-dual, see~\cite{atiyah}. The other one is the Eells--Salamon structure, which can be obtained from the  Atiyah--Hitchin--Singer structure by changing the sign only in the fibers.  The nearly Kähler structures of $\C\s{P}^3$ and $\s{F}(\C^3)$ that we are considering are precisely the Eells--Salamon structures that we get when $N$ is the round sphere $\s{S}^4$ and the complex projective plane $\C\s{P}^2$, respectively. Moreover,  the celebrated Eells--Salamon correspondence, see~\cite{eels}, states a one-to-one correspondence between (branched) minimal surfaces in $N$, and non-vertical \textit{$J$-holomorphic curves} in $M$, i.e.\ $J$-invariant immersions of $2$-manifolds in~$M$.

When studying totally geodesic submanifolds of the total space $M$ of a Riemannian submersion  $F\rightarrow M\rightarrow B$, it is also relevant to consider their behavior with respect to the underlying Riemannian submersion. Following~\cite{RodriguezVazquezOlmos}, we say that a totally geodesic submanifold $\Sigma$ of the total space of a Riemannian submersion $M$ is \textit{well-positioned} if 
\[T_p \Sigma= (\mathcal{V}_p \cap T_p \Sigma) \oplus (\mathcal{H}_p \cap T_p \Sigma) \quad \text{for all $p\in\Sigma$},\]
where $\mathcal{V}$ and $\mathcal{H}$ denote the vertical and horizontal distributions associated with the Riemannian submersion $F\rightarrow M\rightarrow B$. It turns out that if a totally geodesic submanifold is well-positioned, the metric of the total space can be rescaled in the direction of the fibers while preserving the totally geodesic property for all these new metrics, see~\cite[Lemma 3.12]{dearricott}.  In this work, we find several examples of not well-positioned totally geodesic submanifolds.

We are also interested in understanding the interaction between totally geodesic
submanifolds of the nearly Kähler spaces under investigation and their ambient almost
complex structure.
A way to measure how a submanifold fails to be complex is by using the notion of Kähler angle, see e.g.~\cite{bolton}. We say that a submanifold $\Sigma$ of an almost Hermitian manifold $M$ has constant Kähler angle $\Phi(\Sigma)=\varphi\in[0,\pi/2]$ if
\begin{equation*}
	\lvert\lvert \mathrm{proj}_{T_p \Sigma} J v\rvert\rvert^2=\cos^2(\varphi)\lvert \lvert v\rvert \rvert^2 \quad \text{for all $v\in T_p\Sigma$ and every $p\in M$},
\end{equation*}
where $\mathrm{proj}_{T_p \Sigma}$ denotes the orthogonal projection onto $T_p \Sigma$.
The submanifolds satisfying $\Phi(\Sigma)=0$ or $\Phi(\Sigma)=\pi/2$ are exactly those which are almost complex or totally real, respectively.
An interesting question is to determine the possible constant Kähler angles of the totally geodesic submanifolds of an almost Hermitian manifold.
Of course, this  question is only interesting for spaces with non-constant curvature, since in $\C^n$ every number in $[0,\pi/2]$ can be realized as the constant Kähler angle of a totally geodesic submanifold.
In the setting of Hermitian symmetric spaces, not all totally geodesic submanifolds $\Sigma$ satisfy $\Phi(\Sigma)\in\{0,\pi/2\}$.
For instance, Klein realized in~\cite{kleindga} that there is a totally geodesic $2$-sphere in the Hermitian symmetric space $\s{G}^+_2(\R^5)=\s{SO}(5)/(\s{SO}(3)\times\s{SO}(2))$ with constant Kähler angle $\arccos(1/5)$.
Even more, the second author of this article proved in~\cite{products} that every rational number between $[0,1]$ can be realized as the arccosine of the Kähler angle of a totally geodesic submanifold embedded in a Hermitian symmetric space of large enough rank.

There is a relatively large number of articles focusing on the  investigation of totally geodesic submanifolds of nearly Kähler homogeneous $6$-manifolds under strong assumptions.
In these works the authors use special frames to carry out the classification for Lagrangian totally geodesic submanifolds, or  totally geodesic $J$-holomorphic curves.
In  $\C\s{P}^3$, the Lagrangian totally geodesic submanifolds  were classified in~\cite{Aslan,liefsoens}.
In the flag manifold $\s{F}(\C^3)$ the Lagrangian totally geodesic submanifolds were classified in~\cite{Storm}, and the totally geodesic $J$-holomorphic  curves were classified in~\cite{CwiklinskiVrancken}.
In $\s{S}^3\times\s{S}^3$, the totally geodesic Lagrangian submanifolds were classified in~\cite{dioos}, and the totally geodesic $J$-holomorphic curves were classified in~\cite{BoltonDillenDioosVrancken}.
Although they listed six congruence classes of totally geodesic submanifolds in $\s{S}^3\times\s{S}^3$, there are just two different ones: either a round sphere or a  Berger sphere, where the latter was first constructed in~\cite{moroianu}.
It is important to remark that there are no known  results obstructing the existence of totally geodesic submanifolds $\Sigma$ of nearly Kähler $6$-manifolds when: $\Sigma^3$ is not Lagrangian, $\Sigma^2$ is not $J$-holomorphic, or $\Sigma$ has dimension~$4$.
In this article, we generalize the aforementioned partial classifications  following an entirely different approach.
By employing tools from the theory of Riemannian homogeneous spaces, we address the classification problem of totally geodesic submanifolds in its full generality.

In what follows we state the main results of this article. We denote by $\s{S}^n(r)$ the $n$-dimensional sphere of radius $r$, and by $\R\s{P}^n(r)$  its $\mathbb{Z}_2$-quotient under the antipodal map. Moreover, let us consider the sphere $\s{S}^{3}$ with the Berger metric $g_\tau$ given by taking the round metric (of radius one) and rescaling the vertical subspace of the Hopf fibration $\s{S}^1\rightarrow \s{S}^3\rightarrow \s{S}^2$ by a factor of $\tau>0$.  Then $\s{S}^{3}_{\C,\tau}(r)$ denotes the sphere $\s{S}^{3}$ equipped with the Riemannian metric $r^{2} g_\tau$, and we denote by $\R\s{P}^{3}_{\C,\tau}(r)$ its $\mathbb{Z}_2$-quotient.  We also denote by $\s{T}^2_{\Lambda}$ the torus induced by a lattice $\Lambda\subseteq \R^2$.

\begin{thmx}\label{th:tg-cp3-classification}
	Let $\Sigma$ be a complete submanifold of the homogeneous nearly Kähler manifold $\C\s{P}^3=\s{Sp}(2)/\s{U}(1)\times\s{Sp}(1)$ of dimension~$d\ge 2$. Then, $\Sigma$ is totally geodesic if and only if it is congruent to one of the submanifolds listed in Table~\ref{table:CP3}.
\end{thmx}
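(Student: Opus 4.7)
The plan is to specialize the general theory developed earlier in the paper for totally geodesic submanifolds of analytic and naturally reductive Riemannian manifolds to $\C\s{P}^3 = \s{Sp}(2)/(\s{U}(1)\times\s{Sp}(1))$. The first step is algebraic: fix a base point $p_0$, identify $T_{p_0}\C\s{P}^3$ with a reductive complement $\g{m}$ of the isotropy algebra $\g{h} = \g{u}(1)\oplus\g{sp}(1)$ in $\g{sp}(2)$, and split $\g{m} = \g{m}_{\mathcal V}\oplus\g{m}_{\mathcal H}$ into the $\s{H}$-invariant vertical ($\dim=2$) and horizontal ($\dim=4$) summands of the twistor fibration $\s{S}^2 \to \C\s{P}^3 \to \s{S}^4$. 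I would then write down the Eells--Salamon almost complex structure $J$, the naturally reductive nearly Kähler metric, and compute from these data the canonical connection $\nabla^c$, the difference tensor $D = \nabla - \nabla^c$, and the Riemannian curvature $R$ at $p_0$ in terms of $\g{sp}(2)$-brackets.

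The second and main step classifies $D$-invariant totally geodesic submanifolds via the criterion from Subsection~\ref{subsec:homspaces}: admissible tangent spaces are precisely the subspaces $V\subseteq\g{m}$ invariant under $D_X$ and $R(X,Y)$ for all $X,Y\in V$, and each such $V$ is realized as the tangent space at $p_0$ of the orbit $\s{L}\cdot p_0$ of the connected subgroup $\s{L}\subset\s{Sp}(2)$ with Lie algebra $V \oplus [V,V]_{\g{h}}$. Because the isotropy representation on $\g{m}$ is reducible with summands $\g{m}_{\mathcal V}$ and $\g{m}_{\mathcal H}$, the candidate $V$'s are naturally parametrized by their intersections and projections with respect to this splitting together with their Kähler angle. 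For each congruence class of admissible $V$ of dimension $2\le d\le 5$ I would identify the resulting submanifold, verify completeness, and recognize it in Table~\ref{table:CP3}; the expected entries are the natural objects from the twistor picture, namely twistor fibers $\s{S}^2$, horizontal lifts of totally geodesic subspheres of $\s{S}^4$, $J$-holomorphic $\C\s{P}^1$'s, and Lagrangian $3$-spheres of round and Berger type (plus possible $\mathbb{Z}_2$-quotients).

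To finish, I would rule out any totally geodesic submanifolds not already accounted for, which by the preceding step means the non-$D$-invariant ones. Using the analytic framework of Section~\ref{sec:tganalytic}, any totally geodesic submanifold through $p_0$ is uniquely determined by its tangent space $V$, and the property of being such a tangent space is encoded by the invariance of $V$ under a tower of curvature-like operations obtained from iterated covariant derivatives of $R$ and $J$. For $\C\s{P}^3$ these conditions can be solved explicitly and, after quotienting by the $\s{H}$-action, show that each remaining candidate $V$ is contained in some $V'$ coming from a maximal $D$-invariant submanifold of the previous paragraph; intrinsic totally geodesic submanifolds of those maximal ones (which are either symmetric spaces or Berger $3$-spheres) then account for the remaining entries of Table~\ref{table:CP3}. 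The hardest step is the enumeration in the second paragraph: despite the low dimension of $\g{m}$, the interplay between $J$, the vertical/horizontal splitting, and the skew tensor $D$ produces a rich family of a priori candidate subspaces, so careful case analysis using the Kähler angle as a convenient discrete invariant is necessary to ensure the final list agrees with Table~\ref{table:CP3}.
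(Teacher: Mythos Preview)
Your strategy---first classify $D$-invariant totally geodesic submanifolds via canonically embedded subalgebras, then argue that any non-$D$-invariant one sits inside a maximal $D$-invariant one---is not how the paper proceeds, and as written it has real gaps. The paper works dimension by dimension. Codimension one is excluded by Theorem~\ref{th:hyp} (you do not address $d=5$ at all). For $d=4,3,2$ the paper uses a direct local argument: the key tool is Lemma~\ref{lemma:CP3WellPositioned}, which exploits the spectrum of the Jacobi operator $R_X$ for a vertical or horizontal $X$ to force any totally geodesic subspace containing such a vector to be well-positioned, i.e.\ to split along $\g{p}_1\oplus\g{p}_2$. This reduces each dimension to a short case analysis by $\dim(\g{v}\cap\g{p}_1)$, where contradictions are read off from specific values of $R$ and $\nabla R$ on the basis $\{e_1,\dots,e_6\}$; for surfaces the kernel of the Cartan operator $C_X$ pins down the second basis vector. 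The $D$-invariance of the resulting list is a \emph{consequence} of this classification, not an input.

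Your third step is where the proposal becomes circular: you cannot conclude that every non-$D$-invariant candidate $V$ lies in a $D$-invariant maximal one without already having classified all totally geodesic subspaces, and that classification is precisely the content of the theorem. Concretely, nothing in your outline would detect the sphere $\s{S}^2(\sqrt{5})=\s{SU}(2)_{\Lambda_3}\cdot o$, which is not well-positioned and whose tangent plane $\spann\{\sqrt{2}e_1+\sqrt{3}e_3,\sqrt{2}e_2+\sqrt{3}e_4\}$ does not fall out of the twistor picture in any obvious way; the paper finds it by computing $\ker C_X$ for $X=e_1+\lambda e_3$ and solving $\langle(\nabla_X R)(Y,X,Y),e_5\rangle=0$ for $\lambda$. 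Your expected list is also off: there is exactly one three-dimensional example, the Berger $\R\s{P}^3_{\C,1/2}(2)$, and no round Lagrangian $3$-sphere. Finally, the K\"ahler angle is not used as an organizing invariant in the proof; the relation to $J$ is recorded only after the classification is complete.
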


\begin{table}[h!]
	\centering
	\caption{Totally geodesic submanifolds of $\C\mathsf{P}^{3}$ of dimension $d\ge 2$.}
	\label{table:CP3}
	\renewcommand{\arraystretch}{1.2}
	\begin{tabular}{cccc}
		\toprule 
		Submanifold & Relationship with $J$ & Comments & Well-positioned? \\
		\toprule
		$\R\mathsf{P}^{3}_{\C,1/2}(2)$ & Lagrangian & Orbit of $\mathsf{U}(2)$ & Yes\\
		\hline
		$\mathsf{S}^{2}\left(1/\sqrt{2}\right)$ & $J$-holomorphic & Fiber of $\C\mathsf{P}^{3}\to\mathsf{S}^{4}$ & Yes \\
		\hline
		$\mathsf{S}^{2}(1)$ & $J$-holomorphic & Orbit of $\mathsf{SU}(2)$ & Yes \\
		\hline
		$\mathsf{S}^{2}\left(\sqrt{5}\right)$ & $J$-holomorphic & Orbit of $\mathsf{SU}(2)_{\Lambda_{3}}$ & No \\
		\bottomrule
	\end{tabular}
\end{table}
As far as we know, the totally geodesic $\s{S}^2(\sqrt{5})$ has not appeared previously in the literature.
This is an orbit of the group $\s{SU}(2)_{\Lambda_{3}}$, which is the maximal connected subgroup of $\s{Sp}(2)$ induced by the $4$-dimensional complex irreducible representation of $\s{SU}(2)$ (note that this representation is of symplectic type).
All the examples in this theorem are maximal.
The non-vertical totally geodesic $J$-holomorphic curves are $\s{S}^2(1)$ and $\s{S}^2(\sqrt{5})$. Their associated minimal surfaces in $\s{S}^4$  under the Eells--Salamon correspondence are a totally geodesic $2$-sphere in $\s{S}^4$, and the Veronese embedding of the projective plane in $\s{S}^4$, respectively.

\begin{thmx}\label{th:tg-fc3-classification}
	Let $\Sigma$ be a complete submanifold of the homogeneous nearly Kähler manifold $\s{F}(\C^3)=\s{SU}(3)/\s{T}^2$ of dimension~$d\ge 2$. Then, $\Sigma$ is totally geodesic if and only if it is congruent to one of the submanifolds listed in Table~\ref{table:FC3}.
\end{thmx}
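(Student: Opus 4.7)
The plan is to follow the same two-step strategy as for Theorem \ref{th:tg-cp3-classification}: first classify the $D$-invariant totally geodesic submanifolds of $\s{F}(\C^3)$ via the algebraic criterion developed in Subsection \ref{subsec:homspaces}, and then invoke the analytic tools of Section \ref{sec:tganalytic} to rule out any additional totally geodesic submanifolds. Write $\g{su}(3)=\g{t}^2\oplus\g{m}$ for the reductive decomposition, and split $\g{m}=\g{m}_1\oplus\g{m}_2\oplus\g{m}_3$ into the three pairwise inequivalent real two-dimensional $\s{T}^2$-modules associated with the positive roots of $\g{su}(3)$. The naturally reductive nearly Kähler metric rescales the Killing form by a common factor on each $\g{m}_i$, and the brackets satisfy $[\g{m}_i,\g{m}_j]\subseteq \g{m}_k$ when $\{i,j,k\}=\{1,2,3\}$, while $[\g{m}_i,\g{m}_i]\subseteq\g{t}^2$. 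In particular, the difference tensor $D=\nabla-\nabla^c$ is encoded by the projection of the ambient bracket onto $\g{m}$.

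The $D$-invariance criterion translates into the requirement that a subspace $W\subseteq\g{m}$ be closed under both $[W,W]_\g{m}$ and the Lie triple bracket $[[W,W]_{\g{t}^2},W]$. Exploiting the $S_3$-action of the Weyl group $N_{\s{SU}(3)}(\s{T}^2)/\s{T}^2$ permuting the summands $\g{m}_i$, together with the complex conjugation automorphism of $\g{su}(3)$, I would reduce the search to a small number of $\s{T}^2$-orbits of candidates in each dimension: subspaces lying in a single $\g{m}_i$, Lagrangian-type triples $\ell_1\oplus\ell_2\oplus\ell_3$ with $\ell_i\subseteq\g{m}_i$, and direct sums of a full $\g{m}_i$ with lines in the other summands. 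Solving the resulting finite algebraic system leaves a short list of subspaces that must next be identified geometrically.

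Each surviving candidate should be matched with a concrete submanifold of $\s{F}(\C^3)$. The anticipated examples include a flat totally geodesic torus $\s{T}^2_\Lambda$ coming from an abelian subalgebra of $\g{m}$; totally geodesic $J$-holomorphic $\s{S}^2$'s arising both as fibers of the twistor fibration $\s{F}(\C^3)\to\C\s{P}^2$ and, via the Eells--Salamon correspondence, from totally geodesic minimal surfaces in $\C\s{P}^2$ such as the Veronese $\R\s{P}^2$, thus recovering \cite{CwiklinskiVrancken}; the Lagrangian submanifolds of \cite{Storm}; and most notably the real flag $\R\s{P}^2$ realized as the fixed-point set of complex conjugation on flags in $\C^3$, which is the self-intersecting example announced in Subsection \ref{subsec:projplanerealflag}. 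For every entry of the table I would then compute the Kähler angle and check whether the submanifold is well-positioned with respect to the twistor fibration.

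The main obstacle is excluding non-$D$-invariant totally geodesic submanifolds, which in a naturally reductive but non-symmetric setting are not \emph{a priori} ruled out. I would argue dimension by dimension: for $d=2$ the nearly Kähler identity $(\nabla_X J)X=0$ applied to vectors tangent to $\Sigma$, together with the behaviour of the induced Kähler angle on a totally geodesic surface, severely restricts the admissible tangent planes; for $d=3$ I would combine the structure of the curvature operator on $\s{F}(\C^3)$ with an analysis of the second fundamental form of the canonical connection to show that every candidate tangent space is forced to be $D$-invariant; and $d\geq 4$ is excluded by sectional-curvature obstructions coming from the nearly Kähler metric, since the corresponding curvature identities admit no totally geodesic subspace of dimension larger than those already produced. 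The analyticity of the metric and the rigidity theorem of Section \ref{sec:tganalytic} then upgrade this local algebraic classification to the global one, completing the proof.
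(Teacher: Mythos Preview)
Your two-step plan---classify the $D$-invariant subspaces first, then rule out everything else---has a structural problem in this particular case: the totally real $\R\s{P}^{2}(2\sqrt{2})$ in Table~\ref{table:FC3} is \emph{not} $D$-invariant (see Section~\ref{subsec:projplanerealflag} and the remarks following Table~\ref{table:FC3}), so it will not appear in your first step. You also misidentify it: the fixed-point set of complex conjugation on $\s{F}(\C^{3})$ is the three-dimensional real flag manifold $\s{F}(\R^{3})$, not a projective plane. The $\R\s{P}^{2}(2\sqrt{2})$ is a totally geodesic surface sitting \emph{inside} $\s{F}(\R^{3})$, arising because $\s{F}(\R^{3})$ happens to have constant curvature; it is not extrinsically homogeneous and in fact has self-intersections, so it cannot be detected by any orbit-type argument.

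More seriously, the tools you propose for the second step are not sharp enough. The paper does not use the nearly K\"ahler identity $(\nabla_{X}J)X=0$ or K\"ahler-angle arguments to constrain tangent spaces; instead it works pointwise with the Jacobi operator $R_{X}$ and, crucially, the Cartan operator $C_{X}Y=(\nabla_{X}R)(X,Y,X)$. Any totally geodesic subspace must be invariant under both (Remark~\ref{remark:JacobiInvariance}), and for surfaces Proposition~\ref{prop:TojoSurfaces} gives the further constraint that the entire $D_{X}$-orbit of a tangent vector lies in a single $R_{X}$-eigenspace and in $\ker C_{X}$. In the flag manifold these operators have very small eigenspaces for generic $X$, and the actual proofs (Propositions~\ref{prop:4tgflag}--\ref{prop:2tgflag}) proceed by explicit computation of $\ker C_{X}$ and the $R_{X}$-eigenspace decomposition after normalizing $X$ via the isotropy and the $\mathfrak{S}_{3}$-action, followed by direct checks that $\nabla R$ is not preserved in the spurious cases. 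Your suggested ``sectional-curvature obstructions'' for $d\geq 4$ and ``structure of the curvature operator'' for $d=3$ are placeholders for this concrete eigenspace analysis, and without it the argument does not close.
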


\begin{table}[h!]
	\centering
	\caption{Totally geodesic submanifolds of $\mathsf{F}(\C^{3})$ of dimension $d\ge 2$.}
	\label{table:FC3}
	\renewcommand{\arraystretch}{1.2}
	\begin{tabular}{cccc}
		\toprule 
		Submanifold & Relationship with $J$ & Comments & Well-positioned? \\
		\toprule
		$\mathsf{F}(\R^{3})$ & Lagrangian & Orbit of $\mathsf{SO}(3)$ & Yes\\
		\hline
		$\mathsf{S}^{3}_{\C,1/4}(\sqrt{2})$ & Lagrangian & Orbit of $\mathsf{SU}(2)$ & No \\
		\hline
		$\mathsf{T}^2_{\Lambda}$ & $J$-holomorphic & Orbit of $\s{T}^2$ & No \\
		\hline
		$\mathsf{S}^{2}\left(1/\sqrt{2}\right)$ & $J$-holomorphic & Fiber of $\mathsf{F}(\C^{3})\to\C\mathsf{P}^{2}$ & Yes \\
		\hline
		$\mathsf{S}^{2}\left(\sqrt{2}\right)$ & $J$-holomorphic & Orbit of $\mathsf{SO}(3)$ & No \\
		\hline
		$\R\mathsf{P}^{2}\left(2 \sqrt{2}\right)$ & Totally real & Not injectively immersed & No \\
		\bottomrule
	\end{tabular}
\end{table}
To the best of our knowledge, $\R\s{P}^2(2\sqrt{2})$ is the first example in the literature of a totally geodesic immersed submanifold of dimension $d\geq 2$ with self-intersections in a simply connected homogeneous space.
Moreover, this is the only non-maximal example, it is not $D$-invariant, and not extrinsically homogeneous, i.e.\ an orbit of a subgroup of the isometry group of the ambient space.  The non-vertical totally geodesic $J$-holomorphic curves are  $\s{T}^2_\Lambda$ and $\s{S}^2(\sqrt{2})$. Their associated minimal surfaces in $\C\s{P}^2$ under the Eells--Salamon correspondence are the Clifford torus in~$\C\s{P}^2$, that is, $\{[z_0:z_1:z_2]\in \mathbb{C}\s{P}^2: \lvert z_{0}\rvert=\lvert z_{1}\rvert=\lvert z_{2}\rvert\}$, and a totally geodesic $\R\s{P}^2$ in $\C\s{P}^2$, respectively.

\begin{thmx}\label{th:tg-s3s3-classification}
	Let $\Sigma$ be a complete submanifold of the homogeneous nearly Kähler manifold $\s{S}^3 \times \s{S}^3=\s{SU}(2)^{3}/\Delta\s{SU}(2)$  of dimension~$d\ge 2$. Then, $\Sigma$ is totally geodesic if and only if it is congruent to one of the submanifolds listed in Table~\ref{table:S3S3}.
\end{thmx}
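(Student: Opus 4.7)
The plan is to apply the general machinery for $D$-invariant totally geodesic submanifolds of naturally reductive spaces developed in Section~\ref{subsec:homspaces} to the presentation $\s{S}^3 \times \s{S}^3 = \s{SU}(2)^3/\Delta\s{SU}(2)$, and then rule out the remaining possibilities using the analytic rigidity tools of Section~\ref{sec:tganalytic}. First I would fix the reductive complement $\g{m}=\{(X_1,X_2,X_3)\in\g{su}(2)^3 : X_1+X_2+X_3=0\}$, express the $\s{SU}(2)^3$-invariant metric, the nearly Kähler almost complex structure $J$, and the difference tensor $D=\nabla-\nabla^c$ purely in terms of the $\g{su}(2)$-bracket, and exploit the cyclic permutation symmetry of the three factors to cut down the bookkeeping.

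Next I would classify the $\Delta\s{SU}(2)$-invariant, $D$-invariant subspaces of $\g{m}$ algebraically. The isotropy acts on $\g{m}$ as two copies of the adjoint representation of $\s{SU}(2)$, so by Schur's lemma the $\Delta\s{SU}(2)$-invariant subspaces are parametrized in a very rigid way by a pair $(V,W)$ of $\g{su}(2)$-subspaces together with a gluing datum in $\R^2$. Imposing the Lie-triple-like condition that characterizes $D$-invariant totally geodesic subspaces, combined with the compatibility with $J$ and the bracket structure, reduces the classification to a short case analysis by dimension $d\in\{2,3,4,5\}$. For each surviving tangent space I then identify the connected subgroup of $\s{SU}(2)^3$ whose orbit through the basepoint realizes it, and read off the intrinsic geometry listed in Table~\ref{table:S3S3}, taking care to distinguish the Berger $3$-sphere from the round $3$-sphere by tracking the induced metric along the orbit.

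To finish I would eliminate totally geodesic submanifolds that are not $D$-invariant. By the results of Section~\ref{sec:tganalytic}, such a submanifold of an analytic Riemannian manifold is determined by its tangent space at one point together with finitely many higher-order jets of the second fundamental form; on a naturally reductive homogeneous space one has explicit recursive formulae for the iterated covariant derivatives of the curvature tensor. Feeding these into the Gauss and Codazzi equations for a putative $\Sigma\subseteq\s{S}^3\times\s{S}^3$, together with the nearly Kähler identity $(\nabla_X J)(Y)=-(\nabla_Y J)(X)$, should produce algebraic obstructions that force $T_p\Sigma$ to satisfy the $D$-invariance condition; the previous step then returns us to Table~\ref{table:S3S3}.

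The main obstacle will be ruling out candidates of dimension $4$ and of dimension $2$ with Kähler angle strictly between $0$ and $\pi/2$: these are precisely the cases where no prior results are available, and where the isotropy decomposition of $\g{m}$ a priori admits nontrivial invariant subspaces of intermediate Kähler angle. A secondary difficulty is checking that each surviving algebraic candidate integrates to a \emph{complete} totally geodesic submanifold with the expected intrinsic metric; this is most delicate for the non-well-positioned Lagrangian Berger sphere $\s{S}^3_{\C,1/4}(\sqrt{2})$, where the induced metric does not split cleanly under any natural Riemannian submersion and must be computed directly from the embedding of $\s{SU}(2)$ into $\s{SU}(2)^3$.
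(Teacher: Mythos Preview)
Your plan has a genuine gap: the final reduction step, where you propose to show that every totally geodesic submanifold is $D$-invariant, is false in this space. The great sphere $\s{S}^{2}(2/\sqrt{3})$ inside the Lagrangian round $\s{S}^{3}(2/\sqrt{3})$ appears in Table~\ref{table:S3S3}, is extrinsically homogeneous, but is \emph{not} $D$-invariant (see Section~\ref{sec:S3S3RoundS2} and Remark~\ref{rem:alekseevski-nikonorov}). So no amount of jet analysis or Codazzi-type manipulation can force $D$-invariance of $T_{p}\Sigma$, because there is a counterexample already in the target list. A secondary confusion: you refer to ``the non-well-positioned Lagrangian Berger sphere $\s{S}^{3}_{\C,1/4}(\sqrt{2})$'', but that submanifold lives in $\s{F}(\C^{3})$; the Berger sphere in $\s{S}^{3}\times\s{S}^{3}$ is $\s{S}^{3}_{\C,1/3}(2)$ and is well-positioned. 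Also, classifying ``$\Delta\s{SU}(2)$-invariant'' subspaces of $\g{m}$ is the wrong problem: a totally geodesic tangent space need not be invariant under the full isotropy; the isotropy is used to normalize a representative, not as a constraint.

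The paper's proof proceeds differently. It works directly with arbitrary (not necessarily $D$-invariant, not necessarily isotropy-invariant) subspaces $\g{v}\subseteq\g{p}$, and constrains them via the eigenspace decompositions of the Jacobi operator $R_{X}$ and the Cartan operator $C_{X}$ for well-chosen $X\in\g{v}$ (Remark~\ref{remark:JacobiInvariance} and Proposition~\ref{prop:TojoSurfaces}), together with explicit evaluations of $R$ and $\nabla R$ in the basis $\{e_{1},\dots,e_{6}\}$. A key structural lemma (Lemma~\ref{lemma:S3S3WellPositioned}) shows that any $\g{v}$ containing a pure vertical or horizontal vector is well-positioned, which reduces most of the case analysis to short computations; the residual non-well-positioned cases are handled by applying the order-three isometry $s_{o}$ to move them back to the well-positioned regime. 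The argument never attempts to prove $D$-invariance; it simply exhausts the possible tangent spaces dimension by dimension (Propositions~\ref{prop:4tgs3s3}, \ref{prop:3tgs3s3}, \ref{prop:2tgs3s3}), with hypersurfaces excluded by Theorem~\ref{th:hyp}.
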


\begin{table}[h!]
	\centering
	\caption{Totally geodesic submanifolds of $\mathsf{S}^{3}\times\mathsf{S}^{3}$ of dimension $d\ge 2$.}
	\renewcommand{\arraystretch}{1.2}
	\begin{tabular}{cccc}
		\toprule 
		Submanifold & Relationship with $J$ & Comments & Well-positioned? \\
		\toprule
		$\mathsf{S}^{3}\left(2/\sqrt{3}\right)$ & Lagrangian & Fiber of $\mathsf{S}^{3}\times\mathsf{S}^{3}\to\mathsf{S}^{3}$ & Yes \\
		\hline
		$\mathsf{S}^{3}_{\C,1/3}(2)$ & Lagrangian & Orbit of $\Delta_{1,3}\mathsf{SU}(2)\times\mathsf{SU}(2)_{2}$ & Yes \\
		\hline
		$\mathsf{T}^2_{\Gamma}$ & $J$-holomorphic & Orbit of a two-dimensional torus & Yes \\
		\hline
		$\mathsf{S}^{2}(\sqrt{3/2})$ & $J$-holomorphic & Orbit of $\Delta\s{SU}(2)$ & No \\
		
		\hline
		$\mathsf{S}^{2}\left(2/\sqrt{3}\right)$ & Totally real & Orbit of $\Delta\s{SU}(2)$ & Yes \\
		\bottomrule
	\end{tabular}
	\label{table:S3S3}
\end{table}

Interestingly, $\s{S}^2(2/\sqrt{3})$ is not a $D$-invariant totally geodesic submanifold, but is extrinsically homogeneous.
This together with the characterization of $D$-invariant totally geodesic submanifolds given in Theorem~\ref{th:CharacterizationDInvariantTGSubmanifolds} gives a counterexample to Proposition~2 in~\cite{AlekseevskyNikonorov}, see~Remark~\ref{rem:alekseevski-nikonorov}. Furthermore, $\s{S}^2(2/\sqrt{3})$ is the only non-maximal example in the list above.

As a consequence of Theorem~\ref{th:tg-cp3-classification}, Theorem~\ref{th:tg-fc3-classification}, and Theorem~\ref{th:tg-s3s3-classification}, we have:

\begin{corx}\label{cor:tg-kahler-angle}
	Let $\Sigma$ be a maximal totally geodesic submanifold of a  homogeneous nearly Kähler $6$-manifold of non-constant curvature.
	Then the following statements hold:
	\begin{enumerate}[\rm (i)]
		\item if $\Sigma$ has dimension two, then $\Sigma$ is a $J$-holomorphic curve.
		\item if $\Sigma$ has dimension three, then $\Sigma$ is a Lagrangian submanifold.
	\end{enumerate}
\end{corx}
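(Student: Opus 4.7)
The plan is to derive this corollary as a direct bookkeeping consequence of the three classification theorems. Since $\s{S}^6=\s{G}_2/\s{SU}(3)$ is isotropy irreducible, it carries a unique invariant metric up to scale, namely the round one, and hence has constant sectional curvature. The hypothesis of non-constant curvature therefore restricts the ambient space to $\C\s{P}^3$, $\s{F}(\C^3)$, or $\s{S}^3\times\s{S}^3$, whose complete totally geodesic submanifolds of dimension $d\geq 2$ are exhaustively listed in Tables~\ref{table:CP3}, \ref{table:FC3}, and~\ref{table:S3S3}.

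Given this reduction, the remainder of the proof is a straightforward reading off of the tables. As noted in the commentary following each theorem, every row of Table~\ref{table:CP3} is maximal; in Tables~\ref{table:FC3} and~\ref{table:S3S3} the only non-maximal entries are $\R\s{P}^2(2\sqrt{2})\subset \s{F}(\C^3)$ and $\s{S}^2(2/\sqrt{3})\subset\s{S}^3\times\s{S}^3$, respectively. For part~(ii), every $d=3$ row in the three tables carries the label ``Lagrangian'' in the ``Relationship with $J$'' column, so the conclusion is immediate. For part~(i), the two non-maximal rows are precisely the two totally real entries among the $d=2$ submanifolds; once these are discarded, every remaining $d=2$ row is labelled ``$J$-holomorphic''.

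Since the statement amounts to a tabular verification, there is no serious obstacle to the argument once Theorems~\ref{th:tg-cp3-classification}, \ref{th:tg-fc3-classification}, and~\ref{th:tg-s3s3-classification} are established. The one conceptual point worth underlining is that the two totally real $2$-dimensional examples happen to be exactly the submanifolds that are properly contained in larger totally geodesic submanifolds of the same ambient space, which (by inspection of the tables) must be Lagrangian $3$-folds; it is this containment that forces their exclusion under the maximality hypothesis and leaves only $J$-holomorphic curves in dimension two.
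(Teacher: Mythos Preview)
Your argument is correct and matches the paper's approach exactly: the paper states the corollary as an immediate consequence of Theorems~\ref{th:tg-cp3-classification}, \ref{th:tg-fc3-classification}, and~\ref{th:tg-s3s3-classification} without giving a separate proof, and your tabular verification---excluding $\s{S}^6$ via the non-constant curvature hypothesis, then discarding the two non-maximal totally real surfaces $\R\s{P}^2(2\sqrt{2})$ and $\s{S}^2(2/\sqrt{3})$---is precisely how that consequence is read off.
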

Thus, every totally geodesic submanifold $\Sigma$ of a homogeneous strictly nearly Kähler $6$-manifold of non-constant curvature satisfies $\Phi(\Sigma)\in\{0,\pi/2\}$.
This raises the question whether this also holds for  (not necessarily homogeneous) irreducible strictly nearly Kähler manifolds.

In this article, we also study totally geodesic submanifolds of Riemannian cones.
It can be checked that for every totally geodesic submanifold $\Sigma$ of a Riemannian manifold $M$, the cone over $\Sigma$ is a totally geodesic submanifold of the cone over $M$, see~Lemma~\ref{lemma:tgcone}.
However, there might be totally geodesic submanifolds of a Riemannian cone that do not arise as cones, see~Examples~\ref{ex:unitspherecone} and~\ref{ex:nontrivalexconetg}.
In Section~\ref{sec:tgcones}, we prove a structure result for totally geodesic submanifolds in cones, see Theorem~\ref{th:TGSubmanifoldsOfCones}.
As a  consequence of this, we deduce that maximal totally geodesic submanifolds of Riemannian cones are either cones over a totally geodesic submanifold or totally geodesic hypersurfaces, see Corollary~\ref{cor:MaximalTGSubmanifoldsOfCones}. 
It was observed in~\cite{bar} that Riemannian cones are intimately linked to special holonomy.
For instance, a special class of Sasakian manifolds is that of Sasakian--Einstein manifolds, whose investigation has led to the construction of many inhomogeneous Einstein metrics on spheres, see~\cite{boyer}.
It turns out that the holonomy of the cone over a Sasakian--Einstein manifold is contained in $\s{SU}(n)$.
Similarly, Riemannian cones over strictly nearly Kähler $6$-manifolds have its holonomy contained in $\s{G}_2$.
This holonomy reduction is equivalent to the existence of a parallel $3$-form $\phi$ that is locally modeled on the associative $3$-form on $\R^7$, or alternatively, the existence of a torsion-free $\s{G}_2$-structure.
Another class of $\s{G}_2$-structures defined on $7$-dimensional manifolds is that of nearly parallel $\s{G}_2$-structures.
A $\s{G}_2$-structure $\phi$ is \textit{nearly parallel} if it satisfies $\star d\phi=c  \phi$, for $c\in\R\setminus\{0\}$, where $\star$ denotes the Hodge star operator. It can also be proved that cones over nearly parallel $\s{G}_2$-manifolds have its holonomy contained in $\s{Spin}(7)$.
The first example of a manifold with exceptional holonomy $\s{Spin}(7)$ was constructed in~\cite{bryant}, and it is the cone over the homogeneous nearly parallel $\s{G}_2$-manifold $\s{B}^7=\s{SO}(5)/\s{SO}(3)$.
We prove that Sasakian--Einstein, strictly nearly Kähler $6$-manifolds, and nearly parallel $\s{G}_2$-manifolds do not admit totally geodesic hypersurfaces, see~Theorem~\ref{th:hyp}.

Moreover, we consider the classification problem of totally geodesic submanifolds in cones with holonomy $\s{G}_2$ over homogeneous nearly Kähler manifolds.
As a consequence of Corollary~\ref{cor:ConeIsometryGroup}, these are the only Riemannian cones with holonomy equal to $\s{G}_2$ equipped  with a metric of cohomogeneity one, and thus with the highest possible degree of symmetry, contributing to a rich presence of totally geodesic submanifolds. 
\begin{thmx}\label{th:tg-g2-cones}
	Let $M$ be a  homogeneous nearly Kähler $6$-manifold of non-constant curvature and let $\Sigma$ be a maximal totally geodesic submanifold of the $\s{G}_2$-cone $\widehat{M}$ over $M$ of dimension greater than one.	Then $\Sigma$ is the Riemannian cone of a maximal totally geodesic submanifold $S$ of $M$.
\end{thmx}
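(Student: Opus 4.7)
The plan is to combine Corollary~\ref{cor:MaximalTGSubmanifoldsOfCones}, Lemma~\ref{lemma:tgcone}, and Theorem~\ref{th:hyp} with a direct analysis of how totally geodesic hypersurfaces must sit inside a Riemannian cone.

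The first step is to apply Corollary~\ref{cor:MaximalTGSubmanifoldsOfCones} to the maximal totally geodesic submanifold $\Sigma$ of the $\s{G}_2$-cone $\widehat{M}$. Since $\dim \Sigma > 1$, this dichotomy yields two possibilities: either (a) $\Sigma = \widehat{S}$ is the Riemannian cone over a totally geodesic submanifold $S$ of $M$, or (b) $\Sigma$ is a totally geodesic hypersurface of $\widehat{M}$. In case (a), maximality of $S$ follows immediately by monotonicity: if $S \subsetneq S' \subsetneq M$ were a proper inclusion with $S'$ totally geodesic in $M$, then Lemma~\ref{lemma:tgcone} would produce a proper inclusion $\Sigma = \widehat{S} \subsetneq \widehat{S'} \subsetneq \widehat{M}$ with $\widehat{S'}$ totally geodesic in $\widehat{M}$, contradicting the maximality of $\Sigma$.

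The core of the argument is the exclusion of case (b). The natural tool is the position vector field $r = t\partial_t$ of the cone, which satisfies $\nabla_X r = X$ for every $X$. Given a totally geodesic hypersurface $\Sigma$ with unit normal $\xi$, I decompose $r|_\Sigma = r^T + r^\perp$ and use $\nabla_X r = X$ together with the vanishing of the second fundamental form to conclude that $r^\perp$ is parallel in the (one-dimensional) normal bundle, hence has constant length $c \ge 0$. Two subcases arise. If $c = 0$, then $\partial_t$ is tangent to $\Sigma$ at every point, so $\Sigma$ is a radial cone over $N := \Sigma \cap (M \times \{1\})$, and $N$ would be a totally geodesic hypersurface of $M$; this is directly ruled out by Theorem~\ref{th:hyp}, since strictly nearly Kähler $6$-manifolds admit no totally geodesic hypersurfaces. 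If $c > 0$, then $\xi$ is generically non-radial, and I combine the parallelism $\nabla \phi = 0$ of the associative $3$-form of $\widehat{M}$ with $\nabla_X \xi = 0$ for $X \in T\Sigma$ to show that the contracted form $\iota_\xi \phi$ restricts to a parallel form on $\Sigma$, inducing a parallel $\s{SU}(3)$-structure and forcing $\Sigma$ to be Calabi--Yau. The hypothesis that $M$ has non-constant curvature implies that the holonomy of $\widehat{M}$ is exactly $\s{G}_2$ (Bryant's construction), acting irreducibly on $\R^7$; coupled with the constraint on $r^\perp$ and the curvature identity $R(X,Y)\xi = 0$ for $X, Y \in T\Sigma$ derived from the totally geodesic condition, this produces the desired contradiction.

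The main obstacle I expect to face is the non-radial subcase of (b): while the radial subcase reduces cleanly to the link geometry and is dispatched by Theorem~\ref{th:hyp}, the non-radial case is more delicate because a parallel normal vector field along $\Sigma$ does not, in general, extend to a parallel field on a tubular neighborhood (as the standard example of great hyperspheres in $\s{S}^n$ shows). The hypothesis of non-constant curvature is essential precisely here: it excludes $M = \s{S}^6$, whose cone is flat $\R^7 \setminus \{0\}$ and does admit plenty of non-radial totally geodesic hyperplanes, and it is what ensures the full $\s{G}_2$-irreducibility of the holonomy needed to conclude.
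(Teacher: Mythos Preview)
Your overall strategy matches the paper's: apply Corollary~\ref{cor:MaximalTGSubmanifoldsOfCones} and then exclude the hypersurface alternative. The paper dispatches that alternative in one line by invoking~\cite[Theorem~1.2]{JentschMoroianuSemmelmann} (the same circle of ideas underlies Theorem~\ref{th:hyp}, via their Theorem~4.3: an Einstein manifold admitting a totally geodesic hypersurface has full orthogonal restricted holonomy, contradicting $\operatorname{Hol}(\hat{M})\subseteq\s{G}_2$). You instead attempt a direct argument; your decomposition $r=r^{T}+r^{\perp}$ and the conclusion that $\lvert r^{\perp}\rvert$ is constant are correct, and your $c=0$ subcase is properly reduced to Theorem~\ref{th:hyp}.

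The genuine gap is the $c>0$ subcase. You correctly establish that the unit normal $\xi$ is parallel along $\Sigma$ in the ambient connection, that $\iota_{\xi}\phi$ restricts to a parallel form on $\Sigma$ (so $\Sigma$ inherits a parallel $\s{SU}(3)$-structure and is Calabi--Yau), and that $\hat{R}(X,Y)\xi=0$ for $X,Y\in T\Sigma$. But the sentence ``this produces the desired contradiction'' is not justified by what precedes it. Irreducibility of the $\s{G}_2$-representation on $\R^{7}$ only excludes a \emph{globally} parallel vector field on $\hat{M}$; it does not by itself rule out a hypersurface along which the unit normal happens to be parallel. One can push your identities further---combining $\hat{R}(X,Y)\xi=0$ with the Ricci-flatness of both $\hat{M}$ and $\Sigma$ and the first Bianchi identity actually gives $\hat{R}(\cdot,\xi)=0$ along $\Sigma$---but this is still only a pointwise nullity condition on the curvature along $\Sigma$, not a statement about $\operatorname{Hol}(\hat{M})$ (Ambrose--Singer requires curvature operators along \emph{all} paths in $\hat{M}$, not just those lying in $\Sigma$). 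Bridging that gap is precisely the content of the Jentsch--Moroianu--Semmelmann theorem, and without citing it or reproving it your argument is incomplete. You yourself flag this subcase as the main obstacle; as written, the obstacle has not been overcome.
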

Notice that combining Theorem~\ref{th:tg-g2-cones} with the classification of totally geodesic submanifolds in cones over space forms~(see Proposition~\ref{prop:ConstantCurvatureCone}) and three-dimensional Berger spheres (see Proposition~\ref{prop:bergercone}), one can list all totally geodesic submanifolds in cohomogeneity one $\s{G}_2$-cones and thus obtain the full classification.

Moreover, Riemannian cones over $J$-holomorphic curves or Lagrangian submanifolds of a nearly Kähler $6$-manifold give rise to associative or coassociative manifolds of the corresponding $\s{G}_2$-cone over~$M$, respectively.
By definition, \textit{associative} and \textit{coassociative} submanifolds are the submanifolds of a $\s{G}_2$-manifold calibrated by $\phi$ and the Hodge dual of $\phi$, respectively; see \cite{HarveyLawson} and \cite[Chapters 4 and 12]{joyce} for an introduction to calibrated geometry.
As a consequence of Corollary~\ref{cor:tg-kahler-angle} and Theorem~\ref{th:tg-g2-cones}, one has the following:

\begin{corx}\label{cor:tg-associative-coassociative}
	Let $\Sigma$ be a maximal totally geodesic submanifold of the $\s{G}_2$-cone over a homogeneous nearly Kähler $6$-manifold of non-constant curvature.
	Then the following statements hold:
	\begin{enumerate}[\rm (i)]
		\item if $\Sigma$ has dimension three, then $\Sigma$ is an associative submanifold.
		\item if $\Sigma$ has dimension four, then $\Sigma$ is a coassociative submanifold.
	\end{enumerate}
\end{corx}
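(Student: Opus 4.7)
The plan is to combine Theorem~\ref{th:tg-g2-cones} with Corollary~\ref{cor:tg-kahler-angle} to reduce the statement to a general principle of $\s{G}_2$-geometry, namely that cones over $J$-holomorphic curves (resp.\ Lagrangian submanifolds) in a strictly nearly Kähler 6-manifold are associative (resp.\ coassociative) in the ambient $\s{G}_2$-cone. First I would apply Theorem~\ref{th:tg-g2-cones} to obtain a maximal totally geodesic submanifold $S\subseteq M$ such that $\Sigma$ is the Riemannian cone over $S$. In case~(i), $\dim S=2$, and Corollary~\ref{cor:tg-kahler-angle}\,(i) forces $S$ to be a $J$-holomorphic curve; in case~(ii), $\dim S=3$ and Corollary~\ref{cor:tg-kahler-angle}\,(ii) forces $S$ to be Lagrangian. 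What remains is to verify the calibration condition in each case.

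For the verification, I would use the standard description of the $\s{G}_2$-structure on the cone $\widehat{M}=M\times\R_+$ in terms of the $\s{SU}(3)$-structure $(g,J,\omega,\Omega=\rho_++i\rho_-)$ on $M$. On $\widehat{M}$, with cone metric $\widehat{g}=dr^2+r^2 g$, the associative 3-form and its Hodge dual take the form
\begin{equation*}
\phi=r^2\,dr\wedge\omega+r^3\rho_+, \qquad \star\phi=\tfrac{1}{2}r^4\,\omega\wedge\omega-r^3\,dr\wedge\rho_-,
\end{equation*}
up to sign conventions. Along the cone $C(S)=S\times\R_+$, the tangent space at $(p,r)$ decomposes as $\spann\{\partial_r\}\oplus T_pS$, and the induced volume form equals $r^{\dim S}\,dr\wedge\mathrm{vol}_S$.

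For case~(i), since $S$ is $J$-holomorphic, $T_pS$ is $J$-invariant and $\omega|_S=\mathrm{vol}_S$; the $\rho_+$-summand vanishes upon restriction to $C(S)$ for dimensional reasons, so $\phi|_{C(S)}=r^2\,dr\wedge\mathrm{vol}_S=\mathrm{vol}_{C(S)}$, establishing associativity. For case~(ii), since $S$ is Lagrangian, $\omega|_S=0$, hence the $\omega\wedge\omega$-summand of $\star\phi$ restricted to $C(S)$ vanishes, and the computation reduces to checking that $\rho_-|_S=\pm\mathrm{vol}_S$. This is the well-known fact that a Lagrangian submanifold of a nearly Kähler 6-manifold is automatically special Lagrangian of constant phase, which can be obtained either from the parallelism of $\Omega$ along the nearly Kähler connection together with the first-order behaviour of $\rho_\pm$ on $T_pS$, or more directly by observing that at a point $p\in S$, choosing a unitary frame adapted to the Lagrangian subspace $T_pS\subset T_pM$ makes $\rho_-$ restrict to $\pm\mathrm{vol}_S$ up to a global phase that is forced to be locally constant.

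The main obstacle I expect is producing a clean, coordinate-free argument that Lagrangian submanifolds are calibrated by $\rho_-$ in the nearly Kähler setting; although this is folklore in the $\s{G}_2$ literature, making the constant-phase argument rigorous requires a careful interplay between the totally geodesic condition on $S$ and the identity $(\nabla_X J)Y+(\nabla_Y J)X=0$, so that the derivative of $\rho_-|_S$ along $S$ can be shown to vanish. The calculations for case~(i) are essentially algebraic once $J$-invariance is in hand and should go through with little friction.
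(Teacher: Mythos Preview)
Your overall strategy matches the paper's exactly: the paper states the corollary as an immediate consequence of Theorem~\ref{th:tg-g2-cones} and Corollary~\ref{cor:tg-kahler-angle}, together with the general fact (recorded in Section~\ref{sec:tgcones}) that the cone over a $J$-holomorphic curve is associative and the cone over a Lagrangian submanifold is coassociative. The paper gives no further argument beyond citing this as known.

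Where you diverge is in the last paragraph. You suspect that establishing $\rho_-|_S=\pm\operatorname{vol}_S$ for Lagrangian $S$ requires the totally geodesic condition together with a constant-phase argument along $S$. This is unnecessary: the nearly K\"ahler structure equations give $d\omega = c\,\rho_+$ for a nonzero constant $c$ (this is part of what makes the cone metric have holonomy in $\mathsf{G}_2$). Restricting to any Lagrangian $S$ one has $\omega|_S=0$, hence $\rho_+|_S = c^{-1}\,d(\omega|_S)=0$, and since $|\rho_+|^2+|\rho_-|^2=|\Omega|^2$ is pointwise equal to $\operatorname{vol}_S^2$ on a Lagrangian subspace, it follows that $\rho_-|_S=\pm\operatorname{vol}_S$ at every point. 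So the phase is forced to be $\pm\pi/2$ for purely algebraic reasons coming from the exterior differential system, with no appeal to parallel transport, $(\nabla_X J)Y+(\nabla_Y J)X=0$, or the totally geodesic hypothesis. Your case~(i) argument is fine as written.
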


Both Corollaries~\ref{cor:tg-kahler-angle} and~\ref{cor:tg-associative-coassociative} seem to indicate that for totally geodesic submanifolds of nearly Kähler $6$-manifolds and their $\s{G}_2$-cones, there is a strong link between this purely Riemannian property and the underlying nearly Kähler and $\s{G}_2$-structures, respectively.
Consequently, we find that it would be very interesting to investigate whether both corollaries hold true without the homogeneity assumptions.

\subsection*{Organization of the paper}
In Section~\ref{sec:prelim}, we summarize some standard facts about Riemannian homogeneous spaces~(Subsection~\ref{subsec:homspaces}), and we describe the simply connected nearly Kähler homogeneous $6$-manifolds (Subsection~\ref{subsec:homNK}).
In Section~\ref{sec:tganalytic}, we recall and further develop the theory of totally geodesic immersions in analytic Riemannian manifolds.
In particular, we prove a characterization of inextendable totally geodesic immersions in terms of their geodesics, see Proposition~\ref{prop:TGInextendableCharacterization}; and we make sense of the maximality notion for the case of totally geodesic immersions, see Proposition~\ref{prop:TGInclusions}.
In Section~\ref{sec:tgnatred} we introduce and develop new tools for the study of totally geodesic submanifolds in naturally reductive homogeneous spaces.
In particular, in Subsection~\ref{subsec:Dinvtg}, we consider the class of $D$-invariant totally geodesic submanifolds, and we characterize those both from an algebraic and geometric point of view, see Theorem~\ref{th:CharacterizationDInvariantTGSubmanifolds}.
In Subsection~\ref{subsec:tgsurf}, we derive a useful criterion for the existence of totally geodesic surfaces in naturally reductive homogeneous spaces, see~Proposition~\ref{prop:TojoSurfaces}.
Also, in Subsection~\ref{subsec:wellpostg}, we characterize when a totally geodesic submanifold of the total space of a homogeneous fibration is well-positioned, see~Lemma~\ref{lemma:HomogeneousFibrationWellPositioned}.
In Section~\ref{sec:examples}, we exhibit and discuss the properties of the totally geodesic submanifolds of~$\C\s{P}^3$, the flag manifold $\s{F}(\C^3)$, and $\s{S}^3\times \s{S}^3$. 
In Section~\ref{sec:tgcones}, we study totally geodesic submanifolds of Riemannian cones.
In Subsection~\ref{subsec:tgcones}, we start by observing that totally geodesic submanifolds of a Riemannian manifold induce totally geodesic submanifolds in its Riemannian cone, see Lemma~\ref{lemma:tgcone}.
We also obtain a non-existence result for totally geodesic hypersurfaces in three distinct types of manifolds: Sasakian--Einstein manifolds, six-dimensional strictly nearly Kähler manifolds, and nearly parallel $\mathsf{G}_{2}$-manifolds.
Moreover, we prove  Theorem~\ref{th:TGSubmanifoldsOfCones}, which gives a structure result for totally geodesic submanifolds in Riemannian cones.
Finally, in Section~\ref{sec:mainths} we provide the proofs for the main theorems. 

\textbf{Acknowledgments}. We wish to thank Prof.~Jason Lotay and Prof.~Thomas~Leistner for very helpful discussions, and Prof.~Miguel Domínguez-Vázquez for his valuable comments on an earlier draft of this manuscript.
The first author would also like to thank his PhD advisor Prof. Jos\'{e} Carlos D\'{i}az-Ramos for his constant support.

\section{Preliminaries}\label{sec:prelim}
	
\subsection{Riemannian homogeneous spaces}\label{subsec:homspaces}
	
Let $M=\mathsf{G}/\mathsf{K}$ be a Riemannian homogeneous space and $o=e\mathsf{K}$.
We denote by $\mathsf{G}_{p}$ the isotropy subgroup of $p$, that is, the set of all elements of $\s{G}$ that fix $p$.
For our purposes we assume that $\mathsf{G}$ is connected and the action of $\s{G}$ on $M$ is almost effective, in the sense that $\bigcap_{p\in M}\mathsf{G}_{p}$ is a discrete subgroup of $\s{G}$.
The (Lie) group of isometries of $M$ is written as $I(M)$, and $I^{0}(M)\subseteq I(M)$ denotes its identity component.
Throughout this article, Lie groups are denoted by uppercase letters and their Lie algebras are denoted by their corresponding lowercase gothic letters.
	
To each $X\in \g{g}$ we can associate its corresponding fundamental vector field $X^{*}\in\mathfrak{X}(M)$ defined via $X^{*}_{p}=\frac{d}{dt}\vert_{t=0}\Exp(tX)\cdot p$ for all $p\in M$.
Clearly, $X^{*}$ is a Killing vector field. 
Moreover, one can see that the map $X\in\g{g}\mapsto X^{*}\in\g{X}(M)$ is a Lie algebra anti-homomorphism.
We may identify the quotient $\g{g}/\g{k}$ with the tangent space $T_{o}M$ via the isomorphism $X+\g{k}\mapsto X^{*}_{o}$, which also establishes an equivalence of representations between the isotropy representation $\mathsf{K}\to\mathsf{GL}(T_{o}M)$ and the adjoint representation $\mathsf{K}\to \mathsf{GL}(\g{g}/\g{k})$.
	
We say that $M$ is \textit{reductive} if there exists a vector subspace $\g{p}\subseteq \g{g}$ such that $\g{g}=\g{k}\oplus \g{p}$ and $\Ad(\mathsf{K})\g{p}=\g{p}$.
The subspace $\g{p}$ is called a \textit{reductive complement} and the decomposition $\g{g}=\g{k}\oplus \g{p}$ is known as a \textit{reductive decomposition}.
It can be shown that every Riemannian homogeneous space is reductive.
As a consequence, we may identify $\g{p}$ with $T_{o}M$ and the isotropy representation is equivalent to the adjoint representation $\mathsf{K}\to \mathsf{GL}(\g{p})$.
In particular, we may endow $\g{p}$ with the scalar product induced by the metric on $M$, and one sees that it is $\mathsf{K}$-invariant.
We denote both the metric on $M$ and the inner product on $\g{p}$ by $\langle\cdot,\cdot\rangle$.
If $X\in \g{g}$, we denote by $X_{\g{k}}$ and $X_{\g{p}}$ the unique vectors in $\g{k}$ and $\g{p}$ respectively such that $X=X_{\g{k}}+X_{\g{p}}$.
Furthermore, if $V$ is a Euclidean space and $W\subseteq V$ is a subspace, we denote by $v_{W}$ the orthogonal projection of $v\in V$ onto $W$. We define the symmetric bilinear map $U\colon\g{p}\times\g{p}\rightarrow \g{p}$ by
\begin{equation}\label{eq:UTensor}
	2 \langle U(X, Y),Z \rangle=\langle [Z,X]_{\g{p}},Y\rangle + \langle X, [Z,Y]_{\g{p}}\rangle, \quad X,Y,Z\in\g{p}.
\end{equation}
The reductive decomposition $\g{g}=\g{k}\oplus\g{p}$ is \textit{naturally reductive} if $U$ is identically zero.
In that case, one sees that the curves $\Exp(tX)\cdot o$ (where $X\in\g{p}$) are geodesics.
Suppose that $\g{g}$ admits an $\Ad(\s{G})$-invariant inner product $q$ and consider $\g{p}=\g{k}^\perp$, where $\g{k}^{\perp}$ denotes the orthogonal complement of $\g{k}$ in $\g{g}$ with respect to $q$.
Then, we say that the reductive decomposition $\g{g}=\g{k}\oplus\g{p}$ is \textit{normal homogeneous}. In particular, normal homogeneous reductive decompositions are naturally reductive.
	
Every reductive homogeneous space $M=\mathsf{G}/\mathsf{K}$ admits two natural connections.
On the one hand, we may consider the canonical connection $\nabla^{c}$, which is characterized by the equation
\[
	(\nabla^{c}_{X^{*}}Y^{*})_{o}=(-[X,Y]_{\g{p}})^{*}_{o}, \quad X,Y\in\g{p}.
\]
On the other hand, we have the Levi-Civita connection $\nabla$. We can also consider the tensor  $D:=\nabla - \nabla^c$. It turns out that $D$ is a $\s{G}$-invariant tensor defined on $M=\s{G}/\s{K}$ called the \textit{difference tensor}. Notice that $U=0$ holds if and only if the operators $A_{X}\colon Y\in \g{p}\to [X,Y]_{\g{p}}$ are skew-symmetric for all $X\in \g{p}$.
In particular, if $D$ is an element of $\operatorname{Hom}_{\s{K}}\bigl(\bigwedge^{2}\g{p},\g{p}\bigr)$, then  the reductive decomposition $\g{g}=\g{k}\oplus\g{p}$ is naturally reductive. 
	
Following \cite[Chapter X]{KobayashiNomizu2}, we can express $\nabla$ and $D$ at the base point $o\in M$ in terms of the Lie bracket of $\g{g}$ and the metric of $M$ in the following way:
\begin{equation*}
	(\nabla_{X^*} Y^*)_o=\left(-\frac{1}{2}[X,Y]_{\g{p}} + U(X,Y) \right)^*_o, \quad
	D_X Y=\frac{1}{2}[X,Y]_{\g{p}}+ U(X,Y),
\end{equation*}
where $X,Y\in\g{p}$. 
Moreover,  the curvature tensor associated with the canonical connection and the Levi-Civita connection can be written, respectively,  as
\begin{equation*}
	R^c(X,Y)Z=-[[X,Y]_{\g{k}},Z], \qquad 					
	R(X,Y)Z=D_X D_Y Z-D_Y D_X Z-[[X,Y]_{\g{k}},Z]-D_{[X,Y]_{\g{p}}}Z,
\end{equation*}
where $X,Y,Z\in \g{p}$.
We also need to compute the covariant derivative $\nabla R$ of the curvature tensor.
As shown in \cite[Remark~2.5]{RodriguezVazquezOlmos}, the tensor $\nabla R$ is given at $o$ by
\begin{equation*}
	\nabla_{V}R(X,Y,Z)=D_V(R(X,Y)Z)-R(D_V X,Y)Z-R(X,D_V Y)Z-R(X,Y)D_V Z
\end{equation*}
for all $X,Y,Z,V\in\g{p}$.
	
An important class of Riemannian submersions involving homogeneous spaces is that of homogeneous fibrations, which we briefly describe.
The main reference is \cite{GromollWalschap}.
Let $\s{H}\subseteq \s{K}\subseteq \s{G}$ be a chain of inclusions of compact connected subgroups of the connected Lie group $\s{G}$, and endow $\s{G}/\s{K}$ with a $\s{G}$-invariant Riemannian metric.
Then there exists a left-invariant metric on $\s{G}$ that is also right $\s{K}$-invariant, and a $\s{G}$-invariant metric on $\s{G}/\s{H}$ that makes the canonical projection $\pi\colon\s{G}/\s{H}\to\s{G}/\s{K}$ a Riemannian submersion with totally geodesic fibers isometric to $\s{K}/\s{H}$.
We denote by $\mathcal{V}$ and $\mathcal{H}$ the vertical and horizontal distributions associated with the Riemannian submersion $\pi$. 
We are concerned with the case that $\s{G}$ is a compact group with a bi-invariant metric and the homogeneous spaces $F=\s{K}/\s{H}$, $M=\s{G}/\s{H}$ and $B=\s{G}/\s{K}$ are endowed with the corresponding normal homogeneous metrics.
If $V$ is a Euclidean vector space and $W\subseteq V$ is a vector subspace, we denote by $V\ominus W$ the orthogonal complement of $W$ in $V$.
In this case, the tangent space $T_{o}M$ at $o=e\s{H}$ is identified with $\g{p}=\g{g}\ominus\g{h}$, so the vertical and horizontal subspaces at $o$ are $\mathcal{V}_{o}=\g{k}\ominus\g{h}$ and $\mathcal{H}_{o}=\g{p}\ominus \g{k}$.
It turns out that the distributions $\mathcal{V}$ and $\mathcal{H}$ are $\s{G}$-invariant in the sense that for every $p\in M$ and $g\in \s{G}$, we have $g_{*p}\mathcal{V}_{p}=\mathcal{V}_{g\cdot p}$ and $g_{*p}\mathcal{H}_{p}=\mathcal{H}_{g\cdot p}$.
	
A submanifold $N\subseteq M$ is said to be \textit{extrinsically homogeneous with respect to the presentation} $M=\s{G}/\s{K}$ if there exists a Lie subgroup $\s{S}\subseteq \s{G}$ such that $N$ is an orbit of $\s{S}$.
More generally, a submanifold $N$ of a Riemannian manifold $M$ is \textit{extrinsically homogeneous} if it is an orbit of a Lie subgroup $\s{S}\subseteq I(M)$.
In the following lemma, we provide a formula for the second fundamental form for an extrinsically homogeneous submanifold of a homogeneous space.
This formula was originally derived by Solonenko in the case of Riemannian symmetric spaces, see~\cite[Proposition~2.2.43]{Solonenko}.
See also \cite[Proposition~2.2]{AlekseevskyDiScala} for an alternative expression.
	
\begin{lemma}
	Let $M=\s{G}/\s{K}$ be a Riemannian homogeneous space equipped with an arbitrary $\s{G}$-invariant metric and reductive decomposition $\g{g}=\g{k}\oplus\g{p}$.
	Assume that $\s{S}$ is a Lie subgroup of $\s{G}$, and let $\g{s}_{\g{p}}$ and $\g{s}_{\g{p}}^{\perp}$ be the tangent and normal spaces to $\s{S}\cdot o$ at $o$ (regarded as subspaces of $\g{p}$).
	Let $V_{\g{s}_{\g{p}}^{\perp}}$ denote the orthogonal projection of $V\in\g{p}$ onto $\g{s}_{\g{p}}^{\perp}$.
	Then the second fundamental form of $\s{S}\cdot o$ at $o$ is given by
	\begin{equation}\label{eq:SecondFundamentalFormOrbit}
		\mathbb{II}(X_{\g{p}},Y)=([X_{\g{k}},Y]+D_{X_{\g{p}}}Y)_{\g{s}_{\g{p}}^{\perp}}
	\end{equation}
	for all $X\in\g{s}$ and $Y\in \g{s}_{\g{p}}$.
	In particular, $\s{S}\cdot o$ is totally geodesic if and only if $[X_{\g{k}},Y]+D_{X_{\g{p}}}Y\in \g{s}_{\g{p}}$ for all $X\in \g{s}$ and $Y\in\g{s}_{\g{p}}$.
	\end{lemma}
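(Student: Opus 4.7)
My plan is to compute $\mathbb{II}(X_{\g{p}},Y)$ by extending both tangent vectors to Killing fields tangent to the orbit and exploiting the symmetry of the second fundamental form. Choose a lift $\tilde{Y}\in\g{s}$ of $Y\in\g{s}_{\g{p}}$ with $\tilde{Y}_{\g{p}}=Y$; then the fundamental vector fields $X^{*}$ and $\tilde{Y}^{*}$ are tangent to $\s{S}\cdot o$ (their flows stay in $\s{S}$), and at the base point $X^{*}_{o}=X_{\g{p}}$ and $\tilde{Y}^{*}_{o}=Y$ under the identification $T_{o}M\cong\g{p}$. Since $\nabla$ is torsion-free, the second fundamental form is symmetric, so it suffices to compute
\[ \mathbb{II}(X_{\g{p}},Y)=\mathbb{II}(Y,X_{\g{p}})=\bigl((\nabla_{\tilde{Y}^{*}}X^{*})_{o}\bigr)_{\g{s}_{\g{p}}^{\perp}}, \]
an asymmetric choice that gives a much cleaner expression than differentiating in the opposite order, since only $X^{*}$ has to be split as $X_{\g{k}}^{*}+X_{\g{p}}^{*}$.

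The technical core of the argument is the auxiliary identity
\[ (\nabla_{V}Z^{*})_{o}=[Z,V]\quad\text{for every }Z\in\g{k},\ V\in\g{p}. \]
To prove it, note that $Z^{*}$ is a Killing field with $Z^{*}_{o}=0$ whose flow $\Exp(tZ)\in\s{K}$ fixes $o$ with differential $\Ad(\Exp(tZ))\vert_{\g{p}}$; its infinitesimal generator on $T_{o}M\cong\g{p}$ is thus $\ad(Z)\vert_{\g{p}}$. Combining the identity $(\mathcal{L}_{Z^{*}}V)_{o}=-(\nabla_{V}Z^{*})_{o}$ (which uses $Z^{*}_{o}=0$) with $(\mathcal{L}_{Z^{*}}V)_{o}=-\ad(Z)(V)$ yields the claim. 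For the complementary piece, the formula $(\nabla_{V}W^{*})_{o}=-[V,W]_{\g{p}}+D_{V}W$ from Subsection~\ref{subsec:homspaces}, combined with the torsion identity $D_{Y}X_{\g{p}}-D_{X_{\g{p}}}Y=[Y,X_{\g{p}}]_{\g{p}}$ (immediate from $D_{X}Y=\tfrac{1}{2}[X,Y]_{\g{p}}+U(X,Y)$), gives $(\nabla_{Y}X_{\g{p}}^{*})_{o}=D_{X_{\g{p}}}Y$ after the bracket terms cancel.

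Assembling both contributions via $X^{*}=X_{\g{k}}^{*}+X_{\g{p}}^{*}$ yields
\[ (\nabla_{\tilde{Y}^{*}}X^{*})_{o}=(\nabla_{Y}X_{\g{k}}^{*})_{o}+(\nabla_{Y}X_{\g{p}}^{*})_{o}=[X_{\g{k}},Y]+D_{X_{\g{p}}}Y, \]
and projecting onto $\g{s}_{\g{p}}^{\perp}$ establishes \eqref{eq:SecondFundamentalFormOrbit}. The ``in particular'' clause follows because $\s{S}\cdot o$ is extrinsically $\s{S}$-homogeneous, so $\mathbb{II}$ vanishes everywhere on the orbit once it vanishes at $o$, which by the formula just derived is equivalent to $[X_{\g{k}},Y]+D_{X_{\g{p}}}Y\in\g{s}_{\g{p}}$ for all $X\in\g{s}$ and $Y\in\g{s}_{\g{p}}$. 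The only real subtlety I anticipate is tracking signs and the identification $T_{o}M\cong\g{p}$ in the auxiliary formula $(\nabla_{V}Z^{*})_{o}=[Z,V]$; the rest is a direct algebraic manipulation using tools already assembled in Subsection~\ref{subsec:homspaces}.
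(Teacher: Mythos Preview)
Your proof is correct and follows essentially the same strategy as the paper: extend both tangent vectors to fundamental Killing fields tangent to the orbit, exploit the symmetry of $\mathbb{II}$, and compute $(\nabla_{\tilde{Y}^{*}}X^{*})_{o}$. The only difference is bookkeeping: you split $X^{*}=X_{\g{k}}^{*}+X_{\g{p}}^{*}$ and handle the two pieces with your auxiliary identity $(\nabla_{V}Z^{*})_{o}=[Z,V]$ and the torsion relation for $D$, whereas the paper swaps via torsion-freeness, $\nabla_{Y^{*}}X^{*}=\nabla_{X^{*}}Y^{*}+[Y^{*},X^{*}]$, and uses the anti-homomorphism property $[Y^{*},X^{*}]=[X,Y]^{*}$ to evaluate the bracket directly. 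Both routes are short and yield the same expression; the paper's is marginally slicker since it avoids proving the auxiliary identity separately, while yours isolates a reusable general fact about isotropy Killing fields.
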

\begin{proof}
	Choose arbitrary elements $X$, $Y\in\g{s}$, so that the vector fields $X^{*}$ and $Y^{*}$ are tangent to $\s{S}\cdot o$ and their values at $o$ are $X_{\g{p}}$ and $Y_{\g{p}}$ respectively.
	We evaluate the covariant derivative $\nabla_{Y^{*}}X^{*}$ at $o$.
	We see that
	\[
	\begin{aligned}
		\nabla_{Y^{*}}X^{*}={}&\nabla_{Y_{\g{p}}^{*}}X^{*}=\nabla_{X^{*}}Y_{\g{p}}^{*}+[Y_{\g{p}}^{*},X^{*}]=\nabla_{X_{\g{p}}^{*}}Y_{\g{p}}^{*}+[X,Y_{\g{p}}]^{*}=\nabla_{X_{\g{p}}^{*}}Y_{\g{p}}^{*}+[X_{\g{k}},Y_{\g{p}}]^{*}+[X_{\g{p}},Y_{\g{p}}]^{*} \\
		={}&-\frac{1}{2}[X_{\g{p}},Y_{\g{p}}]_{\g{p}}+U(X_{\g{p}},Y_{\g{p}})+[X_{\g{k}},Y_{\g{p}}]+[X_{\g{p}},Y_{\g{p}}]_{\g{p}}=[X_{\g{k}},Y_{\g{p}}]+D_{X_{\g{p}}}Y_{\g{p}}.
	\end{aligned}
	\]
	Thus, projecting onto the normal space and using the fact that the second fundamental form is symmetric, we obtain that
	\[
	\mathbb{II}(X_{\g{p}},Y_{\g{p}})=(\nabla_{Y^{*}}X^{*})_{\g{s}_{\g{p}}^{\perp}}=([X_{\g{k}},Y_{\g{p}}]+D_{X_{\g{p}}}Y_{\g{p}})_{\g{s}_{\g{p}}^{\perp}},
	\]
	as desired. \qedhere
\end{proof}

\subsection{Homogeneous nearly Kähler manifolds}\label{subsec:homNK}
	
In this section we present the ambient spaces that we work with throughout the rest of this article.
The classification of homogeneous nearly Kähler manifolds in dimension six was done by Butruille \cite{Butruille}.
Indeed, every simply connected Riemannian manifold satisfying the previous conditions is homothetic to either the sphere $\mathsf{S}^{6}$, the complex projective space $\C\mathsf{P}^{3}$,  the flag manifold $\mathsf{F}(\C^{3})$, and $\mathsf{S}^{3}\times\mathsf{S}^{3}$.
Since $\mathsf{S}^{6}$ carries its natural round metric, its totally geodesic submanifolds are well-known, so we only need to focus on the other three spaces.
It turns out that these manifolds are examples of $3$-symmetric spaces.
The main reference for the description of $3$-symmetric spaces is \cite{GrayWolfI}.
	
Let $(M,J)$ be an almost Hermitian manifold.
Recall that $M$ is \textit{nearly Kähler} if for every vector field $X\in\g{X}(M)$ we have $(\nabla_{X}J)X=0$, and that it is \textit{strictly nearly Kähler} if it is nearly Kähler and $\nabla_{X}J\neq 0$ for all nonzero $X\in TM$.
If $\dim M = 6$, this condition is equivalent to $\nabla J \neq 0$.
On the other hand, a \textit{$3$-symmetric space} $M$ is a Riemannian manifold $M$ together with a family of isometries $s_{p}\colon M\to M$, where $p\in M$, satisfying $s_{p}^{3}=\operatorname{Id}_{M}$ for all $p\in M$, $p$ is an isolated fixed point of $s_{p}$, and each $s_{p}$ is holomorphic with respect to the so-called \textit{canonical almost complex structure} $J$ defined via
\begin{equation}\label{eq:3SymmetricSpaceJ}
	(s_{p})_{*p}=-\frac{1}{2}\operatorname{Id}_{T_{p}M}+\frac{\sqrt{3}}{2}J_{p}, \quad p\in M.
\end{equation}
Any connected 3-symmetric space is automatically homogeneous \cite[Theorem 4.8]{Gray}.
Conversely, one can construct a $3$-symmetric space in terms of algebraic data.
Indeed, let $\s{G}$ be a connected Lie group and $\s{K}$ a closed subgroup of $\s{G}$.
Assume that there exists an automorphism $\Theta\colon \s{G}\to\s{G}$ of order three such that $\s{G}^{\Theta}_{0}\subseteq\s{K}\subseteq\s{G}^{\Theta}$, where $\s{G}^{\Theta}$ is the fixed point set of $\Theta$ and $\s{G}^{\Theta}_{0}$ is its identity component.
It turns out that $M=\s{G}/\s{K}$ is a reductive homogeneous space in such a way that $\theta=\Theta_{*}$ preserves the reductive complement.
Let $\g{g}=\g{k}\oplus\g{p}$ be a reductive decomposition of $\g{g}$ satisfying $\theta\g{p}=\g{p}$.
Then, any inner product on $\g{p}$ that is invariant under $\Ad(\s{K})$ and $\theta$ gives rise to a $\s{G}$-invariant metric on $M$ that turns $M$ into a $3$-symmetric space, where the isometry of order three at $o=e\s{K}$ is given by $s_{o}(x\s{K})=\Theta(x)\s{K}$.
We say that $(\s{G},\s{K},\Theta)$ is the \textit{triple} associated with the $3$-symmetric space $M$.
The corresponding almost complex structure is the $\s{G}$-invariant tensor field $J$ defined at $o$ by~\eqref{eq:3SymmetricSpaceJ}.
By \cite[Proposition~5.6]{Gray}, the almost Hermitian manifold $(M,J)$ is nearly Kähler if and only if the decomposition $\g{g}=\g{k}\oplus\g{p}$ is naturally reductive.

We now proceed to describe our six-dimensional examples, exhibiting them as $3$-symmetric spaces.

\subsubsection{The complex projective space $\C\mathsf{P}^{3}$}\label{subsection:CP3Description}
	
Consider $\mathbb{H}^{2}$ as a right $\C$-vector space, so that the projective space $\mathsf{P}(\mathbb{H}^{2})$ is exactly $\C\mathsf{P}^{3}$.
The natural action of $\s{G}=\mathsf{Sp}(2)$ on $\C\mathsf{P}^{3}$ is transitive, and the isotropy subgroup at $o=[1:0]$ is $\s{K}=\mathsf{U}(1)\times\mathsf{Sp}(1)$, so that $\C\mathsf{P}^{3}$ can be viewed as the quotient $\mathsf{Sp}(2)/\mathsf{U}(1)\times\mathsf{Sp}(1)$.
The Killing form of $\g{g}=\g{sp}(2)$ is $\mathcal{B}(X,Y)=12\operatorname{Re}\operatorname{tr}_{\mathbb{H}}(XY)$, so $-\mathcal{B}$ is an $\Ad(\mathsf{Sp}(2))$-invariant inner product in $\g{g}$,
but we renormalize it so that the inner product on $\g{g}$ is $\langle X,Y\rangle = -2\operatorname{Re}\operatorname{tr}_{\mathbb{H}}(XY)$.
Let $\g{p}$ be the orthogonal complement of $\g{u}(1)\oplus\g{sp}(1)$ in $\g{sp}(2)$.
We endow $\C\mathsf{P}^{3}$ with the homogeneous metric induced by the restriction of $\langle \cdot, \cdot \rangle$ to $\g{p}$.
We also consider the element $\omega=\operatorname{diag}\bigl(e^\frac{2\pi i}{3},1\bigr)\in\s{K}$.
Then the conjugation $\Theta=I_{\omega}$ defines an inner automorphism of order three in $\mathsf{G}$, whose fixed point set is $\mathsf{Sp}(2)^{\Theta}=\mathsf{U}(1)\times\mathsf{Sp}(1)$, and $(\mathsf{Sp}(2),\mathsf{U}(1)\times\mathsf{Sp}(1),\Theta)$ is the triple associated with the $3$-symmetric space $\C\mathsf{P}^{3}$.
The nearly Kähler complex structure $J$ is defined as $J=\frac{2}{\sqrt{3}}\Theta_{*}+\frac{1}{\sqrt{3}}\operatorname{Id}_{\g{p}}$.

Let $E_{ij}\in\g{gl}(2,\H)$ be the elementary matrix which has all components equal to zero except for the $(i,j)$ component, which is one.
We use the orthonormal basis $\{e_{1},\dots,e_{6}\}$ of $\g{p}$ defined by
\begin{align*}
	e_{1}=&~\frac{j}{\sqrt{2}}E_{11}, & e_{2}=&~\frac{k}{\sqrt{2}}E_{11}, & e_{3}=&~\frac{1}{2}(E_{21}-E_{12}), \\
	e_{4}=&~\frac{i}{2}(E_{12}+E_{21}), & e_{5}=&~\frac{j}{2}(E_{12}+E_{21}), & e_{6}=&~\frac{k}{2}(E_{12}+E_{21}).
\end{align*}
	
The isotropy representation allows us to decompose $\g{p}$ as the direct sum of two irreducible submodules $\g{p}_{1}=\vecspan\{e_{1},e_{2}\}$ and $\g{p}_{2}=\vecspan\{e_{3},\dots,e_{6}\}$.
Indeed, the subrepresentation $\g{p}_{1}$ of $\mathsf{U}(1)\times\mathsf{Sp}(1)$ is isomorphic to the representation $\C$ with the action given by $(\lambda,\mu)\cdot z=\lambda^{2}z$, whereas $\g{p}_{2}$ is isomorphic to the representation $\R^{4}$ of $\mathsf{U}(1)\times\mathsf{Sp}(1)$ under $(\lambda,\mu)\cdot x=\mu x \bar{\lambda}$.
In particular, the group $\s{U}(1)\times\s{Sp}(1)$ acts transitively on the unit sphere of each $\g{p}_{i}$.
	
The isometry group of $\C\mathsf{P}^{3}$ is $I(\C\mathsf{P}^{3})=(\mathsf{Sp}(2)/\mathbb{Z}_{2})\rtimes \mathbb{Z}_{2}$, where the outer $\mathbb{Z}_{2}$ is generated by conjugation by $\operatorname{diag}(j,1)\in\mathsf{Sp}(2)$ (see for example~\cite{ShankarIsometryGroups}).
	
Now, consider the chain of subgroups $\mathsf{U}(1)\times\mathsf{Sp}(1)\subseteq \mathsf{Sp}(1)\times\mathsf{Sp}(1)\subseteq\mathsf{Sp}(2)$.
This gives rise to the homogeneous fibration $\C\mathsf{P}^{1}\to\C\mathsf{P}^{3}\to\mathbb{H}\mathsf{P}^{1}=\mathsf{S}^{4}$, which is precisely the twistor fibration, the fiber of which is a totally geodesic $\C\mathsf{P}^{1}=\mathsf{S}^{2}$.
The decomposition of $\g{p}$ into the vertical and horizontal subspaces of this submersion is given by $\mathcal{V}_{o}=\g{p}_{1}$ and $\mathcal{H}_{o}=\g{p}_{2}$.

\subsubsection{The flag manifold $\mathsf{F}(\C^{3})$}\label{subsection:FlagDescription}
Recall that a full flag in $\C^3$ is a chain $0=V_{0}\subseteq V_{1}\subseteq V_{2}\subseteq V_{3}=\C^3$ (also denoted by $(V_1,V_2)$) of subspaces such that $\dim_{\C}V_{k}=k$ for each $k$.
We denote by $\s{F}(\C^{3})$ the space of all flags in $\C^3$, which is naturally identified with the quotient of the Stiefel manifold of orthonormal bases of $\C^3$ under the standard action of $\s{U}(1)^{3}$.
The group $\s{G}=\s{SU}(3)$ acts transitively on $\s{F}(\C^3)$, and if $o$ is the standard flag $0\subseteq \C e_1\subseteq \spann\{e_1,e_2\}\subseteq \C^3$, its isotropy subgroup is the maximal torus $\s{K}=\s{T}^{2}$ of diagonal matrices in $\s{SU}(3)$, so we have $\s{F}(\C^3)=\s{SU}(3)/\s{T}^{2}$.
	
Let us endow $\mathsf{F}(\C^{3})$ with a reductive decomposition and a Riemannian metric.
Note that the Killing form of $\g{g}=\g{su}(3)$ satisfies $\mathcal{B}(X,Y)=6\operatorname{tr}(XY)$ for all $X$, $Y\in \g{g}$.
As a consequence, the negative Killing form gives a bi-invariant metric on $\mathsf{SU}(3)$.
However, for the sake of convenience, we rescale this metric so that the inner product in $\g{g}$ is $\langle X,Y \rangle = -\operatorname{tr}(XY)$ for all $X$, $Y\in\g{su}(3)$.
Let $\g{p}$ be the orthogonal complement of $\g{t}=\g{u}(1)\oplus\g{u}(1)$ in $\g{g}$.
Then, the restriction of $\langle \cdot ,\cdot \rangle$ to $\g{p}$ induces an $\Ad(\mathsf{T}^2)$-invariant inner product on $\g{p}$, that is, a $\mathsf{G}$-invariant metric on $\mathsf{F}(\C^{3})$.
This metric is homothetic to the standard homogeneous metric on $M$.
We also consider the automorphism $\Theta=I_{\omega}\colon \mathsf{SU}(3)\to\mathsf{SU}(3)$, where $\omega=\operatorname{diag}\bigl(e^{\frac{2\pi i}{3}},1,e^{-\frac{2\pi i}{3}}\bigr)\in\s{T}^{2}$.
Then $\Theta$ is an automorphism of order three whose fixed point set is precisely $\mathsf{SU}(3)^{\Theta}=\mathsf{T}^2$, so $(\mathsf{SU}(3),\mathsf{T}^2,\Theta)$ is the triple associated with the $3$-symmetric space $\mathsf{F}(\C^{3})$.
The corresponding almost complex structure $J$ at $\g{p}$ is determined by $J=\frac{2}{\sqrt{3}}\Theta_{*}+\frac{1}{\sqrt{3}}\operatorname{Id}_{\g{p}}$.
We consider the orthonormal basis $\{e_{1},\dots, e_{6}\}$ of $\g{p}$, where
\begin{align*}
	e_1=&~\frac{1}{\sqrt{2}}(E_{12}-E_{21}), & e_2=&~\frac{i}{\sqrt{2}}(E_{12}+E_{21}), & e_3=&~\frac{1}{\sqrt{2}}(E_{23}-E_{32}), \\
	e_4=&~\frac{i}{\sqrt{2}}(E_{23}+E_{32}), & e_5=&~\frac{1}{\sqrt{2}}(E_{13}-E_{31}), & e_6=&~\frac{i}{\sqrt{2}}(E_{13}+E_{31}).
\end{align*}
Once again, the $E_{ij}$ denote elementary $3\times 3$ matrices.
It is easy to check that the tangent space splits as the direct sum of irreducible submodules $\g{p}=\g{p}_{1}\oplus\g{p}_{2}\oplus\g{p}_{3}$, where each $\g{p}_{k}=\vecspan\{e_{2k-1},e_{2k}\}$ is isomorphic to $\C$.
To be more precise, if $g=\operatorname{diag}\bigl(e^{i x},e^{i y},e^{-i (x+y)}\bigr)$ is an arbitrary element of $\mathsf{T}^2$, then $\Ad(g)$ acts on $\g{p_1}$ as multiplication by $e^{i(x-y)}$, on $\g{p}_{2}$ as multiplication by $e^{i(x+2y)}$, and on $\g{p}_{3}$ as multiplication by $e^{i(2x+y)}$.
Note that $\g{p}_{1}$, $\g{p}_{2}$ and $\g{p}_{3}$ are pairwise nonisomorphic as representations of $\mathsf{T}^2$.
Furthermore, if $g\in\s{U}(3)$ is a permutation matrix, then the map $a\s{T}^{2}\to gag^{-1}\s{T}^{2}$ is an isometry fixing $o$ and whose differential at $o$ permutes the irreducible submodules of $\g{p}$, and every permutation of these submodules can be achieved in this way.
For example, the transposition $(1,2)$ interchanges $\g{p}_{2}$ and $\g{p}_{3}$ and the cycle $(1,2,3)$ acts as the cycle $(\g{p}_{1},\g{p}_{2},\g{p}_{3})$.
	
Consider the chain of inclusions $\mathsf{T}^2\subseteq \mathsf{U}(2)\subseteq\mathsf{SU}(3)$.
The corresponding homogeneous fibration is $\C\mathsf{P}^{1}\to\mathsf{F}(\C^{3})\to\C\mathsf{P}^{2}$ (explicitly, it takes the flag $(V_1,V_2)\in\s{F}(\C^{3})$ to $V_2^\perp\in\C\s{P}^{2}$) and the fiber $\C\mathsf{P}^{1}=\mathsf{U}(2)\cdot o$ is totally geodesic.
The vertical and horizontal subspaces of this fibration at $o$ are precisely $\mathcal{V}_{o}=\g{p}_{1}$ and $\mathcal{H}_{o}=\g{p}_{2}\oplus\g{p}_{3}$.
		
We now determine the full isometry group of $\s{F}(\C^{3})$.
This computation was done by Shankar in \cite{ShankarIsometryGroups} when $\s{F}(\C^{3})$ carries a metric of positive sectional curvature. However, the homogeneous metric that we are considering in $\s{F}(\C^{3})$ does not have positive sectional curvature. In our case, we may calculate the isometry group of $\s{F}(\C^{3})$ via the following approach (based on the proof of~\cite[\S4, Proposition~6 and \S16, Theorem~3]{Onishchik}).
Firstly, the effectivized version of the presentation $\s{SU}(3)/\s{T}^2$ is $\s{PSU(3)}/(\s{T}^2/\mathbb{Z}_{3})$, and we may apply~\cite[Theorem~5.1]{WangZiller} to conclude that $I^{0}(\s{F}(\C^{3}))=\s{PSU}(3)$.
As for the group of components $I(\s{F}(\C^{3}))/I^{0}(\s{F}(\C^{3}))$, since the flag manifold is simply connected, this group is equal to $\s{H}/\s{H}^{0}$, where $\s{H}$ is the isotropy subgroup of $I(\s{F}(\C^{3}))$ at $o$ and $\s{H}^{0}$ its identity component.
This follows from the long exact sequence of homotopy groups associated with the fibration $\s{H}\hookrightarrow I(\s{F}(\C^{3}))\to\s{F}(\C^{3})$.
Now,  the conjugation map $C\colon \s{H}\to\s{Aut}(\s{PSU}(3),\s{T}^2/\mathbb{Z}_{3})=\{\varphi\in\s{Aut}(\s{PSU}(3)): \varphi \text{ preserves } \s{T}^2/\mathbb{Z}_{3}\}$ is injective by~\cite[Proposition~1.7]{ShankarIsometryGroups}.
In addition, any $\varphi\in\s{Aut}(\s{PSU}(3),\s{T}^2/\mathbb{Z}_{3})$ descends to a diffeomorphism $\bar{\varphi}$ of $\s{F}(\C^{3})$, which is actually an isometry because $\varphi$ preserves the Killing form and the metric on the flag manifold is induced by it.
It is easy to show that $\bar{\varphi}\in\s{H}$ and $C(\bar{\varphi})=\varphi$, so $C$ is an isomorphism and $\s{H}=\s{Aut}(\s{PSU}(3),\s{T}^2/\mathbb{Z}_{3})$.
Now, since $\s{Aut}(\s{PSU}(3))=\Ad(\s{PSU}(3))\rtimes\mathbb{Z}_{2}$, where the outer $\mathbb{Z}_{2}$ is generated by complex conjugation, the computation of $\s{H}/\s{H}^{0}$ reduces to that of $\frac{\s{N}_{\s{PSU}(3)}(\s{T}^2/\mathbb{Z}_{3})}{\s{Z}_{\s{PSU}(3)}(\s{T}^2/\mathbb{Z}_{3})}\rtimes \mathbb{Z}_{2}$.
The first factor is merely the Weyl group $\s{W}(\s{PSU}(3))=\g{S}_{3}$, so we have obtained $\s{H}/\s{H}^{0}=\g{S}_{3}\rtimes\mathbb{Z}_{2}$, and we conclude that the full isometry group is $I(\mathsf{F}(\C^{3}))=\mathsf{PSU}(3)\rtimes (\g{S}_{3}\rtimes\mathbb{Z}_{2})$.
	
\subsubsection{The almost product $\mathsf{S}^{3}\times\mathsf{S}^{3}$}\label{subsection:S3S3Description}
This space is obtained via the Ledger--Obata construction from the group $\mathsf{SU}(2)$ (see for example \cite{LedgerObata}).
We consider the product $\mathsf{G}=\mathsf{SU}(2)^3$ and the subgroup $\mathsf{K}=\Delta\mathsf{SU}(2)$ obtained by embedding $\mathsf{SU}(2)$ diagonally in $\mathsf{G}$.
Then $\mathsf{G}$ acts on $M=\mathsf{S}^{3}\times\mathsf{S}^{3}=\mathsf{SU}(2)\times\mathsf{SU}(2)$ via the equation $(g,h,k)\cdot(x,y)=(gxk^{-1},hyk^{-1})$, and the isotropy subgroup at $o=(I,I)$ is $\mathsf{K}$, so we obtain that $M=\mathsf{G}/\mathsf{K}$.
The Killing form of $\g{su}(2)$ is $\mathcal{B}(X,Y)=4\operatorname{tr}(XY)$, and the direct sum $\mathcal{B}\oplus\mathcal{B}\oplus\mathcal{B}$ is precisely the Killing form of $\g{g}$ whose opposite yields the standard homogeneous metric on $M$.
Similarly to the previous cases, we consider the renormalized metric given by $\langle (X_{1},X_{2},X_{3}),(Y_{1},Y_{2},Y_{3}) \rangle=-\operatorname{tr}(X_{1} Y_{1})-\operatorname{tr}(X_{2}Y_{2})-\operatorname{tr}(X_{3}Y_{3})$.
We denote by $\g{p}$ the orthogonal complement of $\g{k}=\Delta\g{su}(2)$ in $\g{g}$ and we consider the order three automorphism $\Theta\colon \mathsf{G}\to\mathsf{G}$ defined by $\Theta(g,h,k)=(h,k,g)$.
The nearly Kähler complex structure $J$ is given as $J=\frac{2}{\sqrt{3}}\Theta_{*}+\frac{1}{\sqrt{3}}\operatorname{Id}_{\g{p}}$.
Furthermore, if $L_{g}\colon \s{S}^{3}\to\s{S}^{3}$ denotes left multiplication by $g\in\s{S}^{3}$, the almost product structure of $\s{S}^{3}\times\s{S}^{3}$ is the $\s{G}$-invariant tensor $P$ of type $(1,1)$ defined by
\begin{equation*}
	P(v,w)=\left(\left(L_{ab^{-1}}\right)_{*b}w,\left(L_{ba^{-1}}\right)_{*a}v\right), \quad v\in T_{a}\s{S}^{3},w\in T_{b}\s{S}^{3}.
\end{equation*}
The restriction of $P$ to $T_{o}(\s{S}^{3}\times\s{S}^{3})$ is identified with the $\Ad(\s{K})$-invariant map $P\colon \g{p}\to \g{p}$ given by $P(X,Y,Z)=(Y,X,Z)$.
	
We choose the following basis of $\g{su}(2)$ in terms of elementary matrices:
\begin{equation}\label{eq:su2Matrices}
	H=i(E_{11}-E_{22}),\quad
	E=E_{21}-E_{12},\quad
	F=i(E_{12}+E_{21}).
\end{equation}
Then we can give a basis $\{e_{1},\dots,e_{6}\}$ of $\g{p}$ as follows:
\begin{align*}
	e_{1}={}&\frac{1}{\sqrt{12}}\left(H,-2H,H\right), & e_{2}={}&\frac{1}{\sqrt{12}}\left(E,-2E,E\right), & e_{3}={}&\frac{1}{\sqrt{12}}\left(F,-2F,F\right), \\
	e_{4}={}&\frac{1}{2}\left(H,0,-H\right), &
	e_{5}={}&\frac{1}{2}\left(E,0,-E\right), &
	e_{6}={}&\frac{1}{2}\left(F,0,-F\right).
\end{align*}
Consider the chain of inclusions $\Delta\mathsf{SU}(2)\subseteq \Delta_{1,3}\mathsf{SU}(2)\times\mathsf{SU}(2)_{2}\subseteq\mathsf{SU}(2)\times\mathsf{SU}(2)\times\mathsf{SU}(2)$.
This gives rise to the homogeneous fibration $\mathsf{S}^{3}\hookrightarrow \mathsf{S}^{3}\times\mathsf{S}^{3}\to\mathsf{S}^{3}$ which is merely the projection onto the first factor.
Once again the fibers are totally geodesic.
The vertical and horizontal subspaces at $o$ are given by
\begin{equation*}
	\mathcal{V}_{o}=\g{p}_{1}=\vecspan\{e_{1},e_{2},e_{3}\}, \quad
	\mathcal{H}_{o}=\g{p}_{2}=\vecspan\{e_{4},e_{5},e_{6}\}.
\end{equation*}	
The isometry group of $\mathsf{S}^{3}\times\mathsf{S}^{3}$ is $I(\mathsf{S}^{3}\times\mathsf{S}^{3})=(\mathsf{SU}(2)^{3}/\Delta \mathbb{Z}_{2})\rtimes \mathfrak{S}_{3}$
where $\mathfrak{S}_{3}$ denotes the symmetric group on three elements acting in the natural manner on $\mathsf{S}^{3}\times\mathsf{S}^{3}$, see~\cite[Lemma 3.3]{VasquezIsospectral} for a proof.
	
\section{Totally geodesic immersions in real analytic Riemannian manifolds}
\label{sec:tganalytic}
In this section we develop some new tools in the theory of totally geodesic immersions in analytic Riemannian manifolds.

To conduct the global study of totally geodesic submanifolds, it is natural to assume that the ambient space is real analytic, since homogeneous spaces are examples of real analytic manifolds~\cite[Proposition~4.2]{KobayashiNomizu}.
As we will see, it is also necessary to allow self-intersections in our submanifolds, leading to the study of totally geodesic immersions in analytic Riemannian manifolds. 

Let $M$ be a Riemannian manifold and $f\colon \Sigma \to M$ an isometric immersion of a connected Riemannian manifold $\Sigma$ to $M$.
We say that $f$ is a \textit{totally geodesic} immersion (and $\Sigma$ is a \textit{totally geodesic submanifold} of $M$) if for every geodesic $\gamma$ of $\Sigma$ the composition $f\circ \gamma$ is also a geodesic of $M$.
This condition is equivalent to the vanishing of the second fundamental form $\mathbb{II}$ of $\Sigma$.
We refer the reader to~\cite[Appendix~A]{HeintzeLiuOlmos} and~\cite[Section~10.3]{BerndtConsoleOlmos} for more details on totally geodesic immersions.
A similar treatment can be found in~\cite{Tsukada}.

When studying general totally geodesic immersions, certain redundancies arise that need to be avoided.
The first one is due to reparametrizations of the domain, which is handled by introducing the notion of equivalence.
We say that two totally geodesic immersions $f_{i}\colon \Sigma_{i}\to M$ are \textit{equivalent} if there exists an isometry $h\colon \Sigma_{1}\to \Sigma_{2}$ such that $f_{1}=f_{2}\circ h$.
The second redundancy occurs when considering surjective local isometries.
For instance, a totally geodesic embedding of $\R\s{P}^2$ in $\R\s{P}^3$ yields the same information as its composition $\s{S}^{2}\to\R\s{P}^{3}$ with the universal cover $\s{S}^{2}\to\R\s{P}^{2}$, though the latter is not an embedding.
We can deal with this issue by working exclusively with compatible immersions.
Let $f\colon \Sigma^{k}\to M$ be totally geodesic, and consider the Grassmann bundle $\s{G}_{k}(TM)$ of $k$-planes in $TM$.
Then $f$ induces a smooth map $\tilde{f}\colon \Sigma \to \s{G}_{k}(TM)$ by letting $\tilde{f}(p)=(f(p),f_{*p}(T_{p}\Sigma))$ (for ease of notation, we simply write $\tilde{f}(p)=f_{*p}(T_{p}\Sigma)$).
The map $f$ is said to be \textit{compatible} if $\tilde{f}$ is injective (this is the case, for example, if $f$ is injective).
In other words, we allow $\Sigma$ to have self-intersections in $M$, but at each point of self-intersection the corresponding tangent spaces must be different.
It turns out that any totally geodesic immersion can be factored through a compatible one~\cite[p.~272]{BerndtConsoleOlmos}, so imposing this condition does not cause us to lose information.
We note that in general, given compatible totally geodesic immersions $f_{1}\colon \Sigma_{1}\to\Sigma_{2}$ and $f_{2}\colon \Sigma_{2}\to M$, the composition $f_{2}\circ f_{1}$ may not be compatible.
A counterexample will be provided in Remark~\ref{remark:CompositionCompatibleImmersions}.

One sees that a compatible immersion $f\colon \Sigma \to M$ is determined up to equivalence by the image $\tilde{f}(\Sigma)\subseteq \s{G}_{k}(TM)$.
Given two compatible totally geodesic immersions $f_{i}\colon \Sigma_{i}\to M$, we say that $f_{2}\colon \Sigma_{2}\to M$ \textit{extends} $f_{1}\colon \Sigma_{1}\to M$ if there exists an injective local isometry $j\colon \Sigma_{1}\to \Sigma_{2}$ satisfying $f_{1}=f_{2}\circ j$, or equivalently, if $\tilde{f}_{1}(\Sigma_{1})\subseteq \tilde{f}_{2}(\Sigma_{2})$.
This defines a partial ordering on any set of equivalence classes of compatible totally geodesic immersions.
By~\cite[Proposition~10.3.2]{BerndtConsoleOlmos}, every compatible totally geodesic immersion can be extended to an \textit{inextendable} one (that is, an immersion which is maximal with respect to this partial ordering), and this extension is unique up to equivalence (we say in this case that the submanifold is \textit{inextendable}). Note that Heintze, Liu and Olmos~\cite{HeintzeLiuOlmos} refer to these submanifolds as \textit{maximal} totally geodesic submanifolds, whereas we prefer to reserve this term for another concept.
We thus aim to classify inextendable totally geodesic submanifolds up to congruence.

Moreover, given $p\in \Sigma$ and a vector subspace $V\subseteq T_{p}M$, if there exists an inextendable totally geodesic immersion $f\colon \Sigma \to M$ with such that $(p,V)\in\tilde{f}(\Sigma)$, then $f$ is unique up to equivalence.
In other words, there exists at most one inextendable totally geodesic submanifold passing through $p$ with tangent space $V$.
Because of this, we are interested in determining which subspaces $V$ of $T_{p}M$ are tangent to a totally geodesic submanifold.
The answer to this question is given by means of the following result, see \cite[Section~10.3.2]{BerndtConsoleOlmos}:
\begin{theorem}\label{th:CharacterizationTGSubmanifolds}
	Let $M$ be a real analytic Riemannian manifold, $p\in M$ and $V\subseteq T_{p}M$ a vector subspace.
    Then there exists a totally geodesic submanifold $\Sigma$ of $M$ whose tangent space at $p$ is $V$ if and only if $(\nabla^{k}R)_{p}$ preserves $V$ for all $k\geq 0$.
\end{theorem}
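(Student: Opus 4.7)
The forward direction is standard: if $\Sigma$ is totally geodesic with $T_p\Sigma = V$, the vanishing of the second fundamental form forces $\nabla^M|_\Sigma$ to coincide with $\nabla^\Sigma$ on tangent fields, so the Gauss equation reduces to $R^M(X,Y)Z = R^\Sigma(X,Y)Z \in V$ for $X,Y,Z \in V$, and an easy induction on $k$ yields $(\nabla^k R^M)_p(v_1,\dots,v_k,X,Y)Z \in V$ whenever all arguments lie in $V$. For the converse, the plan is to set $\Sigma_0 := \exp_p(V \cap U)$ for a sufficiently small neighborhood $U$ of $0 \in V$ on which $\exp_p$ embeds $V \cap U$, obtaining an analytic $d$-submanifold of $M$ with $T_p\Sigma_0 = V$, and to verify in three steps that $\Sigma_0$ is totally geodesic.

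First I propagate the curvature-invariance along radial geodesics. For each $v \in V$, let $\gamma(t) = \exp_p(tv)$ and let $P_t$ denote parallel transport along $\gamma$. The analytic curve of trilinear maps on $T_pM$ given by $\phi_v(t) := P_t^{-1} \circ R_{\gamma(t)} \circ (P_t \otimes P_t \otimes P_t)$ satisfies $\phi_v^{(k)}(0) = (\nabla_v^k R)_p$, and each of these preserves $V$ by hypothesis; by real-analyticity $\phi_v(t)$ itself preserves $V$ for every $t$ in its convergence domain, i.e., $R_{\gamma(t)}$ preserves $V(t) := P_t(V)$. The same Taylor argument applied to each $(\nabla^j R)$ shows that $V(t)$ inherits the full infinite-jet curvature-invariance hypothesis at $\gamma(t)$. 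Next I identify tangent spaces along $\gamma$ via Jacobi fields: using $d(\exp_p)_{tv}(w) = t^{-1}J_w(t)$ for the Jacobi field $J_w$ on $\gamma$ with $J_w(0) = 0$ and $J_w'(0) = w \in V$, and decomposing $J_w = J_T + J_N$ with respect to the parallel orthogonal decomposition $TM|_\gamma = V(t) \oplus V(t)^\perp$, the first step forces $R(\dot\gamma,J_T)\dot\gamma \in V(t)$; the normal component of the Jacobi equation thus becomes an autonomous linear ODE in $J_N$ with vanishing initial data, giving $J_N \equiv 0$. A dimension count then yields $T_{\gamma(t)}\Sigma_0 = V(t)$.

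Finally, $\Sigma_0$ is totally geodesic. At any $q = \gamma(t_0) \in \Sigma_0$ the subspace $V(t_0) = T_q\Sigma_0$ inherits the full curvature-invariance hypothesis by the preceding discussion, so the first two steps can be repeated at $q$ with $V(t_0)$ in place of $V$ to produce $\tilde\Sigma_q := \exp_q(V(t_0) \cap U_q)$, whose defining property is that every geodesic of $M$ issuing from $q$ in a $T_q\Sigma_0$-direction remains in $\tilde\Sigma_q$. Matching $\tilde\Sigma_q$ and $\Sigma_0$ along the shared radial geodesic $\gamma$ (both contain it with identical tangent spaces $V(t)$ at each point), and using that both arise as images under the exponential map of a common subspace threaded along this geodesic, one concludes $\tilde\Sigma_q \subseteq \Sigma_0$ near $q$, and hence $\mathbb{II}_{\Sigma_0} \equiv 0$. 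The main obstacle is precisely this last local identification: two analytic submanifolds sharing a curve with matching tangent spaces need not coincide in general, so one must carefully leverage the explicit exponential-map parametrisation of both objects and not merely the tangent-space equality. Analyticity of $M$ is essential both here and in the propagation step, since it is what converts the countable jet condition $(\nabla^k R)_p(V) \subseteq V$ into genuine continuous parallelism of the curvature-invariance property along radial geodesics.
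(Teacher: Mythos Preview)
The paper does not prove this theorem; it quotes the result from \cite[Section~10.3.2]{BerndtConsoleOlmos}, so there is no in-paper argument to compare against. Your forward direction and your first two steps of the converse---analytic propagation of the invariance of $V(t)=P_tV$ under every $(\nabla^kR)_{\gamma(t)}$, and the Jacobi-field identification $T_{\gamma(t)}\Sigma_0=V(t)$---are correct and are exactly the backbone of the standard argument.

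The gap you flag in Step~3 is real and is not closed by the hint you give. Two analytic submanifolds that share a curve with identical tangent spaces along it need not coincide (think of the $xy$-plane and the graph $z=y^2$ along the $x$-axis in $\R^3$), and the fact that $\Sigma_0=\exp_p(V)$ and $\tilde\Sigma_q=\exp_q(V(t_0))$ are both exponential images does not help: the centres $p$ and $q$ are different, so there is no a~priori compatibility between $\exp_p$ and $\exp_q$ off the common geodesic. Trying to run the construction again from $p\in\tilde\Sigma_q$ just reproduces $\Sigma_0$ and leads you in a circle.

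The way the references close the argument bypasses $\tilde\Sigma_q$ altogether. From your Step~2 one already has $\mathbb{II}(\dot\gamma,\cdot)=0$ along every radial geodesic (since $T\Sigma_0\rvert_\gamma$ is parallel), and in particular $\mathbb{II}_p=0$. The $R$-invariance of each $T_q\Sigma_0$ kills the normal part of the Codazzi equation, so $\nabla^\perp\mathbb{II}$ is totally symmetric; with $T$ the unit radial field this gives $(\nabla^\perp_T\mathbb{II})(X,Y)=(\nabla^\perp_X\mathbb{II})(T,Y)=-\mathbb{II}(\nabla^\Sigma_XT,Y)$. In a parallel frame along $\gamma$ this is a linear ODE $h'=-Sh$ with $S=\tfrac{1}{r}\operatorname{Id}+(\text{bounded})$ and initial value $h(0)=0$, whose only analytic solution is $h\equiv 0$. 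This is the substance of the argument in Tsukada and in Berndt--Console--Olmos.
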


Motivated by the previous theorem, we introduce the following definition.
A vector subspace $V\subseteq T_{p}M$ is \textit{totally geodesic} if there exists a totally geodesic immersion $f\colon \Sigma\to M$ such that $V\in\tilde{f}(\Sigma)$.

\begin{remark}
	For the sake of convenience, we refer to the totally geodesic submanifold \linebreak$f\colon \Sigma \to M$ as $\Sigma$ during the rest of this paper unless we need to specify the immersion.
	In most cases, the totally geodesic immersions that will appear are inclusions of embedded submanifolds $\Sigma\subseteq M$, so in this context there is no ambiguity in omitting the immersion. 
\end{remark}
\color{black}

For each $X\in T_{p}M$ we can define its corresponding \textit{Jacobi operator} $R_{X}\colon T_{p}M\to T_{p}M$ via the equation $R_{X}Y=R(Y,X)X$.
Moreover, we may also consider the \textit{Cartan operator} $C_{X}\colon T_{p}M\to T_{p}M$ given by $C_{X}Y=\nabla R(X,X,Y,X)$.
Both linear maps are symmetric with respect to the inner product on $T_{p}M$, so we may decompose $T_{p}M$ as the orthogonal direct sum of the eigenspaces of $R_{X}$, as well as the orthogonal direct sum of the eigenspaces of $C_{X}$.

\begin{remark}\label{remark:JacobiInvariance}
	If $V\subseteq T_{p}M$ is a totally geodesic subspace and $X\in T_{p}M$ is any vector, it follows from Theorem~\ref{th:CharacterizationTGSubmanifolds} that $V$ is an invariant subspace for $R_{X}$ and $C_{X}$.
	In particular, if $T_{p}M=(T_{p}M)_{\lambda_{1}}\oplus\dots\oplus (T_{p}M)_{\lambda_{r}}$ is the eigenspace decomposition of $T_{p}M$ with respect to $R_{X}$, we have $V=(V\cap (T_{p}M)_{\lambda_{1}})\oplus \dots \oplus (V\cap (T_{p}M)_{\lambda_{r}})$. 
	The same argument can be applied for the operator $C_{X}$.
\end{remark}	

We also establish the following notation.
Given a smooth curve $\alpha\colon I\to P$, where~$I$ is an interval and $P$ is a Riemannian manifold, the parallel transport map from $\alpha(t_{1})\in P$ to $\alpha(t_{2})\in P$ along $\alpha$ is denoted by $\mathcal{P}_{t_{1},t_{2}}^{\alpha}$.

In this paper we focus on real analytic manifolds which are not necessarily complete (see Section~\ref{sec:tgcones}).
Because the exponential map of a real analytic Riemannian manifold is also real analytic, one sees that for any compatible totally geodesic immersion $f\colon \Sigma\to M$, the domain $\Sigma$ is also a real analytic manifold and $f$ is a real analytic map.

We now provide a criterion that allows us to determine whether a totally geodesic submanifold $\Sigma$ of $M$ is inextendable in terms of the maximal geodesics of $\Sigma$ and $M$.
The key observation that makes use of real analyticity is the following:
if $p\in M$ and $V\subseteq T_{p}M$ is a totally geodesic subspace, then for every geodesic $\gamma\colon I\subseteq \R\to M$ satisfying $\gamma(0)=p$ and $\gamma'(0)\in V$, the parallel translates $\mathcal{P}_{0,t}^{\gamma}V$ are also totally geodesic subspaces of $T_{\gamma(t)}M$ for all $t\in I$ (see for example the proof in~\cite[Corollary~3.7]{Tsukada}).

\begin{proposition}\label{prop:TGInextendableCharacterization}
	Let $M$ be a connected real analytic Riemannian manifold and $f\colon \Sigma \to M$ a compatible totally geodesic immersion.
	Then the following conditions are equivalent:
	\begin{enumerate}[\rm (i)]
		\item\label{propItem:Inextendable} The immersion $f$ is inextendable.
		\item\label{propItem:MaximalToMaximal} For every maximal geodesic $\gamma\colon I\subseteq \R \to \Sigma$, the composition $f\circ \gamma\colon I \to M$ is also a maximal geodesic of $M$.
	\end{enumerate}
\end{proposition}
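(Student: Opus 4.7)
The plan is to prove both implications by contrapositive. The direction \eqref{propItem:MaximalToMaximal}$\Rightarrow$\eqref{propItem:Inextendable} is essentially topological, relying on openness of $j(\Sigma)$ in a strict extension and on uniqueness of geodesics, while the reverse direction requires the parallel-transport observation recalled just before the statement together with Theorem~\ref{th:CharacterizationTGSubmanifolds} in order to build a strict extension of $f$ by analytic continuation.

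For \eqref{propItem:MaximalToMaximal}$\Rightarrow$\eqref{propItem:Inextendable}, suppose $f$ admits a strict extension $f'\colon\Sigma'\to M$ via an injective local isometry $j\colon\Sigma\hookrightarrow\Sigma'$ with $f=f'\circ j$. Since $j$ is a local diffeomorphism between equidimensional manifolds, $j(\Sigma)$ is open and proper in $\Sigma'$; connectedness of $\Sigma'$ then forces the frontier $\partial j(\Sigma)$ to be nonempty. Choose $q\in\partial j(\Sigma)$, a normal neighborhood $U$ of $q$ in $\Sigma'$, a point $p_0\in j(\Sigma)\cap U$, and the unique geodesic $\alpha\colon[0,t_*]\to U$ of $\Sigma'$ joining $p_0$ to $q$. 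Set $t^*=\sup\{t\in[0,t_*]:\alpha([0,t])\subseteq j(\Sigma)\}$; openness of $j(\Sigma)$ and the supremum condition force $\alpha(t^*)\notin j(\Sigma)$. Then $\gamma:=j^{-1}\circ\alpha|_{[0,t^*)}$ is a geodesic of $\Sigma$ that admits no extension at $t^*$, since otherwise continuity and uniqueness of geodesics would give $\alpha(t^*)=j(\gamma(t^*))\in j(\Sigma)$. Completing $\gamma$ to a maximal geodesic of $\Sigma$ (whose right endpoint is necessarily $t^*$), the composition $f\circ\gamma$ agrees with $f'\circ\alpha|_{[0,t^*)}$ on this portion and is strictly extended in $M$ by $f'\circ\alpha|_{[0,t_*]}$, contradicting \eqref{propItem:MaximalToMaximal}.

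For \eqref{propItem:Inextendable}$\Rightarrow$\eqref{propItem:MaximalToMaximal}, suppose a maximal geodesic $\gamma\colon(a,b)\to\Sigma$ is such that $f\circ\gamma$ extends as a geodesic $\eta\colon(a,c)\to M$ with $c>b$. Fix $t_0\in(a,b)$ and $t_1\in(b,c)$, both close to $b$, and set $V_t=\mathcal{P}_{t_0,t}^{\eta}\bigl(f_{*\gamma(t_0)}T_{\gamma(t_0)}\Sigma\bigr)\subseteq T_{\eta(t)}M$ for $t\in(a,c)$. Since $V_{t_0}$ is totally geodesic and $\eta'(t_0)\in V_{t_0}$, the parallel-transport observation yields that each $V_t$ is totally geodesic. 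By Theorem~\ref{th:CharacterizationTGSubmanifolds}, there is a local totally geodesic submanifold $\Sigma_1$ of $M$ through $\eta(t_1)$ with $T_{\eta(t_1)}\Sigma_1=V_{t_1}$, and for $t_1$ close enough to $b$ the segment $\eta|_{[t_0,t_1]}$ lies in $\Sigma_1$ with tangent spaces exactly $\{V_s\}_{s\in[t_0,t_1]}$. At $\eta(t_0)=f(\gamma(t_0))$, the local image $f(\Sigma)$ and $\Sigma_1$ are therefore both totally geodesic submanifolds of $M$ sharing tangent space $V_{t_0}$; by uniqueness of such submanifolds in the analytic setting, their germs at $\eta(t_0)$ coincide. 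Using this identification as gluing datum, I form an analytic manifold $\Sigma'$ by attaching a small neighborhood of $\eta(t_1)$ in $\Sigma_1$ to $\Sigma$ along a tubular chart around $\gamma((t_0-\varepsilon,b))$, together with an open embedding $j\colon\Sigma\hookrightarrow\Sigma'$ and a compatible totally geodesic immersion $f'\colon\Sigma'\to M$ satisfying $f=f'\circ j$. The extension is strict, because a preimage in $j(\Sigma)$ of the new point over $\eta(t_1)$ would amount to extending $\gamma$ to some parameter larger than $b$, contradicting the maximality of $\gamma$.

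The main technical obstacle is making the gluing in the previous paragraph rigorous. One must verify that $\Sigma'$ is Hausdorff and analytic, that $f'$ is actually a \emph{compatible} totally geodesic immersion despite possible self-intersections of $f$ in $M$ (tracked via the tangent subspaces $V_s$), and that the identification of germs along the tube around $\eta|_{[t_0,t_1]}$ is globally consistent even if $\gamma$ meets itself inside $\Sigma$. The critical ingredient for all this is the uniqueness of a totally geodesic submanifold through a given point with a given tangent space in the real analytic setting, which propagates along $\eta$ by analytic continuation.
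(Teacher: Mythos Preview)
Your overall strategy matches the paper's: both implications are proved by contrapositive, the direction \eqref{propItem:MaximalToMaximal}$\Rightarrow$\eqref{propItem:Inextendable} uses a boundary-point argument in a proper extension, and the direction \eqref{propItem:Inextendable}$\Rightarrow$\eqref{propItem:MaximalToMaximal} parallel-transports the tangent space along the extended geodesic and glues on a local totally geodesic piece near the endpoint. Your proof of \eqref{propItem:MaximalToMaximal}$\Rightarrow$\eqref{propItem:Inextendable} is essentially the paper's.

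For \eqref{propItem:Inextendable}$\Rightarrow$\eqref{propItem:MaximalToMaximal}, however, there is a genuine gap in the strictness claim. You assert that a preimage in $j(\Sigma)$ of the new point over $\eta(t_1)$ ``would amount to extending $\gamma$ to some parameter larger than $b$''. This is precisely the nontrivial step, and it is where the \emph{compatibility} hypothesis on $f$ is used. If there were some $y\in\Sigma$ with $\tilde f(y)=(\eta(t_1),V_{t_1})$, one must run a geodesic $\alpha$ in $\Sigma$ backwards from $y$ with initial velocity $(f_{*y})^{-1}(-\eta'(t_1))$, compute that $\tilde f(\alpha(s))=\tilde f(\gamma(t_1-s))$ for small $s$, and then invoke the injectivity of $\tilde f$ to conclude $\alpha(s)=\gamma(t_1-s)$; continuity of $\alpha$ at $s=0$ then contradicts the nonextendability of $\gamma$ at $b$. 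Without this argument, nothing rules out that $f$ already hits $(\eta(t_1),V_{t_1})$ from a completely different branch of $\Sigma$, in which case your glued object $\Sigma'$ is not a strict extension. The paper carries this out explicitly before attempting any gluing.

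As for the gluing itself, the paper avoids the Hausdorff/compatibility verification you flag as a ``technical obstacle'' by invoking \cite[Lemma~10.3.1]{BerndtConsoleOlmos}: once one knows $\tilde f(\Sigma)\cap\tilde\iota(S)\neq\varnothing$ for the local piece $S$ at the endpoint, and that $(\eta(t_1),V_{t_1})\notin\tilde f(\Sigma)$, that lemma directly produces a compatible totally geodesic immersion strictly extending $f$. You should either cite that lemma or reproduce its content; the informal ``attach a tubular chart'' description does not by itself guarantee that the result is a manifold with a compatible immersion.
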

\begin{proof}
	Firstly, suppose that~(\ref{propItem:Inextendable}) holds.
	If~(\ref{propItem:MaximalToMaximal}) is not satisfied, then there exists a geodesic $\gamma\colon [0,1)\to\Sigma$ with $\gamma(0)=x$ that is not extendable to the right, while the composition $f\circ\gamma$ admits an extension $\sigma\colon[0,1]\to M$.
	We consider $q=\sigma(1)$.
	The subspace $W=\mathcal{P}_{0,1}^{\sigma}V\subseteq T_{q}M$, where $V=\tilde{f}(x)$, is therefore totally geodesic.
	
	We show that $W\notin \tilde{f}(\Sigma)$.
	Indeed, suppose that for a certain $y\in \Sigma$ we have $W=\tilde{f}(y)$.
	Since $-\sigma'(1)=-\mathcal{P}_{0,1}^{\sigma}\sigma'(0)\in W=f_{*y}(T_{y}\Sigma)$, there exists a geodesic $\alpha\colon [0,\delta)\to \Sigma$ satisfying $\alpha(0)=y$ and $\alpha'(0)=-(f_{*y})^{-1}\sigma'(1)$.
	Suppose without loss of generality that $\delta < 1$.
	The composition $f\circ \alpha$ satisfies $f(\alpha(0))=q$ and $(f\circ \alpha)'(0)=-\sigma'(1)$, so $f(\alpha(t))=\sigma(1-t)$ for all $t \in [0,\delta)$.
	Furthermore, we have
	\[
	\begin{aligned}
		\tilde{f}(\alpha(t))={}&f_{*\alpha(t)}(T_{\alpha(t)}\Sigma)=f_{*\alpha(t)}(\mathcal{P}_{0,t}^{\alpha}\Sigma)=\mathcal{P}_{0,t}^{f\circ \alpha} f_{*y}(T_{y}\Sigma)=\mathcal{P}_{1,1-t}^{\sigma}W=\mathcal{P}_{0,1-t}^{\sigma}V \\
		={}&\mathcal{P}_{0,1-t}^{f\circ \gamma}V=\mathcal{P}_{0,1-t}^{f\circ\gamma}f_{*x}(T_{x}\Sigma)=f_{*\gamma(1-t)}(T_{\gamma(1-t)}\Sigma)=\tilde{f}(\gamma(1-t)),
	\end{aligned}
	\]
	so using that $\tilde{f}$ is injective we see that $\alpha(t)=\gamma(1-t)$ for all $t\in [0,\delta)$.
	Because $\alpha$ is continuous at $0$, we obtain that the limit $\lim_{t\to 1^{-}}\gamma(t)$ exists and coincides with $y$, but this contradicts the fact that $\gamma$ is not extendable to the right.
	We deduce that $W$ is not in the image of $\tilde{f}$.
	
	We now consider an $\varepsilon > 0$ sufficiently small so that $\exp_{q}\colon B_{T_{q}M}(0,\varepsilon)\to M$ is a diffeomorphism onto its image and $S=\exp_{q}(B_{T_{q}M}(0,\varepsilon)\cap W)$ is a totally geodesic submanifold of $M$.
	As $-\sigma'(1)\in W$, there exists a $\delta > 0$ such that $\sigma(t)\in S$ for all $t\in (1-\delta,1]$.
	In particular, we have for all $t\in (1-\delta,1)$ that
	\[
	\begin{aligned}
		\tilde{f}(\gamma(t))={}&f_{*\gamma(t)}(T_{\gamma(t)}\Sigma)=\mathcal{P}_{0,t}^{f\circ\gamma}V=\mathcal{P}_{0,t}^{\sigma}\mathcal{P}_{1,0}^{\sigma}V=\mathcal{P}_{1,t}^{\sigma}V=\tilde{i}(\sigma(t)),
	\end{aligned}
	\]
	where $i\colon S\hookrightarrow M$ is the inclusion map.
	This means that $\tilde{f}(\Sigma)\cap\tilde{i}(S)\neq \varnothing$, so~\cite[Lemma~10.3.1]{BerndtConsoleOlmos} allows us to construct a compatible totally geodesic immersion that strictly extends $f$ and $i$, as $W=T_q S$ is not contained in $\widetilde{f}(\Sigma)$, contradicting the fact that $f$ is inextendable.
	
	Conversely, suppose that $f$ satisfies~(\ref{propItem:MaximalToMaximal}) and let $g\colon E\to M$ be an extension of $f$.
	By definition, we can find an injective local isometry $\phi\colon \Sigma\to E$ satisfying $f=g\circ \phi$.
	Replacing $\Sigma$ by $\phi(\Sigma)$, we may suppose directly that $\Sigma\subseteq E$ is an open subset and $f=g\vert_{\Sigma}$.
	If we show that $\Sigma$ is also closed, then we may conclude that $\Sigma=E$ and $f=g$.
	
	If $\Sigma$ is not closed in $E$, then we can find a geodesic $\gamma\colon [0,1]\to E$ such that $\gamma(t)\in\Sigma$ for all $t\in [0,1)$ and $\gamma(1)\in E\setminus \Sigma$.
	Write $x=\gamma(0)\in \Sigma$.
	The composition $f\circ \gamma\colon [0,1]\to M$ is also a geodesic with $f(\gamma(0))=f(x)$ and $(f\circ \gamma)'(0)\in \tilde{f}(x)$, so by~(\ref{propItem:MaximalToMaximal}) we may find a geodesic $\beta\colon [0,1]\to \Sigma$ satisfying $\beta(0)=x$ and $f(\beta(t))=f(\gamma(t))$ for all $t\in [0,1]$.
	By uniqueness of $E$-geodesics, we have $\beta=\gamma$, so $\gamma(1)=\beta(1)\in \Sigma$, which gives a contradiction.
	We conclude that $\Sigma=E$ and $f=g$, and because the choice of $g$ is arbitrary we obtain that $\Sigma$ does not admit a proper extension, thus showing~(\ref{propItem:Inextendable}). \qedhere
\end{proof}
Thus, as a consequence of the Hopf--Rinow theorem and Proposition~\ref{prop:TGInextendableCharacterization}, one obtains the following corollary which generalizes a result by Hermann~\cite{HermannCompleteTotallyGeodesic} to the case of non-complete ambient manifolds.

\begin{corollary}\label{cor:CompleteTGSubmanifolds}
	Let $M$ be a connected real analytic Riemannian manifold, $p\in M$ and $V$ a totally geodesic subspace of $T_{p}M$.
	If $f\colon \Sigma\to M$ is the inextendable compatible totally geodesic immersion associated with the subspace $V$, then $\Sigma$ is complete if and only if the exponential map $\exp_{p}$ is defined on all $V$.
\end{corollary}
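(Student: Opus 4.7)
The plan is to combine Proposition~\ref{prop:TGInextendableCharacterization} with the Hopf--Rinow theorem. Since $f$ is the inextendable compatible totally geodesic immersion associated with $V$, there exists a base point $q\in\Sigma$ with $f(q)=p$ and $\tilde{f}(q)=V$, and the differential $f_{*q}\colon T_{q}\Sigma\to V$ is a linear isometry. By Hopf--Rinow applied to the connected Riemannian manifold $\Sigma$, completeness of $\Sigma$ is equivalent to the exponential map of $\Sigma$ being defined on all of $T_{q}\Sigma$. Since $f_{*q}$ is a bijection onto $V$, the task reduces to transferring geodesic existence at $q$ in $\Sigma$ to geodesic existence at $p$ in $M$ with initial velocity in $V$, and vice versa.

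For the forward implication, I would assume $\Sigma$ complete, pick $v\in V$, and set $w=(f_{*q})^{-1}(v)\in T_{q}\Sigma$. The geodesic $\gamma\colon \R\to \Sigma$ with $\gamma(0)=q$ and $\gamma'(0)=w$ is defined on all of $\R$ by completeness, so $f\circ \gamma\colon \R\to M$ is a geodesic of $M$ with $(f\circ\gamma)(0)=p$ and $(f\circ\gamma)'(0)=v$. Therefore $\exp_{p}(tv)=f(\gamma(t))$ is defined for every $t\in\R$, and in particular $\exp_{p}$ is defined on all of $V$.

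For the reverse implication, suppose $\exp_{p}$ is defined on all of $V$. Let $\gamma\colon I\to \Sigma$ be any maximal geodesic with $\gamma(0)=q$, and write $v=f_{*q}\gamma'(0)\in V$. By Proposition~\ref{prop:TGInextendableCharacterization}, the composition $f\circ \gamma\colon I\to M$ is a maximal geodesic of $M$. On the other hand, the hypothesis provides the geodesic $\sigma\colon\R\to M$, $\sigma(t)=\exp_{p}(tv)$, which shares the same initial position and velocity as $f\circ\gamma$. By uniqueness of solutions to the geodesic equation, $\sigma$ extends $f\circ\gamma$, so maximality of $f\circ\gamma$ forces $I=\R$. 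Hence every geodesic of $\Sigma$ issuing from $q$ is defined on all of $\R$, and Hopf--Rinow yields that $\Sigma$ is complete.

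I do not expect any substantial obstacle: the analytic content has already been carried by Proposition~\ref{prop:TGInextendableCharacterization}, which is where real analyticity of $M$ is essential (through the uniqueness and parallel transport of totally geodesic subspaces). The corollary itself is a clean translation between the geodesics of $\Sigma$ and their $f$-images, mediated by the isometry $f_{*q}$ and Hopf--Rinow.
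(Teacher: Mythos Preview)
Your proof is correct and follows exactly the approach indicated in the paper, which simply states that the corollary is a consequence of the Hopf--Rinow theorem and Proposition~\ref{prop:TGInextendableCharacterization}. You have spelled out the details that the paper leaves implicit, and there is nothing to add.
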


\subsection{Maximal totally geodesic submanifolds in analytic Riemannian manifolds}
\label{subsec:maxtganalytic}

We now concern ourselves with defining a notion of maximality for totally geodesic submanifolds.
Indeed, if $M$ is a real analytic Riemannian manifold and $\Sigma_{1}$, $\Sigma_{2}\subseteq M$ are two inextendable and embedded totally geodesic submanifolds, one can wonder if $\Sigma_{1}\subseteq \Sigma_{2}$.
In this case, it is easy to see that $\Sigma_{1}\subseteq \Sigma_{2}$ if and only if there exists a point $p\in \Sigma_{1}\cap\Sigma_{2}$ such that $T_{p}\Sigma_{1}\subseteq T_{p}\Sigma_{2}$.
Therefore, the study of inclusions between embedded totally geodesic submanifolds of $M$ containing the point $p$ is equivalent to that of inclusions between totally geodesic subspaces of $T_{p}M$.
For general totally geodesic immersions, the situation is more involved, and one needs to introduce the following ``pullback-type" construction to make sense of the inclusion relationship.

\begin{proposition}\label{prop:TGInclusions}
	Let $M$ be a connected real analytic Riemannian manifold, $p\in M$ and $V_{1}$, $V_{2}\subseteq T_{p}M$ two totally geodesic subspaces.
	For each $i\in \{1,2\}$, consider the inextendable compatible totally geodesic immersion $f_{i}\colon \Sigma_{i}\to M$ satisfying $V_{i}\in \tilde{f}_{i}(\Sigma_{i})$ and let $x_{i}\in\Sigma_{i}$ be the unique point such that $\tilde{f}(x_{i})=V_{i}$.
	Then the following assertions are equivalent:
	\begin{enumerate}[\rm (i)]
		\item\label{propItem:SubspaceInclusion} $V_{1}\subseteq V_{2}$.
		\item\label{propItem:ImmersionInclusion} There exists a connected Riemannian manifold $E$, a surjective local isometry $\pi\colon E\to \Sigma_{1}$, an inextendable compatible totally geodesic immersion $h\colon E\to \Sigma_{2}$ and a point $z\in E$ such that $f_{1}\circ \pi=f_{2}\circ h$, $\pi(z)=x_{1}$ and $h(z)=x_{2}$.
		In other words, the following diagram commutes:
		\[
			\begin{tikzcd}
				E \arrow[r, "h"] \arrow[d, "\pi"] & \Sigma_{2} \arrow[d, "f_{2}"] \\
				\Sigma_{1} \arrow[r, "f_{1}"]     & M                            
			\end{tikzcd}
		\]
	\end{enumerate}
	
	Furthermore, if $\Sigma_{2}\subseteq M$ is injectively immersed and $f_{2}=\iota \colon \Sigma_{2}\hookrightarrow M$, one can take $E=\Sigma_{1}$, $\pi=\operatorname{Id}_{\Sigma_{1}}$, $h\colon \Sigma_{1}\to\Sigma_{2}$ given by $h(x)=f_{1}(x)$ and $z=x_{1}$, so $V_{1}\subseteq V_{2}$ if and only $f_{1}(\Sigma_{1})\subseteq \Sigma_{2}$.
\end{proposition}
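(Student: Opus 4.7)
The implication (\ref{propItem:ImmersionInclusion}) $\Rightarrow$ (\ref{propItem:SubspaceInclusion}) will be immediate: differentiating $f_{1}\circ\pi = f_{2}\circ h$ at $z$ and using that $\pi$ is a surjective local isometry gives $V_{1}=(f_{1})_{*x_{1}}(\pi_{*z}(T_{z}E))=(f_{2})_{*x_{2}}(h_{*z}(T_{z}E))\subseteq V_{2}$. For the harder direction (\ref{propItem:SubspaceInclusion}) $\Rightarrow$ (\ref{propItem:ImmersionInclusion}), my strategy is to pull $V_{1}$ back to a totally geodesic subspace of $\Sigma_{2}$ and produce $E$ as the inextendable totally geodesic submanifold of $\Sigma_{2}$ associated with it. Set $W_{1} := (f_{2})_{*x_{2}}^{-1}(V_{1})\subseteq T_{x_{2}}\Sigma_{2}$. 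Because $\Sigma_{2}$ is totally geodesic in $M$, the second fundamental form of $f_{2}$ vanishes, so for every $k\ge 0$ the operator $(\nabla^{k}R^{\Sigma_{2}})_{x_{2}}$ is the restriction via $(f_{2})_{*x_{2}}$ of $(\nabla^{k}R^{M})_{p}$ to $T_{x_{2}}\Sigma_{2}$; combined with Theorem~\ref{th:CharacterizationTGSubmanifolds} applied to $V_{1}\subseteq T_{p}M$ this forces $W_{1}$ to be totally geodesic in $\Sigma_{2}$. Let $h\colon E\to\Sigma_{2}$ be the inextendable compatible totally geodesic immersion with $\tilde{h}(z)=W_{1}$ and $h(z)=x_{2}$ supplied by Theorem~\ref{th:CharacterizationTGSubmanifolds}. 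The composition $g:=f_{2}\circ h\colon E\to M$ is then a totally geodesic immersion with $g_{*z}(T_{z}E)=V_{1}$; moreover, applying Proposition~\ref{prop:TGInextendableCharacterization} first to $h$ inside $\Sigma_{2}$ and then to $f_{2}$ inside $M$, every maximal $E$-geodesic is mapped by $g$ to a maximal $M$-geodesic.

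It remains to produce a surjective local isometry $\pi\colon E\to\Sigma_{1}$ satisfying $f_{1}\circ\pi=g$ and $\pi(z)=x_{1}$. Near $z$ I would define
\[
\pi := \exp^{\Sigma_{1}}_{x_{1}}\circ (f_{1})_{*x_{1}}^{-1}\circ g_{*z}\circ (\exp^{E}_{z})^{-1},
\]
which makes sense because $g_{*z}(T_{z}E)=V_{1}=(f_{1})_{*x_{1}}(T_{x_{1}}\Sigma_{1})$; the totally geodesic character of $f_{1}$ and $g$ then ensures that $f_{1}\circ\pi=g$ locally and that $\pi_{*z}$ is a linear isometry. To extend $\pi$ globally, for any maximal $E$-geodesic $\alpha\colon I\to E$ with $\alpha(0)=z$, I would let $\beta\colon J\to\Sigma_{1}$ be the maximal $\Sigma_{1}$-geodesic with $\beta(0)=x_{1}$ and $\beta'(0)=(f_{1})_{*x_{1}}^{-1}(g_{*z}(\alpha'(0)))$. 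Both $g\circ\alpha$ and $f_{1}\circ\beta$ are maximal $M$-geodesics with the same initial data (by the maximal-to-maximal property above for $g$ and by Proposition~\ref{prop:TGInextendableCharacterization} for $f_{1}$), hence $I=J$ and $f_{1}\circ\beta=g\circ\alpha$, which forces $\pi(\alpha(t)):=\beta(t)$. Iterating this construction along broken geodesic paths in $E$ and invoking the compatibility of $f_{1}$ (which ensures that two lifts agreeing in image and tangent space at a common point must coincide there) should produce a globally well-defined local isometry $\pi$; surjectivity then follows by joining an arbitrary $y\in\Sigma_{1}$ to $x_{1}$ by a piecewise geodesic path and lifting it segment by segment. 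For the ``furthermore'' claim, when $\Sigma_{2}\subseteq M$ is embedded the same geodesic-lifting argument applied directly to $\Sigma_{1}$-geodesics yields $f_{1}(\Sigma_{1})\subseteq\Sigma_{2}$, so one may take $E=\Sigma_{1}$, $\pi=\operatorname{Id}_{\Sigma_{1}}$, and $h=f_{1}$ viewed as a map into $\Sigma_{2}$.

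The main technical obstacle will be making the global extension of $\pi$ fully rigorous: one has to verify that distinct broken-geodesic lifts in $E$ produce the same endpoint in $\Sigma_{1}$. This path-independence should rest on combining the compatibility hypothesis on $f_{1}$ with the maximal-to-maximal characterization of inextendability (Proposition~\ref{prop:TGInextendableCharacterization}) applied to each of the three immersions $f_{1}$, $f_{2}$ and $h$.
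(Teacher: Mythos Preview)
Your proof of (\ref{propItem:ImmersionInclusion}) $\Rightarrow$ (\ref{propItem:SubspaceInclusion}) and the first half of (\ref{propItem:SubspaceInclusion}) $\Rightarrow$ (\ref{propItem:ImmersionInclusion})---defining $W_{1}=(f_{2})_{*x_{2}}^{-1}(V_{1})$, taking the inextendable compatible $h\colon E\to\Sigma_{2}$ through $W_{1}$, and checking via Proposition~\ref{prop:TGInextendableCharacterization} that $g=f_{2}\circ h$ sends maximal geodesics to maximal geodesics---matches the paper exactly.

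The divergence is in how you manufacture $\pi$. You attempt to build it by hand via lifting along broken geodesics, and you correctly identify path-independence as the crux. This is where the proposal has a genuine gap: compatibility of $f_{1}$ only tells you that a point of $\Sigma_{1}$ is determined by the pair (image, tangent space), but the path-lifting you describe produces a \emph{candidate} point in $\Sigma_{1}$ at each stage, and you have not shown that two broken-geodesic paths in $E$ with the same endpoints yield the same candidate. Carrying this out rigorously essentially amounts to reproving, in an ad hoc way, the uniqueness of inextendable compatible totally geodesic immersions with a prescribed tangent element.

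The paper avoids this entirely. Since $g=f_{2}\circ h$ need not be compatible, one passes to the quotient $E/\mathcal{R}$ by the relation $x\,\mathcal{R}\,y \Leftrightarrow \tilde{g}(x)=\tilde{g}(y)$; the general machinery (\cite[\S10.3.1]{BerndtConsoleOlmos}) gives $E/\mathcal{R}$ a Riemannian structure so that the projection $\rho\colon E\to E/\mathcal{R}$ is a surjective local isometry and the induced map $\bar{g}\colon E/\mathcal{R}\to M$ is a \emph{compatible} totally geodesic immersion. Because $g$ (hence $\bar{g}$) sends maximal geodesics to maximal geodesics, $\bar{g}$ is inextendable by Proposition~\ref{prop:TGInextendableCharacterization}, and since $\tilde{\bar{g}}([z])=V_{1}=\tilde{f}_{1}(x_{1})$, uniqueness of the inextendable compatible immersion through $V_{1}$ yields a global isometry $\phi\colon E/\mathcal{R}\to\Sigma_{1}$ with $f_{1}\circ\phi=\bar{g}$. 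Then $\pi:=\phi\circ\rho$ does everything you want, with no path-lifting argument needed. The ``furthermore'' case is also handled this way: when $f_{2}$ is injective the composition $f_{2}\circ h$ is already compatible, so $E/\mathcal{R}=E$ and $\phi$ is a global isometry $E\to\Sigma_{1}$; replacing $E$ by $\Sigma_{1}$ via $\phi$ gives $h\colon\Sigma_{1}\to\Sigma_{2}$ with $\iota\circ h=f_{1}$.
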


\begin{proof}
	Start by assuming~(\ref{propItem:ImmersionInclusion}).
	Then we have
	\[
		\begin{aligned}
			V_{1}={}&\tilde{f_{1}}(x_{1})=\tilde{f_{1}}(\pi(z))=(f_{1})_{*\pi(z)}(T_{\pi(z)}\Sigma_{1})=(f_{1}\circ\pi)_{*z}(T_{z}E)=(f_{2}\circ h)_{*z}(T_{z}E) \\
			={}&(f_{2})_{*x_{2}}(h_{*z}(T_{z}E))\subseteq (f_{2})_{*x_{2}}(T_{x_{2}}\Sigma_{2})=\tilde{f_{2}}(x_{2})=V_{2},
		\end{aligned}
	\]
	which proves~(\ref{propItem:SubspaceInclusion}).
	
	Now, suppose~(\ref{propItem:SubspaceInclusion}) is true, and let $W=(f_{2})_{*x_{2}}^{-1}(V_{1})$, which is a totally geodesic subspace of $T_{x_{2}}\Sigma_{2}$.
	We can construct an inextendable compatible totally geodesic immersion $h\colon E \to \Sigma_{2}$ such that $W\in\tilde{h}(E)$, and there exists a unique $z\in E$ for which $h(z)=x_{2}$ and $\tilde{h}(z)=W$.
	As $M$ and $\Sigma_{2}$ are real analytic, we may apply Proposition~\ref{prop:TGInextendableCharacterization} twice to see that the composition $f_{2}\circ h\colon E\to M$ sends maximal geodesics of $E$ to maximal geodesics of $M$. 
	However, $f_{2}\circ h$ need not be compatible.
	Let $\mathcal{R}$ be the equivalence relation on $E$ defined by \[x\mathcal{R}y:\Leftrightarrow\tilde{f_{2}\circ h}(x)=\tilde{f_{2}\circ h}(y).\]
	The quotient space $E/\mathcal{R}$ admits a unique smooth structure and Riemannian metric such that the natural projection $\rho\colon E\to E/\mathcal{R}$ is a surjective local isometry and the map $g\colon E/\mathcal{R}\to M$ given by $g([x])=f_{2}(h(x))$ is a compatible totally geodesic immersion~\cite[Section~10.3.1]{BerndtConsoleOlmos}.
	Because $f_{2}\circ h$ sends maximal geodesics to maximal geodesics and $\rho$ is a surjective local isometry, the immersion $g$ sends the maximal geodesics of $E/\mathcal{R}$ to maximal geodesics of $M$, so $g$ is inextendable by Proposition~\ref{prop:TGInextendableCharacterization}.
	Observe that
	\[
		\tilde{g}([z])=g_{*[z]}(T_{[z]}E/\mathcal{R})=(g\circ \rho)_{*z}(T_{z}E)=(f_{2}\circ h)_{*z}(T_{z}E)=(f_{2})_{*x_{2}}(W)=V_{1},
	\]
	so by uniqueness of $f_{1}$ there exists a global isometry $\phi\colon E/\mathcal{R}\to \Sigma_{1}$ such that $g=f_{1}\circ\phi$.
	By considering $\pi=\phi\circ \rho\colon E\to \Sigma_{1}$, we obtain the equalities $f_{1}\circ \pi=g\circ\rho=f_{2}\circ h$ and $\pi(z)=x_{1}$ because $\tilde{f_{1}}$ is injective and
	\[
		\tilde{f_{1}}(\pi(z))=(f_{1})_{*\pi(z)}(T_{\pi(z)}\Sigma_{1})=(f_{1}\circ\pi)_{*z}(T_{z}E)=(g\circ\rho)_{*z}(T_{z}E)=V_{1}=\tilde{f_{1}}(x_{1}).
	\]
	Therefore,~(\ref{propItem:ImmersionInclusion}) holds.
	
	Finally, note that if $\Sigma_{2}\subseteq M$ and $f_{2}=\iota$ is the inclusion map, one sees easily that the composition $\iota \circ h$ in the previous paragraph is also a compatible totally geodesic immersion, so $E/\mathcal{R}=E$ and we obtain in this case a global isometry $\phi\colon E\to \Sigma_{1}$ satisfying $\iota\circ h=f_{1}\circ \phi$.
	By replacing $E$ with $\Sigma_{1}$ and $h$ with $h\circ\phi^{-1}$, we obtain $\iota\circ h=f_{1}$, so $h\colon \Sigma_{1}\to\Sigma_{2}$ is simply the restriction in codomain of $f_{1}$, and $f_{1}(\Sigma_{1})\subseteq \Sigma_{2}$. \qedhere
\end{proof}

Motivated by the previous proposition, we say that an inextendable compatible totally geodesic immersion $f\colon \Sigma\to M$ (or simply, $\Sigma$) is \emph{maximal} if it is not a global isometry and whenever we have another inextendable compatible totally geodesic immersion $f'\colon \Sigma'\to M$, a Riemannian manifold $E$, a surjective local isometry $\pi\colon E\to \Sigma$ and a compatible totally geodesic immersion $h\colon E\to \Sigma'$ satisfying $f'\circ h = f \circ \pi$, we have that $f'$ is either a global isometry or equivalent to $f$.
From Proposition~\ref{prop:TGInclusions}, the following conditions are equivalent:
\begin{itemize}
	\item $f\colon\Sigma \to M$ is maximal.
	\item For all $x\in\Sigma$, $\tilde{f}(x)=f_{*x}(T_{x}\Sigma)$ is a maximal totally geodesic subspace of $T_{f(x)}M$.
	\item There exists an $x\in\Sigma$ such that $\tilde{f}(x)$ is a maximal totally geodesic subspace of $T_{f(x)}M$.
\end{itemize}

\section{Totally geodesic submanifolds in naturally reductive homogeneous spaces}
\label{sec:tgnatred}
In this section, we introduce new techniques for studying totally geodesic submanifolds in naturally reductive homogeneous spaces.
Because homogeneous spaces are complete and real analytic, by Corollary~\ref{cor:CompleteTGSubmanifolds} their inextendable totally geodesic submanifolds are complete.
These submanifolds are also homogeneous as Riemannian manifolds as a consequence of~\cite[Theorem~8.9]{KobayashiNomizu2}, but they need not be extrinsically homogeneous.

We denote by $M=\mathsf{G}/\mathsf{K}$ a naturally reductive homogeneous space endowed with a reductive decomposition $\g{g}=\g{k}\oplus \g{p}$.
Since $M$ is homogeneous, we may only consider totally geodesic submanifolds passing through $o=e\s{K}$, which is equivalent to studying totally geodesic subspaces of $T_{o}M\equiv \g{p}$.
In this setting, we have the following characterization of these subspaces due to Tojo:

\begin{theorem}[Tojo's criterion \cite{TojoTotallyGeodesic}]\label{thm:TojoCriterion}
	Let $M=\mathsf{G}/\mathsf{K}$ be a naturally reductive homogeneous space with reductive decomposition $\g{g}=\g{k}\oplus\g{p}$.
	Assume $\g{v}\subseteq \g{p}$ is a vector subspace and consider for each $X\in \g{v}$ the operator $D_{X}\colon \g{p}\to \g{p}$.
	Then, the following conditions are equivalent:
	\begin{enumerate}[\rm (i)]
		\item\label{thItem:TojoExistence} There exists a totally geodesic submanifold $\Sigma$ of $M$ passing through $o$ with tangent space~$\g{v}$.
		\item\label{thItem:TojoHalfInvariance} For each $X\in \g{v}$, we have $R(X,e^{-D_{X}}\g{v})e^{-D_{X}}\g{v}\subseteq e^{-D_{X}}\g{v}$.
		\item\label{thItem:TojoFullInvariance} For each $X\in \g{v}$, the subspace $e^{-D_{X}}\g{v}$ is $R$-invariant.
	\end{enumerate}
\end{theorem}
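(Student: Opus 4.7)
The plan is to prove the cycle of implications $(\text{i})\Rightarrow(\text{iii})\Rightarrow(\text{ii})\Rightarrow(\text{i})$. The unifying technical tool is a formula for the Levi-Civita parallel transport along the homogeneous geodesic $\gamma_X(t)=\Exp(tX)\cdot o$ with $X\in\g{p}$. Using the decomposition $\nabla=\nabla^c+D$ together with the fact that, on a reductive homogeneous space, $\nabla^c$-parallel transport along $\gamma_X$ coincides with the differential of $\Exp(tX)$, I would integrate the transport equation to obtain
\[
    \mathcal{P}^{\gamma_X}_{0,t} = (\Exp(tX))_{*o}\circ e^{-tD_X}\colon \g{p}\longrightarrow T_{\gamma_X(t)}M
\]
(after identifying $T_oM\equiv\g{p}$). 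I would record this identity as a preliminary lemma before attacking the theorem.

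For $(\text{i})\Rightarrow(\text{iii})$, take the totally geodesic submanifold $\Sigma$ with $T_o\Sigma=\g{v}$ and $X\in\g{v}$. Then $\gamma_X$ lies in $\Sigma$, so the transport formula yields $T_{\gamma_X(1)}\Sigma=(\Exp X)_{*o}(e^{-D_X}\g{v})$. Acting by the isometry $\Exp(-X)\in\s{G}$ moves $\Sigma$ to another totally geodesic submanifold through $o$ with tangent space $e^{-D_X}\g{v}$, so by the $k=0$ case of Theorem~\ref{th:CharacterizationTGSubmanifolds} this subspace is $R_o$-invariant, proving~(\text{iii}). For $(\text{iii})\Rightarrow(\text{ii})$, natural reductivity gives $D_XY=\tfrac12[X,Y]_{\g{p}}$, hence $D_XX=0$ and $e^{-D_X}X=X$; thus $X\in e^{-D_X}\g{v}$ whenever $X\in\g{v}$, and~(\text{ii}) is an immediate specialization of~(\text{iii}).

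The delicate implication is $(\text{ii})\Rightarrow(\text{i})$. My plan is to verify the analytic invariance criterion of Theorem~\ref{th:CharacterizationTGSubmanifolds}, namely $(\nabla^k R)_o(\g{v},\dots,\g{v})\subseteq\g{v}$ for every $k\geq 0$. Since $tX\in\g{v}$ and $D_{tX}=tD_X$, applying~(\text{ii}) to $tX$ gives
\[
    e^{tD_X}\,R\bigl(tX,\,e^{-tD_X}u\bigr)\,e^{-tD_X}v\in\g{v}\qquad\text{for all } u,v\in\g{v} \text{ and } t\in\R.
\]
Expanding in $t$, every Taylor coefficient of the left-hand side must lie in $\g{v}$. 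Using the identities of Subsection~\ref{subsec:homspaces} relating $\nabla$, $\nabla^c$ and $D$, and iterating the Leibniz rule, the coefficient of $t^{k+1}$ should be identifiable, up to a combinatorial factor, with $(\nabla^k R)_o(X,\dots,X;u,v,\,\cdot\,)$. A polarization in the $X$-slots then delivers the full $(\nabla^k R)_o$-invariance of $\g{v}$, and Theorem~\ref{th:CharacterizationTGSubmanifolds} produces the desired totally geodesic submanifold.

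The main obstacle I anticipate is the combinatorial bookkeeping in this last step: one needs a clean closed form for the Taylor coefficients of the gauge-transformed curvature $e^{tD_X}R(tX,e^{-tD_X}\,\cdot\,)e^{-tD_X}\,\cdot\,$ and to match them term by term with the iterated Levi-Civita covariant derivatives $(\nabla^k R)_o$ expressed in terms of $D$ and $R^c$. By contrast, the other two implications are essentially formal consequences of the parallel transport formula and of the identity $D_XX=0$ valid on naturally reductive spaces.
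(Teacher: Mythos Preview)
The paper does not supply its own proof of Theorem~\ref{thm:TojoCriterion}; the result is quoted from \cite{TojoTotallyGeodesic}. What the paper does provide (the parallel transport identity~\eqref{eq:OlmosFormula} and Corollary~\ref{cor:TojoCongruentSubmanifolds}) is exactly the content of your implication $(\text{i})\Rightarrow(\text{iii})$, and your argument for $(\text{iii})\Rightarrow(\text{ii})$ via $D_XX=0$ is correct. So on those two steps there is nothing to compare: you and the paper agree.

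The substantive point is your $(\text{ii})\Rightarrow(\text{i})$. Your Taylor computation is right: since $R$ is $\s{G}$-invariant one has $(\nabla^kR)_o(X,\dots,X;a,b,c)=((D_X)^k\!\cdot R)(a,b,c)$ (using $D_XX=0$), and expanding $e^{tD_X}R(tX,e^{-tD_X}u)e^{-tD_X}v$ yields $(\nabla^kR)_o(X,\dots,X;X,u,v)\in\g{v}$ for all $k$ and all $X,u,v\in\g{v}$. The gap is the next sentence, where you invoke ``polarization in the $X$-slots'' to obtain full $(\nabla^kR)$-invariance of $\g{v}$. Polarization alone only produces the symmetrization over the first $k{+}1$ arguments, which does not determine $(\nabla^kR)_o$; the covariant derivative slots are not symmetric, and the $(k{+}1)$-st slot sits inside the curvature part. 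What actually closes the argument is an induction on $k$ using two further ingredients: the Ricci identity, which shows that modulo $\g{v}$ the tensor $(\nabla^kR)_o$ \emph{is} symmetric in its first $k$ slots once the lower $(\nabla^jR)_o$ already preserve $\g{v}$; and the (differentiated) second Bianchi identity, which couples the $k$-th differentiation slot to the first two curvature slots. Combining these with the first-order polarization one obtains a relation of the type $(k{+}2)\,S(X^{k-1}Y;X,u,\cdot)=0$ for the projection $S$ of $(\nabla^kR)_o$ to $\g{v}^\perp$, forcing $S\equiv 0$. This is a genuine algebraic argument rather than bookkeeping, and your proposal should make it explicit; as written, ``polarization'' does not suffice.
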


We now give a geometric interpretation of the subspace $e^{-t D_{X}}\g{v}$.
Consider the geodesic $\gamma(t)=\Exp(t X)\cdot o$ with initial condition $X\in \g{v}$.
Then there are two vector space isomorphisms that we can establish between $\g{p}=T_{o}M$ and $T_{\gamma(t)}M$: parallel translation $\mathcal{P}_{0,t}^{\gamma}\colon T_{o}M\to T_{\gamma(t)}M$ and the pushforward of the flow of $X^{*}$, given by $\Exp(tX)_{*o}\colon T_{o}M\to T_{\gamma(t)}M$.
Both maps are related by \cite[Equation~(2.2.1)]{DO-nullity}
\begin{equation}\label{eq:OlmosFormula}
	\Exp(tX)^{-1}_{*o}\circ \mathcal{P}_{0,t}^{\gamma}=e^{-t D_{X}}.
\end{equation}
Suppose that $\g{v}$ is totally geodesic. We consider the complete totally geodesic immersion $f\colon \Sigma\to M$ such that $\g{v}=\tilde{f}(p)$ for some point $p\in \Sigma$ and take $v=(f_{*p})^{-1}(X)\in T_{p}\Sigma$, $g=\Exp(-t X)$.
From~\eqref{eq:OlmosFormula} and the fact that $f$ commutes with parallel translations we see that $e^{-tD_X}\g{v}=\tilde{g\circ f}(\exp_{p}(tv))$, yielding the following result:
\begin{corollary}[{\cite[Proposition 3.5]{TojoTotallyGeodesic}}]\label{cor:TojoCongruentSubmanifolds}
	If $\g{v}\subseteq \g{p}$ is a totally geodesic subspace, then for every $X\in \g{v}$ the subspace $e^{-D_X}\g{v}$ is also totally geodesic, and the corresponding totally geodesic submanifolds are congruent.
\end{corollary}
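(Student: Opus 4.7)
The plan is to produce the submanifold corresponding to $e^{-D_X}\g{v}$ by transporting the one with tangent space $\g{v}$ via a suitably chosen element of $\s{G}$, and then read off the new tangent space at $o$ using the identity~\eqref{eq:OlmosFormula} that the authors have already recorded.

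First, I would invoke the hypothesis (equivalently, Tojo's criterion, Theorem~\ref{thm:TojoCriterion}) to fix an inextendable compatible totally geodesic immersion $f\colon\Sigma\to M$ together with a point $p\in\Sigma$ such that $f(p)=o$ and $\tilde f(p)=\g{v}$. Given $X\in\g{v}$, consider the geodesic $\gamma(t)=\Exp(tX)\cdot o$ of $M$; since $X\in T_o f(\Sigma)$ and $f$ is totally geodesic, $\gamma$ is a lift, via $f$, of a geodesic of $\Sigma$ starting at $p$. Evaluated at $t=1$, this yields a point $q\in\Sigma$ with $f(q)=\gamma(1)$ and
\[
	f_{*q}(T_q\Sigma)=\mathcal P_{0,1}^{\gamma}\bigl(f_{*p}(T_p\Sigma)\bigr)=\mathcal P_{0,1}^{\gamma}\g{v}.
\]

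Next, apply the isometry $g:=\Exp(-X)\in\s{G}$, which sends $\gamma(1)$ back to $o$. The immersion $g\circ f\colon\Sigma\to M$ is then a compatible totally geodesic immersion, congruent to $f$ by construction; its value at $q$ is $o$, and its tangent space there is
\[
	(g\circ f)_{*q}(T_q\Sigma)=g_{*\gamma(1)}\bigl(\mathcal P_{0,1}^{\gamma}\g{v}\bigr)=\bigl(\Exp(X)_{*o}\bigr)^{-1}\circ\mathcal P_{0,1}^{\gamma}\,\g{v}=e^{-D_X}\g{v},
\]
where in the last equality I have used formula~\eqref{eq:OlmosFormula} at $t=1$. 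Thus $e^{-D_X}\g{v}$ is the tangent space at $o$ of the totally geodesic submanifold $g\circ f$, proving both that $e^{-D_X}\g{v}\subseteq\g{p}$ is totally geodesic and that the associated inextendable submanifold is congruent to $f(\Sigma)$ via the ambient isometry $g$.

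There is no substantial obstacle in this argument: all the technical content, namely the identification of parallel transport with the derivative of the group action through the difference tensor, has already been packaged in~\eqref{eq:OlmosFormula}. The only mild subtlety is to phrase the conclusion at the level of (possibly non-injective) immersions rather than embedded submanifolds, but composition with the ambient isometry $g$ preserves compatibility, so this is automatic.
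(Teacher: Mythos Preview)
Your proof is correct and follows essentially the same approach as the paper: both arguments compose the given totally geodesic immersion $f$ with the ambient isometry $g=\Exp(-X)$ and then read off the tangent space at $o$ using~\eqref{eq:OlmosFormula} together with the fact that $f$ intertwines parallel transport. The paper presents this reasoning in a single sentence just before the corollary, while you have spelled out the steps more explicitly, but there is no substantive difference.
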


\subsection{Totally geodesic submanifolds invariant under $D$}\label{subsec:Dinvtg}

We now study a particular class of totally geodesic submanifolds of $M=\s{G}/\s{K}$.
Consider the canonical connection $\nabla^{c}$ associated with the reductive decomposition $\g{g}=\g{k}\oplus\g{p}$ and the difference tensor $D=\nabla-\nabla^{c}$.
We say that an immersion $f\colon \Sigma\to M$ is $D$-\textit{invariant} (and $\Sigma$ is a $D$-\textit{invariant} submanifold) if for every $x\in\Sigma$ the subspace $\tilde{f}(x)\subseteq T_{f(x)}M$ is invariant under $D$.
It is immediate that a $D$-invariant submanifold is totally geodesic if and only if for every $X$, $Y\in\g{X}(\Sigma)$ the covariant derivative $\nabla^{c}_{X}Y$ remains tangent to $\Sigma$.

These submanifolds are related to certain subalgebras of $\g{g}$.
We say that a Lie subalgebra $\g{s}$ is \textit{canonically embedded} in $\g{g}$ if it splits with respect to the reductive decomposition, that is,
\[
	\g{s}=(\g{s}\cap \g{k})\oplus (\g{s}\cap \g{p})=\g{s}_\g{k}\oplus \g{s}_{\g{p}}.
\]

The following theorem gives an algebraic characterization of $D$-invariant totally geodesic submanifolds passing through the origin.
Furthermore, it gives an explicit method to construct them from their tangent space at $o$.
The proof can be obtained by combining the theorem in~\cite[p.~11]{Sagle} and the first result in \cite[\S 2]{HermannTotallyGeodesicOrbits}.
However, we include it for the sake of completeness.
\begin{theorem}\label{th:CharacterizationDInvariantTGSubmanifolds}
	Let $M=\s{G}/\s{K}$ be a naturally reductive homogeneous space with reductive decomposition $\g{g}=\g{k}\oplus\g{p}$ and $\g{v}\subseteq \g{p}$ a vector subspace.
	The following conditions are equivalent:
	\begin{enumerate}[\rm (i)]
		\item\label{thItem:RDInvariant} The subspace $\g{v}$ is invariant under the tensors $R$ and $D$.
		\item\label{thItem:RcDInvariant} The subspace $\g{v}$ is invariant under the tensors $R^c$ and $D$.
		\item\label{thItem:CanonicallyEmbeddedOrbit} There exists a connected Lie subgroup $\s{S}\subseteq \s{G}$ such that its Lie algebra $\g{s}$ is canonically embedded in $\g{g}$ and the tangent space to the orbit $\s{S}\cdot o$ at $o$ is $\g{v}$.
		\item\label{thItem:DInvariantTGSubmanifold} There exists a connected, injectively immersed, and complete $D$-invariant totally geodesic submanifold $\Sigma$ such that $o\in \Sigma$ and $T_{o}\Sigma=\g{v}$.
	\end{enumerate}
	Furthermore, if any of the four previous conditions hold, we have:
	\begin{enumerate}[\rm(1)]
		\item\label{thItem:CanonicallyEmbeddedSubalgebraConstruction} a Lie subgroup satisfying the conditions of item~\eqref{thItem:CanonicallyEmbeddedOrbit} is the connected subgroup $\s{S}$ with Lie algebra
		\[
			\g{s}=[\g{v},\g{v}]+\g{v}=[\g{v},\g{v}]_{\g{k}}\oplus\g{v},
		\]
		\item\label{thItem:TGOrbitConstruction} the totally geodesic submanifold $\Sigma$ passing through $o$ with tangent space $\g{v}$ is $\Sigma=\s{S}\cdot o$.
	\end{enumerate}
\end{theorem}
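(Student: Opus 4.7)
I will establish the cycle (i)$\Leftrightarrow$(ii)$\Rightarrow$(iii)$\Rightarrow$(iv)$\Rightarrow$(i), with the explicit constructions (1) and (2) emerging naturally from the implications (ii)$\Rightarrow$(iii)$\Rightarrow$(iv). The equivalence of (i) and (ii) is the easy part: from the curvature identity $R(X,Y)Z=D_X D_Y Z-D_Y D_X Z-D_{[X,Y]_{\g{p}}}Z+R^c(X,Y)Z$ recalled in Subsection~\ref{subsec:homspaces}, together with the naturally reductive identity $[X,Y]_{\g{p}}=2 D_X Y$, the $D$-invariance of $\g{v}$ forces every $D$-term on the right-hand side to preserve $\g{v}$, so that $R$-invariance and $R^c$-invariance of $\g{v}$ are equivalent.

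For (ii)$\Rightarrow$(iii), my approach is to set $\g{s}:=[\g{v},\g{v}]_{\g{k}}\oplus\g{v}$ (the subspace appearing in~(1)) and verify that it is a Lie subalgebra of $\g{g}$; the candidate subgroup is then the connected subgroup $\s{S}\subseteq\s{G}$ with Lie algebra $\g{s}$, which is canonically embedded by construction and whose orbit through $o$ has tangent space $\g{s}_{\g{p}}=\g{v}$ (this realizes (2) once we have (iv)). Closure under the bracket reduces to three checks:
\begin{enumerate}[\rm(a)]
\item $[\g{v},\g{v}]\subseteq\g{s}$, via $[X,Y]=[X,Y]_{\g{k}}+2 D_X Y$ and $D$-invariance;
\item $[[\g{v},\g{v}]_{\g{k}},\g{v}]\subseteq\g{v}$, which is immediate from $R^c$-invariance since $[[X,Y]_{\g{k}},Z]=-R^c(X,Y)Z$;
\item $[[\g{v},\g{v}]_{\g{k}},[\g{v},\g{v}]_{\g{k}}]\subseteq[\g{v},\g{v}]_{\g{k}}$.
\end{enumerate}
Step~(c) is the main obstacle. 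The idea is to compute $[[X,Y],[X',Y']]$ for $X,Y,X',Y'\in\g{v}$ in two different ways: first, by decomposing each factor into its $\g{k}$- and $\g{p}$-parts and reading off the $\g{k}$-component to obtain $[[X,Y]_{\g{k}},[X',Y']_{\g{k}}]+4[D_X Y,D_{X'}Y']_{\g{k}}$; and second, by iterated applications of the Jacobi identity, reducing $[[X,Y],[X',Y']]$ to nested brackets of elements of $\g{v}$, which by (a) and (b) already lie in $\g{s}$. Comparing $\g{k}$-components and noting that $[D_X Y,D_{X'}Y']_{\g{k}}\in[\g{v},\g{v}]_{\g{k}}$ trivially, one isolates the desired containment $[[X,Y]_{\g{k}},[X',Y']_{\g{k}}]\in[\g{v},\g{v}]_{\g{k}}$.

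For (iii)$\Rightarrow$(iv), take $\Sigma:=\s{S}\cdot o$, thereby realizing construction~(2). Since $\g{s}$ is canonically embedded, the isotropy $\s{S}_o$ has Lie algebra $\g{s}\cap\g{k}$ and $T_o\Sigma=\g{s}_{\g{p}}=\g{v}$; the orbit $\Sigma=\s{S}/\s{S}_o$ is injectively immersed, and it is complete because $\s{S}$ acts transitively by isometries. The $D$-invariance of $\g{v}$ at $o$ follows immediately from $\g{s}$ being a subalgebra, since then $[\g{v},\g{v}]_{\g{p}}\subseteq\g{s}_{\g{p}}=\g{v}$ and hence $D_X Y=\frac{1}{2}[X,Y]_{\g{p}}\in\g{v}$; $D$-invariance at other points follows from the $\s{G}$-invariance of $D$. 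Total geodesy follows directly from formula~\eqref{eq:SecondFundamentalFormOrbit}: for $X\in\g{s}$ and $Y\in\g{v}$, both $[X_{\g{k}},Y]$ and $D_{X_{\g{p}}}Y$ lie in $\g{s}\cap\g{p}=\g{v}$, so the second fundamental form vanishes. Finally, (iv)$\Rightarrow$(i) closes the cycle: $D$-invariance of $\g{v}$ is part of the hypothesis, and $R$-invariance follows by applying Theorem~\ref{th:CharacterizationTGSubmanifolds} to the totally geodesic submanifold $\Sigma$.
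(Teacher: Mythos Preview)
Your proposal is correct and follows essentially the same approach as the paper. Both establish the cycle (i)$\Leftrightarrow$(ii)$\Rightarrow$(iii)$\Rightarrow$(iv)$\Rightarrow$(i), with the subalgebra $\g{s}=[\g{v},\g{v}]_{\g{k}}\oplus\g{v}$ constructed in the step (ii)$\Rightarrow$(iii) and the orbit $\Sigma=\s{S}\cdot o$ in (iii)$\Rightarrow$(iv); the key ideas---the curvature identity for (i)$\Leftrightarrow$(ii), the Jacobi identity for closure of $\g{s}$, and formula~\eqref{eq:SecondFundamentalFormOrbit} for total geodesy---are identical.

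The only organizational difference is in checking that $\g{s}$ is a subalgebra. The paper verifies $[\g{v},[\g{v},\g{v}]]\subseteq\g{s}$ and $[[\g{v},\g{v}],[\g{v},\g{v}]]\subseteq\g{s}$ directly, the latter via $[[\g{v},\g{v}],[\g{v},\g{v}]]\subseteq[\g{v},[\g{v},[\g{v},\g{v}]]]\subseteq[\g{v},\g{s}]\subseteq\g{s}$. Your decomposition into (a), (b), (c) forces you to isolate the sharper statement $[[\g{v},\g{v}]_{\g{k}},[\g{v},\g{v}]_{\g{k}}]\subseteq[\g{v},\g{v}]_{\g{k}}$, which is why you need the additional step of comparing $\g{k}$-components; but since the Jacobi argument already gives $[[X,Y],[X',Y']]\in\g{s}$, you could simply observe that (a), (b), and this containment together suffice for $[\g{s},\g{s}]\subseteq\g{s}$, bypassing the two-way computation entirely.
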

\begin{proof}
	Firstly, note that the formula
	\[
	R^{c}(X,Y)Z={}-[[X,Y]_{\g{k}},Z]=R(X,Y)Z-D_X D_Y Z+D_Y D_X Z+2D_{D_X Y}Z
	\]
	implies that any $D$-invariant subspace of $\g{p}$ is invariant under $R$ if and only if it is invariant under $R^{c}$, so (\ref{thItem:RDInvariant}) and (\ref{thItem:RcDInvariant}) are equivalent.
	
	Now, suppose that $\g{v}$ satisfies (\ref{thItem:RcDInvariant}).	
	We prove that $\g{s}=[\g{v},\g{v}]+\g{v}$ is a Lie subalgebra of $\g{v}$.
	This amounts to checking that $[\g{v},[\g{v},\g{v}]]$ and $[[\g{v},\g{v}],[\g{v},\g{v}]]$ are contained in $\g{s}$.
	Let $X$, $Y$, $Z\in \g{v}$.
	Then we have
	\[
	\begin{aligned}
		[[X,Y],Z]=&{}[[X,Y]_{\g{k}},Z]+[[X,Y]_{\g{p}},Z]=-R^{c}(X,Y)Z+2[D_X Y,Z]\in \g{s}.
	\end{aligned}
	\]
	In particular, $[\g{v},\g{s}]\subseteq \g{s}$.
	Similarly, by the Jacobi identity, we see that
	\[
	\begin{aligned}
		[[\g{v},\g{v}],[\g{v},\g{v}]]\subseteq[\g{v},[\g{v},[\g{v},\g{v}]]]\subseteq [\g{v},\g{s}]\subseteq \g{s}.
	\end{aligned}
	\]
	Therefore, $\g{s}$ is a Lie subalgebra.
	Because $\g{v}$ is $D$-invariant, we see that $\g{s}_{\g{p}}=\g{v}\subseteq \g{s}$, and from this inclusion it follows that $\g{s}=\g{s}_{\g{k}}\oplus\g{v}$, which proves that $\g{s}$ is canonically embedded.
	It is also immediate from the description of $\g{s}$ that $\g{s}_{\g{k}}=[\g{v},\g{v}]_{\g{k}}$.
	As a consequence, if we consider the Lie subgroup $\s{S}$ of $\s{G}$ whose Lie algebra is $\g{s}$, then the tangent space $T_{o}(\s{S}\cdot o)$ coincides with $\g{s}_{\g{p}}=\g{v}$, and therefore (\ref{thItem:CanonicallyEmbeddedOrbit}) holds.
	
	We now prove that (\ref{thItem:CanonicallyEmbeddedOrbit}) implies (\ref{thItem:DInvariantTGSubmanifold}).
	Assume $\s{S}\subseteq \s{G}$ is a Lie subgroup whose Lie algebra is canonically embedded in $\g{g}$ and $T_{o}(\s{S}\cdot o)=\g{s}_{\g{p}}=\g{v}$.
	It is clear from \eqref{eq:SecondFundamentalFormOrbit} that $\mathbb{II}$ is zero at $o$.
	Because $\s{S}\cdot o$ is extrinsically homogeneous, this implies that the second fundamental form vanishes everywhere, and thus $\s{S}\cdot o$ is totally geodesic.
	We show that $\s{S}\cdot o$ is $D$-invariant, which by $\s{G}$-invariance of $D$ is equivalent to checking that $\g{v}$ is $D$-invariant.
	Given $X$, $Y\in \g{s}_{\g{p}}\subseteq \g{s}$, we have $D_{X}Y=(1/2)[X,Y]_{\g{p}}\in \g{s}_{\g{p}}$, so the claim follows.
	
	Finally, it is immediate that (\ref{thItem:DInvariantTGSubmanifold}) implies (\ref{thItem:RDInvariant}) from the definition of $D$-invariant submanifolds and Theorem~\ref{th:CharacterizationTGSubmanifolds}. \qedhere
\end{proof}

\begin{corollary}
	Every complete $D$-invariant totally geodesic submanifold of a naturally reductive homogeneous space $M=\s{G}/\s{K}$ is extrinsically homogeneous with respect to the given presentation of $M$, and thus an injectively immersed submanifold.
\end{corollary}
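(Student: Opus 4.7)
The plan is to derive the corollary directly from Theorem~\ref{th:CharacterizationDInvariantTGSubmanifolds} together with the uniqueness of inextendable compatible totally geodesic immersions discussed after Theorem~\ref{th:CharacterizationTGSubmanifolds}.

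Let $f\colon \Sigma \to M$ be a complete $D$-invariant (compatible) totally geodesic immersion. Using the transitive $\s{G}$-action, I would first translate a chosen point $p_{0}\in \Sigma$ so that $f(p_{0})=o$, and set $\g{v}=\tilde{f}(p_{0})\subseteq \g{p}$. By the $D$-invariance hypothesis and Theorem~\ref{th:CharacterizationTGSubmanifolds}, the subspace $\g{v}$ is invariant under both $D$ and $R$, so condition~(\ref{thItem:RDInvariant}) of Theorem~\ref{th:CharacterizationDInvariantTGSubmanifolds} holds.

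Next, I would invoke items~(\ref{thItem:CanonicallyEmbeddedSubalgebraConstruction})--(\ref{thItem:TGOrbitConstruction}) of Theorem~\ref{th:CharacterizationDInvariantTGSubmanifolds} to produce the connected Lie subgroup $\s{S}\subseteq \s{G}$ with canonically embedded Lie algebra $\g{s}=[\g{v},\g{v}]_{\g{k}}\oplus \g{v}$, so that $N:=\s{S}\cdot o$ is an injectively immersed, complete, $D$-invariant totally geodesic submanifold through $o$ with $T_{o}N=\g{v}$. Both $f$ and the inclusion $\iota\colon N\hookrightarrow M$ are then compatible totally geodesic immersions (the latter by injectivity) whose domains are complete; applying Proposition~\ref{prop:TGInextendableCharacterization} to each, I conclude that both are inextendable, since a maximal geodesic of a complete manifold is defined on all of $\R$ and its image in $M$ is then automatically maximal.

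Finally, since $f$ and $\iota$ are inextendable compatible totally geodesic immersions through the same pair $(o,\g{v})$, the uniqueness statement recalled after Theorem~\ref{th:CharacterizationTGSubmanifolds} produces an isometry $h\colon \Sigma \to N$ with $f=\iota\circ h$. Hence $f$ is injective and $f(\Sigma)=\s{S}\cdot o$, which is the desired extrinsic homogeneity with respect to the given presentation. The only subtlety I anticipate is ensuring that the initial immersion may really be taken compatible (any totally geodesic immersion factors through a compatible one, and this factoring preserves both completeness and the $D$-invariance of the tangent spaces), after which the whole argument reduces to the matching of the two inextendable models through $(o,\g{v})$.
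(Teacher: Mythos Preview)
Your proposal is correct and follows exactly the route the paper intends: the corollary is stated without proof immediately after Theorem~\ref{th:CharacterizationDInvariantTGSubmanifolds}, and your argument simply unpacks the implication (\ref{thItem:DInvariantTGSubmanifold})$\Rightarrow$(\ref{thItem:CanonicallyEmbeddedOrbit}) together with item~(\ref{thItem:TGOrbitConstruction}), making explicit the uniqueness step (via Proposition~\ref{prop:TGInextendableCharacterization} and the uniqueness of inextendable compatible immersions through a given $(o,\g{v})$) that item~(\ref{thItem:TGOrbitConstruction}) already encodes.
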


\begin{remark}
	Theorem~\ref{th:CharacterizationDInvariantTGSubmanifolds} is also a refinement of \cite[Lemma~3.1]{RodriguezVazquezOlmos}, which states that for a general reductive homogeneous space a subspace $\g{v}\subseteq \g{p}$ invariant under $R$ and $D$ is tangent to a complete totally geodesic submanifold.  Also, notice that the class of $D$-invariant totally geodesic submanifolds  includes all totally geodesic submanifolds of symmetric spaces, since in a symmetric space $D=0$.
    Thus, in the symmetric setting the subspaces $\g{v}$ appearing in the previous theorem are the Lie triple systems~(see~\cite[\S 11.1]{BerndtConsoleOlmos}).
\end{remark}

\begin{remark}
	The case $\g{v}=\g{p}$ in Theorem~\ref{th:CharacterizationDInvariantTGSubmanifolds} is part of a result by Kostant~\cite{Kostant}, which implies in particular that the connected (normal) subgroup with Lie algebra $[\g{p},\g{p}]+\g{p}$ acts transitively on $M$. 
\end{remark}

\begin{remark}
It is worth noting that in the irreducible setting the conditions of Theorem~\ref{th:CharacterizationDInvariantTGSubmanifolds} do not depend on the naturally reductive decomposition that we choose.
Indeed, from \cite[Theorem~2.1]{OlmosReggiani}, we see that if $M=\s{G}/\s{K}$ is a simply connected irreducible naturally reductive space which is not symmetric, then the canonical connection $\nabla^{c}$ is unique.
Therefore, given any naturally reductive decomposition $\g{g}=\g{k}\oplus\g{p}$ of~$\g{g}$, the subspaces $\g{v}\subseteq \g{p}$ that are invariant under $R$ and $D$ correspond under the identification $\g{p}\equiv T_{o}M$ to the subspaces $V\subseteq T_{o}M$ that are invariant under $R$ and $\nabla-\nabla^{c}$, and the uniqueness of the canonical connection implies that these subspaces are always the same regardless of the decomposition. 
Similarly, if $M$ is a nearly Kähler 3-symmetric space and one restricts their attention to the reductive decompositions invariant under the automorphism of order three, then all of their associated canonical connections coincide by~\cite[Lemma~3.1]{TojoLagrangian}, and the same argument applies.
\end{remark}

\subsection{Totally geodesic surfaces}\label{subsec:tgsurf}

As an application of Corollary~\ref{cor:TojoCongruentSubmanifolds}, we derive a necessary condition for the existence of totally geodesic surfaces with a given tangent plane.

Let $\g{v}\subseteq \g{p}$ be a $2$-dimensional  subspace, and assume that $\g{v}$ is the tangent plane at $o$ of a complete totally geodesic surface $\Sigma$ of $M$.
Fix a nonzero element $X\in \g{v}$ and choose any $Y\in \g{v}\setminus \{0\}$ that is orthogonal to $X$, so that $\{X,Y\}$ is an orthogonal basis of $\g{v}$.
Since $\Sigma$ is homogeneous and two-dimensional, it follows that $\Sigma$ is a space of constant curvature $\kappa\in \R$, and the same can be said for the totally geodesic submanifold $\Sigma_{t}$ associated with $e^{-tD_{X}}\g{v}$ for all $t\in\R$.
This implies in particular that the restriction of $\nabla R$ to the tangent space of $\Sigma$ and $\Sigma_{t}$ at any of their points is the zero tensor.
Furthermore, since $D_{X}X=0$ due to the skew-symmetry of $D$, we have $e^{-t D_{X}}\g{v}=\vecspan\{X,e^{-t D_{X}}Y\}$.
Because $\Sigma_{t}$ has curvature $\kappa$, it follows that $e^{-t  D_{X}}Y$ is an eigenvector of the Jacobi operator $R_{X}$ with eigenvalue $\kappa \lvert X \rvert^{2}$, as well as an element of $\ker C_{X}$.
One can argue similarly with the so-called \textit{Cartan operators of order} $j$ given by $C^j_XY=\nabla^jR(X,\dots,X,Y,X)$, because they vanish identically on $\g{v}$.
Since the subspace of $\g{p}$ generated by the curve $e^{-t D _{X}} Y$ is the span of all vectors of the form $D_{X}^{k}Y$ with $k\geq 0$, we have obtained the following:

\begin{proposition}\label{prop:TojoSurfaces}
	Let $M=\mathsf{G}/\mathsf{K}$ be a naturally reductive homogeneous space with reductive decomposition $\g{g}=\g{k}\oplus \g{p}$.
	Choose orthogonal vectors $X$, $Y\in \g{p}$ and suppose that $\g{v}=\vecspan\{X,Y\}$ is the tangent space of a totally geodesic surface $\Sigma$ of $M$.
	Then we have the inclusion
	\[
	\spann \{ D_{X}^{k}Y\colon k\geq 0 \}\subseteq \ker (R_{X}-\kappa\lvert X \rvert^{2}\operatorname{Id}_{\g{p}})\cap\bigcap_{j= 1}^{\infty}\ker C_{X}^{j}.
	\]
\end{proposition}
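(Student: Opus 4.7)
The plan is to formalize the sketch preceding the statement, using Corollary~\ref{cor:TojoCongruentSubmanifolds} together with an analytic expansion in $t$ to convert a one-parameter family of identities into the infinitely many linear conditions on the vectors $D_X^k Y$. First I would invoke homogeneity: since $\Sigma$ is a connected two-dimensional homogeneous Riemannian manifold, its sectional curvature is some constant $\kappa \in \R$. By Corollary~\ref{cor:TojoCongruentSubmanifolds}, for each $t \in \R$ the subspace $\g{v}_t := e^{-t D_X}\g{v}$ is the tangent space at $o$ of a complete totally geodesic surface $\Sigma_t$ congruent to $\Sigma$, hence also of constant curvature $\kappa$.

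Next I would use that natural reductivity makes each $D_X$ skew-symmetric on $\g{p}$, which yields $D_X X = 0$ and $\langle X, e^{-t D_X} Y \rangle = \langle e^{t D_X} X, Y \rangle = \langle X, Y \rangle = 0$. Therefore $\{X, e^{-t D_X} Y\}$ is an orthogonal basis of $\g{v}_t$. Because $\Sigma_t$ is totally geodesic in $M$, Theorem~\ref{th:CharacterizationTGSubmanifolds} ensures that $R$ and each $\nabla^j R$ preserve $\g{v}_t$; moreover, by the vanishing of the second fundamental form, their restrictions to $\g{v}_t$ agree with the intrinsic curvature tensor of $\Sigma_t$ and its covariant derivatives. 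Since $\Sigma_t$ is two-dimensional of constant curvature $\kappa$, it is locally symmetric, so these restrictions yield
\[
R_X(e^{-t D_X} Y) = \kappa \lvert X \rvert^{2}\, e^{-t D_X} Y, \qquad C_X^{j}(e^{-t D_X} Y) = 0 \quad (j \geq 1)
\]
for every $t \in \R$.

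The final step is to regard $t \mapsto e^{-t D_X} Y$ as the real-analytic curve $\sum_{k \geq 0} \frac{(-t)^{k}}{k!}\, D_X^{k} Y$ and to expand the previous identities in powers of $t$. Since $R_X - \kappa \lvert X \rvert^{2} \operatorname{Id}_{\g{p}}$ and each $C_X^{j}$ are linear, equating coefficients gives $\bigl(R_X - \kappa \lvert X \rvert^{2} \operatorname{Id}_{\g{p}}\bigr) D_X^{k} Y = 0$ and $C_X^{j} D_X^{k} Y = 0$ for all $k \geq 0$ and $j \geq 1$, which is exactly the claimed inclusion. The one point that requires care, and which I expect to be the main obstacle, is the identification of the iterated ambient covariant derivatives $\nabla^{j} R$ along $\Sigma_t$ with the intrinsic ones; this however follows routinely by induction from the totally geodesic condition $\mathbb{II} \equiv 0$ via the Gauss and Codazzi equations, together with the fact that parallel transport in $\Sigma_t$ and in $M$ coincide for vectors tangent to $\Sigma_t$.
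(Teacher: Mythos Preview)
Your proposal is correct and follows essentially the same argument as the paper: invoke intrinsic homogeneity to get constant curvature $\kappa$, use Corollary~\ref{cor:TojoCongruentSubmanifolds} together with $D_X X=0$ to obtain the orthogonal basis $\{X,e^{-tD_X}Y\}$ of a congruent totally geodesic plane, deduce that $e^{-tD_X}Y$ lies in $\ker(R_X-\kappa\lvert X\rvert^2\operatorname{Id})\cap\bigcap_j\ker C_X^j$, and then pass to the span of the $D_X^kY$. The only cosmetic difference is that you spell out the power-series expansion and the Gauss--Codazzi identification, whereas the paper simply notes that the subspace generated by the curve $t\mapsto e^{-tD_X}Y$ coincides with $\spann\{D_X^kY:k\ge0\}$.
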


\subsection{Well-positioned totally geodesic submanifolds and homogeneous fibrations}\label{subsec:wellpostg}

We now study the case that $M=\s{G}/\s{H}$ is also the total space of the homogeneous fibration induced by the inclusions $\s{H}\subseteq \s{K}\subseteq \s{G}$ (observe the change of notation).
Let $B=\s{G}/\s{K}$ be the base space and $F=\s{K}/\s{H}$ be the fiber of the given submersion.
Consider a totally geodesic immersion $f\colon\Sigma\to M$.
We say that $\Sigma$ is \textit{well-positioned} at $p\in \Sigma$ (with respect to the fibration $F \to M\to B$) if $\tilde{f}(p)=\left(\tilde{f}(p) \cap \mathcal{V}_{f(p)}\right)\oplus\left(\tilde{f}(p) \cap \mathcal{H}_{f(p)}\right)$.
Furthermore, $\Sigma$ is said to be \textit{well-positioned} if it is well-positioned at every point $p\in \Sigma$.
The next result allows us to give an algebraic characterization for a totally geodesic submanifold to be well-positioned.

\begin{lemma}\label{lemma:HomogeneousFibrationWellPositioned}
	Let $F\to M \to B$ be the homogeneous fibration induced by the chain of inclusions $\s{H}\subseteq \s{K}\subseteq \s{G}$, where $\s{H}$, $\s{K}$ and $\s{G}$ are compact and the Riemannian metrics on $F$, $M$ and $B$ are induced by a bi-invariant metric on $\s{G}$.
	Let $f\colon\Sigma\to M$ be a complete totally geodesic immersion passing through the point $o$ with tangent space $\g{v}$.
	Then, the following conditions are equivalent:
	\begin{enumerate}[\rm (i)]
		\item $\Sigma$ is well-positioned with respect to the submersion $M\to B$,
		\item for all $X\in \g{v}$, the subspace $e^{-D_{X}}\g{v}$ splits with respect to the decomposition $\g{p}=\mathcal{V}_{o}\oplus\mathcal{H}_{o}$.
	\end{enumerate}		
\end{lemma}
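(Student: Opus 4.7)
My plan is to use the formula $\mathcal{P}_{0,t}^{\gamma}=\Exp(tX)_{*o}\circ e^{-t D_X}$ from~\eqref{eq:OlmosFormula} together with the $\s{G}$-invariance of the distributions $\mathcal{V}$ and $\mathcal{H}$. First, I would record the consequence of $\s{G}$-invariance: for each $g\in \s{G}$ and $p\in M$, the differential $g_{*p}\colon T_pM\to T_{g\cdot p}M$ sends $\mathcal{V}_p$ to $\mathcal{V}_{g\cdot p}$ and $\mathcal{H}_p$ to $\mathcal{H}_{g\cdot p}$. Consequently, for any subspace $W\subseteq T_pM$ and any $g\in\s{G}$, $W$ splits with respect to $\mathcal{V}_p\oplus\mathcal{H}_p$ if and only if $g_{*p}^{-1}W\subseteq T_{g^{-1}\cdot p}M$ splits with respect to $\mathcal{V}_{g^{-1}\cdot p}\oplus\mathcal{H}_{g^{-1}\cdot p}$.

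Next, I would translate the well-positionedness of $f$ at a general point into an algebraic condition at $o$. Let $p_0\in \Sigma$ denote a chosen basepoint with $f(p_0)=o$ and $\tilde{f}(p_0)=\g{v}$. For each $X\in\g{v}$, since $f$ is a totally geodesic immersion and $\Sigma$ is complete, the $M$-geodesic $\gamma_X(t)=\Exp(tX)\cdot o$ lifts uniquely to a geodesic $\sigma_X\colon\R\to\Sigma$ with $\sigma_X(0)=p_0$ and $f_{*p_0}\sigma_X'(0)=X$, and $f\circ\sigma_X=\gamma_X$. Because $f$ commutes with parallel transport, one has
\[
\tilde{f}(\sigma_X(t))=\mathcal{P}^{\gamma_X}_{0,t}\g{v}=\Exp(tX)_{*o}\bigl(e^{-tD_X}\g{v}\bigr),
\]
using~\eqref{eq:OlmosFormula}. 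Combined with the previous paragraph, $\tilde{f}(\sigma_X(t))$ splits with respect to $\mathcal{V}_{\gamma_X(t)}\oplus \mathcal{H}_{\gamma_X(t)}$ if and only if $e^{-tD_X}\g{v}$ splits with respect to $\mathcal{V}_o\oplus \mathcal{H}_o$.

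Finally, I would close the argument in both directions. For (i)$\Rightarrow$(ii), given any $X\in\g{v}$, evaluating the equivalence above at $t=1$ and the point $\sigma_X(1)\in\Sigma$ immediately yields that $e^{-D_X}\g{v}$ splits. For (ii)$\Rightarrow$(i), note that by completeness of $\Sigma$ together with the Hopf--Rinow theorem every point of $\Sigma$ is of the form $\sigma_X(1)$ for some $X\in\g{v}$ (after a reparametrization we can always arrange $t=1$, observing that $tX\in\g{v}$ and $D_{tX}=tD_X$ gives $e^{-tD_X}\g{v}=e^{-D_{tX}}\g{v}$, which by hypothesis splits). Hence $f$ is well-positioned at every point of $\Sigma$. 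I do not anticipate any serious obstacle; the only subtlety is being careful that $f$ need not be injective, but since well-positionedness is defined in terms of $\tilde{f}(p)$ rather than of $f(p)$, the reduction to the one-point condition via geodesics $\sigma_X$ is what makes the proof go through.
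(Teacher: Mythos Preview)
Your proposal is correct and follows essentially the same approach as the paper: both use the identity~\eqref{eq:OlmosFormula} together with the $\s{G}$-invariance of $\mathcal{V}$ and $\mathcal{H}$ to show that well-positionedness at the point $\exp_{p_0}(X)$ is equivalent to $e^{-D_X}\g{v}$ splitting, and then invoke completeness (Hopf--Rinow) to cover all points of $\Sigma$. The paper's proof is slightly terser but the logical structure is identical.
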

\begin{proof}
	Let $p\in\Sigma$ be such that $\tilde{f}(p)=\g{v}$.
	Since we are assuming that $\Sigma$ is connected and complete, every point of $\Sigma$ is of the form $q=\exp_{p}(v)$ for a certain $v\in T_{p}\Sigma$.
	Consider the geodesic $\gamma(t)=f(\exp(tv))=\Exp(tX)\cdot o$ (where $X=f_{*p}(v)$), which connects $p$ and $q$.
	Then, we have $\tilde{f}(q)=\mathcal{P}_{0,1}^{\gamma}\g{v}$.
	As $\mathcal{V}$ and $\mathcal{H}$ are invariant under $\s{G}$, we see that $\mathcal{V}_{f(q)}=\Exp(X)_{*o}\mathcal{V}_{o}$ and $\mathcal{H}_{f(q)}=\Exp(X)_{*o}\mathcal{H}_{o}$.
	Therefore, using~\eqref{eq:OlmosFormula} we see that $\Sigma$ is well-positioned at $q$ if and only if the subspace
	$\Exp(X)_{*o}^{-1}(\tilde{f}(q))=e^{-D_{X}}\g{v}$
	splits with respect to the decomposition $\g{p}=\mathcal{V}_{o}\oplus\mathcal{H}_{o}$.
	As $\g{v}=f_{*p}(T_{p}\Sigma)$, the equivalence follows. \qedhere
\end{proof}

\begin{corollary}\label{cor:CanonicallyEmbeddedWellPositioned}
	Let $F\to M \to B$ be as in Lemma~\ref{lemma:HomogeneousFibrationWellPositioned}, and let $\Sigma\subseteq M$ be a $D$-invariant totally geodesic submanifold passing through $o$.
	Then $\Sigma$ is well-positioned if and only if it is well-positioned at $o$.
\end{corollary}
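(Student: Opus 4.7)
The plan is to deduce the corollary immediately from Lemma~\ref{lemma:HomogeneousFibrationWellPositioned} together with the algebraic interpretation of $D$-invariance of $\Sigma$ at the base point $o$. The forward implication (well-positioned everywhere implies well-positioned at $o$) is tautological, so the content lies in the converse.

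For the converse, first I would translate the assumption that $\Sigma$ is $D$-invariant into the algebraic condition $D_{X}Y\in\g{v}$ for all $X,Y\in\g{v}$, where $\g{v}=T_o\Sigma$. This is the definition at $o$ combined with $\s{G}$-invariance of $D$. It follows that for every $X\in\g{v}$ the operator $D_{X}$ restricts to a linear endomorphism of $\g{v}$, and hence the invertible operator $e^{-D_{X}}$ also preserves $\g{v}$. Since $e^{-D_{X}}$ is a linear isomorphism of $\g{p}$ and $\dim e^{-D_X}\g{v}=\dim\g{v}$, this yields the key equality
\[
e^{-D_{X}}\g{v}=\g{v} \quad \text{for every } X\in\g{v}.
\]

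Now I would apply Lemma~\ref{lemma:HomogeneousFibrationWellPositioned}: $\Sigma$ is well-positioned if and only if, for every $X\in\g{v}$, the subspace $e^{-D_{X}}\g{v}$ splits with respect to the decomposition $\g{p}=\mathcal{V}_{o}\oplus\mathcal{H}_{o}$. In view of the equality above, this condition collapses to the single requirement that $\g{v}$ itself splits as $(\g{v}\cap\mathcal{V}_{o})\oplus(\g{v}\cap\mathcal{H}_{o})$, which is precisely the statement that $\Sigma$ is well-positioned at $o$. This finishes the proof. There is no substantive obstacle: the only point to be careful about is ensuring that $D$-invariance of the submanifold truly entails $D_{X}\g{v}\subseteq\g{v}$ for $X\in\g{v}$ (and not only for $X\in\g{p}$), which is exactly the definition adopted in Subsection~\ref{subsec:Dinvtg}.
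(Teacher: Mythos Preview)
Your proof is correct and follows exactly the same approach as the paper: the paper's proof is the single sentence ``This follows from noting that the $D$-invariance of $\g{v}$ implies that $e^{-D_{X}}\g{v}=\g{v}$ for all $X\in\g{v}$,'' and you have spelled out precisely this argument in more detail.
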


\begin{proof}
	This follows from noting that the $D$-invariance of $\g{v}$ implies that $e^{-D_{X}}\g{v}=\g{v}$ for all $X\in\g{v}$. \qedhere
\end{proof}

\section{The examples}\label{sec:examples}
	
In this section we will describe the totally geodesic submanifolds of $\C\s{P}^{3}$, $\s{F}(\C^{3})$ and $\s{S}^{3}\times\s{S}^{3}$ that appear in the classification and determine their isometry type.
We indicate if the examples are well-positioned with respect to the homogeneous fibrations given in Subsection~\ref{subsec:homNK}.
	
Let us recall some definitions about special submanifolds of almost Hermitian manifolds.	
If $(M^{2n},J)$ is an almost Hermitian manifold and $f\colon \Sigma\to M$ is an immersion, we say that $f$ (and $\Sigma$) is \textit{totally real} if for all $p\in \Sigma$ the subspaces $f_{*p}(T_p\Sigma)$ and $Jf_{*p}(T_p\Sigma)$ of $T_{f(p)}M$ are orthogonal.
If we also have $T_{p}M=f_{*p}(T_p\Sigma)\oplus Jf_{*p}(T_p\Sigma)$ (that is, if $\dim\Sigma = n$), then $\Sigma$ is a \textit{Lagrangian} submanifold.
Separately, $f$ (and $\Sigma$) is \textit{almost complex} (or $J$\textit{-holomorphic}) if $f_{*p}(T_p\Sigma)$ is invariant under $J$ for all $p\in\Sigma$.
Furthermore, if $\Sigma$ is a surface, we will refer to it as an \textit{almost complex surface} or a $J$\textit{-holomorphic curve}.
\begin{remark}
	Many of the totally geodesic submanifolds that appear in this section are isometric to a sphere with a round or complex Berger metric.
	We can compute the radius $r$ of the sphere $\s{S}^{n}(r)$, as well as the parameters of the Berger sphere $\s{S}^{3}_{\C,\tau}(r)$ from its sectional curvature. 
	Indeed, it is well known that the sectional curvature of $\s{S}^{n}(r)$ is equal to $1/r^{2}$.
	In the case of $\s{S}^{3}_{\C,\tau}(r)$, the parameters $r$ and $\tau$ can be obtained from the equations $\tau=r^{2}\sec(U,X)$ and
	$4-3\tau=r^{2}\sec(X,Y)$,
	where $U$ is a vertical vector and $X$, $Y$ are horizontal vectors with respect to the Hopf fibration (see~\cite{GorodskiKollrossRV}).
\end{remark}
		
\subsection{The complex projective space}\label{subsec:excp3}
We describe the totally geodesic examples of the nearly Kähler complex projective space $\C\s{P}^3$.
\subsubsection{The real projective space \cite[Example~3.9]{Aslan}}
	
Consider the subgroup $\mathsf{U}(2)^{j}\subseteq \mathsf{Sp}(2)$ whose Lie algebra is given by
\begin{equation*}
	\g{u}(2)^{j}=\spann\bigl\{
		jE_{11},jE_{22},E_{21}-E_{12},j(E_{12}+E_{21})
	\bigr\}=(\g{u}(2)^{j}\cap\g{k})\oplus(\g{u}(2)^{j}\cap \g{p}).
\end{equation*}
Then, $\g{u}(2)^{j}$ is canonically embedded in $\g{sp}(2)$, so the orbit $\mathsf{U}(2)^{j}\cdot o$ is a totally geodesic submanifold of $\C\mathsf{P}^{3}$ whose tangent space is $\vecspan\{e_{1},e_{3},e_{5}\}$.
The isotropy subgroup $\mathsf{U}(2)^{j}\cdot o$ is equal to $\mathbb{Z}_{2}\times \mathsf{U}(1)$, so $\mathsf{U}(2)^{j}\cdot o$ is diffeomorphic to a real projective space $\R \mathsf{P}^{3}$.
The induced metric is Berger-like.
Indeed, this totally geodesic submanifold is isometric to $\R\s{P}^3_{\C,1/2}(2)$.
Let us write $\g{p}_{\R\mathsf{P}^{3}_{\C,1/2}(2)}=\vecspan\{e_1,e_3,e_5\}$.
A computation gives $J(\g{p}_{\R\mathsf{P}^{3}_{\C,1/2}(2)})=\g{p}\ominus \g{p}_{\R\mathsf{P}^{3}_{\C,1/2}(2)}$, and since $\R\mathsf{P}^{3}_{\C,1/2}(2)$ is extrinsically homogeneous we see that $\R\mathsf{P}^{3}_{\C,1/2}(2)$ is a Lagrangian submanifold.
Finally, note that $\R\s{P}^{3}_{\C,1/2}(2)$ is well-positioned at $o$, so by Corollary~\ref{cor:CanonicallyEmbeddedWellPositioned}, it is well-positioned.
	
\subsubsection{The fiber of the twistor fibration}
	
Recall that the fibers of the twistor fibration are totally geodesic surfaces in $\C\mathsf{P}^{3}$.
In particular, the orbit through $o$ is $\left(\mathsf{Sp}(1)\times\mathsf{Sp}(1)\right)\cdot o=\mathsf{Sp}(1)_{f}\cdot o$, where $\mathsf{Sp}(1)_{f}$ denotes the image of the standard embedding of $\s{Sp}(1)$ in $\s{Sp}(2)$ in the first block.
The isotropy subgroup $(\mathsf{Sp}(1)_{f})_{o}$ coincides with $\mathsf{U}(1)$, so $\mathsf{Sp}(1)_{f}\cdot o$ is diffeomorphic to a sphere.
Its tangent space at $o$ is $\g{p}_{\mathsf{Sp}(1)_{f}\cdot o}=\g{p}_{1}$.
The sectional curvature in this case is $\operatorname{sec}(\g{p}_{1})=2$, so $\mathsf{Sp}(1)_{f}\cdot o$ is a round sphere of radius $1/\sqrt{2}$.
Furthermore, the fact that $J(\g{p}_{1})=\g{p}_{1}$ implies that $\mathsf{Sp}(1)_{f}\cdot o$ is an almost complex surface in $\C\mathsf{P}^{3}$.
By definition, $\s{Sp}(1)_{f}\cdot o$ is well-positioned.
	
\subsubsection{The horizontal sphere $\mathsf{SU}(2)\cdot o$}
	
Consider the standard embedding of $\mathsf{SU}(2)$ in $\mathsf{Sp}(2)$.
Since the Lie algebra $\g{su}(2)$ is given by
\begin{equation*}
	\g{su}(2)=\vecspan\bigl\{
		i(E_{11}-E_{22}),E_{21}-E_{12},i(E_{12}+E_{21})
	\bigr\}
	=(\g{su}(2)\cap\g{k})\oplus(\g{su}(2)\cap \g{p}),
\end{equation*}
it follows that $\g{su}(2)$ is canonically embedded, and the orbit $\mathsf{SU}(2)\cdot o$ is a totally geodesic submanifold of $\C\mathsf{P}^{3}$ with tangent space $\g{p}_{\mathsf{SU}(2)\cdot o}=\vecspan\{e_{3},e_{4}\}$.
The isotropy subgroup of $\mathsf{SU}(2)$ at $o$ is the canonical $\mathsf{U}(1)$, so $\mathsf{SU}(2)\cdot o$ is diffeomorphic to a sphere.
Furthermore, its sectional curvature is given by $\operatorname{sec}(\g{p}_{\mathsf{SU}(2)\cdot o})=1$, so this submanifold is a round sphere of radius $1$.
Finally, note that $\g{p}_{\mathsf{SU}(2)\cdot o}$ is $J$-invariant, so this sphere is also an almost complex surface in $\C\mathsf{P}^{3}$.
Note that $\s{SU}(2)\cdot o$ is well-positioned by Corollary~\ref{cor:CanonicallyEmbeddedWellPositioned}.
Indeed, its tangent space at every point is always contained in the horizontal subspace of the twistor fibration.
	
\subsubsection{The sphere $\mathsf{SU}(2)_{\Lambda_{3}}\cdot o$}
	
Consider the unique complex irreducible representation $\Lambda_{3}$ of $\mathsf{SU}(2)$ of dimension four.
Since this representation is unitary and of symplectic type, it restricts to a homomorphism $\mathsf{SU}(2)\to\mathsf{Sp}(2)$.
To get an explicit description of this map at the Lie algebra level (which is enough for our purposes), it suffices to see that the linear map $\g{su}(2)\to \g{sp}(2)$ defined via
\begin{equation*}
	\begin{aligned}
		H&\mapsto i(E_{11}+3E_{22}), & E&\mapsto\sqrt{3}(E_{21}-E_{12})+2jE_{11} ,& F&\mapsto -2kE_{11}-i\sqrt{3}(E_{12}+E_{21}),
	\end{aligned}
\end{equation*}
is a Lie algebra homomorphism which is also irreducible as a representation, so by uniqueness it must be equal to $\Lambda_3$.

We denote by $\mathsf{SU}(2)_{\Lambda_{3}}$ the image of the previous homomorphism.
The Lie algebra of this group satisfies
\begin{equation*}
	\g{su}(2)_{\Lambda_{3}}=\vecspan\Bigl\{
		i(E_{11}+3E_{22}),
		\sqrt{2}e_{1}+\sqrt{3}e_{3},
		\sqrt{2}e_{2}+\sqrt{3}e_{4}
	\Bigr\}=(\g{su}(2)_{\Lambda_{3}}\cap \g{k})\oplus (\g{su}(2)_{\Lambda_{3}}\cap \g{p}),
\end{equation*}
so it is canonically embedded in $\g{sp}(2)$.
As a consequence, the orbit $\mathsf{SU}(2)_{\Lambda_{3}}\cdot o$ is a totally geodesic submanifold of $\C\mathsf{P}^{3}$ with tangent space $\g{p}_{\mathsf{SU}(2)_{\Lambda_{3}}\cdot o}=\vecspan\left\{ \sqrt{2}e_{1}+\sqrt{3}e_{3},\sqrt{2}e_{2}+\sqrt{3}e_{4} \right\}$.
The isotropy subgroup at $o$ is the $\mathsf{U}(1)$ subgroup with Lie algebra generated by $\operatorname{diag}(i,3i)$, so this orbit is actually a sphere.
Since the sectional curvature of $\g{p}_{\mathsf{SU}(2)_{\Lambda_{3}}\cdot o}$ is $1/5$, we see that $\mathsf{SU}(2)_{\Lambda_{3}}\cdot o$ is a sphere of radius $\sqrt{5}$.
One sees that $\g{p}_{\mathsf{SU}(2)_{\Lambda_{3}}\cdot o}$ is $J$-invariant, and by homogeneity it follows that $\mathsf{SU}(2)_{\Lambda_{3}}\cdot o$ is an almost complex submanifold of $\C\mathsf{P}^{3}$.
Clearly, $\mathsf{SU}(2)_{\Lambda_{3}}\cdot o$ is not well-positioned at $o$, so it is not well-positioned.
	
\subsection{The flag manifold}\label{subsec:exflag} We describe the totally geodesic examples of the nearly Kähler flag manifold $\s{F}(\C^3)$.
	
\subsubsection{The real flag manifold $\mathsf{F}(\R^{3})$ \cite[Example~3.1]{Storm}}\label{ex:realflag}
There is a natural embedding of the real flag manifold $\mathsf{F}(\R^{3})$ in $\mathsf{F}(\C^{3})$ which is induced by the usual inclusion of $\R^3$ in $\C^3$.
This submanifold can also be seen as the orbit $\mathsf{SO}(3)\cdot o$ of the standard $\mathsf{SO}(3)\subseteq \mathsf{SU}(3)$, and the corresponding isotropy subgroup is $\mathsf{SO}(3)_{o}=\mathbb{Z}_{2}\oplus \mathbb{Z}_{2}$, so we get $\mathsf{F}(\R^{3})=\mathsf{SO}(3)/(\mathbb{Z}_{2}\oplus\mathbb{Z}_{2})=\mathsf{Sp}(1)/\mathsf{Q}_{8}$, where $\mathsf{Q}_{8}=\{\pm 1, \pm i, \pm j, \pm k\}$.
Observe that $\g{so}(3)$ is canonically embedded in $\g{su}(3)$, since $\g{so}(3)\subseteq \g{p}$. Thus, Theorem~\ref{th:CharacterizationDInvariantTGSubmanifolds} allows us to conclude that $\mathsf{F}(\R^{3})$ is totally geodesic in $M$, and its tangent space is precisely $\g{p}_{\mathsf{F}(\R^{3})}=\g{so}(3)=\vecspan\{e_{1},e_{3},e_{5}\}$.
A direct computation shows that $\mathsf{F}(\R^{3})$ has constant curvature equal to $1/8$.
Furthermore, we have the equality $J (\g{so}(3))=\vecspan\{e_{2},e_{4},e_{6}\}$, implying that the inclusion $\mathsf{F}(\R^{3})\subseteq \mathsf{F}(\C^{3})$ is Lagrangian.
Finally, note that $\s{F}(\R^{3})$ is well-positioned at $o$, so $\s{F}(\R^{3})$ is well-positioned by Corollary~\ref{cor:CanonicallyEmbeddedWellPositioned}.
	
\subsubsection{The Berger sphere \cite[Example~3.2]{Storm}}\label{ex:bergerflag}
Let $\mathsf{SU}(2)_{(1,0,1)}$ denote the subgroup of $\mathsf{SU}(3)$ that fixes $(1,0,1)\in \C^{3}$.
This subgroup is conjugate to the standard $\mathsf{SU}(2)$ inside $\mathsf{SU}(3)$.
The Lie algebra $\g{su}(2)_{(1,0,1)}$ is the set of all $X\in \g{su}(3)$ such that $X(1,0,1)=0$, and its projection to $\g{p}$ is
spanned by $\{e_{1}+e_{3},e_{2}-e_{4},e_{6}\}$.
It is easy to check that the isotropy subgroup of $\mathsf{SU}(2)_{(1,0,1)}$ at $o$ is trivial, so the corresponding orbit $\mathsf{SU}(2)_{(1,0,1)}\cdot o$ is diffeomorphic to a $3$-sphere.
A direct application of~\eqref{eq:SecondFundamentalFormOrbit} yields that $\mathsf{SU}(2)_{(1,0,1)}\cdot o$ is a totally geodesic submanifold of $\mathsf{F}(\C^{3})$ isometric to $\mathsf{S}^{3}_{\C,1/4}(\sqrt{2})$ and whose tangent space is given by $\g{p}_{\mathsf{S}^{3}_{\C,1/4}(\sqrt{2})}=\vecspan\{e_{1}+e_{3},e_{2}-e_{4},e_{6}\}$.
The subspace $\g{p}_{\mathsf{S}^{3}_{\C,1/4}(\sqrt{2})}$ is also invariant under $D$.
However, the Lie algebra $\g{su}(2)_{(1,0,1)}$ is not canonically embedded in $\g{su}(3)$.
In this case, the connected subgroup given by Theorem~\ref{th:CharacterizationDInvariantTGSubmanifolds} is actually the subgroup	$\s{U}(2)_{(1,0,1)}$ that fixes the complex line generated by $(1,0,1)$.
A direct calculation shows that $J(\g{p}_{\mathsf{S}^{3}_{\C,1/4}(\sqrt{2})})=\g{p}\ominus \g{p}_{\mathsf{S}^{3}_{\C,1/4}(\sqrt{2})}$, so the Berger sphere is Lagrangian.
Note from the expression of $\g{p}_{\mathsf{S}^{3}_{\C,1/4}(\sqrt{2})}$ that $\mathsf{S}^{3}_{\C,1/4}(\sqrt{2})$ is not well-positioned.
	
\subsubsection{The torus \cite[Example~3.3]{CwiklinskiVrancken}}
Consider the maximal torus $\mathsf{H}\subseteq\mathsf{SU}(3)$ whose Lie algebra is given by $\g{h}=\vecspan\{e_{1}+e_{3}+e_{5},e_{2}+e_{4}-e_{6}\}$.
Observe that $\g{h}\subseteq \g{p}$, so it is a canonically embedded subalgebra of $\g{g}$, and the orbit $\mathsf{H}\cdot o$ is a totally geodesic surface.
Since $\mathsf{H}\cdot o$ is a quotient of $\mathsf{H}$ by a finite group, it is a compact abelian Lie group itself and hence diffeomorphic to a torus.
However, we see that $\mathsf{H}\cdot o$ is not isometric to the standard flat torus.
Indeed, in order to determine the isometry type of $\mathsf{H}\cdot o$, we compute the preimage $\exp_{o}^{-1}(o)$.
For this, we need a description of the Riemannian exponential map $\exp_{o}\colon \g{h}\to \mathsf{H}\cdot o\subseteq M$, which is merely the restriction of the Riemannian exponential map of $M$.
To this end, we define the following orthonormal vectors $X=\frac{1}{\sqrt{3}}(e_{1}+e_{3}+e_{5})$, $Y=\frac{1}{\sqrt{3}}(e_{2}+e_{4}-e_{6})$ of $\g{h}$.
Then the exponential map of $\g{h}$ satisfies $\exp_{o}(u X + v Y)=e^{u X}e^{v Y} \cdot o$, and this element is equal to $o$ if and only if $e^{uX}e^{vY}$ is a diagonal matrix.
The solutions to $\exp_{o}(u X + v Y)=o$ are given by the lattice $\Lambda=\vecspan_{\mathbb{Z}}\left\{ \left( \sqrt{2}\pi ,\sqrt{2}\pi/\sqrt{3} \right),\left( 0,2\sqrt{2}\pi/\sqrt{3} \right) \right\}$.
Since $\exp_{o}$ is $\g{h}$-equivariant, in the sense that it satisfies the equation
$
\exp_{o}(T+S)=\Exp(T)\cdot \exp_{o}(S),
$
it follows that $\exp_{o}$ is actually a Riemannian covering map, so $\mathsf{H}\cdot o$ is isometric to the quotient $\R^{2}/\Lambda$.
We will refer to this orbit as $\mathsf{T}^2_{\Lambda}=\mathsf{H}\cdot o=\R^{2}/\Lambda$.
Note that $\mathsf{H}\cdot o$ is not a product $\mathsf{S}^{1}(r_{1})\times \mathsf{S}^{1}(r_{2})$, since the closest points in $\Lambda\setminus\{(0,0)\}$ to the origin are those in
$
\left\{
	\left( \pm\sqrt{2}\pi ,\pm\sqrt{6}\pi/3 \right),
	\left( \pm\sqrt{2}\pi ,\mp\sqrt{6}\pi/3 \right),
	\left( 0,\pm2\sqrt{6}\pi/3 \right)
\right\},
$
and thus there exist three different closed geodesics of minimum length passing through $o$, as opposed to two in the case of $\mathsf{S}^{1}(r)\times\mathsf{S}^{1}(r)$ or one in the case of $\mathsf{S}^{1}(r_{1})\times\mathsf{S}^{1}(r_{2})$ with $r_{1}\neq r_{2}$.
Let $\g{p}_{\mathsf{T}^2_{\Lambda}}=\g{h}=\vecspan\{e_{1}+e_{3}+e_{5},e_{2}+e_{4}-e_{6}\}$
be the tangent space of this surface, then $J(\g{p}_{\mathsf{T}^2_{\Lambda}})=\g{p}_{\mathsf{T}^2_{\Lambda}}$, and by homogeneity it follows that $\mathsf{T}^2_{\Lambda}$ is an almost complex surface in $\mathsf{F}(\C^{3})$.
However, it is clear from the expression of $\g{p}_{\s{T}^2_{\Lambda}}$ that $\s{T}^2_{\Lambda}$ is not well-positioned.
	
\subsubsection{The fiber of the submersion $\mathsf{F}(\C^{3})\to \C\mathsf{P}^{2}$ \cite[Example~3.1]{CwiklinskiVrancken}}
Recall that the fibers of the Riemannian submersion $\mathsf{F}(\C^{3})\to\C\mathsf{P}^{2}$ are totally geodesic.
The fiber through $o$ is $\C\mathsf{P}^{1}=\mathsf{U}(2)\cdot o=\mathsf{SU}(2)\cdot o$, where the isotropy subgroup of $\mathsf{SU}(2)$ at $o$ is $\mathsf{U}(1)$.
The tangent space, as said before, is $\g{p}_{\C\mathsf{P}^{1}}=\g{p}_{1}$.
Since the sectional curvature of $\g{p}_{1}$ is $2$, it follows that $\mathsf{SU}(2)\cdot o$ is isometric to the round sphere of radius $1/\sqrt{2}$.
Furthermore, $J(\g{p}_{1})=\g{p}_{1}$, so $\mathsf{SU}(2)\cdot o$ is an almost complex surface in $\mathsf{F}(\C^{3})$.
Clearly, $\C\s{P}^{1}$ is well-positioned as it is a fiber itself.
	
\subsubsection{The sphere \cite[Example~3.2]{CwiklinskiVrancken}}
Consider $E=\vecspan\{(0,1,0),(1,0,-1),(i,0,i)\}$, which is a real form of $\C^3$, and let $\sigma\colon \C^{3}\to \C^{3}$ be the associated real structure.
Then the normalizer
\begin{equation*}
	\mathsf{SO}(3)^{\sigma}=\{g\in \mathsf{SU}(3)\colon g(E)=E\}=\{g\in\mathsf{SU}(3)\colon g\sigma=\sigma g\}
\end{equation*}
is a subgroup of $\mathsf{SU}(3)$ conjugate to the standard $\mathsf{SO}(3)$.
The corresponding Lie algebra is given by $\g{so}(3)^{\sigma}=\vecspan\{ \operatorname{diag}(i,0,-i),e_{1}+e_{3},e_{2}+e_{4}\}$, and in particular it is canonically embedded in $\g{su}(3)$.
One sees that the isotropy subgroup $\mathsf{SO}(3)^{\sigma}_{o}$ is the $\mathsf{U}(1)$ subgroup generated by $\g{so}(3)^{\sigma}\cap\g{k}$, so we obtain that $\mathsf{SO}(3)^{\sigma}\cdot o$ is a totally geodesic submanifold of $\mathsf{F}(\C^{3})$ that is diffeomorphic to a sphere.
Its tangent space at $o$ is
$
	\g{p}_{\g{so}(3)^{\sigma}}=\vecspan\{e_{1}+e_{3},e_{2}+e_{4}\},
$
and this plane has sectional curvature $1/2$, so $\mathsf{SO}(3)^{\sigma}\cdot o$ is isometric to a two-dimensional sphere of radius $\sqrt{2}$.
The equality $J(\g{p}_{\g{so}(3)^{\sigma}})=\g{p}_{\g{so}(3)^{\sigma}}$ implies that $\mathsf{SO}(3)^{\sigma}\cdot o$ is an almost complex surface in $\mathsf{F}(\C)^{3}$.
Since $\mathsf{SO}(3)^{\sigma}\cdot o$ is not well-positioned at $o$, it is not well-positioned.
	
\subsubsection{Real projective planes inside $\mathsf{F}(\R^{3})$}
\label{subsec:projplanerealflag}
Recall that $\mathsf{F}(\R^{3})$ is a Lagrangian submanifold with constant sectional curvature.
In particular, every $2$-plane inside $\g{p}_{\mathsf{F}(\R^{3})}$ will give rise to a totally geodesic surface inside $\mathsf{F}(\R^{3})$ (hence inside $\mathsf{F}(\C^{3})$).	
We describe these examples.
	
As we saw earlier, $\s{F}(\R^{3})$ can be regarded as the Lie group quotient $\s{Sp}(1)/\s{Q}_{8}$ with a metric of constant curvature equal to $1/8$.
As a consequence, $\s{F}(\R^{3})$ is isometric to the quotient $\s{S}^{3}(2\sqrt{2})/\s{Q}_{8}$, and the projection map $\pi\colon \s{S}^{3}(2\sqrt{2})\to\s{F}(\R^{3})$ is a Riemannian covering map.
This projection is equivariant with respect to the double cover $\s{Sp}(1)\to\s{SO}(3)$.
	
We view $\H\equiv\R^{4}$.
Consider the totally geodesic embedding $h\colon\s{S}^{2}(2\sqrt{2})\hookrightarrow \s{S}^{3}(2\sqrt{2})$ defined by $h(x,y,z)=xi+yj+zk$.
Then $\pi\circ h$ is also a totally geodesic immersion of the $2$-sphere satisfying $\pi\circ h(-x,-y,-z)=\pi\circ h(x,y,z)$ for all $(x,y,z)\in\s{S}^{2}(2\sqrt{2})$, so it factors through an isometric immersion $\phi\colon \R\s{P}^{2}(2\sqrt{2})\to\s{F}(\R^{3})$ defined via
\begin{equation}\label{eq:RP2Immersion}
	\phi([x:y:z])=(xi+yj+zk)\s{Q}_{8}, \quad (x,y,z)\in\s{S}^{2}(2\sqrt{2}).
	\end{equation}
As the projection $\s{S}^{2}(2\sqrt{2})\to\R\s{P}^{2}(2\sqrt{2})$ is also a covering map, we deduce that $\phi$ is a totally geodesic immersion.
Note that $\phi$ is not injective, as the points $[2\sqrt{2}:0:0]$, $[0:2\sqrt{2}:0]$ and $[0:0:2\sqrt{2}]$ have the same image.
	
\begin{proposition}
	The map $\phi\colon \R\s{P}^{2}(2\sqrt{2})\to\s{F}(\R^{3})$ defined by \eqref{eq:RP2Immersion} is a non-injective inextendable compatible totally geodesic immersion.
\end{proposition}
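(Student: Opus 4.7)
The plan is to verify the remaining three properties of $\phi$ beyond non-injectivity, which is immediate from $i,j,k\in\s{Q}_{8}$ and the defining formula \eqref{eq:RP2Immersion}. The totally geodesic character of $\phi$ is inherited from the totally geodesic immersion $\pi\circ h$ via the Riemannian covering $\s{S}^{2}(2\sqrt{2})\to\R\s{P}^{2}(2\sqrt{2})$, as already indicated in the paragraph preceding the proposition, so the substantive work is to establish compatibility and inextendability.

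Inextendability will be obtained from Proposition~\ref{prop:TGInextendableCharacterization}. Since both $\R\s{P}^{2}(2\sqrt{2})$ and $\s{F}(\R^{3})$ are compact, hence complete, every maximal geodesic of $\R\s{P}^{2}(2\sqrt{2})$ is defined on all of $\R$, and its composition with $\phi$ is a geodesic of $\s{F}(\R^{3})$ also defined on $\R$, which is therefore maximal in $\s{F}(\R^{3})$. Condition~(ii) of the proposition thus holds trivially and gives inextendability.

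Compatibility is the substantive step. Identifying $\s{S}^{3}(2\sqrt{2})$ with $\s{Sp}(1)$ as a smooth manifold, the Riemannian covering $\pi$ is the quotient by the right action of $\s{Q}_{8}$, so two lifts $\tilde{p}_{1},\tilde{p}_{2}\in\s{S}^{2}(2\sqrt{2})$ of distinct points of $\R\s{P}^{2}(2\sqrt{2})$ satisfy $\phi([\tilde{p}_{1}])=\phi([\tilde{p}_{2}])$ if and only if $h(\tilde{p}_{2})=h(\tilde{p}_{1})\eta$ for some $\eta\in\s{Q}_{8}\setminus\{\pm 1\}=\{\pm i,\pm j,\pm k\}$; the cases $\eta=\pm 1$ force $[\tilde{p}_{1}]=[\tilde{p}_{2}]$. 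Since the right translation $R_{\eta}$ on $\s{Sp}(1)$ is a deck transformation of $\pi$, the pushforward tangent planes $\phi_{*[\tilde{p}_{i}]}(T_{[\tilde{p}_{i}]}\R\s{P}^{2}(2\sqrt{2}))\subseteq T_{\phi([\tilde{p}_{1}])}\s{F}(\R^{3})$ coincide if and only if the two totally geodesic $2$-spheres $h(\s{S}^{2}(2\sqrt{2}))$ and $h(\s{S}^{2}(2\sqrt{2}))\cdot\eta^{-1}$ inside $\s{S}^{3}(2\sqrt{2})$ share the same tangent plane at the common point $h(\tilde{p}_{1})$. A short pure-quaternion computation, using the fact that $\eta$ is a pure imaginary unit, shows that $h(\s{S}^{2}(2\sqrt{2}))\cdot\eta^{-1}$ is the sphere of radius $2\sqrt{2}$ in the $3$-dimensional subspace $\eta^{\perp}\subseteq\H$, which is distinct from $\operatorname{Im}(\H)$; hence these are two distinct great $2$-spheres of $\s{S}^{3}(2\sqrt{2})$, they meet transversally along a great circle, and their tangent planes at any common point necessarily differ.

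The principal obstacle is the compatibility verification: one must translate the distinctness of tangent planes at self-intersections of $\phi$ into the transversal intersection of two distinct great $2$-spheres of $\s{S}^{3}(2\sqrt{2})$ obtained by right multiplication by elements of $\s{Q}_{8}$, after which the conclusion becomes elementary.
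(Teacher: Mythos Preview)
Your argument is correct. For inextendability you invoke Proposition~\ref{prop:TGInextendableCharacterization} exactly as the paper does (the paper phrases it as ``since $\R\s{P}^{2}(2\sqrt{2})$ is complete, we only need to show compatibility''), though note that this proposition already presupposes compatibility, so logically the compatibility check must come first.

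For compatibility the two arguments diverge. The paper fixes $\lambda=i$, computes explicitly the points $p=[0:y:z]$ and $q=[0:z:-y]$ with $\phi(p)=\phi(q)$, writes down bases of the two pushforward planes in the model $\R(yj+zk)^{\perp}$, transports one via right multiplication by $i$, and compares coordinates; it then declares the cases $\lambda=j,k$ analogous. Your route is more structural: you observe that the two tangent planes at a self-intersection lift through the covering $\pi$ to the tangent planes of the great $2$-spheres $h(\s{S}^{2})=\s{S}^{3}\cap\operatorname{Im}(\H)$ and $h(\s{S}^{2})\cdot\eta^{-1}=\s{S}^{3}\cap\eta^{\perp}$ at their common point, and since $\operatorname{Im}(\H)\neq\eta^{\perp}$ for $\eta\in\{\pm i,\pm j,\pm k\}$ these two linear sections are distinct great $2$-spheres, hence have distinct tangent planes everywhere on their intersection circle. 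This handles all six values of $\eta$ at once via the quaternion identity $\operatorname{Im}(\H)\cdot\eta^{-1}=\eta^{\perp}$ for a pure unit $\eta$, and avoids any coordinate computation. The paper's version is more concrete and self-contained; yours is cleaner and makes the underlying reason (transversality of distinct great $2$-spheres) transparent.
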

	
\begin{proof}
	Since $\R\s{P}^{2}(2\sqrt{2})$ is complete, we only need to show that $\phi$ is compatible.
	This is equivalent to proving the following: for every pair of different points $p=[x:y:z]$ and $q=[x':y':z']\in\R\s{P}^{2}(2\sqrt{2})$ such that $\phi(p)=\phi(q)$, the images $\phi_{*p}(T_{p}\R\s{P}^{2}(2\sqrt{2}))$ and $\phi_{*q}(T_{q}\R\s{P}^{2}(2\sqrt{2}))$ are different subspaces of $T_{\phi(p)}\s{F}(\R^{3})$.
		
	Let $p=[x:y:z]$ and $q=[x':y':z']$ be as above.
	Then $\phi(p)=\phi(q)$ implies that there is an element $\lambda\in\s{Q}_{8}$ such that $x'i+y'j+z'k=(xi+yj+zk)\lambda$.
	Changing the sign of the homogeneous coordinates of $q$ if necessary, we may assume that $\lambda\in \{i,j,k\}$.
	We will deal with the case $\lambda=i$, as the other two cases can be treated in an analogous manner.
	In this setting, we obtain that $x'i+y'j+z'k=(xi+yj+zk)i=-x-yk+zj$, which yields $x=x'=0$, $y'=z$ and $z'=-y$, so $p=[0:y:z]$ and $q=[0:z:-y]$. 
	
	Let us compute $\phi_{*p}(T_{p}\R\s{P}^{2}(2\sqrt{2}))$.
	On the one hand, we can identify the tangent space of $\R\s{P}^{2}(2\sqrt{2})$ at $p=[0:y:z]$ with the tangent space $T_{(0,y,z)}\s{S}^{2}(2\sqrt{2})\equiv \R (0,y,z)^{\perp}=\spann\{(1,0,0),(0,-z,y)\}$.
	Moreover, we can also view $T_{(yj+zk)\s{Q}_{8}}\s{F}(\R^{3})$ as $T_{(yj+zk)}\s{S}^{3}(2\sqrt{2})\equiv\R(yj+zk)^{\perp}$.
	Under these identifications, $\phi_{*p}(T_{p}\R\s{P}^{2}(2\sqrt{2}))$ is spanned by $\phi_{*p}(1,0,0)=i$, and  $\phi_{*p}(0,-z,y)=-zj+yk$.		
	We now determine $\phi_{*q}(T_{q}\R\s{P}^{2}(2\sqrt{2}))$.
	For this, we have identifications $T_{q}\R \s{P}^{2}(2\sqrt{2})\equiv \R(0,z,-y)^{\perp}=\spann \{(1,0,0),(0,y,z)\}$ and $T_{(-zj+yk)}\s{F}(\R^{3})\equiv \R(-zj+yk)^{\perp}$.
	We obtain that $\phi_{*q}(T_{q}\R\s{P}^{2}(2\sqrt{2}))$ is generated by $
	\phi_{*q}(1,0,0)=i$, and $\phi_{*q}(0,y,z)=yj+zk$.
	In order to finish, observe that the composition of the isomorphisms \[\R(yj+zk)^{\perp}\to T_{(yj+zk)\s{Q}_{8}}\s{F}(\R^{3})=T_{(zj-yk)\s{Q}_{8}}\s{F}(\R^{3})\to \R(zj-yk)^{\perp}\]
	is simply right multiplication by $i$, so $\phi_{*p}(T_{p}\R\s{P}^{2}(2\sqrt{2}))$, regarded as a subspace of $\R (zj-yk)^{\perp}$, is spanned by $1$ and $yj+zk$.
	Thus, we obtain that the images of $\phi_{*p}$ and $\phi_{*q}$ are different, and therefore $\phi$ is a compatible immersion. \qedhere
\end{proof}
	
The next lemma shows that, up to congruence, $\R \s{P}^{2}(2\sqrt{2})$ is the unique totally geodesic surface of $\s{F}(\R^{3})$.
	
\begin{lemma}
	Let $\psi\colon \Sigma \to \s{F}(\R^{3})$ be a compatible totally geodesic immersion of a complete two-dimensional Riemannian manifold.
	Then $\psi$ is congruent to $\phi$ under an element of $\s{SO}(3)$.
	In particular, $\psi$ and $\phi$ are congruent as immersions into $\s{F}(\C^{3})$ as well.
\end{lemma}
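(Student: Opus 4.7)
I would argue by lifting to the universal cover $\s{S}^{3}(2\sqrt{2})\to\s{F}(\R^{3})$, where totally geodesic $2$-surfaces are completely understood, and then transporting the result back down to $\s{F}(\R^{3})$ using the left-multiplication action of $\s{Sp}(1)$ on $\H$.

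More precisely, since $\s{F}(\R^{3})$ has constant sectional curvature $1/8$, the Riemannian manifold $\Sigma$ inherits this curvature and, being complete and two-dimensional, its universal cover is isometric to $\s{S}^{2}(2\sqrt{2})$. Let $\rho\colon \s{S}^{2}(2\sqrt{2})\to \Sigma$ be this Riemannian cover and $\pi\colon \s{S}^{3}(2\sqrt{2})\to\s{F}(\R^{3})$ the standard Riemannian cover by the quaternions $\s{Sp}(1)=\s{S}^{3}(2\sqrt{2})$ rescaled. The composition $\psi\circ\rho$ is a totally geodesic immersion and, since its domain is simply connected, it lifts along $\pi$ to a totally geodesic immersion $\hat\psi\colon \s{S}^{2}(2\sqrt{2})\to \s{S}^{3}(2\sqrt{2})$. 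By the classification of complete totally geodesic surfaces in the round $3$-sphere, $\hat\psi$ is then an isometry onto a great $2$-sphere $V\cap \s{S}^{3}(2\sqrt{2})$ for some $3$-dimensional real subspace $V\subseteq \H$.

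Next I would observe that the $\s{SO}(3)$-action on $\s{F}(\R^{3})$ is induced by the left-multiplication action of $\s{Sp}(1)$ on $\s{S}^{3}(2\sqrt{2})$ (the central $\{\pm 1\}$ is absorbed into $\s{Q}_{8}$). This action is transitive on unit vectors of $\H$ and hence on $3$-dimensional subspaces of $\H$ (via their unit normals). Picking $q\in\s{Sp}(1)$ with $qV=\operatorname{Im}\H$ and writing $g\in\s{SO}(3)$ for its class, we may replace $\psi$ by $g\circ\psi$ to reduce to the case $V=\operatorname{Im}\H$. Then both $\hat\psi$ and the standard inclusion $\iota\colon \s{S}^{2}(2\sqrt{2})\to\operatorname{Im}\H\cap \s{S}^{3}(2\sqrt{2})$, $\iota(x,y,z)=xi+yj+zk$, are isometries onto the same great $2$-sphere, so $\hat\psi=\iota\circ\tilde h$ for some $\tilde h\in\s{O}(3)$. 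If $\tau\colon \s{S}^{2}(2\sqrt{2})\to\R\s{P}^{2}(2\sqrt{2})$ denotes the antipodal cover, this yields $\psi\circ\rho=\pi\circ\iota\circ\tilde h=\phi\circ\tau\circ\tilde h$.

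The final step is to descend through $\rho$. Because $\tilde h\in\s{O}(3)$ commutes with the antipodal map (which is central in $\s{O}(3)$), the map $\phi\circ\tau\circ\tilde h$ factors through the antipodal identification on $\s{S}^{2}(2\sqrt{2})$, and thus through $\rho$ as $\phi\circ h$ for a uniquely determined isometry $h\colon \Sigma\to\R\s{P}^{2}(2\sqrt{2})$. The alternative possibility $\Sigma\cong \s{S}^{2}(2\sqrt{2})$ is excluded by the compatibility hypothesis: in that scenario a direct computation shows that antipodal points of $\s{S}^{2}(2\sqrt{2})$ would have the same image and the same tangent plane in $T_{\psi(x)}\s{F}(\R^{3})$, contradicting injectivity of $\tilde\psi$. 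Unwinding the reduction yields $\psi=g^{-1}\circ\phi\circ h$, establishing the claimed congruence; and since $\s{SO}(3)\subseteq \s{SU}(3)\subseteq I(\s{F}(\C^{3}))$, the same $g^{-1}$ witnesses congruence in $\s{F}(\C^{3})$ as well. The main delicacy lies in tracking the two deck groups correctly across the diagram, and it is precisely the centrality of $-\operatorname{Id}\in\s{O}(3)$ that makes the descent of $\tilde h$ through $\rho$ go through.
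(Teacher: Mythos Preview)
Your argument is correct and follows essentially the same route as the paper: pass to the universal cover $\s{S}^{3}(2\sqrt{2})\to\s{F}(\R^{3})$, identify totally geodesic $2$-surfaces there with great spheres, and use the left $\s{Sp}(1)$-action (descending to $\s{SO}(3)$) to normalise. The only cosmetic difference is that the paper packages the final step via the uniqueness of inextendable compatible totally geodesic immersions with prescribed tangent data (exhibiting a $\phi_{a}$ through every pair $(p,V)$), whereas you lift $\psi$ itself and descend explicitly; both arguments rest on the same geometry.
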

\begin{proof}
	Let $a\in\s{Sp}(1)$ be arbitrary, and take the map $\phi_{a}\colon \R\s{P}^{2}(2\sqrt{2})\to \s{F}(\R^{3})=\s{S}^{3}(2\sqrt{2})/\s{Q}_{8}$ given by $\phi_{a}([x:y:z])=a(xi+yj+zk)\s{Q}_{8}$.
	Since left multiplication by $a$ is an isometry, $\phi_{a}$ is also a compatible totally geodesic immersion of $\R\s{P}^{2}$ congruent to $\phi$.
	We show that all totally geodesic surfaces arise in this manner.
	
	Let $\pi\colon \s{S}^{3}(2\sqrt{2})\to \s{F}(\R^{3})$ be the projection map and consider the totally geodesic sphere $\s{S}^{2}_{1}(2\sqrt{2})\subseteq \s{S}^{3}(2\sqrt{2})$ obtained as the intersection of $\spann\{i,j,k\}$ with our $3$-sphere.
	Take any point $p=\pi(z)\in\s{F}(\R^{3})$ and a two-dimensional subspace $V\subseteq T_{\pi(z)}\s{F}(\R^{3})$.
	As $\pi$ is a Riemannian covering map, we may regard $V$ as a subspace of $T_{z}\s{S}^{3}(2\sqrt{2})\equiv \R z ^{\perp}$, where we are considering the standard inner product on $\H\equiv \R^{4}$.
	Let $a\in \s{Sp}(1)$ be orthogonal to $V$ and $z$ and consider the great sphere $\s{S}^{2}_{a}=a\cdot \s{S}^{2}_{1}(2\sqrt{2})$.
	Note that $\s{S}^{2}_{a}$ coincides with the great sphere obtained by intersecting $\s{S}^{3}(2\sqrt{2})$ with the subspace $V\oplus \R z$, and thus the map $h_{a}\colon \s{S}^{2}(2\sqrt{2})\to \s{S}^{3}(2\sqrt{2})$ defined by $h_{a}(x,y,z)=a(xi+yj+zk)$ is the unique compatible totally geodesic immersion passing through $z$ with tangent space $V$.
	Since $h_{a}(-x,-y,-z)=-h_{a}(x,y,z)$, the map $h_{a}$ descends to the map $\phi_{a}\colon \R\s{P}^{2}(2\sqrt{2})\to \s{F}(\R^{3})$, so $\phi_{a}$ passes through $p=\pi(z)$ with tangent space $V$.
	As $p$ and $V$ are arbitrary, we conclude that every complete compatible totally geodesic immersion from a surface to $\s{F}(\R^{3})$ is equivalent to one of the form $\phi_{a}$, and is thus congruent to $\phi\colon \R\s{P}^{2}(2\sqrt{2})\to\s{F}(\R^{3})$.
	The element in $\s{SO}(3)$ that achieves this congruence is the image of $a$ under the double cover $\s{Sp}(1)\to \s{SO}(3)$. \qedhere
\end{proof}	
	
Clearly, the fact that $\phi(\R\mathsf{P}^{2}(2\sqrt{2}))$ is contained in a Lagrangian submanifold implies that $\phi$ is totally real.
Note that none of these submanifolds are well-positioned.
Indeed, the totally geodesic $\R\s{P}^{2}(2\sqrt{2})$ corresponding to the subspace $\g{v}=\spann\{e_{1}+e_{3},e_{5}\}$ is not well-positioned (since it is not well-positioned at $o$), and because all of these submanifolds are congruent to this $\R\s{P}^{2}(2\sqrt{2})$ by an element of $\s{SO}(3)$, it follows that no totally geodesic $\R\s{P}^{2}(2\sqrt{2})$ is well-positioned.	
Also, as $\R\s{P}^{2}(2\sqrt{2})$ is not injectively immersed, it can not arise as an extrinsically homogeneous submanifold of $\s{F}(\C^{3})$.

\begin{remark}\label{remark:CompositionCompatibleImmersions}
	Let us consider the unit speed geodesic $\gamma$ of $\R\s{P}^{2}(2\sqrt{2})$ given by the expression $\gamma(t)=\left[\cos \frac{t}{2\sqrt{2}}: \sin \frac{t}{2\sqrt{2}} :0\right]$.
	Then $\gamma$ descends to an injective totally geodesic immersion $f\colon \s{S}^{1}=\R/(2\sqrt{2}\pi\mathbb{Z})\to\R\s{P}^{2}(2\sqrt{2})$ defined via $f([t])=\left[\cos \frac{t}{2\sqrt{2}} : \sin \frac{t}{2\sqrt{2}} : 0\right]$. Thus, $f$ is a compatible totally geodesic immersion.
	We now take the compatible totally geodesic immersion $\phi\colon \R\s{P}^{2}(2\sqrt{2})\to\s{F}(\R^{3})$ defined as in~\eqref{eq:RP2Immersion}.
	The composition $\beta=\phi\circ f\colon \s{S}^{1}\to \s{F}(\R^{3})$ is not compatible.
	Indeed, a short calculation yields
	\[
		\begin{aligned}
		\phi(\gamma(0))=&{}\phi(\gamma(\sqrt{2}\pi))=2\sqrt{2}\s{Q}_{8}, \\
		(\phi\circ\gamma)'(0)=&{}(\phi\circ\gamma)'(\sqrt{2}\pi)=\frac{d}{dt}\bigg\vert_{t=0}2\sqrt{2}\left(\cos \frac{t}{2\sqrt{2}} i + \sin \frac{t}{2\sqrt{2}} j\right)\s{Q}_{8},
		\end{aligned}
	\]
	so $\tilde{\beta}([0])=\tilde{\beta}([\sqrt{2}\pi])$, implying that $\tilde{\beta}$ is not injective.
\end{remark}

\subsection{The almost product $\mathsf{S}^{3}\times\mathsf{S}^{3}$}\label{subsec:exs3s3}We describe the totally geodesic examples of the almost product $\s{S}^3\times \s{S}^3$ equipped with a homogeneous nearly Kähler metric.
	
\subsubsection{The fiber of $\mathsf{S}^{3}\times\mathsf{S}^{3}\to\mathsf{S}^{3}$ \cite[Example~3.1]{ZhangDioosHuVranckenWang}}\label{subsec:S3S3RoundS3}
Let $\Sigma=\mathsf{S}^{3}$ be the fiber of the projection map $(x,y)\mapsto x$, which we know from Subsection~\ref{subsection:S3S3Description} that is a totally geodesic submanifold of $\mathsf{S}^{3}\times\mathsf{S}^{3}$, and coincides with the orbit $\left(\Delta_{1,3}\mathsf{SU}(2)\times\mathsf{SU}(2)_{2}\right)\cdot o$.
It is immediate to check that the normalizer of $\Sigma$ in $\mathsf{G}$ is precisely $\mathsf{N}_{\mathsf{G}}(\Sigma)=\Delta_{1,3}\mathsf{SU}(2)\times\mathsf{SU}(2)_{2}$, and the restricted action $\mathsf{N}_{\mathsf{G}}(\Sigma)\curvearrowright \Sigma$ satisfies
\begin{equation}\label{eq:DoubleCover}
	(g,h,g)\cdot (I,x)=(I,hxg^{-1}), \quad g,h,x\in\mathsf{SU}(2),
\end{equation}
so this action coincides with the double cover $\mathsf{Spin}(4)=\mathsf{SU}(2)\times\mathsf{SU}(2)\to\mathsf{SO}(4)$ acting on $\mathsf{S}^{3}$.
As a consequence, $\Sigma$ is isometric to a round sphere.
A direct calculation yields that its sectional curvature is $3/4$, so we actually have $\Sigma=\mathsf{S}^{3}\left(2/\sqrt{3}\right)$.
The tangent space of $\Sigma$ through $o$ is
$
	\g{p}_{\mathsf{S}^{3}\left( 2/\sqrt{3} \right) }=\g{p}_{1}
$.
A direct calculation yields $J (\g{p}_{1})=\g{p}_{2}$, so $\mathsf{S}^{3}\left( 2/\sqrt{3} \right)$ is a Lagrangian submanifold.
As $\mathsf{S}^{3}\left(2/\sqrt{3}\right)$ is the fiber, it is obviously well-positioned.
	
\subsubsection{The Berger sphere \cite[Example~3.4]{ZhangDioosHuVranckenWang}}\label{subsec:S3S3BergerS3}
Consider the subgroup \[\mathsf{B}=\left\{ \left(g,k,HgH^{-1}\right)\in\mathsf{G}\colon g\in\mathsf{SU}(2),k\in\mathsf{U}(1) \right\},\] where $H\in\mathsf{SU}(2)$ is the element defined in \eqref{eq:su2Matrices} and $\mathsf{U}(1)$ is embedded in $\mathsf{SU}(2)$ diagonally.
The Lie algebra $\g{b}\subseteq \g{g}$ satisfies $\g{b}=\R(H,H,H)\oplus\vecspan\{ e_{1},e_{5},e_{6} \}=(\g{b}\cap\g{k})\oplus(\g{b}\cap\g{p})$,
so the orbit $\mathsf{B}\cdot o$ is a totally geodesic submanifold of $\mathsf{S}^{3}\times\mathsf{S}^{3}$.
As the isotropy subgroup $\mathsf{B}_{o}$ is merely the diagonally embedded $\mathsf{U}(1)$, it follows that $\mathsf{B}\cdot o$ is diffeomorphic to a $3$-sphere.
More precisely, $\mathsf{B}\cdot o$ is the Berger sphere $\mathsf{S}^{3}_{\C,1/3}(2)$. 
Its tangent space at $o$ is $\g{p}_{\mathsf{S}^{3}_{\C,1/3}(2)}=\g{b}\cap\g{p}=\vecspan\{e_{1},e_{5},e_{6}\}$.
One sees that $J(\g{p}_{\mathsf{S}^{3}_{\C,1/3}(2)})=\g{p}\ominus \g{p}_{\mathsf{S}^{3}_{\C,1/3}(2)}$, so $\mathsf{S}^{3}_{\C,1/3}(2)$ is a Lagrangian submanifold.
Finally, a direct application of Corollary~\ref{cor:CanonicallyEmbeddedWellPositioned} yields that $\mathsf{S}^{3}_{\C,1/3}(2)$ is well-positioned.

\begin{remark}
	We note that although the authors in~\cite{ZhangDioosHuVranckenWang} provide six examples of Lagrangian totally geodesic submanifolds of $\s{S}^{3}\times \s{S}^{3}$, the first three are congruent to the round sphere given in Subsection~\ref{subsec:S3S3RoundS3} and the last three are congruent to the Berger sphere described in Subsection~\ref{subsec:S3S3BergerS3}.
	This will be a consequence of Theorem~\ref{th:tg-s3s3-classification}. 
\end{remark}

\subsubsection{The torus \cite[Example 1]{BoltonDillenDioosVrancken}}
	
Let $\mathsf{T}^2$ be the connected subgroup of $\mathsf{G}$ with Lie algebra $\g{t}=\vecspan\{e_{1},e_{4}\}$.
As $\mathsf{T}^2$ is contained in the torus $\mathsf{U}(1)\times\mathsf{U}(1)\times\mathsf{U}(1)$ (where $\mathsf{U}(1)$ is the diagonal subgroup of $\mathsf{SU}(2)$), it follows that
$\mathsf{T}^2$ is a two-dimensional torus.
Furthermore, as $\g{t}\subseteq \g{p}$, we see that $\g{t}$ is canonically embedded, so $\mathsf{T}^2\cdot o$ is a totally geodesic surface of $\mathsf{S}^{3}\times\mathsf{S}^{3}$ diffeomorphic to a torus. 
Consider the exponential map $\exp_{o}\colon\g{t}\to\mathsf{T}^2\cdot o$, which is $\g{t}$-equivariant in the sense that $\exp_{o}(T+S)=\Exp(T)\cdot \exp_{o}(S)$.
This means that $\exp_{o}$ is a Riemannian covering map, and $\mathsf{T}^2\cdot o$ is isometric to the quotient of $\g{t}=\R^{2}$ by the lattice $\Gamma=\exp_{o}^{-1}(o)$.
Now, given $u$, $v\in \R$, we see that $\exp_{o}(u e_{1}+ve_{4})$ is equal to
\[
	\left(\operatorname{diag}\left(e^{\frac{1}{6} i \left(\sqrt{3} u+3 v\right)},e^{-\frac{1}{6} i \left(\sqrt{3} u+3 v\right)}\right),\operatorname{diag}\left(e^{-\frac{i u}{\sqrt{3}}},e^{\frac{i u}{\sqrt{3}}}\right),\operatorname{diag}\left( e^{\frac{1}{6} i \left(\sqrt{3} u-3 v\right)},e^{-\frac{1}{6} i \left(\sqrt{3} u-3 v\right)}\right)\right)\cdot o,
\]
so the corresponding lattice is $\Gamma=\vecspan_{\mathbb{Z}} \left\{ (2\pi/\sqrt{3},2\pi),(4\pi/\sqrt{3},0) \right\}$.
Thus, $\mathsf{T}^2\cdot o$ is isometric to the flat torus $\mathsf{T}^2_{\Gamma}=\R ^{2}/\Gamma$.
It turns out that the closest points in $\Gamma \setminus \{(0,0)\}$ to the origin are those in the set $\left\{
	\pm \left(2 \pi/\sqrt{3} ,2 \pi \right), \pm\left(4\pi/\sqrt{3},0\right), \pm\left(2\pi/\sqrt{3} ,-2 \pi \right)
\right\}$, so $\mathsf{T}^2_{\Gamma}$ admits three closed geodesics of minimum length $\frac{4 \pi}{\sqrt{3}}$ passing through $o$, unlike any Riemannian product of the form $\mathsf{S}^{1}(a)\times\mathsf{S}^{1}(b)$.
By construction, we have that $\g{p}_{\mathsf{T}^2_{\Gamma}}=\vecspan \{e_{1},e_{4}\}$
is a $J$-invariant subspace, so $\mathsf{T}^2_{\Gamma}$ is an almost complex surface inside $\mathsf{S}^{3}\times\mathsf{S}^{3}$.
By Corollary~\ref{cor:CanonicallyEmbeddedWellPositioned}, $\s{T}^2_{\Gamma}$ is a well-positioned totally geodesic submanifold of $\s{S}^{3}\times\s{S}^{3}$.
	
\subsubsection{Not well-positioned totally geodesic spheres \cite[Example~2]{BoltonDillenDioosVrancken}}
	
Let $g=\left(e^{\frac{i\pi}{3}F},I,e^{-\frac{i\pi}{3}F}\right)$ and take the subgroup $g\mathsf{K}g^{-1}\subseteq \mathsf{G}$.
Its Lie algebra $\Ad(g)\g{k}$ satisfies 
\begin{equation*}
	\Ad(g)\g{k}=\R (F,F,F)\oplus \vecspan \{ e_{1}+e_{5},e_{2}-e_{4} \}=\left(\Ad(g)\g{k} \cap \g{k}\right)\oplus \left( \Ad(g)\g{k}\cap \g{p} \right),
\end{equation*}
so $\Ad(g)\g{k}$ is a canonically embedded subalgebra of $\g{g}$ and the orbit $(g\mathsf{K}g^{-1})\cdot o$ is a totally geodesic surface.
The isotropy subgroup $(g\mathsf{K}g^{-1})\cdot o$ is isomorphic to $\mathsf{U}(1)$, and thus $(g\mathsf{K}g^{-1})\cdot o$ is isometric to a round sphere.
A simple computation yields that its sectional curvature is $2/3$, so $(g\mathsf{K}g^{-1})\cdot o$ is a round sphere with radius $\sqrt{3/2}$.
Its tangent space at $o$ is given by $\g{p}_{\mathsf{S}^{2}( \sqrt{3/2} )}=\Ad(g)\g{k}\cap \g{p}=\vecspan\{e_{1}+e_{5},e_{2}-e_{4}\}$.
One sees that $J$ preserves this subspace, and by homogeneity we deduce that $\mathsf{S}^{2}( \sqrt{3/2})$ is an almost complex surface in $\mathsf{S}^{3}\times\mathsf{S}^{3}$.
Clearly, $\mathsf{S}^{2}( \sqrt{3/2} )$ is not well-positioned at $o$.
	
\subsubsection{Great spheres inside $\mathsf{S}^{3}\left(2/\sqrt{3}\right)$}
\label{sec:S3S3RoundS2}
	
Let $\Sigma$ be a totally geodesic surface inside the Lagrangian round $\mathsf{S}^{3}$, so it is merely a great sphere inside $\mathsf{S}^{3}$.
Then $\Sigma\subseteq \mathsf{S}^{3}\times\mathsf{S}^{3}$ is automatically a totally real totally geodesic submanifold.
Plus, $\Sigma$ is also homogeneous.
Indeed, as $\mathsf{S}^{3}=\mathsf{SO}(4)/\mathsf{SO}(3)$ is a symmetric space, its totally geodesic submanifolds are homogeneous, so $\Sigma$ is an orbit of a subgroup $\mathsf{H}\subseteq \mathsf{SO}(4)$.
Let $\phi \colon \mathsf{N}_{\mathsf{G}}(\mathsf{S}^{3})\to \mathsf{SO}(4)$ be the double cover defined as the composition of the isometric action defined in \eqref{eq:DoubleCover} with the projection of $\mathsf{S}^{3}\times\mathsf{S}^{3}$ onto its second factor.
The preimage $\mathsf{L}=\phi^{-1}(\mathsf{H})$ is a subgroup of $\mathsf{G}$ whose orbit at any $p\in \Sigma$ coincides with $\Sigma$, so $\Sigma$ is extrinsically homogeneous.
For instance, one can take the diagonal subgroup $\s{K}\subseteq \s{N}_{\s{G}}(\s{S}^3)$, and the orbit $\s{K}\cdot (I,H)$ is an example of these spheres.
Similarly, if $\Sigma'$ is another totally geodesic surface inside the round $\mathsf{S}^{3}$, there exists an element $g\in\mathsf{SO}(4)$ such that $g\cdot \Sigma = \Sigma '$, which implies that any element $h\in \phi^{-1}(g)$ also satisfies $h\cdot \Sigma=\Sigma'$.
As $\Sigma$ is contained in the fiber of $\s{S}^{3}\times\s{S}^{3}\to\s{S}^{3}$, its tangent space at every point is contained in the vertical subspace, so $\Sigma$ is well-positioned.

It is worth noting that even though theses spheres are extrinsically homogeneous, they are not $D$-invariant.
Indeed, observe that the difference tensor restricted to $\g{p}_{1}$ is given by
\[
	\frac{1}{2}[(X,-2X,X),(Y,-2Y,Y)]_{\g{p}}=\frac{1}{2}(-[X,Y],2[X,Y],-[X,Y]), \quad X,Y\in\g{su}(2),
\]
which means that the $D$-invariant subspaces of $\g{p}_{1}$ are in a one-to-one correspondence with the Lie subalgebras of $\g{su}(2)$.
As $\g{su}(2)$ admits no codimension one subalgebras, it follows that no two-dimensional subspace of $\g{p}_{1}$ (and thus no totally geodesic sphere inside the fiber $\s{S}^3$) is $D$-invariant.
	
\begin{remark}
	\label{rem:alekseevski-nikonorov}
	The round $\s{S}^{2}$ described in Section~\ref{sec:S3S3RoundS2} serves as a counterexample to~\cite[Proposition~2]{AlekseevskyNikonorov}.
	In this result, the authors claim that for a compact geodesic orbit space $M=\s{G}/\s{K}$ with reductive decomposition $\g{g}=\g{k}\oplus\g{p}$, a subspace $\g{v}\subseteq\g{p}$ is tangent to an extrinsically homogeneous totally geodesic submanifold if and only if it generates a canonically embedded subalgebra $\g{s}\subseteq \g{g}$ satisfying $\g{s}_{\g{p}}=\g{v}$ and $U(\g{v},\g{v})\subseteq \g{v}$ (recall that $U$ is defined by~\eqref{eq:UTensor}).
	In the naturally reductive setting, because $U=0$, the proposition would imply that every extrinsically homogeneous totally geodesic submanifold of $M$ is automatically $D$-invariant, which we know from the aforementioned example that is not true in general.
\end{remark}

\section{Riemannian cones and totally geodesic submanifolds}\label{sec:tgcones}

In this section we start by recalling the definition and the basic properties of Riemannian cones.
After that we prove a structure result for totally geodesic submanifolds of Riemannian cones.
We refer the reader to \cite{Leistner} for a detailed account on semi-Riemannian cones.

Let $M$ be a Riemannian manifold, which for our purposes is assumed to be real analytic and complete.
We define its \textit{Riemannian cone} as the warped product $\hat{M}=\R^{+}\times_{f} M$, where $f\colon \R^{+}\to\R^{+}$ is the identity map.
More explicitly, if $g=\langle\cdot,\cdot\rangle$ denotes the metric on $M$ and $r\colon \hat{M}\to\R^{+}$ is the projection on the first factor, the inner product on $\hat{M}$ is given by $\hat{g}=\mathrm{d}r^{2}+r^{2}g$.
	
Every vector field $X\in \g{X}(\R^{+})$ (respectively, $X\in \g{X}(M)$) admits a natural extension to $\hat{M}$, which we will also denote by $X$.
In particular, if $\partial_{r}$ is the unit radial vector field on $\R^{+}$, then its natural extension to $\hat{M}$ is called the \textit{cone vector field} or the \textit{radial vector field}.
Note that at every point $(\tau,p)$ of $\hat{M}$ we have the orthogonal decomposition $T_{(\tau,p)}\hat{M}=\R(\partial_{r})_{(\tau,p)}\oplus T_{p}M$.
The Levi-Civita connection $\hat{\nabla}$ of $\hat{M}$ is characterized by the following equations for $X$, $Y\in \g{X}(M)$:
\begin{equation}\label{eq:ConeConnection}
	\begin{aligned}
		\hat{\nabla}_{\partial_{r}} \partial_{r}={}& 0, & \hat{\nabla}_{X}\partial_{r}={}&\hat{\nabla}_{\partial_{r}}X=\frac{1}{r}X, & \hat{\nabla}_{X}Y{}={}&\nabla_{X}Y-r\langle X,Y\rangle \partial_{r}.
	\end{aligned}
\end{equation}
As a consequence, the curvature tensor $\hat{R}$ is determined by the conditions
\begin{equation}
	\label{eq:curvaturecone}
	\begin{aligned}
		\hat{R}(\partial_{r},\cdot)\cdot={}&\hat{R}(\cdot,\partial_{r})\cdot=\hat{R}(\cdot,\cdot)\partial_{r}=0, \\
		\hat{R}(u,v)w={}&R(u,v)w-\langle v,w\rangle u+\langle u,w\rangle v, & u,v,w\in T_{p}M.
	\end{aligned}	
\end{equation}

Let $X=a\partial_{r}+v\in T_{(\tau,p)}\hat{M}$ be arbitrary.
From the equations above we see that the Jacobi operator associated with $X$ satisfies
\[
	\begin{aligned}
		\hat{R}_{X}(b \partial_{r}+w)={}&\hat{R}(b \partial_{r}+w,a\partial_{r}+v)(a\partial_{r}+v)=\hat{R}(w,v)v={}R_{v}w-\lvert v \rvert^{2}w+\langle v,w \rangle v.
	\end{aligned}
\]
Although the process is more tedious, it is possible to compute the covariant derivatives of the curvature tensor from~\eqref{eq:ConeConnection} and~\eqref{eq:curvaturecone}.
For instance, one can show that
\begin{equation}\label{eq:coneNablaR}
	(\hat{\nabla}_{x}\hat{R})(u,v,w)=(\nabla_{x}R)(u,v,w)-\langle x,\hat{R}(u,v)w \rangle \tau\partial_r
\end{equation}
for all $(\tau,p)\in \hat{M}$ and $x$, $u$, $v$, $w\in T_{p}M$.
We will make use of this formula later on.

\begin{remark}
	Let $M=\s{S}^{n}(1)$ be the standard round sphere of radius one. Then, its cone is the punctured Euclidean space $\R^{n+1}\setminus \{0\}$.
	However, if $r\neq 1$, the cone of $\s{S}^{n}(r)$ is not flat due to~\eqref{eq:curvaturecone}.
	This illustrates that two homothetic manifolds may not have homothetic cones.
\end{remark}
	
We can also describe the geodesics of $\hat{M}$ in terms of those of $M$.
Let $(\tau,p)\in\hat{M}$ be any point and consider the tangent vector $w=a\partial_{r}+v$, where $a\in\R$ and $v\in T_{p} M$ are arbitrary.
From~\cite[Equation~2.7]{Leistner}, we see that the geodesic $\hat{\gamma}(t)=\hat{\exp}_{(\tau,p)}(tw)=(\rho(t),\alpha(t))$ is given in a neighborhood of $t=0$ by
\begin{equation}\label{eq:ConeGeodesics}
	\rho(t)={} \sqrt{(at+\tau)^{2}+\lvert v \rvert^{2}\tau^{2}t^{2}},\quad
	\alpha(t)={}\exp_{p}(f(t)v),
\end{equation}
where
\[
	f(t)=\begin{cases}
		\frac{1}{\lvert v \rvert}\arctan\left(\frac{\lvert v \rvert \tau t}{at+\tau}\right), & v\neq0, \\
		0, & v=0.
		\end{cases}
\]
As a consequence, the maximal interval of definition of $\hat{\gamma}(t)$ contains the interval
\begin{equation}\label{eq:ConeGeodesicMaximalInterval}
	I_{a}=\begin{cases}
		\R, & a=0, \\
		\left(-\frac{\tau}{a},\infty \right), & a>0, \\
		\left(-\infty,-\frac{\tau}{a} \right) & a<0.
	\end{cases}
\end{equation}
Note that $I_{a}$ only depends on $a$.
A consequence of~\eqref{eq:ConeGeodesics} is that if $\gamma$ is a geodesic of $\hat{M}$, its projection to $M$ is a pregeodesic of $M$.
Observe that $\hat{M}$ is never complete.
However, it is clear that it is an analytic Riemannian manifold. The following lemma shows that actually the only incomplete geodesics are those tangent to the cone vector field.
\begin{lemma}\label{lemma:ConeGeodesicsIntervalOfDefinition}
	Let $M$ be a complete Riemannian manifold, $(\tau,p)\in\hat{M}$ a point in its Riemannian cone and $w=a\partial_{r}+v\in T_{(\tau,p)}M$ a unit vector, where $a\in \R$ and $v\in T_{p}M$.
	Consider the maximal geodesic $\hat{\gamma}(t)$ such that $\hat{\gamma}(0)=(\tau,p)$ and $\hat{\gamma}'(0)=w$.
	The following statements hold:
	\begin{enumerate}[\rm (i)]
		\item\label{lemmaItem:VerticalGeodesic} If $v=0$, then the maximal interval of definition of $\hat{\gamma}(t)$ is precisely $I_{a}$.
		\item\label{lemmaItem:BaseGeodesic} If $v\neq 0$, then $\hat{\gamma}(t)$ is defined on all $\R$.
	\end{enumerate}
\end{lemma}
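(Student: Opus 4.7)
The plan is to exploit formula~\eqref{eq:ConeGeodesics} together with~\eqref{eq:ConeGeodesicMaximalInterval} and, case by case, verify that the stated intervals are actually the maximal ones.

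For part~(\ref{lemmaItem:VerticalGeodesic}), the unit condition $a^{2}+\tau^{2}\lvert v\rvert^{2}=1$ together with $v=0$ forces $a=\pm 1$. Plugging $v=0$ into~\eqref{eq:ConeGeodesics} collapses $\alpha(t)$ to the constant $p$ and gives $\rho(t)=\lvert at+\tau\rvert$, so $\hat{\gamma}(t)=(at+\tau,p)$ on the connected component of $\{t\in\R : at+\tau>0\}$ containing $0$, which is precisely $I_{a}$. Since $\hat{M}=\R^{+}\times M$ excludes the apex $r=0$, and $\rho(t)\to 0$ as $t$ tends to the finite endpoint of $I_{a}$, the geodesic exits $\hat{M}$ at that endpoint and admits no further extension.

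For part~(\ref{lemmaItem:BaseGeodesic}), the strategy is to promote the local formula~\eqref{eq:ConeGeodesics} to a globally defined smooth curve on $\R$ and then invoke analyticity. First I would check that $\rho(t)^{2}=(at+\tau)^{2}+\lvert v\rvert^{2}\tau^{2}t^{2}$ is strictly positive for every $t\in\R$: at $t=0$ it equals $\tau^{2}>0$, while for $t\neq 0$ the second summand is strictly positive because $v\neq 0$ and $\tau>0$. Next, I would resolve the apparent singularity of $f$ at $t=-\tau/a$ (when $a\neq 0$) by differentiating the closed-form expression and obtaining
\[
	f'(t)=\frac{\tau^{2}}{(at+\tau)^{2}+\lvert v\rvert^{2}\tau^{2}t^{2}}=\frac{\tau^{2}}{\rho(t)^{2}},
\]
which is smooth (and bounded, since integrable) on all of $\R$. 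Thus $f$ admits a unique smooth extension to $\R$, and completeness of $M$ guarantees that $\alpha(t)=\exp_{p}(f(t)v)$ is defined for every $t\in\R$.

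Finally, I would argue that the smooth curve obtained in this way is actually a geodesic on the whole real line. Since $M$ is assumed to be real analytic, so is the warped product $\hat{M}$, whence its geodesics are analytic curves. The extended curve is analytic on $\R$ and coincides with the maximal geodesic $\hat{\gamma}$ on a neighborhood of $0$, so by unique analytic continuation it must equal $\hat{\gamma}$ on its entire maximal interval and provide the desired extension to all of $\R$. The only delicate point in this plan is the jump behavior of the $\arctan$-formula for $f$ at $t=-\tau/a$; rewriting $f$ via its derivative $\tau^{2}/\rho^{2}$ makes the extension manifestly smooth, and the rest reduces to algebra together with the completeness of $M$.
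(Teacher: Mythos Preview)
Your proof of part~(\ref{lemmaItem:VerticalGeodesic}) matches the paper's almost verbatim. For part~(\ref{lemmaItem:BaseGeodesic}) your argument is correct but takes a genuinely different route. You extend the closed-form expression~\eqref{eq:ConeGeodesics} to all of $\R$ by rewriting $f$ through its derivative $f'(t)=\tau^{2}/\rho(t)^{2}$ (smooth since $\rho>0$ everywhere) and then invoke real analyticity to conclude that the globally defined curve remains a geodesic. The paper instead locates the unique time $t_{0}=-a\tau\in I_{a}$ at which $\rho'(t_{0})=0$, observes that $\hat{\gamma}'(t_{0})$ is then tangent to the link $\{\rho(t_{0})\}\times M$, and restarts the geodesic from that point; since the new initial velocity has vanishing radial component, \eqref{eq:ConeGeodesicMaximalInterval} gives $I_{0}=\R$ and ODE uniqueness finishes the job. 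The paper's argument is more elementary in that it avoids the analyticity hypothesis entirely (the lemma statement only assumes completeness), whereas yours is more direct once the explicit formula is in hand but leans on the standing real-analyticity assumption of the section. Two small points: your analytic-continuation step would read more cleanly if you said explicitly that $\hat{\nabla}_{\gamma'}\gamma'$ is an analytic section along the extended curve vanishing on an open interval, hence identically; and the parenthetical ``bounded, since integrable'' is a slip---integrability does not imply boundedness, and all you actually need is that $\tau^{2}/\rho^{2}$ is smooth on $\R$.
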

\begin{proof}
	Firstly, assume that $v=0$.
	Without loss of generality, we may also suppose that $a$ is positive, so $a=1$.
	From~\eqref{eq:ConeGeodesics}, we see that the curve $\beta\colon (-\tau,\infty)\to \hat{M}$ defined by $\beta(t)=(t + \tau,p)$ is a geodesic of $\hat{M}$ with initial conditions $\beta(0)=(\tau,p)$, $\beta'(0)= \partial_{r}$.
	Since $t+\tau$ converges to zero as $t$ converges to $-\tau$, the curve $\beta$ is a maximal geodesic, thus proving~(\ref{lemmaItem:VerticalGeodesic}).
		
	Now, assume that $v\neq 0$, so that the curve $\beta\colon I_{a}\to \hat{M}$, $\beta(t)=(\rho(t),\alpha(t))$ defined by~\eqref{eq:ConeGeodesics} is a geodesic of $\hat{M}$ with $\beta(0)=p$, $\beta'(0)=w$.
	Note that the derivative
	\[
		\rho'(t)=\frac{2a(at+\tau)+2\lvert v\rvert^{2}\tau^{2}t}{2\rho(t)}=\frac{a\tau+(a^{2}+\lvert v\rvert^{2}\tau^{2})t}{\sqrt{(at+\tau)^{2}+\lvert v \rvert^{2}\tau^{2}t^{2}}}=\frac{a\tau+t}{\sqrt{(at+\tau)^{2}+\lvert v \rvert^{2}\tau^{2}t^{2}}}
	\]
	vanishes at $t_{0}=-a\tau\in I_{a}$ (this last inclusion holds because $\lvert a \rvert \in (0,1)$ and $\tau>0$).
	As a consequence, $\beta'(t_{0})=\alpha'(t_{0})\in T_{\alpha(t_{0})}M$.
	Looking at~\eqref{eq:ConeGeodesicMaximalInterval}, we obtain that the geodesic $\hat{\exp}_{\beta(t_{0})}(\beta'(t_{0}))$ is defined on all of $\R$, and by uniqueness it must coincide with the curve $\beta(t+t_{0})$.
	Thus, $\beta$ can be extended to all $\R$, so~(\ref{lemmaItem:BaseGeodesic}) holds.\qedhere
\end{proof}
Suppose that $f\colon M\to N$ is a smooth map.
We define its associated \textit{cone map} as the map $\hat{f}\colon \hat{M}\to\hat{N}$ given by $\hat{f}(\tau,p)=(\tau,f(p))$.
It can be easily checked that $\hat{f}$ is an isometric immersion (respectively, an isometry) if and only if $f$ is an isometric immersion (respectively, an isometry). In the following, we provide some information about the isometry group of Riemannian cones. 

\begin{proposition}\label{prop:IsometricCones}
	Let $M$ and $N$ be two complete Riemannian manifolds. Every isometry $f\colon \hat{M}\to \hat{N}$ is the cone map of an isometry $g\colon M\to N$. In particular, 	if the cones $\hat{M}$ and $\hat{N}$ are isometric, then $M$ and $N$ are also isometric.
\end{proposition}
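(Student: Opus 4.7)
The plan is to exploit the intrinsic characterization of the radial vector field $\partial_{r}$ provided by Lemma~\ref{lemma:ConeGeodesicsIntervalOfDefinition}: a unit-speed maximal geodesic of $\hat{M}$ fails to be defined on all of $\R$ precisely when it is tangent to $\partial_{r}$. Because any isometry carries maximal geodesics to maximal geodesics with the same interval of definition, this will force $f_{*}\partial_{r}=\pm\partial_{r}$ at every point. A brief comparison of the two possible shapes of the maximal interval of definition will then pin the sign down to $+$, and hence $f$ will preserve both the radial coordinate and the radial flow. The statement will follow immediately.

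Concretely, fix $q=(\tau,p)\in\hat{M}$ and write $f(q)=(\tau',p')$. By Lemma~\ref{lemma:ConeGeodesicsIntervalOfDefinition}, the unit vectors at $q$ for which the maximal geodesic is not defined on all of $\R$ are exactly $\pm\partial_{r}$, so we may write $f_{*q}(\partial_{r})=\varepsilon\partial_{r}$ for some $\varepsilon\in\{\pm 1\}$. The geodesic $t\mapsto (\tau+t,p)$ has maximal domain $(-\tau,\infty)$, and its image under $f$ is the geodesic of $\hat{N}$ starting at $f(q)$ with velocity $\varepsilon\partial_{r}$, whose maximal domain is $(-\tau',\infty)$ if $\varepsilon=+1$ and $(-\infty,\tau')$ if $\varepsilon=-1$, again by Lemma~\ref{lemma:ConeGeodesicsIntervalOfDefinition}. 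Matching the domain $(-\tau,\infty)$ rules out $\varepsilon=-1$ and forces $\tau=\tau'$. Therefore $r\circ f=r$ and $f_{*}\partial_{r}=\partial_{r}$ globally.

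Next, since $f$ intertwines the radial flows $\Phi^{s}(\tau,p)=(\tau+s,p)$ of the two cones, I would define $g\colon M\to N$ by the equation $f(1,p)=(1,g(p))$, where the second coordinate is well-defined thanks to $r\circ f=r$. Flow-equivariance then yields
\[
    f(\tau,p)=f(\Phi^{\tau-1}(1,p))=\Phi^{\tau-1}(f(1,p))=(\tau,g(p))
\]
for every $(\tau,p)\in\hat{M}$, i.e.\ $f=\hat{g}$. Because the induced metric of $\hat{g}$ on the slice $\{1\}\times M$ is exactly $g_{M}$, and similarly for $N$, the map $g$ is automatically an isometry of $(M,g_{M})$ onto $(N,g_{N})$, and the discussion at the start of Section~\ref{sec:tgcones} gives the converse implication for free.

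The only step that requires genuine input is the sign determination for $\varepsilon$; every other step is routine. I do not expect any further obstacle, as the non-completeness of the radial geodesics provides a rigid enough feature of $\hat{M}$ for the argument to work without appealing to curvature identities such as~\eqref{eq:curvaturecone} or~\eqref{eq:coneNablaR}.
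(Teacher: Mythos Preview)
Your argument is correct and follows essentially the same route as the paper: both use Lemma~\ref{lemma:ConeGeodesicsIntervalOfDefinition} to identify $\pm\partial_{r}$ as the only directions with incomplete geodesics, rule out the minus sign and deduce $\tau=\tau'$ by comparing maximal intervals, and then conclude that $f$ is a cone map. The only cosmetic difference is that the paper shows independence of the second component from $\tau$ via a direct computation of $f_{*}\partial_{r}$, whereas you phrase it as flow-equivariance of the radial flow; these are equivalent.
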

\begin{proof}
	We take the subset $\mathcal{C}_{(\tau,p)}=\{ X\in T_{(\tau,p)}\hat{M}\colon \hat{\exp}_{(\tau,p)}(tX)\text{ is defined on }\R \}$ of $T_{(\tau,p)}\hat{M}$ for each $(\tau,p)\in \hat{M}$.
	It is clear from Lemma~\ref{lemma:ConeGeodesicsIntervalOfDefinition} that $\mathcal{C}_{(\tau,p)}=T_{(\tau,p)}\hat{M}\setminus \R (\partial_{r})_{(\tau,p)}$.
	
	Now, let $f\colon \hat{M}\to\hat{N}$ be an isometry and fix $(\tau,p)\in \hat{M}$ with image $(s,q)=f(\tau,p)$.
	Since $f$ is an isometry, it sends $\mathcal{C}_{(\tau,p)}$ to $\mathcal{C}_{(s,q)}$, and thus we have $f_{*(\tau,p)}((\partial_{r})_{(\tau,p)})=\pm (\partial_{r})_{(s,q)}$.
	The equality $f_{*(\tau,p)}((\partial_{r})_{(\tau,p)})=~- (\partial_{r})_{(s,q)}$ is not possible, because in that case the maximal geodesic $\gamma(t)=\hat{\operatorname{exp}}_{(\tau,p)}(t\partial_r)$ would be mapped to the maximal geodesic $\beta(t)=\hat{\operatorname{exp}}_{(s,q)}(-t\partial_r)$, which is not possible because the first geodesic is defined for all $t\geq 0$, while the second one is not.
	We deduce that $f_{*(\tau,p)}((\partial_{r})_{(\tau,p)})=~(\partial_{r})_{f(\tau,p)}$.
	In addition, the maximal geodesic $\gamma(t)=\hat{\operatorname{exp}}_{(\tau,p)}(t\partial_r)$ is mapped to the maximal geodesic $\beta(t)=~\hat{\operatorname{exp}}_{(s,q)}(t\partial_r)$, and their corresponding intervals of definition are $(-\tau,0)$ and $(-s,0)$.
	Hence, $\tau=s$.
	All in all, we have seen that $f$ sends the link $\{\tau\}\times M$ to $\{\tau\}\times N$ for each $\tau\in \R^+$, so $f$ takes the form $f(\tau,p)=(\tau,h(\tau,p))$ for a map $h\colon \hat{M}\to N$.
	Furthermore, at each $(\tau,p)\in\hat{M}$ we have
	\[
		\begin{aligned}
			(\partial_{r})_{f(\tau,p)}={}&f_{*(\tau,p)}((\partial_{r})_{\tau,p})=\frac{d}{dt}\bigg\vert_{t=0}(\tau+t,h(\tau+t,p))=(\partial_{r})_{f(\tau,p)}+h_{*(\tau,p)}((\partial_{r})_{(\tau,p)}),
		\end{aligned}
	\]
	which means that $h_{*(\tau,p)}((\partial_{r})_{(\tau,p)})=0$, so $h$ does not depend on $\tau$.
	In other words, there exists a map $g\colon M\to N$ such that $h(\tau,p)=g(p)$ for all $(\tau,p)\in\hat{M}$.
	The fact that $f$ is an isometry readily implies that $g$ is also an isometry. \qedhere
\end{proof}

\begin{corollary}\label{cor:ConeIsometryGroup}
	For a complete Riemannian manifold $M$, the map $f\in I(M)\mapsto \hat{f}\in I(\hat{M})$ is a Lie group isomorphism.
\end{corollary}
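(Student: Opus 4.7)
The plan is to check that the assignment $\Phi\colon f\in I(M)\mapsto \hat{f}\in I(\hat{M})$ is a bijective group homomorphism that is a homeomorphism with respect to the compact-open topologies; smoothness then follows from automatic continuity for continuous homomorphisms between Lie groups.

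First I would verify that $\Phi$ is a well-defined group homomorphism. Since $\hat{f}$ fixes the radial coordinate and acts as $f$ on the $M$ factor, we have $\hat{f}^{*}(\mathrm{d}r^{2}+r^{2}g)=\mathrm{d}r^{2}+r^{2}f^{*}g=\hat{g}$, so $\hat{f}\in I(\hat{M})$ whenever $f\in I(M)$. The equality $\widehat{f\circ g}(\tau,p)=(\tau,f(g(p)))=\hat{f}(\hat{g}(\tau,p))$ and $\widehat{\operatorname{Id}_{M}}=\operatorname{Id}_{\hat{M}}$ show that $\Phi$ is a group homomorphism. Injectivity of $\Phi$ is immediate from the equality $f=\pi_{M}\circ \hat{f}\vert_{\{1\}\times M}$, where $\pi_{M}$ denotes the projection onto the second factor. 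Surjectivity is exactly the content of Proposition~\ref{prop:IsometricCones} applied to the case $N=M$.

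Next I would check continuity in both directions with respect to the compact-open topology, which is the standard topology underlying the Lie group structure of $I(M)$ and $I(\hat{M})$ provided by the Myers--Steenrod theorem. If $f_{n}\to f$ uniformly on compact subsets of $M$, then for any compact $K\subseteq \hat{M}$ the projection $\pi_{M}(K)\subseteq M$ is compact, and a direct estimate shows that $\hat{f}_{n}\to \hat{f}$ uniformly on $K$, so $\Phi$ is continuous. Conversely, if $\hat{f}_{n}\to \hat{f}$ uniformly on compacts of $\hat{M}$, restricting to the link $\{1\}\times M$ and identifying it with $M$ gives $f_{n}\to f$ uniformly on compacts of $M$, so $\Phi^{-1}$ is also continuous.

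Finally, a continuous group homomorphism between Lie groups is automatically smooth, so $\Phi$ is a Lie group isomorphism. The only genuinely non-formal input is Proposition~\ref{prop:IsometricCones}, which guarantees surjectivity; once that is in hand, the remaining assertions are routine and I do not expect any real obstacle.
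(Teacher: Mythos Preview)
Your proof is correct and follows the same line as the paper: the only substantive ingredient is Proposition~\ref{prop:IsometricCones}, which supplies surjectivity, and the rest (group homomorphism, injectivity, continuity, automatic smoothness) is routine. The paper in fact gives no separate proof of the corollary and treats it as immediate from Proposition~\ref{prop:IsometricCones}; your write-up simply makes explicit the verification that the bijection respects the Lie group structures, which the paper leaves to the reader.
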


Many geometric properties of Riemannian manifolds can be translated into geometric properties of their cones.
For instance, as a consequence of~\eqref{eq:curvaturecone}, $M$ is an Einstein manifold with $\operatorname{Ric}=(n-1)\langle\cdot,\cdot\rangle$ if and only if $\hat{M}$ is Ricci-flat.
It turns out that nearly Kähler structures on six-dimensional manifolds are related to $\s{G}_{2}$-structures on their cones.
We briefly describe this relationship, see \cite{bar} for details.
For the general theory of $\s{G}_{2}$-manifolds, we refer the reader to \cite[Chapter~10]{joyce}.

Let $M$ be a six-dimensional strictly nearly Kähler manifold.
Then $M$ is Einstein with positive Ricci curvature~\cite[Theorem~5.2]{GrayStructure} and after rescaling the metric we may assume that the Einstein constant of $M$ is $\lambda=5$.
In that case, one defines a three-form $\phi\in\Omega^{3}(\hat{M})$ via the equations (for $X,Y,Z\in \g{X}(M)$)
\[
	\begin{aligned}
		\phi(X,Y,Z)={}&r^{3}\langle Y,(\nabla_{X}J)Z \rangle, &
		\phi(\partial_{r},X,Y)=&-\phi(X,\partial_{r},Y)=\phi(X,Y,\partial_{r})=r^2 \langle X,JY\rangle,
	\end{aligned}
\] 
and checks that $\phi$ is a parallel three-form inducing a $\s{G}_{2}$ structure on $\hat{M}$.
In addition, the restricted holonomy group of $\hat{M}$ is precisely $\s{G}_{2}$ whenever $M$ is not locally isometric to $\s{S}^6$.
Conversely, suppose that $\hat{M}$ is a $\s{G}_{2}$-manifold whose structure is given by the parallel three-form $\phi$.
Then the almost complex structure $J$ defined on $M$ by
$
	\langle X,JY \rangle=\phi(\partial_{r},X,Y)
$
is strictly nearly Kähler and $M$ is an Einstein manifold with $\operatorname{Ric}=5\langle\cdot,\cdot\rangle$.
\begin{remark}
Notice that the metrics of the nearly Kähler manifolds $\C\s{P}^3, \s{F}(\C^3), \s{S}^3\times\s{S}^3$ that we are considering have Einstein constants $5/2$, $5/2$ and $5/3$, respectively. Therefore, one would have to rescale these metrics by $1/2$, $1/2$, and $1/3$ to obtain the $\s{G}_2$-cones over them. However, for our purposes this is not a problem, since the totally geodesic property is preserved under rescalings of the ambient manifold and, as we will see, the maximal totally geodesic submanifolds of these $\s{G}_2$-cones are cones over the totally geodesic submanifold of a homogeneous nearly Kähler $6$-manifold.
\end{remark}

There is also a relationship between submanifolds of $M$ that have a nice interaction with $J$ and calibrated cones inside 
$\hat{M}$.
The notions of calibrated geometry were introduced in the seminal paper \cite{HarveyLawson} by Harvey and Lawson.
We remind that a \textit{calibration} on a Riemannian manifold $N$ is a closed differential form $\omega\in\Omega^{k}(M)$ satisfying
$
	\omega(v_{1},\dots,v_{k})\leq 1
$
whenever $v_{1},\dots,v_{k}$ are unit vectors in $TN$.
A $k$-dimensional oriented submanifold $S$ of $N$ is \textit{calibrated} if the restriction of $\omega$ to $S$ is equal to the Riemannian volume form of $S$, and it follows that $S$ is a minimal submanifold.
It can be shown that for the case of a $\s{G}_{2}$-manifold $(N,\phi)$, both $\phi$ and its Hodge dual $\star\phi$ are calibrations (\cite[Theorem~1.4 and Theorem~1.16]{HarveyLawson}).
We say in this case that $S$ is \textit{associative} (respectively, \textit{coassociative}) if it is calibrated with respect to $\phi$ (respectively, $\star\phi$).
Coming back to the case that $N=\hat{M}$ is the cone of a six-dimensional strict nearly K\"{a}hler manifold, it is known that the cone of a $J$-holomorphic curve is an associative submanifold, whereas the cone of a Lagrangian submanifold is a coassociative submanifold.

\subsection{Totally geodesic submanifolds of Riemannian cones}\label{subsec:tgcones}

We now discuss the relationship between the totally geodesic submanifolds of a Riemannian cone (over a complete real analytic manifold) and those of its base.
We are interested in determining the maximal totally geodesic submanifolds of the cone $\hat M$ over $M$.

This first result shows that every totally geodesic submanifold of the base induces a totally geodesic submanifold of the cone by means of the cone map.
\begin{lemma}
\label{lemma:tgcone}
Let $M$ be a Riemannian manifold and $\phi\colon S\to M$ an isometric immersion of a $k$-dimensional submanifold $S$. Then the following statements hold:
\begin{enumerate}[\rm (i)]
	\item\label{lemmaItem:tgcone} 
The immersion $\phi$ is totally geodesic  if and only if the cone map $\hat{\phi}\colon \hat{S}\to \hat{M}$ is totally geodesic.
\item\label{lemmaItem:tgconecompatible}
The totally geodesic immersion $\phi$ is compatible  if and only if the cone map $\hat{\phi}\colon \hat{S}\to \hat{M}$ is compatible. 
\end{enumerate}
\end{lemma}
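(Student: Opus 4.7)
The plan is to exploit the explicit formulas for the Levi-Civita connection $\hat\nabla$ of $\hat M$ given in \eqref{eq:ConeConnection} in both parts.

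For the first equivalence, I would begin by identifying the tangent and normal spaces of $\hat S$ in $\hat M$. At any point $(r,p)\in\hat S$, the radial vector $(\partial_r)_{(r,p)}$ is tangent to both $\hat S$ and $\hat M$, so one has the orthogonal decomposition $T_{(r,p)}\hat S = \R(\partial_r)_{(r,p)} \oplus \hat\phi_{*(r,p)}(T_p S)$, and the normal space of $\hat S$ in $\hat M$ at $(r,p)$ is canonically identified with the normal space of $S$ in $M$ at $p$, viewed as a subspace of $T_pM \subseteq T_{(r,p)}\hat M$. Next, I would compute the second fundamental form $\hat{\mathbb{II}}$ of $\hat\phi$ on pairs of tangent vectors via the natural extensions of vector fields from $S$ and $\R^{+}$ to $\hat M$. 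The formulas in \eqref{eq:ConeConnection} yield $\hat\nabla_{\partial_r}\partial_r=0$ and $\hat\nabla_X\partial_r = X/r$, both tangent to $\hat S$, whereas for $X$, $Y$ tangent to $S$ one finds $\hat\nabla_X Y = \nabla_X Y - r\langle X,Y\rangle\partial_r$, whose normal component equals exactly the normal component of $\nabla_X Y$, namely $\mathbb{II}(X,Y)$. Hence $\hat{\mathbb{II}}(X,Y) = \mathbb{II}(X,Y)$ for $X,Y$ tangent to $S$, while $\hat{\mathbb{II}}$ vanishes whenever one of its entries is radial, which yields the desired equivalence.

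For the second equivalence, I would simply observe from the definition of the cone map that
\[
\tilde{\hat\phi}(r,p) \;=\; \R(\partial_r)_{(r,\phi(p))} \oplus \tilde\phi(p) \;\subseteq\; T_{(r,\phi(p))}\hat M \;=\; \R\partial_r \oplus T_{\phi(p)}M.
\]
From this description, $\tilde{\hat\phi}(r_1,p_1) = \tilde{\hat\phi}(r_2,p_2)$ if and only if $r_1=r_2$ and $\tilde\phi(p_1) = \tilde\phi(p_2)$, so the injectivity of $\tilde{\hat\phi}$ is equivalent to the injectivity of $\tilde\phi$.

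The only real care required lies in correctly handling the natural extensions of vector fields across the cone and in the identification of the normal bundle of $\hat S$ in $\hat M$ with the pullback of the normal bundle of $S$ in $M$. Once these identifications are in place, both parts reduce to a direct application of \eqref{eq:ConeConnection} and of the definition of $\tilde{\hat\phi}$ on the Grassmann bundle, with no genuine obstacle beyond bookkeeping.
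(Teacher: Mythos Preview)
Your proof is correct. Part~(ii) is essentially identical to the paper's argument. For part~(i), however, you take a different route: you compute the second fundamental form $\hat{\mathbb{II}}$ of $\hat\phi$ directly from the connection formulas~\eqref{eq:ConeConnection}, while the paper instead uses the explicit description~\eqref{eq:ConeGeodesics} of geodesics in $\hat M$ (after reducing to the embedded case) and checks that a geodesic of $\hat M$ tangent to $\hat S$ stays in $\hat S$ precisely when the corresponding pregeodesic of $M$ stays in $S$. Your approach is arguably more direct and transparent, since it yields the pointwise identity $\hat{\mathbb{II}}(X,Y)=\mathbb{II}(X,Y)$ for $X,Y$ tangent to $S$ and $\hat{\mathbb{II}}(\partial_r,\cdot)=0$ in one stroke; the paper's approach, on the other hand, avoids identifying normal bundles and extending vector fields, working entirely with curves. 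Both are short and neither buys a substantial advantage over the other.
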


\begin{proof}
	First of all, as being totally geodesic is a local property, we may suppose that $S\subseteq M$ is embedded and $\phi$ is the inclusion map.
	As a consequence, $\hat{S}=\R^{+}\times S$ as a subset of $\hat{M}$.
	
	Firstly, assume that $S$ is totally geodesic in $M$.
	Given $(\tau,p)\in\hat{S}$ and $w=a \partial_{r}+v \in T_{(\tau,p)}\hat{S}$ (so $v\in T_{p}S$), we know by~\eqref{eq:ConeGeodesics} that the geodesic $\hat{\gamma}(t)=\hat{\exp}_{(\tau,p)}(tw)$ is of the form $\hat{\gamma}(t)=(\rho(t),\beta(t))$, where $\beta(t)$ is a pregeodesic of $M$ such that $\beta'(0)=v$.
	Since $S$ is totally geodesic, there exists an $\varepsilon>0$ such that $\beta(t)\in S$ for all $t\in(-\varepsilon,\varepsilon)$, so $\hat{\gamma}(t)\in \hat{S}$ for all $t\in(-\varepsilon,\varepsilon)$.
	Therefore, $\hat{S}$ is totally geodesic in $\hat{M}$.
	
	Conversely, suppose that $\hat{S}$ is totally geodesic, and let $p\in S$, $v\in T_{p}S$.
	The geodesic $\hat{\gamma}(t)=\hat{\exp}_{(1,p)}(tw)$ is locally of the form $(\rho(t),\beta(t))$, where $\beta(t)=\exp_{p}(f(t)v)$ for a diffeomorphism $f(t)$ such that $f(0)=0$ and $f'(0)=1$.
	Thus, as $\hat{S}$ is totally geodesic, there exists an $\varepsilon>0$ such that $\hat{\gamma}(t)\in\hat{S}$ for $\lvert t \rvert < \varepsilon$,  which means that $\exp_{p}(f(t)v)\in S$ for the same values of $t$.
	As $f^{-1}$ is continuous at $t=0$, it follows that there exists $\delta > 0$ such that $\exp_{p}(sv)\in S$ for $\lvert s \rvert <\delta$.
	We conclude that $S$ is totally geodesic in $M$, proving  \eqref{lemmaItem:tgcone}.
	
	Finally, observe that since $\hat{\phi}_{*(\tau,p)}(T_{(\tau,p)}\hat{S})=\R (\partial_{r})_{\phi(\tau,p)}\oplus \phi_{*p}(T_{p}S)$ for all $(\tau,p)\in\hat{S}$, it follows that the induced map of $\hat{\phi}$ is an injection of $\hat{S}$ to $\s{G}_{k+1}(T\hat{M})$ if and only if the induced map of $\phi$ is an injection of $S$ to $\s{G}_k(TM)$. This yields \eqref{lemmaItem:tgconecompatible}.
\end{proof}

It was shown in~\cite{JentschMoroianuSemmelmann} that certain types of Riemannian manifolds with special holonomy do not admit totally geodesic hypersurfaces.
We deduce that the same result holds for Sasakian--Einstein, $6$-dimensional nearly Kähler and nearly parallel $\s{G}_{2}$-manifolds.

\begin{theorem}
\label{th:hyp}
Let $M$ be a complete Riemannian manifold with non-constant sectional curvature. Assume that $M$ satisfies one of the following conditions:
\begin{enumerate}[\rm (i)]
	\item $M^{2n+1}$ is Sasakian--Einstein,
	\item $M^6$ is a $6$-dimensional strictly nearly Kähler manifold, or
	\item $M^7$ is a nearly parallel $\s{G}_2$-manifold.
\end{enumerate}
Then, $M$ does not admit a totally geodesic hypersurface.
\end{theorem}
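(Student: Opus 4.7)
The plan is to argue by contradiction, combining the correspondence between totally geodesic submanifolds of a Riemannian manifold and those of its cone, established in Lemma~\ref{lemma:tgcone}, with the recent non-existence result of Jentsch--Moroianu--Semmelmann for manifolds carrying certain parallel calibrations. Assume that $M$ admits a totally geodesic hypersurface $N$. Applying Lemma~\ref{lemma:tgcone}, the cone $\hat{N}$ is then a totally geodesic hypersurface of the Riemannian cone $\hat{M}$, so it suffices to obstruct the existence of a totally geodesic hypersurface in $\hat{M}$.

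The next step is to exploit the special geometry of $\hat{M}$. As recalled earlier in this section, in each of the three settings the cone $\hat{M}$ inherits a torsion-free special geometric structure: Calabi--Yau with $\operatorname{Hol}(\hat{M})\subseteq\mathsf{SU}(n+1)$ in case (i), torsion-free $\mathsf{G}_2$ in case (ii), and torsion-free $\mathsf{Spin}(7)$ in case (iii). The hypothesis that $M$ has non-constant sectional curvature is used to guarantee that $\hat{M}$ is not flat: indeed, formula~\eqref{eq:curvaturecone} shows that flatness of $\hat{M}$ would force $R(u,v)w=\langle v,w\rangle u-\langle u,w\rangle v$ on $M$, i.e., constant sectional curvature one, contradicting the hypothesis. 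This step is where the non-constant curvature assumption is essential, since flat spaces trivially carry totally geodesic hyperplanes.

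The final step is to invoke the main theorem of \cite{JentschMoroianuSemmelmann}, which asserts that a non-flat Riemannian manifold endowed with a parallel Calabi--Yau, $\mathsf{G}_2$, or $\mathsf{Spin}(7)$ structure does not admit any totally geodesic hypersurface. Applied to $\hat{M}$, this directly contradicts the existence of $\hat{N}$ and therefore of $N$, completing the proof.

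The argument is thus essentially a synthesis of already established tools, and its main delicate point is the correct identification of the role of the non-constant curvature assumption: without it, $M$ could be a round sphere of constant curvature one, in which case $\hat{M}$ would be an open subset of Euclidean space and the Jentsch--Moroianu--Semmelmann obstruction would fail. Verifying that this is the only obstruction to the special holonomy argument is the only non-routine part of the reasoning.
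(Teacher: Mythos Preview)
Your proof follows the same overall strategy as the paper: pass to the cone via Lemma~\ref{lemma:tgcone}, use that $\hat{M}$ carries a parallel special structure, and appeal to \cite{JentschMoroianuSemmelmann} for the contradiction. The only noteworthy difference is in how the non-constant curvature hypothesis is deployed: the paper invokes Gallot's theorem~\cite{gallot} to conclude that $\hat{M}$ is locally \emph{irreducible} (not merely non-flat) and then applies \cite[Theorem~4.3]{JentschMoroianuSemmelmann}, which says that a Ricci-flat manifold admitting a totally geodesic hypersurface must have full restricted holonomy $\mathsf{SO}(T_p\hat{M})$, contradicting the inclusion into $\mathsf{SU}(n+1)$, $\mathsf{G}_2$, or $\mathsf{Spin}(7)$. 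Your shortcut through non-flatness is fine in this setting because, for cones over complete manifolds, Gallot's theorem makes non-flatness and local irreducibility equivalent; just be aware that the formulation of the Jentsch--Moroianu--Semmelmann result you quote tacitly needs more than non-flatness in general.
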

\begin{proof}
Observe that in all three cases $M$ is an Einstein manifold with positive Ricci curvature.
As the (non)existence of totally geodesic hypersurfaces is a purely local question, we may suppose that $M$ is simply connected.
Furthermore, as their existence is also independent of rescalings of the metric, we may also suppose that the Einstein constant of $M$ is equal to $\dim M - 1$.

Let $\Sigma$ be a totally geodesic hypersurface of $M$. By Lemma~\ref{lemma:tgcone}, $\hat{\Sigma}$ is a totally geodesic hypersurface of $\hat{M}$. By Gallot's theorem~(see \cite{gallot}), $\hat{M}$ is locally irreducible since $M$ has non-constant sectional curvature. Moreover, by \cite{bar}, we know that:
\begin{enumerate}[\rm (i)]
	\item If $M^{2n+1}$ is a Sasakian--Einstein manifold, then the restricted holonomy of $\hat{M}$ is contained in $\s{SU}(n+1)$.
	\item If $M^6$ is a strictly nearly Kähler manifold, then the restricted holonomy of $\hat{M}$ is contained in $\s{G}_{2}$.
	\item If $M^7$ is a nearly parallel $\s{G}_2$-manifold, then the restricted holonomy of $\hat{M}$ is contained  in $\s{Spin}(7)$.
\end{enumerate}
Now, since $\hat{M}$ is Einstein (indeed Ricci-flat), by  \cite[Theorem 4.3]{JentschMoroianuSemmelmann} the restricted holonomy of the cone $\hat{M}$ is $\s{SO}(T_p \hat{M})$.  This contradicts the fact that the holonomy of $\hat{M}$ is contained in one of three aforementioned groups, yielding the result.
\end{proof}

The following result is concerned with the extendability of cones over totally geodesic submanifolds of the base.

\begin{lemma}\label{lemma:ConesOverTGSubmanifolds}
	Let $M$ be a complete real analytic Riemannian manifold and $\phi\colon S\to M$ a compatible totally geodesic immersion of a $k$-dimensional complete submanifold $S$.
	Then $\hat{\phi}\colon\hat{S}\to\hat{M}$ is an inextendable compatible totally geodesic immersion.
\end{lemma}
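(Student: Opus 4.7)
The plan is to combine the characterization of inextendable totally geodesic immersions given by Proposition~\ref{prop:TGInextendableCharacterization} with the detailed description of geodesics in Riemannian cones provided by Lemma~\ref{lemma:ConeGeodesicsIntervalOfDefinition}.

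First, by Lemma~\ref{lemma:tgcone}\eqref{lemmaItem:tgcone} and~\eqref{lemmaItem:tgconecompatible}, the hypothesis that $\phi$ is a compatible totally geodesic immersion immediately yields that $\hat{\phi}\colon \hat{S}\to\hat{M}$ is itself a compatible totally geodesic immersion. Since $M$ is complete and real analytic, $\hat{M}$ is a real analytic Riemannian manifold, so Proposition~\ref{prop:TGInextendableCharacterization} applies. According to this proposition, it suffices to show that for every maximal geodesic $\gamma\colon I\to\hat{S}$ the composition $\hat{\phi}\circ\gamma\colon I\to\hat{M}$ is a maximal geodesic of $\hat{M}$.

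Fix such a maximal geodesic $\gamma$, let $(\tau,p)=\gamma(0)\in\hat{S}$, and write $\gamma'(0)=a\partial_{r}+v$ with $v\in T_{p}S$ and $a\in\R$; normalize so that $\gamma$ has unit speed. Since $S$ is complete, Lemma~\ref{lemma:ConeGeodesicsIntervalOfDefinition} applies to $\hat{S}$, and so the maximal interval $I$ is precisely $I_{a}$ if $v=0$ and is $\R$ if $v\neq 0$. Now the composition $\hat{\phi}\circ\gamma$ is a geodesic of $\hat{M}$ with initial data $(\tau,\phi(p))$ and tangent vector $a\partial_{r}+\phi_{*p}v$, whose radial component is again $a$. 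Because $\phi$ is an immersion, $\phi_{*p}v=0$ if and only if $v=0$, so applying Lemma~\ref{lemma:ConeGeodesicsIntervalOfDefinition} a second time, now to $\hat{M}$, yields that the maximal interval of definition of $\hat{\phi}\circ\gamma$ as a geodesic of $\hat{M}$ coincides with $I_{a}$ when $v=0$ and with $\R$ when $v\neq 0$. In either case it matches the maximal interval of $\gamma$, so $\hat{\phi}\circ\gamma$ is a maximal geodesic of $\hat{M}$.

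Invoking Proposition~\ref{prop:TGInextendableCharacterization}\eqref{propItem:MaximalToMaximal}$\Rightarrow$\eqref{propItem:Inextendable} concludes that $\hat{\phi}$ is inextendable. The only subtle point is the step where one must recognize that the interval of definition of a geodesic in a cone depends only on the radial component of its initial velocity when the base is complete, which is exactly what Lemma~\ref{lemma:ConeGeodesicsIntervalOfDefinition} delivers for both $\hat{S}$ and $\hat{M}$; the compatibility of $\hat{\phi}$ with the radial direction (i.e.\ $\hat{\phi}_{*(\tau,p)}\partial_{r}=\partial_{r}$) is what makes the two maximal intervals coincide.
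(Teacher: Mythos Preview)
Your proof is correct and follows essentially the same approach as the paper: both reduce inextendability to the geodesic criterion of Proposition~\ref{prop:TGInextendableCharacterization}, invoke Lemma~\ref{lemma:tgcone} for compatibility, and then apply Lemma~\ref{lemma:ConeGeodesicsIntervalOfDefinition} to compare the maximal intervals of $\gamma$ in $\hat{S}$ and of $\hat{\phi}\circ\gamma$ in $\hat{M}$ in the two cases $v=0$ and $v\neq 0$. The only cosmetic difference is that for $v\neq 0$ the paper observes directly that $\hat{\phi}\circ\gamma$ is globally defined because $\gamma$ is, whereas you re-apply Lemma~\ref{lemma:ConeGeodesicsIntervalOfDefinition} on the target side; the content is the same.
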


\begin{proof}
	We have already seen in Lemma~\ref{lemma:tgcone} that $\hat{\phi}\colon \hat{S}\to\hat{M}$ is compatible, so we only need to show inextendability.
	Let $(\tau,p)\in\hat{S}$ be arbitrary and $w= a (\partial_{r})_{(\tau,p)}+v$ be a nonzero tangent vector, where $v\in T_{p}M$.
	We consider the $\hat{S}$-geodesic  $\gamma(t)=\exp_{(\tau,p)}(tw)$.
	If $v\neq 0$, then $\gamma$ is defined on all $\R$ due to Lemma~\ref{lemma:ConeGeodesicsIntervalOfDefinition}, so $\hat{\phi}\circ \gamma$ is also globally defined.
	Otherwise, we have $v=a(\partial_{r})_{(\tau,p)}$, and Lemma~\ref{lemma:ConeGeodesicsIntervalOfDefinition} implies that $\gamma$ is defined precisely on $I_{a}$.
	Because $\hat{\phi}_{*(\tau,p)}((\partial_{r})_{(\tau,p)})=(\partial_{r})_{(\tau,\phi(p))}$, the maximal $\hat{M}$-geodesic $\exp_{(\tau,\phi(p))}(t\hat{\phi}_{*(\tau,p)}(w))$ is also defined exactly on $I_{a}$, so it coincides with $\hat{\phi}\circ \gamma$.
	We conclude that $\hat{\phi}$ sends maximal geodesics of $\hat{S}$ to maximal geodesics of $\hat{M}$, so it is inextendable by Proposition~\ref{prop:TGInextendableCharacterization}. \qedhere
\end{proof}

We can now prove the following characterization of totally geodesic submanifolds in cones:

\begin{theorem}\label{th:TGSubmanifoldsOfCones}
	Let $M$ be a connected $n$-dimensional complete real analytic Riemannian manifold and consider its Riemannian cone $\hat{M}$.
	Suppose $\Sigma$ is a $k$-dimensional manifold with $1\leq k\leq n$ and $f\colon \Sigma\to \hat{M}$ is an inextendable compatible totally geodesic immersion, and let $x\in\Sigma$, $(\tau,p)=f(x)$ and $V=\tilde{f}(x)=f_{*x}(T_{x}\Sigma)$.
	Then exactly one of the following two situations occur:
	\begin{enumerate}[\rm (i)]
		\item\label{thItem:partialRIsTangent}  The vector $(\partial_{r})_{(\tau,p)}$ is in $V$.
		In this case, $\Sigma$ is incomplete, the vector field $\partial_{r}$ is everywhere tangent to the immersion $f$ and there exists a complete compatible totally geodesic immersion $g\colon S\to M$ such that $f$ and $\hat{g}$ are equivalent.
		\item\label{thItem:partialRIsNotTangent} The vector $(\partial_{r})_{(\tau,p)}$ is not in $V$.
		In this case, $\Sigma$ is complete, the vector field $\partial_{r}$ is nowhere tangent to the immersion and there exists:
		\begin{itemize}
			\item a complete compatible totally geodesic immersion $g\colon S\to M$,
			\item a complete compatible totally geodesic immersion $h\colon E\to \hat{S}$, where $E$ is a hypersurface in $\hat{S}$,
			\item and a surjective local isometry $\rho\colon E\to \Sigma$,
		\end{itemize}
		such that the following diagram commutes:
		\[
		\begin{tikzcd}
			E \arrow[d, "\rho"'] \arrow[r, "h"] & \hat{S} \arrow[d, "\hat{g}"] \\
			\Sigma \arrow[r, "f"]               & \hat{M}                     
		\end{tikzcd}
		\]
	\end{enumerate}
\end{theorem}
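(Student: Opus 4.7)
My plan rests on the \emph{Euler vector field} $E=r\partial_{r}$ on $\hat{M}$, which by~\eqref{eq:ConeConnection} satisfies $\hat{\nabla}_{Y}E=Y$ for every $Y\in T\hat{M}$. Along any geodesic $\gamma$ of $\hat{M}$ this gives $\hat{\nabla}_{\gamma'}E=\gamma'$, so that $\mathcal{P}^{\gamma}_{t,0}E(\gamma(t))=E(\gamma(0))+t\gamma'(0)$. If $\gamma=f\circ\alpha$ for a $\Sigma$-geodesic $\alpha$, parallel transport along $\gamma$ preserves $\tilde{f}(\alpha(\cdot))$, and since $\gamma'(0)\in\tilde{f}(\alpha(0))$, the vector $E$ (equivalently $\partial_{r}$) lies in $\tilde{f}$ at $\alpha(t)$ if and only if it does at $\alpha(0)$. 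Joining any two points of the connected manifold $\Sigma$ by a broken geodesic yields the dichotomy.

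Next I would treat case~(\ref{thItem:partialRIsTangent}). Tangency of $\partial_{r}$ everywhere lets me define $X\in\g{X}(\Sigma)$ by $f_{*}X=\partial_{r}$; then $r\circ f\colon\Sigma\to\R^{+}$ has gradient $X$ and is a surjective submersion. Fix any $\tau_{0}$ in its image, set $S=(r\circ f)^{-1}(\tau_{0})$ endowed with the induced metric rescaled by $1/\tau_{0}^{2}$, and define $g\colon S\to M$ by $f(y)=(\tau_{0},g(y))$; the warped factor $\tau_{0}^{2}$ on the link $\{\tau_{0}\}\times M\subseteq\hat{M}$ makes $g$ isometric. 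Using the flow $\Phi^{X}_{t}$ of $X$, whose maximal interval is precisely $(-r\circ f,\infty)$ by Lemma~\ref{lemma:ConeGeodesicsIntervalOfDefinition}~(\ref{lemmaItem:VerticalGeodesic}) and Proposition~\ref{prop:TGInextendableCharacterization}, I construct a diffeomorphism $\psi\colon\hat{S}\to\Sigma$ via $\psi(s,y)=\Phi^{X}_{s-\tau_{0}}(y)$; since $f$ intertwines the radial flow on $\hat{M}$ with $\Phi^{X}$ and is an isometric immersion, a short calculation shows $\psi$ is an isometry satisfying $f\circ\psi=\hat{g}$. Lemma~\ref{lemma:tgcone} then yields that $g$ is compatible and totally geodesic, while the incompleteness of $\Sigma$ is immediate from Lemma~\ref{lemma:ConeGeodesicsIntervalOfDefinition}~(\ref{lemmaItem:VerticalGeodesic}). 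Completeness of $S$ follows by feeding a unit-speed maximal $S$-geodesic $\alpha$ into the cone formula~\eqref{eq:ConeGeodesics} to produce a non-radial geodesic of $\hat{S}\cong\Sigma$; inextendability of $f$ together with Lemma~\ref{lemma:ConeGeodesicsIntervalOfDefinition}~(\ref{lemmaItem:BaseGeodesic}) forces this geodesic to be defined on all of $\R$, showing that $\alpha$ is defined on at least $(-\pi/2,\pi/2)$, and a bootstrap applied at intermediate base points extends its domain to all of~$\R$.

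For case~(\ref{thItem:partialRIsNotTangent}) I would reduce to case~(\ref{thItem:partialRIsTangent}). Let $V_{0}\subseteq T_{p}M$ be the image of $V$ under the projection $T_{(\tau,p)}\hat{M}=\R\partial_{r}\oplus T_{p}M\to T_{p}M$, which is injective since $\partial_{r}\notin V$; thus $\dim V_{0}=k$ and $V\oplus\R\partial_{r}=\R\partial_{r}\oplus V_{0}$. The core of this case is verifying that $V_{0}$ is totally geodesic in $M$: by~\eqref{eq:curvaturecone}, the $T_{p}M$-component of $\hat{R}(u,v)w$ for $u,v,w\in V$ equals $R(u_{0},v_{0})w_{0}$ modulo elements already in $V_{0}$, and the $\hat{R}$-invariance of $V$, split along the decomposition $V\ni v\leftrightarrow v_{0}\in V_{0}$, gives $R$-invariance of $V_{0}$; invoking~\eqref{eq:coneNablaR} together with the identity $\hat{\nabla}_{r\partial_{r}}\hat{R}=-2\hat{R}$ (a consequence of the conformal invariance of $\hat{R}$ as a $(1,3)$-tensor under the radial flow) extends the argument inductively to every $\hat{\nabla}^{j}\hat{R}$, and Theorem~\ref{th:CharacterizationTGSubmanifolds} then supplies the sought inextendable compatible totally geodesic immersion $g\colon S\to M$ with tangent space $V_{0}$. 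By case~(\ref{thItem:partialRIsTangent}), the inextendable totally geodesic submanifold of $\hat{M}$ with tangent space $V'=\R\partial_{r}\oplus V_{0}$ is $\hat{g}\colon\hat{S}\to\hat{M}$, and since $V\subsetneq V'$, Proposition~\ref{prop:TGInclusions} produces $E$, a surjective local isometry $\rho\colon E\to\Sigma$, and an inextendable compatible totally geodesic immersion $h\colon E\to\hat{S}$ with $f\circ\rho=\hat{g}\circ h$. The dimension count $\dim E=\dim\Sigma=\dim\hat{S}-1$ makes $h$ a hypersurface immersion. Completeness of $\Sigma$ and $E$ follows from Proposition~\ref{prop:TGInextendableCharacterization} and Lemma~\ref{lemma:ConeGeodesicsIntervalOfDefinition}~(\ref{lemmaItem:BaseGeodesic}): since $\partial_{r}\notin\tilde{f}$ everywhere, every nonzero element of $\tilde{f}$ has nonzero $T_{p}M$-component, and the commutative diagram forces the same for $\tilde{h}$; thus all maximal geodesics in $\Sigma$ and $E$ are mapped to non-radial maximal $\hat{M}$- and $\hat{S}$-geodesics, which are globally defined.

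The principal obstacle is the inductive invariance of $V_{0}$ under $\nabla^{j}R$ in case~(\ref{thItem:partialRIsNotTangent}), where one must carefully separate $\partial_{r}$- and $T_{p}M$-components of iterated expressions in $\hat{\nabla}\hat{R}$ using~\eqref{eq:coneNablaR} and the scaling identity above. A secondary subtlety is that the link $S$ of the cone $\hat{S}\cong\Sigma$ in case~(\ref{thItem:partialRIsTangent}) is \emph{not} totally geodesic in $\Sigma$, which is precisely why completeness of $S$ demands the bootstrapping procedure sketched above rather than a single invocation of Lemma~\ref{lemma:ConeGeodesicsIntervalOfDefinition}.
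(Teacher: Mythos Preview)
Your proposal is correct and takes a genuinely different route from the paper.

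\textbf{Dichotomy.} You establish the everywhere/nowhere tangency of $\partial_{r}$ via the Euler field identity $\hat{\nabla}_{Y}(r\partial_{r})=Y$ and parallel transport along $\Sigma$-geodesics. The paper instead argues indirectly through completeness: if $\partial_{r}\in V$ at one point then $\Sigma$ is incomplete, and if $\partial_{r}\notin\tilde{f}(y)$ at another point then Corollary~\ref{cor:CompleteTGSubmanifolds} would force $\Sigma$ to be complete, a contradiction. Your argument is more elementary and self-contained.

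\textbf{Case~(\ref{thItem:partialRIsTangent}).} You build an explicit isometry $\psi\colon\hat{S}\to\Sigma$ from the radial flow and then read off all conclusions from $f\circ\psi=\hat{g}$. The paper instead shows that the projection $\pi(F)\subseteq M$ of a small exponential patch $F\subseteq\hat{M}$ is totally geodesic (via the constant-rank theorem and a geodesic-chasing argument), so $W=\pi_{*}(V)$ is a totally geodesic subspace of $T_{p}M$; it then invokes Lemma~\ref{lemma:ConesOverTGSubmanifolds} and uniqueness of inextendable extensions to identify $f$ with $\hat{g}$. Your approach is constructive but pays for it with the bootstrapping needed to prove completeness of $S$; the paper's is softer and avoids that.

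\textbf{Case~(\ref{thItem:partialRIsNotTangent}).} Here the contrast is sharpest. You aim to show that $V_{0}$ is totally geodesic in $M$ by verifying $\nabla^{j}R$-invariance for all $j$ through the curvature formulas~\eqref{eq:curvaturecone},~\eqref{eq:coneNablaR} and the scaling identity $\hat{\nabla}_{r\partial_{r}}\hat{R}=-2\hat{R}$. This works: the base case is exactly as you describe, and the inductive step reduces every occurrence of $\partial_{r}$ in a slot of $\hat{\nabla}^{j}\hat{R}$ to lower-order expressions (using $\hat{\nabla}_{x}\partial_{r}=\frac{1}{\tau}x$ and the scaling identity for the differentiation slot), with the pure $T_{p}M$ term producing $(\nabla^{j}R)(u_{0},\dots)$ modulo lower order. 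The paper sidesteps this computation entirely: it repeats the local projection argument from case~(\ref{thItem:partialRIsTangent}), observing that $\pi\vert_{F}$ now has rank $k$, so $\pi(F)$ is a $k$-dimensional totally geodesic submanifold of $M$ and $V_{0}=W$ is totally geodesic without touching Theorem~\ref{th:CharacterizationTGSubmanifolds}. Both then finish with Proposition~\ref{prop:TGInclusions}.

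In summary, your argument is more algebraic and explicit, with the cost concentrated in the inductive curvature computation you rightly flag as the principal obstacle; the paper's is uniformly geometric, trading that computation for a local constant-rank/projection argument that it reuses in both cases.
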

\begin{proof}
	We work with the natural projection $\pi\colon \hat{M}\to M$.
	Let $W=\pi_{*(\tau,p)}(V)\subseteq T_{p}M$, which is precisely the orthogonal projection of $V$ onto $T_{p}M$.
	We have that $V\subseteq \R (\partial_{r})_{(\tau,p)}\oplus W$ and the dimension of $W$ is either $k-1$ or $k$, depending on whether $(\partial_{r})_{(\tau,p)}$ is in $V$ or not.
	
	Firstly, suppose that $(\partial_{r})_{(\tau,p)}\in V$, so $V=\R(\partial_{r})_{(\tau,p)}\oplus W$.
	Then $\Sigma$ is not complete because the geodesic $\exp_{(\tau,p)}(t(\partial_{r})_{(\tau,p)})$ is not defined on all $\R$, and for every $y\in \Sigma$ the tangent space $\tilde{f}(y)$ contains $(\partial_{r})_{f(y)}$ (otherwise, $\Sigma$ would be complete by Corollary~\ref{cor:CompleteTGSubmanifolds}).
	Therefore, the radial vector field $\partial_{r}$ is everywhere tangent to the immersion $f\colon \Sigma\to\hat{M}$.
	Now, consider an $\varepsilon>0$ such that $\hat{\exp}_{(\tau,p)}$ is a diffeomorphism of the ball $B_{T_{(\tau,p)}\hat{M}}(0,\varepsilon)$ onto its image and the set $F=\hat{\exp}_{(\tau,p)}(V\cap B_{T_{(\tau,p)}\hat{M}}(0,\varepsilon))$ is an embedded totally geodesic submanifold of $\hat{M}$.
	Then $\partial_{r}$ is everywhere tangent to $F$ and the restriction of $\pi$ to $F$ has constant rank equal to $k-1$, so the constant rank theorem implies that (perhaps after shrinking $\varepsilon$) the image $\pi(F)$ is a $(k-1)$-dimensional embedded submanifold of $M$ and $\pi\colon F\to\pi(F)$ is a surjective submersion.
	Let $(s,q)\in F$ be any point and consider a nonzero $w\in T_{q}\pi(F)=\pi_{*(s,q)}(T_{(s,q)}F)$.
	Then, since $(\partial_{r})_{(s,q)}$ is tangent to $F$, the vector $w\in T_{(s,q)}\hat{M}$ is also tangent to $F$.
	As $F$ is totally geodesic, we may choose  $\delta>0$ such that the $\hat{M}$-geodesic $\hat{\gamma}(t)=\hat{\exp}_{(s,q)}(tw)$ is in $F$ for all $t\in(-\delta,\delta)$, and as a consequence the curve $\pi(\hat{\gamma}(t))$ is also contained in $\pi(F)$ for $t\in(-\delta,\delta)$.
	Recall from~\eqref{eq:ConeGeodesics} that $\pi(\hat{\gamma}(t))=\exp_{q}(f(t)w)$, where $f\colon \R \to \R$ is a homeomorphism satisfying $f(0)=0$.
	Because of this, the geodesic $\exp_{q}(tw)$ is contained in $\pi(F)$ for small values of $t$.
	This proves that $\pi(F)$ is a totally geodesic submanifold of $M$, and in particular the subspace $W$ is totally geodesic in $T_{p}M$.
	Consider the complete compatible totally geodesic immersion $g\colon S\to M$ associated with $W$, and let $y\in S$ be the unique point with $g(y)=p$ and $\tilde{g}(y)=W$.
	The cone map $\hat{g}\colon \hat{S}\to\hat{M}$ is an inextendable compatible totally geodesic immersion by Lemma~\ref{lemma:ConesOverTGSubmanifolds} and it satisfies $\hat{g}_{*(\tau,y)}(T_{(\tau,y)}\hat{S})=\R (\partial_{r})_{(\tau,p)}\oplus W = V$, so $f$ and $\hat{g}$ are equivalent by uniqueness.
	
    Secondly, assume that $(\partial_{r})_{(\tau,p)}\notin V$, so that $V$ is a hyperplane in $\R (\partial_{r})_{(\tau,p)}\oplus W$.
	In this setting, $\Sigma$ is complete by Corollary~\ref{cor:CompleteTGSubmanifolds} and for all $y\in\Sigma$ the tangent space $\tilde{f}(y)$ does not contain the vector $(\partial_{r})_{f(y)}$ (otherwise, $\Sigma$ would admit a noncomplete geodesic).
	Thus, the radial vector field is nowhere tangent to $f\colon \Sigma \to M$.
	We now argue in a similar way as in the previous paragraph.
	Let $\varepsilon>0$ be such that $\hat{\exp}_{(\tau,p)}$ is a diffeomorphism of $B_{T_{(\tau,p)}\hat{M}}(0,\varepsilon)$ to its image and $F=\hat{\exp}_{(\tau,p)}(V\cap B_{T_{(\tau,p)}\hat{M}}(0,\varepsilon))$ is an embedded totally geodesic submanifold of $\hat{M}$.
	Then, as $\partial_{r}$ is nowhere tangent to $F$, the restriction of $\pi$ to $M$ has constant rank equal to $k$, so we may shrink $\varepsilon$ so as to have that $\pi(F)$ is a $k$-dimensional embedded submanifold of $M$ and $\pi\colon F\to \pi(F)$ is a diffeomorphism.
	The same argument as above shows that $\pi(F)$ is a totally geodesic submanifold, so in particular $W$ is a totally geodesic subspace of $T_{p}M$.
	Let $g\colon S\to M$ be the associated complete compatible totally geodesic extension and $y\in S$ the unique point with $g(y)=(\tau,p)$ and $\tilde{g}(y)=W$.
	Then the cone map $\hat{g}\colon\hat{S}\to\hat{M}$ is the inextendable compatible totally geodesic immersion associated with $\R(\partial_{r})_{(\tau,p)}\oplus W$.
	Because $V\subseteq \R(\partial_{r})_{(\tau,p)}\oplus W$, we may use Proposition~\ref{prop:TGInclusions} to conclude.
\end{proof}

\begin{corollary}\label{cor:MaximalTGSubmanifoldsOfCones} Let $M$ be an analytic Riemannian manifold and $\widehat{M}$ its Riemannian cone.
	If $\Sigma$ is a maximal totally geodesic submanifold of $\hat{M}$, then $\Sigma$ is either a hypersurface of $\hat{M}$ or the cone over a maximal totally geodesic submanifold $S$ of $M$.
\end{corollary}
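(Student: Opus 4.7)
The plan is to apply Theorem~\ref{th:TGSubmanifoldsOfCones} to the inextendable compatible totally geodesic immersion $f\colon\Sigma\to\hat{M}$ associated with $\Sigma$, and to interpret each of its two conclusions through the definition of a maximal totally geodesic submanifold recalled in Subsection~\ref{subsec:maxtganalytic}. Fix a point $x\in\Sigma$ with $f(x)=(\tau,p)$ and write $V=\tilde{f}(x)$.

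First I would consider case~(i) of Theorem~\ref{th:TGSubmanifoldsOfCones}, in which $(\partial_{r})_{(\tau,p)}\in V$. The theorem provides a complete compatible totally geodesic immersion $g\colon S\to M$ such that $f$ is equivalent to the cone map $\hat{g}\colon\hat{S}\to\hat{M}$; in particular $\Sigma$ is, up to equivalence, the cone over $S$. To verify that $S$ is maximal in $M$, I would argue by contradiction: suppose $g$ admits a proper extension in the sense of Subsection~\ref{subsec:maxtganalytic}, given by an inextendable compatible totally geodesic immersion $g'\colon S'\to M$, a Riemannian manifold $E_{0}$, a surjective local isometry $\rho_{0}\colon E_{0}\to S$, and a compatible totally geodesic immersion $h_{0}\colon E_{0}\to S'$ with $g'\circ h_{0}=g\circ \rho_{0}$, where $g'$ is neither a global isometry nor equivalent to $g$. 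Passing to cone maps yields the analogous diagram $\hat{g'}\circ\hat{h}_{0}=\hat{g}\circ\hat{\rho}_{0}$, in which $\hat{\rho}_{0}$ is a surjective local isometry, $\hat{h}_{0}$ is a compatible totally geodesic immersion, and $\hat{g'}$ is an inextendable compatible totally geodesic immersion by Lemma~\ref{lemma:ConesOverTGSubmanifolds}. By Proposition~\ref{prop:IsometricCones}, $\hat{g'}$ fails to be a global isometry because $g'$ does; it also fails to be equivalent to $\hat{g}$, hence to $f$, because its domain has strictly larger dimension. This contradicts the maximality of $\Sigma$, so $S$ must be maximal.

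Next I would address case~(ii), where $(\partial_{r})_{(\tau,p)}\notin V$. In this setting, Theorem~\ref{th:TGSubmanifoldsOfCones} furnishes data $(E,\rho,h,g)$ with $g\colon S\to M$ a complete compatible totally geodesic immersion, $E$ a hypersurface of $\hat{S}$, $\rho\colon E\to \Sigma$ a surjective local isometry, and $h\colon E\to \hat{S}$ a compatible totally geodesic immersion satisfying $\hat{g}\circ h=f\circ\rho$. Since $\hat{g}\colon\hat{S}\to\hat{M}$ is an inextendable compatible totally geodesic immersion by Lemma~\ref{lemma:ConesOverTGSubmanifolds}, these data exhibit $\hat{g}$ as an extension of $f$ in the sense of the maximality definition. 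Consequently $\hat{g}$ is either equivalent to $f$ or a global isometry. Equivalence is impossible for dimensional reasons, since $\dim\hat{S}=\dim S+1=k+1>k=\dim\Sigma$. Hence $\hat{g}$ is a global isometry, $\hat{S}$ is identified with $\hat{M}$, and $\Sigma$ is a totally geodesic hypersurface of $\hat{M}$ via the surjective local isometry $\rho\colon E\to \Sigma$ whose domain is a hypersurface of $\hat{M}$.

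The main obstacle I anticipate is the bookkeeping required to confirm that the cone construction respects the extension relation of Subsection~\ref{subsec:maxtganalytic}. Concretely, one needs that cone maps of surjective local isometries are surjective local isometries, that compatibility and inextendability of a totally geodesic immersion transfer to its cone map (the latter being exactly Lemma~\ref{lemma:ConesOverTGSubmanifolds}), and that a cone map is a global isometry precisely when its base map is, as asserted in Proposition~\ref{prop:IsometricCones}. Once these functoriality statements are in place, each case concludes by a straightforward dimensional or extension-based argument.
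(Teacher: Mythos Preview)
Your argument is correct and follows precisely the route the paper has in mind: the corollary is stated without proof immediately after Theorem~\ref{th:TGSubmanifoldsOfCones}, and your two cases are exactly the intended reading of the dichotomy there, combined with the characterization of maximality via totally geodesic subspaces from Subsection~\ref{subsec:maxtganalytic}. The only cosmetic point is that in case~(ii) your concluding sentence is slightly tangled; it is cleaner to say that $\hat{g}$ being a global isometry forces $\dim\hat{S}=\dim\hat{M}$, hence $k=\dim S=\dim M=n$, so $\Sigma$ is an $n$-dimensional submanifold of the $(n+1)$-dimensional cone $\hat{M}$.
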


Theorem~\ref{th:TGSubmanifoldsOfCones} reduces the classification of (maximal) totally geodesic submanifolds of cones to that of totally geodesic submanifolds of the base manifold, and separately to that of totally geodesic hypersurfaces.
We note that these hypersurfaces may not arise as cones over totally geodesic hypersurfaces in the base space, as we will see in Example~\ref{ex:unitspherecone} and Example~\ref{ex:nontrivalexconetg}.

\begin{remark}\label{remark:SkewCones}
	Let $M$ be a Riemannian manifold and suppose that $\Sigma$ is a totally geodesic hypersurface of $M$ that is not tangent to the cone vector field $\partial_{r}$.
	We may assume without loss of generality that $\Sigma\subseteq \hat{M}$ is embedded, and we choose a point $(\tau,p)\in \Sigma$.
	The tangent space $V=T_{(\tau,p)}\Sigma\subseteq T_{(\tau,p)}\hat{M}$ is a totally geodesic hyperplane satisfying $\partial_{r}\notin V$.
	This means that there exists a unique (possibly zero) vector $\eta\in T_{p}M$ such that $T_{(\tau,p)}\hat{M}\ominus V=\R (\partial_{r}+\eta)$.
	Let $X=a\partial_{r}+v\in V$ be arbitrary, where $a\in \R$ and $v\in V$.
	Then, since $V$ is $\hat{R}_{X}$-invariant and $\hat{R}_{X}$ is symmetric, it follows that $\partial_{r}+\eta$ is an eigenvector of $\hat{R}_{X}$.
	However, by~\eqref{eq:curvaturecone} the image of $\hat{R}_{X}$ is contained in $T_{p}M$, and because $\partial_{r}+\eta$ is not tangent to the link we must have $\hat{R}_{X}(\partial_{r}+\eta)=0$, so $\hat{R}(\eta,v)v=\hat{R}(\partial_{r}+\eta,X)X=0$	for all $X\in V$.
	As the orthogonal projection of $V$ onto $T_{p}M$ is a linear isomorphism, we deduce that if $V=\R(\partial_{r}+\eta)^{\perp}$ is a totally geodesic hyperplane, then $\hat{R}(\eta,v)v=0$ for all $v\in T_{p}M$.
\end{remark}

\begin{proposition}\label{prop:ConstantCurvatureCone}
	Let $M$ be a space of constant sectional curvature $\kappa\in\R$. Then, every totally geodesic submanifold of dimension $d\ge2$ of the Riemannian cone $\hat{M}$ is a cone over a totally geodesic submanifold $S$ of $M$ if and only if $\kappa\neq1$.	
\end{proposition}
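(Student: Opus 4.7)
The plan is to invoke Theorem~\ref{th:TGSubmanifoldsOfCones} to reduce to a single case and then exploit the special form of the curvature tensor in a space form. Given any compatible totally geodesic immersion $f\colon\Sigma\to\hat{M}$ of dimension $d\ge 2$, extend it to an inextendable one; if case~(i) of Theorem~\ref{th:TGSubmanifoldsOfCones} applies, then $f$ is equivalent to the cone map $\hat{g}$ of some totally geodesic immersion $g\colon S\to M$, and $\Sigma$ is a cone as required. Thus everything reduces to deciding when case~(ii) can occur; equivalently, when the cone $\hat{S}$ over a $d$-dimensional space form $S$ of curvature $\kappa$ (which is itself a space form because $S$ is totally geodesic in $M$) admits a totally geodesic hypersurface $E$ on which $\partial_r$ is nowhere tangent.

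For the only if direction, assume $\kappa=1$ and $\dim M\ge 2$. Then $\hat{M}$ is locally isometric to the punctured Euclidean space $\R^{n+1}\setminus\{0\}$, where $n=\dim M$. Any affine hyperplane $H\subseteq\R^{n+1}$ at positive distance from the origin then sits inside this local model as a totally geodesic hypersurface of dimension $n\ge 2$; since $H$ is not invariant under the homothety flow $(\tau,p)\mapsto(\lambda\tau,p)$ of $\hat{M}$, it cannot be a cone over any submanifold of $M$, giving the required counterexample.

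For the if direction, suppose $\kappa\ne 1$ and assume for contradiction that case~(ii) applies, so that $E$ is a totally geodesic hypersurface of $\hat{S}$ with $\dim S=d\ge 2$ and with $\partial_r$ nowhere tangent. Fix $(\tau,p)\in E$ and apply Remark~\ref{remark:SkewCones} within $\hat{S}$: the tangent space of $E$ at $(\tau,p)$ equals $\R(\partial_r+\eta)^{\perp}$ for some $\eta\in T_p S$ satisfying $\hat{R}(\eta,v)v=0$ for all $v\in T_p S$. Using~\eqref{eq:curvaturecone} together with the constant curvature formula $R(\eta,v)v=\kappa(\lvert v\rvert^2\eta-\langle\eta,v\rangle v)$, this condition becomes $(\kappa-1)(\lvert v\rvert^2\eta-\langle\eta,v\rangle v)=0$, and choosing a unit vector $v\in T_p S$ orthogonal to $\eta$ (possible since $\dim S\ge 2$) forces $\eta=0$. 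Hence the tangent space of $E$ at $(\tau,p)$ coincides with the horizontal subspace $T_p S\subseteq T_{(\tau,p)}\hat{S}$. To complete the contradiction, I would show that $T_p S$ is not a totally geodesic subspace: combining~\eqref{eq:coneNablaR} with $\nabla R=0$ on $S$ yields $(\hat{\nabla}_x\hat{R})(u,v,w)=-\tau\langle x,\hat{R}(u,v)w\rangle\partial_r$ for $x,u,v,w\in T_p S$, and substituting $\hat{R}(u,v)w=(\kappa-1)(\langle v,w\rangle u-\langle u,w\rangle v)$ with $x=u$ and $w=v$ an orthonormal pair gives $(\hat{\nabla}_u\hat{R})(u,v,v)=-\tau(\kappa-1)\partial_r$, which is not in $T_p S$. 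This contradicts Theorem~\ref{th:CharacterizationTGSubmanifolds} and rules out case~(ii).

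The main obstacle I anticipate is the careful bookkeeping needed to apply Remark~\ref{remark:SkewCones} and formula~\eqref{eq:coneNablaR}, originally formulated for the cone of the ambient space, to the cone $\hat{S}$ over the totally geodesic submanifold $S$ supplied by Theorem~\ref{th:TGSubmanifoldsOfCones}; this transfer is legitimate because a totally geodesic submanifold of a space form is itself a space form with the same curvature, and the cone construction commutes with totally geodesic immersions by Lemma~\ref{lemma:tgcone}. Modulo this verification, both directions reduce to short curvature computations that cleanly exploit the warped product structure of the cone metric.
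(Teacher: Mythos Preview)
Your proof is correct and follows essentially the same route as the paper: both reduce via Theorem~\ref{th:TGSubmanifoldsOfCones} to ruling out a totally geodesic hypersurface in a cone over a space form that is nowhere tangent to $\partial_r$, and both use Remark~\ref{remark:SkewCones} together with the constant-curvature formula to force $\eta=0$. The only substantive difference is the final contradiction. The paper argues globally: once $\eta=0$ at every point, the hypersurface is an integral submanifold of the distribution $\partial_r^{\perp}$, hence an open piece of a link $\{\tau_0\}\times M$, and the connection formula~\eqref{eq:ConeConnection} shows directly that links are never totally geodesic. You instead work at a single point and invoke Theorem~\ref{th:CharacterizationTGSubmanifolds} via the $\nabla R$ computation~\eqref{eq:coneNablaR}. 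Your argument is a bit more uniform with the rest of the paper's machinery (everything happens at the tangent-space level), while the paper's is slightly more elementary since it avoids $\nabla R$ altogether. Your explicit acknowledgment that one must pass from $\hat{M}$ to $\hat{S}$ is also a clarification the paper leaves implicit; it is indeed harmless because $S$ inherits the same constant curvature~$\kappa$.
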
	
\begin{proof}
	Due to Theorem~\ref{th:TGSubmanifoldsOfCones}, we may focus only on totally geodesic hypersurfaces.
	Let $M$ be a connected complete Riemannian manifold of constant sectional curvature $\kappa\in \R$ and dimension $n\geq 2$, and suppose that $\hat M$ admits a totally geodesic hypersurface $\Sigma$ that is not tangent to the cone vector field.
	By shrinking $\Sigma$ if necessary, we can assume that $\Sigma\subseteq \hat M$ is embedded and every $(\tau,p)\in\Sigma$ is such that $V=T_{(\tau,p)}\Sigma$ does not contain $\partial_{r}$, so its orthogonal complement must be generated by a vector of the form $\partial_{r}+\eta$ for a certain $\eta\in T_{p}M$.
	Now, from Remark~\ref{remark:SkewCones} and~\eqref{eq:curvaturecone} we deduce that
	$
		0=\hat{R}(\eta,v)v=(\kappa-1)(\lvert v\rvert^{2}\eta-\langle \eta,v \rangle v)
	$
	for all $v\in T_{p}M$, which means that either $\kappa=1$ or $\eta=0$.
	If $\kappa\neq 1$, we obtain that $V=T_{(\tau,p)}M$ for all $(\tau,p)\in \Sigma$, so $\Sigma$ is an integral manifold of the distribution $\mathcal{D}=\partial_{r}^{\perp}$ on $\hat{M}$.
	The maximal integral manifolds of $\mathcal{D}$ are precisely the links $\{\tau\}\times M$ for each $\tau>0$, so $\Sigma$ is actually an open subset of a leaf $\{\tau_{0}\}\times M$ for a certain $\tau_{0}>0$.
	However, the last equation in~\eqref{eq:ConeConnection} shows that the leaves are never totally geodesic, so we arrive at a contradiction.
	We conclude that if $\kappa\neq 1$, the inextendable totally geodesic hypersurfaces of $\hat{M}$ are precisely the cones over the complete totally geodesic hypersurfaces of $M$.
	
	If $M$ has constant sectional curvature equal to $1$, then $\hat{M}$ is flat by~\eqref{eq:curvaturecone}, so for every point $(\tau,p)\in\hat{M}$ and every hyperplane $V\subseteq T_{(\tau,p)}\hat{M}$ there exists an inextendable compatible totally geodesic hypersurface $\Sigma\subseteq \hat{M}$ such that $(\tau,p)\in \Sigma$ and $T_{(\tau,p)}\Sigma=V$.
	In particular, $\Sigma$ is not (contained in) a cone over a totally geodesic hypersurface of $M$ if and only if $(\partial_{r})_{(\tau,p)}\notin V$.
\end{proof}

\begin{example}
\label{ex:unitspherecone}
Let us assume that $M=\s{S}^n(1)$.
	Then $\hat{M}$ is isometric to $\R^{n+1}\setminus \{0\}$ in such a way that the cones over the totally geodesic submanifolds of $\hat{M}$ are of the form $V\setminus \{0\}$, where $V$ is an arbitrary vector subspace of $\R^{n+1}$.
	In particular, any affine hyperplane $\Sigma\subseteq \R^{n+1}$ not containing the origin is a totally geodesic hypersurface that does not appear as a cone over a totally geodesic hypersurface of $\s{S}^n$.
\end{example}
\begin{proposition}
\label{prop:bergercone}
Let $M$ be either $\s{S}^{3}_{\C,\tau}(r)$ or $\R\s{P}^{3}_{\C,\tau}(r)$  and let $\hat M$ denote the Riemannian cone over $M$. Then, $\hat{M}$ admits a totally geodesic hypersurface if and only if $\tau=r=1$.
\end{proposition}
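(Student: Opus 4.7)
The approach is to leverage the dichotomy of Theorem~\ref{th:TGSubmanifoldsOfCones} (and Corollary~\ref{cor:MaximalTGSubmanifoldsOfCones}): every totally geodesic hypersurface of $\hat M$ is either a Riemannian cone over a totally geodesic surface of $M$ or a ``skew'' one, along which the radial vector $\partial_{r}$ is nowhere tangent. My plan is to show that a skew totally geodesic hypersurface exists in $\hat M$ if and only if $\tau=r=1$. For the sufficiency direction, when $\tau=r=1$ the base $M$ carries the round metric of radius one, so $\hat M$ is locally flat Euclidean space $\R^{4}\setminus\{0\}$, and any affine hyperplane not passing through the origin (or its $\{\pm 1\}$-orbit in the projective case) provides the required hypersurface.

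For the necessity direction, I would first compute the curvature tensor of the Berger sphere $M=\s{S}^{3}_{\C,\tau}(r)$ via the Koszul formula on the left-invariant Berger metric on $\s{SU}(2)$. In an orthonormal frame $\{\tilde v,\tilde h_{1},\tilde h_{2}\}$ adapted to the Hopf fibration, with $\tilde v$ vertical, the only non-vanishing basic components of $R$ are $R(\tilde v,\tilde h_{i})\tilde h_{i}=\alpha\tilde v$ and $R(\tilde h_{1},\tilde h_{2})\tilde h_{2}=\beta\tilde h_{1}$, where $\alpha=\tau/r^{2}$ and $\beta=(4-3\tau)/r^{2}$. Assuming a skew totally geodesic hypersurface $\Sigma$ passes through $(s,p)$ with unit normal in the direction $\partial_{r}+\eta$, $\eta\in T_{p}M$, Remark~\ref{remark:SkewCones} together with~\eqref{eq:curvaturecone} yields $R(\eta,v)v=|v|^{2}\eta-\langle\eta,v\rangle v$ for every $v\in T_{p}M$. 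Expanding $\eta=a\tilde v+b\tilde h_{1}+c\tilde h_{2}$ and testing against the frame vectors produces the algebraic relations $a(\alpha-1)=b(\alpha-1)=c(\alpha-1)=b(\beta-1)=c(\beta-1)=0$. If $\alpha\neq 1$, then $\eta=0$ and $\Sigma$ would be tangent to the link $\{s\}\times M$, which is not totally geodesic by~\eqref{eq:ConeConnection}; and if $\alpha=\beta=1$ the equations $\tau=r^{2}$ and $4-3\tau=r^{2}$ together give $\tau=r=1$.

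The main obstacle is the remaining case $\alpha=1$, $\beta\neq 1$ (equivalently, $\tau=r^{2}\neq 1$), in which $\eta=a\tilde v$ with $a\neq 0$ and the Jacobi-type condition no longer closes the argument. My plan here is to invoke the stronger $\hat\nabla\hat R$-invariance of $T_{(s,p)}\Sigma$ guaranteed by Theorem~\ref{th:CharacterizationTGSubmanifolds}. A direct Koszul-formula computation on $M$ gives $(\nabla_{\tilde h_{1}}R)(\tilde h_{1},\tilde h_{2},\tilde h_{1})=\tfrac{4\sqrt{\tau}(\tau-1)}{r^{3}}\tilde v$, and substituting into~\eqref{eq:coneNablaR}---the $\partial_{r}$-correction vanishing because $\hat R(\tilde h_{1},\tilde h_{2})\tilde h_{1}=(1-\beta)\tilde h_{2}$ is orthogonal to $\tilde h_{1}$---yields
\[
(\hat\nabla_{\tilde h_{1}}\hat R)(\tilde h_{1},\tilde h_{2},\tilde h_{1})=\frac{4\sqrt{\tau}(\tau-1)}{r^{3}}\tilde v.
\]
Since $\tilde v\notin T_{(s,p)}\Sigma$ whenever $a\neq 0$, $\hat\nabla\hat R$-invariance forces $\tau=1$, which combined with $\alpha=1$ gives $r=1$ and contradicts $\beta\neq 1$. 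The $\R\mathsf{P}^{3}$ case will follow by descending all constructions and computations to the antipodal $\mathbb{Z}_{2}$-quotient, under which the Berger metric, its curvature, and the cone construction are invariant.
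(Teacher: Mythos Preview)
Your treatment of the ``skew'' case is correct and essentially parallel to the paper's argument: both reduce to the Jacobi-type condition $\hat R(\eta,v)v=0$ from Remark~\ref{remark:SkewCones}, split into the cases $\tau\neq r^{2}$ and $\tau=r^{2}\neq 1$, and in the latter invoke $\hat\nabla\hat R$-invariance to finish. Your specific choice of derivative, $(\hat\nabla_{\tilde h_{1}}\hat R)(\tilde h_{1},\tilde h_{2},\tilde h_{1})=\tfrac{4\sqrt{\tau}(\tau-1)}{r^{3}}\tilde v$, differs from the paper's $(\hat\nabla_{X}\hat R)(X,Y,Y)=(4-4/r^{2})t\,\partial_{r}$, but either computation closes the argument.

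There is, however, a genuine gap: you only treat the skew case and never address the possibility that $\Sigma$ is a cone over a totally geodesic \emph{surface} of $M$. Your opening dichotomy explicitly allows for this, yet the subsequent plan announces only ``My plan is to show that a skew totally geodesic hypersurface exists if and only if $\tau=r=1$.'' That is not enough for the necessity direction of the proposition: you must also rule out totally geodesic surfaces in $\s{S}^{3}_{\C,\tau}(r)$ when $(\tau,r)\neq(1,1)$. The paper dispatches this by citing \cite[Theorem~A]{RodriguezVazquezOlmos}, which shows that genuine Berger spheres ($\tau\neq 1$) admit no totally geodesic hypersurfaces; this is a nontrivial input that your plan omits. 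Note too that for $\tau=1$, $r\neq 1$ the base is a round sphere of radius $r\neq 1$, which \emph{does} have totally geodesic great $2$-spheres, so the cone case actually produces examples there---this falls under Proposition~\ref{prop:ConstantCurvatureCone} rather than the present statement, but it underscores that the cone branch cannot simply be ignored.
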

\begin{proof}
	Let $M=\s{S}^{3}_{\C,\tau}(r)$ be a three-dimensional Berger sphere of radius $r$ and deformation parameter $\tau$.
	We show that $\hat{M}$ does not admit totally geodesic hypersurfaces unless $r=\tau=1$ (that is, $M$ is the unit round sphere).
	
	We first establish some notation.
	Recall that $\s{S}^{3}_{\C,\tau}(r)=\s{U}(2)/\s{U}(1)$ as a homogeneous space, and we have a reductive decomposition $\g{u}(2)=\g{u}(1)\oplus\g{p}$, where $\g{u}(1)=\R K$ and $\g{p}=\spann\{E,X,Y\}$ for the matrices
	\[
		K=iE_{11}, \quad E=\frac{i}{r\sqrt{\tau}}E_{22}, \quad X=\frac{1}{r}(E_{21}-E_{12}), \quad Y=\frac{i}{r}(E_{21}+E_{12}).
	\]
	Furthermore, if $\langle\cdot,\cdot \rangle$ denotes the inner product on $\g{p}$ induced by the Berger metric on $M$, then $E$, $X$ and $Y$ are orthonormal vectors with respect to this metric.
	In addition, the vertical and horizontal subspaces at $o=e\s{U}(1)$ with respect to the Hopf fibration are $\mathcal{V}_{o}=\R E$ and $\mathcal{H}_{o}=\spann\{X,Y\}$.
	
	Let us suppose that $M\neq \s{S}^{3}(1)$ and $\Sigma$ is a totally geodesic hypersurface of $\hat{M}$.
	We may assume that $\Sigma$ is embedded in $M$.
	Because $M$ is homogeneous, Corollary~\ref{cor:ConeIsometryGroup} allows us to suppose that $\Sigma$ passes through a point of the form $(t,o)$ with tangent space $V\subseteq T_{(t,o)}\hat{M}\equiv \R \partial_{r}\oplus \g{p}$.
	As $M$ does not admit totally geodesic hypersurfaces~\cite[Theorem~A]{RodriguezVazquezOlmos}, we have that $\partial_{r}\notin V$, so $V^{\perp}$ must be spanned by a vector of the form $\partial_{r}+\eta$, where $\eta\in \g{p}$.
	We may write $\eta=a_{1}E+a_{2}X+a_{3}Y$ for some constants $a_{1}$, $a_{2}$, $a_{3}\in\R$.
	From Remark~\ref{remark:SkewCones} we also know that $\hat{R}(\eta,Z)Z=0$ for all $Z\in \g{p}$.
	A polarization argument shows that the previous condition is equivalent to $\hat{R}(\eta,Z)W+\hat{R}(\eta,W)Z=0$ for all $Z$, $W\in\g{p}$.
	
	Firstly, suppose that $\tau\neq r^{2}$.
	Then the equations
	\[
		\begin{aligned}
			0=&{}\hat{R}(\eta,E)Y+\hat{R}(\eta,Y)E=\left(1-\frac{\tau}{r^{2}}\right)(a_{3}E+a_{1}Y), \\
			0=&{}\hat{R}(\eta,E)X+\hat{R}(\eta,X)E=\left(1-\frac{\tau}{r^{2}}\right)(a_{2}E+a_{1}X),
		\end{aligned}
	\]
	imply that $a_{1}=a_{2}=a_{3}=0$, so $\eta=0$ and $V=\g{p}$.
	However, from~\eqref{eq:coneNablaR} and the fact that $\hat{R}$ is not identically zero on $\g{p}$ we deduce that $\g{p}$ is not a totally geodesic subspace, giving us a contradiction. 
	
	Secondly, suppose that $\tau=r^{2}$ and $r\neq 1$.
	Since $0=\hat{R}(\eta,X)Y+\hat{R}(\eta,Y)X=\frac{4(r^{2}-1)}{r^{2}}(a_{3}X+a_{2}Y)$, we obtain $a_{2}=a_{3}=0$.
	As a consequence, $\eta\in \mathcal{V}_{o}$, and $V$ contains the horizontal subspace $\mathcal{H}_{o}$.
	Using~\eqref{eq:coneNablaR}, we obtain that
	$
		\left( 4-\frac{4}{r^{2}} \right)t\partial_{r}=(\hat{\nabla}_{X}\hat{R})(X,Y,Y)\in V
	$,
	so $\partial_{r}\in V$, which again yields a contradiction.
	
	All in all, we have shown that $\hat{M}$ does not admit totally geodesic hypersurfaces except in the case $M=\s{S}^{3}(1)$.
	Since the natural projection $\s{S}^{3}_{\C,\tau}(r)\to\R\s{P}^{3}_{\C,\tau}(r)$ is a Riemannian covering map, the same result holds for the three-dimensional Berger projective space.
\end{proof}

In view of the latter results, one could think that totally geodesic hypersurfaces nowhere tangent to the radial vector can only appear if the base is a sphere of radius one.
However, the following construction shows that there are many nontrivial local examples of totally geodesic hypersurfaces satisfying this condition.
\begin{example}
	\label{ex:nontrivalexconetg}
	Consider two smooth positive functions $p,q\colon \R^2\to \R$. 
	We consider the Riemannian manifold $M=(0,\pi/2)\times \R^2$ (endowed with Cartesian coordinates $x$, $y$, $z$) with the metric
	\[
	g=dx^2+(\sin x)^2 p(y,z)^2 dy^2+(\sin x)^2 q(y,z)^2dz^2.
	\]
	We see that the metric on the cone $\hat{M}=(0,\infty)\times (0,\pi/2)\times\R^2$ is now given by
	\[
	\hat{g}=dr^2+r^2dx^2+r^2(\sin x)^2 p(y,z)^2 dy^2+r^2(\sin x)^2 q(y,z)^2dz^2
	\]
	with respect to the product coordinates $(r,x,y,z)$.
	A calculation shows that the hypersurface
	\[
	\Sigma=\left\{ \left(\frac{1}{\cos x},x,y,z\right)\colon (x,y,z)\in M \right\}\subseteq \hat{M}
	\]
	is totally geodesic in $\hat{M}$.
	Note that $\Sigma$ does not arise as a cone over a totally geodesic hypersurface of $M$ because the cone vector field is nowhere tangent to it.
\end{example}

The abundance of examples highlights the appropriateness of further exploring this class of totally geodesic hypersurfaces in cones.

\section{Proofs of the main theorems}\label{sec:mainths}

In this section we provide the proofs of the main theorems of this article.
We go through each one of the homogeneous nearly Kähler $6$-manifolds with non-constant sectional curvature, and classify their totally geodesic submanifolds.

\subsection{The complex projective space}\label{subsec:cp3proof}
	
\begin{lemma}\label{lemma:CP3WellPositioned}
	If $\g{v}\subseteq \g{p}$ is a totally geodesic subspace and $\g{v}$ contains a vertical or a horizontal vector, then $\g{v}$ is well-positioned.
\end{lemma}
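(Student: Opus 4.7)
The plan is to combine the Jacobi-operator analysis of Remark~\ref{remark:JacobiInvariance} with the characterization of well-positioned submanifolds given in Lemma~\ref{lemma:HomogeneousFibrationWellPositioned}, exploiting the rich isotropy structure of $\C\mathsf{P}^{3}$ described in Subsection~\ref{subsection:CP3Description}.

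I would first reduce to the case that $\g{v}$ contains a nonzero vertical vector $X\in\g{v}\cap\g{p}_{1}$; the horizontal case is handled identically with the roles of $\g{p}_{1}$ and $\g{p}_{2}$ interchanged. Since $\g{v}$ is totally geodesic and $X\in\g{v}$, Remark~\ref{remark:JacobiInvariance} implies that $\g{v}$ is invariant under the Jacobi operator $R_{X}$. The key step is to analyze $R_{X}$ via its equivariance properties. Because $R$ is $\mathsf{K}$-invariant, $R_{X}$ commutes with the action of the stabilizer $\s{K}_{X}\subseteq\s{K}=\s{U}(1)\times\s{Sp}(1)$. For a nonzero $X\in\g{p}_{1}$ this stabilizer is $\{\pm 1\}\times \s{Sp}(1)$, under which $\g{p}_{1}$ is the trivial $2$-dimensional real module while $\g{p}_{2}\cong\H$ carries the standard (irreducible) quaternionic $\s{Sp}(1)$-representation. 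Since $R_{X}$ is also symmetric, Schur's lemma forces $R_{X}$ to preserve both $\g{p}_{1}$ and $\g{p}_{2}$ and to restrict to a real scalar on $\g{p}_{2}$.

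Next I would pin down the eigenvalues using the normal-homogeneous sectional-curvature formula. On $\g{p}_{1}$ the eigenvalues are $0$ on $\R X$ and $2$ on the orthogonal complement inside $\g{p}_{1}$ (reflecting the sectional curvature of the totally geodesic twistor fiber $\s{S}^{2}(1/\sqrt{2})$), while the Schur scalar on $\g{p}_{2}$ equals the mixed curvature $\sec(X,Y)=1/8$ for any unit $Y\in\g{p}_{2}$. Since $\{0,2\}\cap\{1/8\}=\varnothing$, every eigenspace of $R_{X}$ lies entirely inside $\g{p}_{1}$ or inside $\g{p}_{2}$, so the $R_X$-invariance of $\g{v}$ yields
\[
\g{v}=(\g{v}\cap\g{p}_{1})\oplus(\g{v}\cap\g{p}_{2}),
\]
that is, $\g{v}$ is well-positioned at~$o$.

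To upgrade from well-positioned at $o$ to well-positioned everywhere, I would apply Lemma~\ref{lemma:HomogeneousFibrationWellPositioned}, which reduces the task to showing that $e^{-D_{Y}}\g{v}$ is $\g{p}_{1}\oplus\g{p}_{2}$-split for every $Y\in\g{v}$. By Corollary~\ref{cor:TojoCongruentSubmanifolds}, each $e^{-D_{Y}}\g{v}$ is again a totally geodesic subspace, corresponding to a congruent totally geodesic submanifold; the natural idea is to re-run the eigenspace argument on the translate. The main obstacle will be to track a vertical (or horizontal) vector inside $e^{-D_Y}\g{v}$. I would handle this by exploiting the bracket relations $[\g{p}_{1},\g{p}_{1}]\subseteq\g{k}$, $[\g{p}_{1},\g{p}_{2}]_{\g{p}}\subseteq\g{p}_{2}$, and $[\g{p}_{2},\g{p}_{2}]_{\g{p}}\subseteq\g{p}_{1}$ coming from the $3$-symmetric structure: these show that $D_{Y_{1}}$ preserves the decomposition $\g{p}_{1}\oplus\g{p}_{2}$ while $D_{Y_{2}}$ swaps the two summands, giving explicit control of $e^{-D_Y}$ when $Y=Y_1+Y_2$ is written according to the splitting at $o$. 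This control is exactly what is needed to conclude that $\g{v}$ is in fact $D$-invariant, at which point Corollary~\ref{cor:CanonicallyEmbeddedWellPositioned} promotes the splitting at $o$ to well-positioning at every point.
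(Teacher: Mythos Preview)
Your first three paragraphs are essentially the paper's argument: normalize the given vertical (resp.\ horizontal) vector via the isotropy action, compute the spectrum of the Jacobi operator $R_{X}$, observe that each eigenspace lies entirely in $\g{p}_{1}$ or $\g{p}_{2}$, and apply Remark~\ref{remark:JacobiInvariance}. The paper does this by direct computation after setting $X=e_{1}$ (resp.\ $X=e_{3}$), whereas you add a Schur-lemma justification; either way one arrives at the splitting $\g{v}=(\g{v}\cap\g{p}_{1})\oplus(\g{v}\cap\g{p}_{2})$.

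Your last paragraph, however, is both unnecessary and gappy. In this lemma (and in its analogue Lemma~\ref{lemma:S3S3WellPositioned}), the phrase ``$\g{v}$ is well-positioned'' is used to mean precisely that $\g{v}$ splits with respect to $\g{p}=\g{p}_{1}\oplus\g{p}_{2}$; the paper's proof stops at that point, and the subsequent Propositions~\ref{prop:4tgcp3}--\ref{prop:2tgcp3} only ever use this splitting at $o$. There is no need to upgrade to global well-positioning, and your proposed route to do so via $D$-invariance does not go through: the bracket relations you list ($D_{Y_{1}}$ preserves each $\g{p}_{i}$ while $D_{Y_{2}}$ swaps them) tell you where $D_{Y}(\g{v}_{i})$ lands in $\g{p}$, but not that it lands in $\g{v}$. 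For instance, knowing $D_{Y_{2}}(\g{v}_{2})\subseteq\g{p}_{1}$ gives no reason for it to lie in $\g{v}_{1}$. In fact the eventual $D$-invariance of all such $\g{v}$ in $\C\s{P}^{3}$ is an output of the classification, not an input to it. Simply drop the last paragraph.
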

\begin{proof}
	If $X\in\g{v}$ is a unit vertical vector, we may assume by means of the isotropy representation that $X=e_{1}$.
	Since the spectrum of the Jacobi operator $R_{e_{1}}\colon \g{p}\ominus \R e_{1}\to \g{p}\ominus \R e_{1}$ consists of the eigenvalues $2$, with eigenspace $\R e_{2}$, and $1/8$, with eigenspace $\g{p}_{2}$, the claim follows from Remark~\ref{remark:JacobiInvariance}.
	Similarly, if $X$ is horizontal we may suppose that $X=e_{3}$, and in this case the eigenvalues of $R_{e_{3}}\colon \g{p}\ominus \R e_{3}\to \g{p}\ominus \R e_{3}$ are $1/8$, with eigenspace $\g{p}_{1}$, $1$, with eigenspace $\R e_{4}$, and $5/8$, with eigenspace $\vecspan\{e_{5},e_{6}\}$, so the result also holds in this case.
	\qedhere 
\end{proof}
	
\begin{proposition}
	\label{prop:4tgcp3}
	There are no dimension four totally geodesic submanifolds in $\C\mathsf{P}^{3}$.
\end{proposition}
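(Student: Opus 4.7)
The argument is a case analysis on the decomposition of $\g{v}$ with respect to the twistor splitting $\g{p}=\g{p}_{1}\oplus\g{p}_{2}$, followed by explicit curvature computations. Let $\g{v}\subseteq\g{p}$ be a $4$-dimensional totally geodesic subspace at $o$. Since $\dim(\g{v}\cap\g{p}_{2})\ge 4+4-6=2$, the subspace $\g{v}$ contains horizontal vectors, and Lemma~\ref{lemma:CP3WellPositioned} yields a splitting $\g{v}=\g{v}_{1}\oplus\g{v}_{2}$ with $\g{v}_{i}\subseteq\g{p}_{i}$, so the pair $(\dim\g{v}_{1},\dim\g{v}_{2})$ is $(0,4)$, $(1,3)$ or $(2,2)$.

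In each case I normalize $\g{v}$ using the isotropy action of $\s{U}(1)\times\s{Sp}(1)$, which acts transitively on the unit spheres of both $\g{p}_{1}$ and $\g{p}_{2}$, and impose that $\g{v}$ be $R_{e_{3}}$-invariant. The Jacobi operator $R_{e_{3}}$ decomposes $\g{p}$ as $\R e_{3}\oplus\R e_{4}\oplus\spann\{e_{5},e_{6}\}\oplus\g{p}_{1}$ with eigenvalues $0,1,5/8,1/8$, and the residual stabilizer of $e_{3}$ acts by rotations on $\spann\{e_{5},e_{6}\}$; these constraints reduce each case to a short list of normal forms. Most of them are eliminated already at the level of $R$: for instance $R(e_{3},e_{4})$ acts on $\spann\{e_{5},e_{6}\}$ as $-\tfrac{1}{4}J$ and so preserves no real line, which rules out the subcase of $(1,3)$ containing both $e_{3}$ and $e_{4}$; the identity $R(e_{5},e_{6})e_{3}=\tfrac{1}{4}e_{4}$ rules out $\g{v}=\R e_{1}\oplus\R e_{3}\oplus\spann\{e_{5},e_{6}\}$; and $R(e_{1},e_{2})e_{3}=e_{4}$ rules out $\g{v}=\g{p}_{1}\oplus\R e_{3}\oplus\R e_{5}$.

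The technical heart of the proof is the two remaining candidates, $\g{v}=\g{p}_{2}$ and $\g{v}=\g{p}_{1}\oplus\spann\{e_{3},e_{4}\}$, for which direct computation shows that $R$ already preserves $\g{v}$. For these, one has to pass to the first covariant derivative of the curvature, using the formula for $\nabla R$ recalled in Subsection~\ref{subsec:homspaces}. Explicit calculations give $(\nabla_{e_{5}}R)(e_{3},e_{4},e_{4})=-\tfrac{\sqrt{2}}{16}e_{1}\in\g{p}_{1}\setminus\{0\}$ in the first case, and a nonzero $e_{6}$-component of $(\nabla_{e_{1}}R)(e_{3},e_{4},e_{3})$ in the second; both conclusions contradict the characterization of totally geodesic subspaces in Theorem~\ref{th:CharacterizationTGSubmanifolds}. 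The main obstacle is precisely that $\g{p}_{2}$, being the horizontal subspace of the twistor fibration, is preserved by both $J$ and $R$, so zeroth-order invariants alone do not detect its failure to be totally geodesic.
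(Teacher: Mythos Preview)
Your proof is correct and follows essentially the same strategy as the paper: invoke Lemma~\ref{lemma:CP3WellPositioned} to get well-positionedness, split into cases according to $\dim(\g{v}\cap\g{p}_{1})$, normalize via the isotropy, and exclude each normal form with an explicit curvature or $\nabla R$ identity. The only structural difference is that you organize the reduction inside each case through the eigenspace decomposition of $R_{e_{3}}$ (and the residual $\s{U}(1)$-stabilizer of $e_{3}$), whereas the paper proceeds more directly—e.g.\ in the $(1,3)$ case it immediately uses $e_{5}=4\sqrt{2}(\nabla_{e_{3}}R)(e_{3},e_{1},e_{3})$ to pin down a basis, and in the $(2,2)$ case it kills the $\spann\{e_{5},e_{6}\}$-part with $\langle R(e_{1},e_{2})(\cdot),\cdot\rangle$ rather than your $R(e_{1},e_{2})e_{3}$. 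Both routes land on the same two ``surviving'' candidates $\g{p}_{2}$ and $\g{p}_{1}\oplus\spann\{e_{3},e_{4}\}$ and eliminate them with $\nabla R$; the paper uses $(\nabla_{e_{3}}R)(e_{3},e_{4},e_{6})=-\tfrac{1}{4\sqrt{2}}e_{1}$ and $(\nabla_{e_{1}}R)(e_{1},e_{2},e_{3})=-\tfrac{3}{4\sqrt{2}}e_{6}$, while you chose different tensor slots. One small correction: the paper computes $R(e_{1},e_{2})e_{3}=\tfrac{3}{4}e_{4}$, not $e_{4}$; this does not affect your argument, which only needs the result to be a nonzero multiple of $e_{4}$.
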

\begin{proof}
	Assume, on the contrary, that there exists a totally geodesic submanifold $\Sigma$ of $M$ of dimension four passing through $o$, and let $\g{v}\subseteq \g{p}$ be its corresponding totally geodesic subspace.
	From Lemma~\ref{lemma:CP3WellPositioned} and by dimension reasons, we know that $\g{v}$ is well-positioned.
	We will distinguish three possibilities according to the dimension of $\g{v}\cap \g{p}_{1}$.
		
	If $\g{v}\cap \g{p}_{1}=\g{p}_{1}$, then $\g{v}\cap\g{p}_{2}$ is two-dimensional.
	By using the isotropy representation if necessary, we may suppose that $e_{3}\in \g{v}$.
	We can therefore consider a basis of $\g{v}$ of the form $\{e_{1},e_{2},e_{3},a_{4}e_{4}+a_{5}e_{5}+a_{6}e_{6}\}$.
	In particular, $a_{6}e_{5}-a_{5}e_{6}$ is orthogonal to $\g{v}$, and the equality
	\[
		0=\langle R(e_{1},e_{2})(a_{4}e_{4}+a_{5}e_{5}+a_{6}e_{6}),a_{6}e_{5}-a_{5}e_{6} \rangle=\frac{3(a_{5}^{2}+a_{6}^{2})}{4}
	\]
	yields $a_{5}=a_{6}=0$, so actually $\g{v}=\vecspan\{e_{1},e_{2},e_{3},e_{4}\}$.
	This is a contradiction due to the fact that $(\nabla_{e_{1}}R)(e_{1},e_{2},e_{3})=-\frac{3}{4\sqrt{2}}e_{6}\notin \g{v}$,
	so this case is not possible.
		
	If $\g{v}\cap \g{p}_{1}$ is one-dimensional (which forces $\dim \g{v}\cap\g{p}_{2}=3$), we may use the isotropy representation to assume that $\g{v}$ contains $e_{1}$ and $e_{3}$.
	In particular, $e_{5}=4\sqrt{2}(\nabla_{e_{3}}R)(e_{3},e_{1},e_{3})$ also belongs to $\g{v}$.
	As a consequence, $\g{v}$ admits a basis of the form $\{e_{1},e_{3},e_{5},a_{4}e_{4}+a_{6}e_{6}\}$, which means that $a_{6}e_{4}-a_{4}e_{6}\in \g{p}\ominus \g{v}$, and
	\[
		0=\langle R(e_{3},e_{5})(a_{4}e_{4}+a_{6}e_{6}), a_{6}e_{4}-a_{4}e_{6} \rangle=-\frac{1}{8}(a_{4}^{2}+a_{6}^{2}),
	\]
	so $a_{4}=a_{6}=0$, another contradiction.
		
	If $\g{v}\cap \g{p}_{1}=0$, then $\g{v}=\g{p}_{2}$, which is also not possible, since $(\nabla_{e_{3}}R)(e_{3},e_{4},e_{6})=-\frac{1}{4\sqrt{2}}e_{1}$ is not in $\g{v}$. In conclusion, no such $\g{v}$ can exist, and the claim follows.\qedhere
\end{proof}
	
\begin{proposition}
	\label{prop:3tgcp3}
	Let $\Sigma$ be a complete totally geodesic submanifold of $\C\s{P}^{3}$ with $\dim \Sigma = 3$.
	Then $\Sigma$ is congruent to the standard $\R\mathsf{P}^{3}_{\C,1/2}(2)$.
\end{proposition}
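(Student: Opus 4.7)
The plan is to classify all 3-dimensional totally geodesic subspaces $\g{v}\subseteq\g{p}$ at $o$ and identify the unique congruence class with the tangent space of $\R\s{P}^3_{\C,1/2}(2)$.

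First, I will show that $\g{v}$ splits as $\g{v} = (\g{v}\cap\g{p}_1)\oplus(\g{v}\cap\g{p}_2)$, so that $(d_1,d_2):=(\dim(\g{v}\cap\g{p}_1),\dim(\g{v}\cap\g{p}_2))$ sums to $3$. Since $\dim\g{v}=3>2=\dim\g{p}_1$, the kernel of the projection $\g{v}\to\g{p}_1$ is nontrivial, so $\g{v}\cap\g{p}_2\neq 0$. Using the transitivity of the isotropy on unit vectors of $\g{p}_2$, I normalize a horizontal unit vector of $\g{v}$ to $e_3$; by Remark~\ref{remark:JacobiInvariance}, $\g{v}$ is $R_{e_3}$-invariant, and the eigenspace decomposition of $R_{e_3}$ into $\R e_3$, $\g{p}_1$, $\R e_4$, $\spann\{e_5,e_6\}$ with distinct eigenvalues $0,1/8,1,5/8$ (see the proof of Lemma~\ref{lemma:CP3WellPositioned}) yields the splitting. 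Hence $(d_1,d_2)\in\{(2,1),(1,2),(0,3)\}$.

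Next, I carry out a case-by-case analysis. In case $(2,1)$, $\s{K}$-transitivity on unit vectors of $\g{p}_2$ normalizes $\g{v}=\spann\{e_1,e_2,e_3\}$, and the identity $(\nabla_{e_1}R)(e_1,e_2,e_3)=-\tfrac{3}{4\sqrt{2}}\,e_6$, established in the proof of Proposition~\ref{prop:4tgcp3}, contradicts the invariance required by Theorem~\ref{th:CharacterizationTGSubmanifolds}. In case $(1,2)$, the action $z\mapsto\lambda^2 z$ on $\g{p}_1\equiv\C$ normalizes the vertical direction to $e_1$; the stabilizer of $e_1$ in $\s{K}$ contains $\s{Sp}(1)$ acting on $\g{p}_2\equiv\H$ by left quaternionic multiplication (transitive on unit vectors), so I further arrange $e_3\in\g{v}$. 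The identity $(\nabla_{e_3}R)(e_3,e_1,e_3)=\tfrac{1}{4\sqrt{2}}\,e_5$ (from the same source) then forces $e_5\in\g{v}$, whence $\g{v}=\spann\{e_1,e_3,e_5\}$, which is the tangent space of $\R\s{P}^3_{\C,1/2}(2)$ at $o$ described in Subsection~\ref{subsec:excp3}.

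In case $(0,3)$, $\g{v}$ is a 3-plane in $\g{p}_2$. The $\s{K}$-action on $\g{p}_2$ commutes with the nearly Kähler $J$ and, modulo the kernel of the representation, realizes the full $\s{U}(\g{p}_2,J)\cong \s{U}(2)$-action on $\g{p}_2\equiv\C^2$, which is transitive on $3$-planes: each contains a unique $J$-invariant complex line, and the stabilizer of that line permutes the transverse real directions transitively. Normalizing $\g{v}=\spann\{e_3,e_4,e_5\}$, the identity $\langle R(e_3,e_5)e_4,e_6\rangle=1/8$ (extracted from the bilinear relation $\langle R(e_3,e_5)(a_4 e_4+a_6 e_6),a_6 e_4-a_4 e_6\rangle=-(a_4^2+a_6^2)/8$ computed in the proof of Proposition~\ref{prop:4tgcp3}) shows $R(e_3,e_5)e_4\notin\g{v}$, a contradiction. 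Only case $(1,2)$ survives, and the uniqueness of the inextendable totally geodesic submanifold with tangent space $\spann\{e_1,e_3,e_5\}$ (Theorem~\ref{th:CharacterizationTGSubmanifolds} together with Corollary~\ref{cor:CompleteTGSubmanifolds}) yields the congruence with $\R\s{P}^3_{\C,1/2}(2)$. The main obstacle will be case $(0,3)$, which requires both justifying the $\s{K}$-transitivity on 3-planes in $\g{p}_2$ via the interaction with $J$, and isolating the correct off-diagonal curvature component from the computations already performed in Proposition~\ref{prop:4tgcp3}.
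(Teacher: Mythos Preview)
Your proof is correct and follows essentially the same strategy as the paper: establish well-positioning via Lemma~\ref{lemma:CP3WellPositioned}, then do a case split on $(\dim(\g{v}\cap\g{p}_1),\dim(\g{v}\cap\g{p}_2))$, using isotropy normalization together with curvature obstructions. The differences are cosmetic: in case $(2,1)$ the paper uses $R(e_1,e_2)e_3=\tfrac{3}{4}e_4$ rather than your $\nabla R$ identity; in case $(0,3)$ the paper simply invokes transitivity of $\s{K}$ on the unit sphere of $\g{p}_2$ (hence on hyperplanes via orthogonal complements) to normalize $\g{v}=\spann\{e_4,e_5,e_6\}$ and derives the contradiction from $R(e_4,e_5)e_6=\tfrac{1}{8}e_3$, whereas your argument via $J$-invariant complex lines reaches an equivalent normalization $\spann\{e_3,e_4,e_5\}$ by a longer route.
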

\begin{proof}
	Let $\Sigma$ be such a submanifold, and assume without loss of generality that $\Sigma$ passes through $o$ with tangent space $\g{v}$.
	Once again, by dimension reasons we see that $\g{v}\cap \g{p}_{2}\neq 0$, and Lemma~\ref{lemma:CP3WellPositioned} implies that $\g{v}$ is well-positioned.
	We consider three cases according to the dimension of $\g{v}\cap\g{p}_{1}$.
		
	If $\g{v}\cap \g{p}_{1}=0$, then $\g{v}\subseteq \g{p}_{2}$ is a hyperplane, and since the isotropy representation is transitive on the unit sphere of $\g{p}_{2}$, we may assume that $\g{v}=\vecspan\{e_{4},e_{5},e_{6}\}$.
	However, since $R(e_{4},e_{5})e_{6}=\frac{1}{8}e_{3}$, we obtain a contradiction.
		
	If $\g{v}\cap \g{p}_{1}$ is one-dimensional, then by using the isotropy representation we may suppose that $\g{v}$ contains $e_{1}$ and $e_{3}$.
	Note that $4\sqrt{2}(\nabla_{e_{3}}R)(e_{3},e_{1},e_{3})=e_{5}$ also belongs to $\g{v}$, which gives $\g{v}=\vecspan\{e_{1},e_{3},e_{5}\}=\g{p}_{\R\mathsf{P}^{3}_{\C,1/2}(\sqrt{2})}$.
	Therefore, in this case we obtain $\Sigma=\R\mathsf{P}^{3}_{\C,1/2}(\sqrt{2})$.
	
	Finally, if $\g{v}\cap \g{p}_{1}=\g{p}_{1}$, then by using the isotropy representation we can assume that $\g{v}=\vecspan\{e_{1},e_{2},e_{3}\}$.
	This is not possible, since $R(e_{1},e_{2})e_{3}=\frac{3}{4}e_{4}$ is not in $\g{v}$.
	This finishes the proof.
\end{proof}
\begin{proposition}
	\label{prop:2tgcp3}
	Let $\Sigma$ be a complete totally geodesic surface inside $\C\mathsf{P}^{3}$.
	Then $\Sigma$ is congruent to one of the spheres described in Table~\ref{table:CP3}.
\end{proposition}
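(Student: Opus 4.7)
Plan: Let $\g{v}\subseteq\g{p}$ be the tangent subspace at $o$ of a complete totally geodesic surface $\Sigma\subseteq\C\s{P}^{3}$. Since $\Sigma$ is intrinsically homogeneous and two-dimensional, it has constant sectional curvature $\kappa$, and by Proposition~\ref{prop:TojoSurfaces}, for any orthogonal pair $X$, $Y\in\g{v}$ the span $\spann\{D_X^k Y:k\geq 0\}$ lies entirely in $\ker(R_X-\kappa|X|^2\operatorname{Id})\cap\bigcap_{j\geq 1}\ker C_X^j$. My plan is to split into cases according to whether $\g{v}$ intersects the vertical subspace $\g{p}_1$ or the horizontal subspace $\g{p}_2$ nontrivially, and to exploit the transitive action of the isotropy $\s{U}(1)\times\s{Sp}(1)$ on the unit spheres of both $\g{p}_1$ and $\g{p}_2$ to reduce each case to a finite list of explicit candidates that can be tested against Theorem~\ref{thm:TojoCriterion}.

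First, if $\g{v}$ contains a vertical unit vector, Lemma~\ref{lemma:CP3WellPositioned} ensures that $\g{v}$ splits into its vertical and horizontal parts. Either $\g{v}=\g{p}_1$, giving the twistor fiber $\s{S}^{2}(1/\sqrt{2})$, or $\g{v}=\R e_1\oplus\R Y$ with $Y\in\g{p}_2$; in the latter case I use the residual isotropy (which still acts transitively on the unit sphere of $\g{p}_2$) to normalize $Y=e_3$ and then verify with a direct curvature computation that the plane $\spann\{e_1,e_3\}$ is not totally geodesic. Symmetrically, if $\g{v}$ contains a horizontal but no vertical vector, I normalize the horizontal unit vector to $e_3$ and use the spectral decomposition of $R_{e_3}$ (with eigenspaces $\g{p}_1$, $\R e_4$, and $\spann\{e_5,e_6\}$, computed in the proof of Lemma~\ref{lemma:CP3WellPositioned}) to enumerate the candidates for the orthogonal direction in $\g{v}$, checking that only $Y=e_4$ yields a totally geodesic plane; this gives $\s{SU}(2)\cdot o=\s{S}^{2}(1)$.

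The main case is when $\g{v}$ contains neither a vertical nor a horizontal vector. Using the isotropy I normalize a unit vector $X\in\g{v}$ to the form $X=\cos\theta\,e_1+\sin\theta\,e_3$ with $\theta\in(0,\pi/2)$, and then compute the spectral decomposition of $R_X$ as a function of $\theta$. The constraint from Proposition~\ref{prop:TojoSurfaces} that the orthogonal direction $Y$ and its full $D_X$-orbit lie in a single $R_X$-eigenspace and in $\ker C_X^j$ for all $j\geq 1$ should isolate the single admissible value $\tan^{2}\theta=3/2$, forcing $X$ proportional to $\sqrt{2}e_1+\sqrt{3}e_3$ and $Y$ proportional to $\sqrt{2}e_2+\sqrt{3}e_4$; this tangent plane is precisely that of $\s{SU}(2)_{\Lambda_{3}}\cdot o=\s{S}^{2}(\sqrt{5})$. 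The principal obstacle lies in this third case, since the Jacobi operator $R_X$ depends nontrivially on $\theta$ with both eigenvalues and eigenspaces varying continuously, and it is the Cartan constraints $C_X^j$ that eliminate all $\theta$ except the exceptional value. Once the tangent plane has been identified in each case, Corollary~\ref{cor:TojoCongruentSubmanifolds} together with the uniqueness of inextendable totally geodesic submanifolds through $o$ with a prescribed tangent plane identifies $\Sigma$ with the corresponding entry of Table~\ref{table:CP3}.
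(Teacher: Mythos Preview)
Your proposal is correct and follows essentially the same strategy as the paper: both use the transitive isotropy action to normalize a vector in $\g{v}$, then invoke the Jacobi and Cartan operators to pin down the remaining direction and finally the curvature constraints to isolate the exceptional angle. The paper's case split is organized slightly differently (according to whether $\g{v}\subseteq\g{p}_1$, $\g{v}\subseteq\g{p}_2$, or $\g{v}$ contains a vector with both projections nonzero), and in the mixed case it leads with the Cartan operator $C_X$---whose kernel turns out to be two-dimensional and therefore equal to $\g{v}$---which makes the identification of the candidate plane a one-line computation before a single $\nabla R$ constraint forces $\lambda^2=3/2$.
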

\begin{proof}
	Suppose that $\g{v}\subseteq \g{p}$ is a totally geodesic plane, and consider the corresponding complete totally geodesic submanifold $\Sigma$ of $M$.
	Notice that $\Sigma$ must be intrinsically homogeneous, and thus a space of constant curvature since it is of dimension two.
	Also, note that either $\g{v}$ is completely contained in one of the irreducible $\mathsf{K}$-submodules of $\g{p}$ or it contains a vector that projects nontrivially on $\g{p}_{1}$ and $\g{p}_{2}$ at the same time.
	Since the case $\g{v}=\g{p}_{1}$ already corresponds to $\Sigma$ being the fiber of the twistor fibration, we may skip this case.
		
	Assume that $\g{v}\subseteq \g{p}_{2}$.
	Using the isotropy representation if necessary, we can suppose that $e_{3}\in \g{v}$.	
	One sees that the kernel of the Cartan operator $C_{X}$ is spanned by $e_{3}$ and $e_{4}$, so we must have $\g{v}=\vecspan\{e_{3},e_{4}\}=\g{p}_{\mathsf{SU}(2)\cdot o}$, since $\Sigma$ has constant curvature, which means that $\Sigma=\mathsf{SU}(2)\cdot o$.
	
	Finally, suppose that there exists a vector $X\in \g{v}$ such that $X_{\g{p}_{1}}$ and $X_{\g{p}_{2}}$ are nonzero.
	By using the isotropy representation and rescaling, we can assume that $X=e_{1}+ \lambda e_{3}$ for a certain $\lambda>0$.
	In this case, $\ker C_{X}$ is spanned by $X$ and $Y=3 \lambda e_{2}+(6-\lambda^{2})e_{4}$, so necessarily $\g{v}=\vecspan\{X,Y\}$, since $\Sigma$ has constant curvature.
	In particular, we have $0=4 \sqrt{2}\langle (\nabla_{X}R)(Y,X,Y),e_{5} \rangle = -\lambda  \left(2 \lambda ^4+3 \lambda^2-9\right)$,
	and this is only possible if $\lambda=\sqrt{3/2}$.
	Therefore, $\g{v}=\vecspan\{\sqrt{2}e_{1}+\sqrt{3}e_{3},\sqrt{2}e_{2}+\sqrt{3}e_{4}\}=\g{p}_{\mathsf{SU}(2)_{\Lambda_{3}}\cdot o}$.
	As a consequence, we see that in this case $\Sigma = \mathsf{SU}(2)_{\Lambda_{3}}\cdot o$.
	This finishes the proof. \qedhere
\end{proof}
	
\begin{proof}[Proof of Theorem~\ref{th:tg-cp3-classification}]
	The theorem follows from combining Theorem~\ref{th:hyp}, Proposition~\ref{prop:4tgcp3}, Proposition~\ref{prop:3tgcp3}, and Proposition~\ref{prop:2tgcp3}.
\end{proof}
	 
\subsection{The flag manifold}\label{subsec:flagproof}
\begin{proposition}
	\label{prop:4tgflag}
	The flag manifold $\mathsf{F}(\C^{3})$ does not admit any codimension two totally geodesic submanifolds.
\end{proposition}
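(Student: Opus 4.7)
The plan is to argue by contradiction. Suppose $\g{v}\subseteq \g{p}=\g{p}_1\oplus\g{p}_2\oplus\g{p}_3$ is a $4$-dimensional totally geodesic subspace, so that $\g{v}^{\perp}$ is $2$-dimensional. I would exploit the $\mathfrak{S}_3$-symmetry in the isotropy representation, which permutes the three irreducible submodules $\g{p}_i$, together with the $\mathsf{T}^2$-action (which acts by independent rotations on each $\g{p}_i$), to normalize $\g{v}$ into a manageable form.

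The first step is to prove an analogue of Lemma~\ref{lemma:CP3WellPositioned} for $\s{F}(\C^{3})$: if $\g{v}$ contains a nonzero vector lying entirely inside some $\g{p}_i$, then the eigenspace decomposition of the corresponding Jacobi operator $R_X$ (whose eigenspaces I expect to be aligned with $\g{p}_1\oplus\g{p}_2\oplus\g{p}_3$, since $[\g{p}_i,\g{p}_i]\subseteq\g{t}$ and $[\g{p}_i,\g{p}_j]\subseteq\g{p}_k$ for $\{i,j,k\}=\{1,2,3\}$) forces $\g{v}$ to split as a direct sum of its intersections with the $\g{p}_i$'s, via Remark~\ref{remark:JacobiInvariance}.

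With this splitting lemma in hand, I would then do a case analysis based on the triple $(v_1,v_2,v_3)$ where $v_i=\dim(\g{v}\cap \g{p}_i)$. Under the assumption that $\g{v}$ meets some $\g{p}_i$ nontrivially, up to $\mathfrak{S}_3$-symmetry the possibilities for a $4$-dimensional $\g{v}$ are $(2,2,0)$, $(2,1,1)$, $(2,2,2)$ (impossible by dimension, so this is actually $(2,2,\leq 1)$ combined), and the low-dimensional boundary cases. In each such case, I would pick explicit vectors like $e_{2i-1}\in\g{p}_i$ and compute $R(e_{2i-1},e_{2j-1})e_{2k-1}$ or, if needed, $(\nabla_{e_{2i-1}}R)(e_{2j-1},e_{2k-1},e_{2\ell-1})$ using the $\g{su}(3)$ bracket relations and the naturally reductive formulas for $R$ and $\nabla R$. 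In each configuration I expect some curvature expression to land in a direction orthogonal to $\g{v}$, yielding the desired contradiction via Theorem~\ref{th:CharacterizationTGSubmanifolds}.

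The remaining and main obstacle is the case in which $\g{v}$ does not meet any $\g{p}_i$ nontrivially, so $v_1=v_2=v_3=0$, and the projections $\pi_i\colon \g{v}\to\g{p}_i$ are all injective (hence at least two of them have full rank $2$). Here the $\mathsf{T}^2$-action still allows independent rotations inside each $\g{p}_i$, which I would use to normalize, say, a basis of $\g{v}$ into the form $\{e_1+a_1 e_3+b_1 e_5,\, e_2+a_2 e_4+b_2 e_6,\dots\}$ with free parameters. Imposing $R$-invariance and $(\nabla R)$-invariance of $\g{v}$ on this family of subspaces produces a system of polynomial equations in the parameters; the claim is that this system has no solutions. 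Verifying this polynomial incompatibility is where the essential computation lies, and it is the step I expect to be the most technically delicate, although conceptually it is still a finite check aided by the $\mathfrak{S}_3$-symmetry that cuts down the number of tuples to test.
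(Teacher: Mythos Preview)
Your overall strategy---normalize via the isotropy, then derive contradictions from $R$- and $\nabla R$-invariance---matches the paper's approach in spirit. However, the splitting lemma you propose fails: the Jacobi operator $R_X$ for a pure vector $X\in\g{p}_i$ does \emph{not} distinguish the other two modules. Concretely, the Weyl element swapping $\g{p}_2\leftrightarrow\g{p}_3$ (conjugation by the transposition $(1,2)\in\s{U}(3)$) sends $e_1\mapsto -e_1$ and hence commutes with $R_{e_1}$, forcing $R_{e_1}\vert_{\g{p}_2}$ and $R_{e_1}\vert_{\g{p}_3}$ to have identical spectra. In fact on $\g{p}\ominus\R e_1$ the operator $R_{e_1}$ has only the eigenvalues $2$ (on $\R e_2$) and $1/8$ (on all of $\g{p}_2\oplus\g{p}_3$). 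So $R_{e_1}$-invariance yields at best the partial splitting $\g{v}=(\g{v}\cap\g{p}_1)\oplus(\g{v}\cap(\g{p}_2\oplus\g{p}_3))$, and your case analysis on the triple $(v_1,v_2,v_3)$ does not follow. Moreover, the Cartan operator $C_{e_1}$ vanishes identically (its eigenvalues $\pm\tfrac{3\lambda\sqrt{1+\lambda^2}}{2\sqrt{2}}$ collapse at $\lambda=0$), so it gives no further constraint for a pure vector.

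The paper avoids this by a different normalization. Since $\dim\g{v}=4$ and $\dim\g{p}_3=2$, the intersection $\g{v}\cap(\g{p}_1\oplus\g{p}_2)$ is always nonzero; using the full isotropy (the $\g{S}_3$-permutations together with the $\s{T}^2$-rotations) one may take $X=e_1+\lambda e_3\in\g{v}$ with $\lambda\geq 0$. For $\lambda>0$ the Cartan operator $C_X$ has three \emph{distinct} eigenvalues, with $\ker C_X=\spann\{e_1,e_3,e_5,\lambda e_2+e_4\}$ and the two nonzero eigenspaces one-dimensional; this yields a clean dichotomy (either $\g{v}=\ker C_X$, or $\g{v}$ contains a specified eigenvector) that finishes via short curvature checks. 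The case $\lambda=0$ reduces to $\lambda>0$ after locating a second vector in $\g{v}$. This single normalization handles all configurations at once---your ``main obstacle'' case $v_1=v_2=v_3=0$ never needs to be parametrized separately.
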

\begin{proof}
	Suppose that $\mathsf{F}(\C^{3})$ admits a totally geodesic submanifold of dimension four.
	This means that there exists a totally geodesic subspace $\g{v}\subseteq\g{p}$ with $\dim \g{v}=4$.
	By a dimension argument, one sees that the intersection $\g{v}\cap(\g{p}_{1}\oplus\g{p}_{2})$ is nontrivial, and using both the isotropy representation of $\mathsf{T}^2$ and conjugating by a permutation matrix if necessary, we may suppose that $\g{v}$ admits a nonzero vector of the form $X=e_{1}+\lambda e_{3}$, where $\lambda\in \R$ is a nonnegative number.
	The Cartan operator $C_{X}$ is diagonalizable with eigenvalues $0$, $\frac{3\lambda\sqrt{1+\lambda^{2}}}{2\sqrt{2}}$ and $-\frac{3\lambda\sqrt{1+\lambda^{2}}}{2\sqrt{2}}$,
	and corresponding eigenspaces
	\[
		\vecspan\{e_{1},e_{3},e_{5},\lambda e_{2}+e_{4}\},\quad \R\left(e_{2}-\lambda e_{4}+\sqrt{1+\lambda^{2}}e_{6}\right), \quad \R\left(-e_{2}+\lambda e_{4}+\sqrt{1+\lambda^{2}}e_{6}\right).
	\]
		
	First, assume that $\lambda > 0$, so the three eigenvalues given above are pairwise distinct.
	We prove that $\g{v}$ coincides with the kernel of the Cartan operator $C_{X}$.
		
	If $C_{X}\vert_{\g{v}}$ is not identically zero, then $\g{v}$ contains a vector of the form $Y=\varepsilon e_{2}-\lambda \varepsilon e_{4}+\sqrt{1+\lambda^{2}}e_{6}$, where $\varepsilon \in \{\pm 1\}$.
		
	If $\lambda \neq 1$, then we can construct a basis of $\g{v}$ with the vectors
	\begin{align*}
		X={}&e_{1}+\lambda e_{3}, \\
		Y={}&\varepsilon e_{2}-\lambda \varepsilon e_{4}+\sqrt{1+\lambda^{2}}e_{6}, \\
		U={}&8R(X,Y)X=-2\varepsilon \left(5\lambda^{2}+8\right)e_{2}+2\varepsilon\lambda\left(8\lambda^{2}+5\right)e_{4}-\left(1+\lambda^{2}\right)^{3/2}e_{6}, \\
		V={}&8\sqrt{2}(\nabla_{X}R)(X,Y,Y)=-3 \lambda  \sqrt{\lambda ^2+1} \left(3 \lambda ^2+5\right) \varepsilon e_{1}-3 \sqrt{\lambda ^2+1} \left(5 \lambda ^2+3\right) \varepsilon e_{3}\\
		&-6 \lambda  \left(\lambda ^2-1\right)e_{5}.
	\end{align*}
	Therefore, the vector $T=-\lambda  \sqrt{\lambda ^2+1} \left(5 \lambda ^2+3\right) e_{2}-\sqrt{\lambda ^2+1} \left(3 \lambda ^2+5\right) e_{4}+2 \lambda  \left(\lambda ^2-1\right) \varepsilon e_{6}$ is orthogonal to $\g{v}$. 
	Now, we see that $0=\langle R(X,U)X,T \rangle=36 \varepsilon\lambda ^3 \left(\lambda ^2-1\right) \sqrt{\lambda ^2+1}$, which is a contradiction.
	
	If $\lambda = 1$, the equation $R(X,Y)X+\frac{13}{4}Y=3\sqrt{2}e_{6}$ implies that the vectors $X=e_{1}+e_{3}$, $Z=e_{2}-e_{4}$, and $T=e_{6}$ are in $\g{v}$.
	We can therefore complete $X$, $Z$ and $T$ to a basis of $\g{v}$ by adding a vector of the form $U=c_{1}e_{1}+c_{2}e_{2}-c_{1}e_{3}+c_{2}e_{4}+c_{3}e_{5}$,
	where $c_i\in\R$ for each $i\in\{1,2,3\}$.
	In particular, we have that the vector $-c_{2}e_{1}+c_{1}e_{2}+c_{2}e_{3}+c_{1} e_{4}$ is orthogonal to $\g{v}$, so $0=2\langle R(X,Z)U,-c_{2}e_{1}+c_{1}e_{2}+c_{2}e_{3}+c_{1} e_{4}  \rangle=-9(c_{1}^{2}+c_{2}^{2})$, which
	forces $c_{1}=c_{2}=0$.
	Therefore $\g{v}=\spann\{X,Z,e_{5},e_{6}\}$.
	However, we have $8 R(X,e_{5})e_{6}=3(e_{2}+e_{4})\notin\g{v}$,	which yields a contradiction.
		
	From all of the above, we see that $\g{v}$ must coincide with $\ker C_{X}=\vecspan\{e_{1},e_{3},e_{5},\lambda e_{2}+e_{4}\}$.
	However, this is also not possible, since $\frac{8}{3}R(e_{1},e_{3})(\lambda e_{2}+e_{4})=-e_{2}+\lambda e_{4}\notin \g{v}$.
	Thus, the case $\lambda > 0$ is not possible.
		
	Now, suppose that $\lambda = 0$, which implies that $e_{1}\in \g{v}$.
	Since the intersection of $\g{v}$ with $\g{p}_{2}\oplus \g{p}_{3}$ is at least two-dimensional, we can use the isotropy representation to assume that there is a tangent vector of the form $Y=e_{3}+\mu e_{5}$, where $\mu \in \R$.
	However, $\mu=0$, for if $\mu \neq 0$, by means of the full isotropy representation we can conjugate $\g{v}$ to a new totally geodesic subspace containing a tangent vector of the form $e_{1}+\mu e_{3}$, and we may use the previous case to derive a contradiction.
	Now this yields that $e_{1}+e_{3}$ is also in $\g{v}$, which is yet another contradiction.
	We conclude that this case is also not possible, and therefore there are no codimension two totally geodesic subspaces in $\g{p}$. \qedhere
\end{proof}
	
\begin{proposition}
	\label{prop:3tgflag}
    Let $\Sigma$ be a complete three-dimensional totally geodesic submanifold of the flag manifold $\s{F}(\C^3)$.
	Then $\Sigma$ is congruent to either $\mathsf{F}(\R^{3})$ or to $\mathsf{S}^{3}_{\C,1/4}(\sqrt{2})$.
\end{proposition}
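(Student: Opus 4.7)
The plan is to take $\g{v}\subseteq \g{p}$ a totally geodesic subspace of dimension three and exploit (a) the isotropy representation of $\s{T}^2$, which allows us to independently rotate within the pairwise non-isomorphic irreducible summands $\g{p}_1,\g{p}_2,\g{p}_3$ (up to the natural linear relation between the three characters), (b) the outer $\g{S}_3$-factor of the isometry group $I(\s{F}(\C^3))=\s{PSU}(3)\rtimes(\g{S}_3\rtimes\mathbb{Z}_2)$, which permutes the $\g{p}_i$ arbitrarily, and (c) the invariance of $\g{v}$ under $R_X$, $C_X$ and the higher Cartan operators $C_X^j$ for every $X\in\g{v}$ (Remark~\ref{remark:JacobiInvariance}, Theorem~\ref{th:CharacterizationTGSubmanifolds}). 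The two target examples correspond to the subspaces $\vecspan\{e_1,e_3,e_5\}=\g{p}_{\s{F}(\R^3)}$ (one line in each submodule) and $\vecspan\{e_1+e_3,e_2-e_4,e_6\}=\g{p}_{\s{S}^3_{\C,1/4}(\sqrt{2})}$ (a diagonal two-plane in $\g{p}_1\oplus\g{p}_2$ plus a line in $\g{p}_3$), so the argument splits naturally into cases distinguished by the ``projection profile'' of $\g{v}$ onto the three irreducible submodules.

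First I would dispatch the case in which $\g{v}\subseteq \g{p}_i\oplus\g{p}_j$ for some pair $i\neq j$: choosing $X=ae_1+be_3\in \g{v}$ in normal form (using an $\s{S}_3$-permutation to move $\g{p}_i,\g{p}_j$ into position $\g{p}_1,\g{p}_2$ and the torus action to place the components on $e_1$ and $e_3$), I would compute $C_X$ and the $R_X$-eigenspaces and show, exactly as in the proof of Proposition~\ref{prop:4tgflag}, that the constraint $C_X\g{v}\subseteq \g{v}$ plus higher curvature invariance forces $e_6\in \g{v}$ together with a $D$-invariant plane in $\g{p}_1\oplus\g{p}_2$; and by the irreducibility of $\Ad(\s{T}^2)$ on $\g{p}_3$ this pins down $\g{v}$ up to the $\s{T}^2$-orbit. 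The resulting subspace will match $\g{p}_{\s{S}^3_{\C,1/4}(\sqrt{2})}$, giving the Berger-sphere example.

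Next I would address the generic case in which there is $X\in\g{v}$ with nonzero projections onto all three $\g{p}_i$. Using the isotropy representation and a permutation, I would normalize $X=ae_1+be_3+ce_5$ with $a\geq b\geq c>0$. Computing $R_X$ and $C_X$, one sees that $\ker C_X\cap \{X\}^\perp$ typically has small dimension, and together with the constraint that $\g{v}$ must be $R_X$-invariant this forces $X$ to be proportional to $e_1+e_3+e_5$. Combining $R_X$- and $C_X$-invariance then propagates this symmetry, yielding $\g{v}=\vecspan\{e_1,e_3,e_5\}=\g{p}_{\s{F}(\R^3)}$; any other candidate should be excluded by computing $(\nabla R)(X,\cdot,\cdot,\cdot)$ or a second Cartan operator. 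Residual configurations (e.g. $\g{v}$ containing an entire $\g{p}_i$, or intersecting one $\g{p}_i$ in a line and the sum of the other two in a plane) are reduced to the previous cases either by choosing a different base vector $X\in \g{v}$ or by direct curvature computations of the type already performed in Proposition~\ref{prop:4tgflag}.

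The main obstacle will be the combinatorial bookkeeping: with three irreducible summands and a three-dimensional $\g{v}$, the list of a priori possible ``shapes'' of $\g{v}$ with respect to the decomposition $\g{p}=\g{p}_1\oplus\g{p}_2\oplus\g{p}_3$ is substantially larger than in the $\C\s{P}^3$ case, and in the mixed configurations one must be careful to use both the torus-rotation freedom inside each $\g{p}_i$ and the $\g{S}_3$-permutation freedom to avoid an explosion of subcases. Once $\g{v}$ is pinned down to one of the two candidates, the identification with $\s{F}(\R^3)$ or $\s{S}^3_{\C,1/4}(\sqrt{2})$ follows from the descriptions in Subsections~\ref{ex:realflag} and~\ref{ex:bergerflag}, both of which exhibit the submanifolds as orbits of canonically embedded (resp.\ merely $D$-invariant) subalgebras via Theorem~\ref{th:CharacterizationDInvariantTGSubmanifolds}.
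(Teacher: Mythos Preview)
Your case structure misplaces the Berger sphere. In Case~1 you assume $\g{v}\subseteq \g{p}_i\oplus\g{p}_j$ and claim this leads to $\g{p}_{\s{S}^3_{\C,1/4}(\sqrt{2})}$, but that subspace is $\vecspan\{e_1+e_3,e_2-e_4,e_6\}$, which is \emph{not} contained in any $\g{p}_i\oplus\g{p}_j$ (the $e_6$ lies in $\g{p}_3$). In fact neither of the two target subspaces sits inside a sum of two irreducible submodules, so Case~1 should conclude ``impossible'', not ``Berger sphere''. Conversely, your Case~2 (some $X\in\g{v}$ with nonzero projection onto all three $\g{p}_i$) must then account for \emph{both} examples, yet you claim $X$ is forced to be proportional to $e_1+e_3+e_5$; this fails for the Berger sphere, where for instance $(e_1+e_3)+e_6$ lies in $\g{v}$ and cannot be rotated into $\vecspan\{e_1,e_3,e_5\}$ by the torus (since $\theta_3=\theta_1+\theta_2$, once you align the $\g{p}_1$- and $\g{p}_2$-components the $\g{p}_3$-component is fixed and stays along $e_6$). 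So your normal form $X=ae_1+be_3+ce_5$ in Case~2 is not general, and your asserted conclusion there is wrong.

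The paper avoids this by a different organizing principle: rather than splitting on how $\g{v}$ projects to the three summands, it notes that $\dim\g{v}+\dim(\g{p}_1\oplus\g{p}_2)=3+4>6$ forces $\g{v}\cap(\g{p}_1\oplus\g{p}_2)\neq 0$, so one can always pick $X\in\g{v}$ of the form $e_1+\lambda e_3$ with $\lambda\geq 0$ (one parameter, no residual $e_6$). The Cartan operator $C_X$ then has the simple eigenspaces $\ker C_X=\vecspan\{e_1,e_3,e_5,\lambda e_2+e_4\}$ and two one-dimensional eigenspaces generated by $\pm(e_2-\lambda e_4)+\sqrt{1+\lambda^2}\,e_6$. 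For $\lambda>0$ one checks: either $\g{v}$ contains one of the latter eigenvectors, and then $R$-invariance forces $\lambda=1$ and $\g{v}=\g{p}_{\s{S}^3_{\C,1/4}(\sqrt{2})}$; or $\g{v}\subseteq\ker C_X$, and then a short $\nabla R$-computation forces $\g{v}=\vecspan\{e_1,e_3,e_5\}=\g{p}_{\s{F}(\R^3)}$. The case $\lambda=0$ is reduced to $\lambda>0$ via a permutation. This single-parameter reduction is what keeps the bookkeeping under control; your three-parameter normal form, even once corrected to allow an $e_6$-component, would produce a substantially more involved analysis.
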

	
\begin{proof}
	We need to classify three-dimensional totally geodesic subspaces of $\g{p}$.
	Let $\g{v}\subseteq \g{p}$ be such a subspace, and $\Sigma$ the corresponding complete totally geodesic submanifold.
	Then we know that there exists a vector $X\in\g{v}\cap(\g{p}_{1}\oplus\g{p}_{2})$, and using the full isotropy representation, we may assume that it is of the form $X=e_{1}+\lambda e_{3}$ for a certain $\lambda \geq 0$.
	The Cartan operator $C_{X}$ is diagonalizable with eigenvalues $0$, $\frac{3\lambda\sqrt{1+\lambda^{2}}}{2\sqrt{2}}$, and $-\frac{3\lambda\sqrt{1+\lambda^{2}}}{2\sqrt{2}}$, and corresponding eigenspaces
	\[
		\vecspan\{e_{1},e_{3},e_{5},\lambda e_{2}+e_{4}\},\quad \R\left(e_{2}-\lambda e_{4}+\sqrt{1+\lambda^{2}}e_{6}\right), \quad \R\left(-e_{2}+\lambda e_{4}+\sqrt{1+\lambda^{2}}e_{6}\right).
	\]
		
	First, assume that $\lambda > 0$, so the three eigenvalues given above are pairwise distinct.
	We prove in this case that either $\Sigma=\mathsf{SU}(2)_{(1,0,1)}\cdot o$ or $\g{v}\subseteq\ker C_{X}$.
	Indeed, if there is a vector of the form $Y=\varepsilon e_{2}-\lambda \varepsilon e_{4}+\sqrt{1+\lambda^{2}}e_{6}$ in $\g{v}$, where $\varepsilon\in\{\pm 1\}$, then we may construct a basis of $\g{v}$ by adding the vector
	\begin{equation*}
		Z=8 R(X,Y)X=-2 \left(5 \lambda ^2+8\right) \varepsilon e_{2}+2 \lambda 
		\left(8 \lambda ^2+5\right) \varepsilon e_{4}-\left(\lambda
		^2+1\right)^{3/2}e_{6}\in \g{v}.
	\end{equation*}
	In particular, $-\lambda e_{1}+e_{3}$ is orthogonal to $\g{v}$, and we must have $0=4\langle R(X,Y,Y),-\lambda e_{1}+e_{3} \rangle=3\lambda(\lambda^{2}-1)$.
	This forces $\lambda = 1$.
	Note that $4R(X,Y)X=-13\varepsilon e_{2}+13 \varepsilon e_{4}-\sqrt{2}e_{6}\in \g{v}$.
	This means that $\g{v}$ is spanned by $e_{1}+e_{3}$, $e_{2}-e_{4}$ and $e_{6}$, so $\Sigma$ coincides with $\mathsf{SU}(2)_{(1,0,1)}\cdot o$.
	Now, suppose that $\g{v}\subseteq \ker C_{X}$.
	Then either $\g{v}=\vecspan\{e_{1},e_{3},e_{5}\}$ (which yields $\Sigma=\mathsf{F}(\R^{3})$) or using the isotropy representation we can find a basis of $\g{v}$ given by the vectors of the form
	\[
		X={}e_{1}+\lambda e_{3}, \quad
		Y={}a_{1}e_{1}+a_{3}e_{3}+a_{5}e_{5}+(\lambda e_{2}+e_{4}), \quad
		Z={}c_{1}e_{1}+c_{3}e_{3}+c_{5}e_{5},
	\]
	for some constants $a_{i}$, $c_{j}\in\R$.
	In particular, the vectors $e_{6}$ and $e_{2}-\lambda e_{4}$ are in $\g{p}\ominus\g{v}$.
	Now, we also see that $0=\langle R(X,Y)X,e_{2}-\lambda e_{4}\rangle=\frac{3\lambda(\lambda^{2}-1)}{4}$, which means that $\lambda = 1$.
	On the other hand, we have
	\begin{align*}
		\langle (\nabla_{X}R)(X,Y,Y),e_{6} \rangle={}&\frac{3(a_{3}-a_{1})}{4\sqrt{2}},&
		\langle (\nabla_{X}R)(X,Y,Y),e_{2}-e_{4} \rangle={}&\frac{-3 a_{5}}{2\sqrt{2}}, \\
		\langle (\nabla_{X}R)(X,Z,Y),e_{6} \rangle={}&\frac{3(c_{3}-c_{1})}{4\sqrt{2}},&
		\langle (\nabla_{X}R)(X,Z,Y),e_{2}-e_{4} \rangle={}&\frac{-3 c_{5}}{2\sqrt{2}}.
		\end{align*}
	Since all of these inner products are zero, we deduce that $a_{5}=c_{5}=0$, $a_{1}=a_{3}$ and $c_{1}=c_{3}$.
	In particular, $Z$ and $X$ are proportional, a contradiction.
		
	We now assume $\lambda = 0$, so $e_{1}\in \g{v}$.
	Since $\g{p}\cap(\g{p}_{2}\oplus\g{p}_{3})$ is nonzero, we may use the isotropy representation to suppose that a vector of the form $e_{3}+\mu e_{5}$ belongs to $\g{v}$.
	Note that if $\mu \neq 0$, then using an element of the full isotropy group permuting the factors of the isotropy representation, we can carry this setting to the one in the previous paragraph, so we may assume that $\mu = 0$, and thus $e_{3}\in \g{v}$.
	As a consequence, $e_{1}+e_{3}\in \g{v}$, and we may use the arguments in the case $\lambda > 0$ to derive the same conclusions, and the proposition is proved. \qedhere
\end{proof}
\begin{proposition}
	\label{prop:2tgflag}
	Every complete totally geodesic surface of $\mathsf{F}(\C^{3})$ is congruent to one of the following:
		\begin{enumerate}[\rm (i)]
			\item the totally geodesic $\mathsf{T}^2_{\Lambda}$,
			\item the  Berger sphere $\s{S}^3_{\C,1/4}(\sqrt{2})$,
			\item the fiber $\C\s{P}^1$,
			\item or a totally geodesic $\R\mathsf{P}^{2}\left(2\sqrt{2}\right)\subseteq \mathsf{F}(\R^{3})$.
		\end{enumerate}
\end{proposition}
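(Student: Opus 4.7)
The plan is to follow the strategy used in Proposition~\ref{prop:2tgcp3}. Let $\g{v}\subseteq\g{p}$ be a totally geodesic $2$-plane with corresponding totally geodesic surface $\Sigma$; since $\Sigma$ is intrinsically homogeneous, it has constant sectional curvature $\kappa$. I would pick a unit vector $X\in\g{v}$ and extend it to an orthogonal basis $\{X,Y\}$; by Proposition~\ref{prop:TojoSurfaces}, the second generator must satisfy $Y\in\ker(R_{X}-\kappa\operatorname{Id})\cap\bigcap_{j\ge 1}\ker C_{X}^{j}$, which severely restricts the possibilities for $\g{v}$ once $X$ is fixed.

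The first step is to put $X$ in a normal form using the full isometry group of $\s{F}(\C^{3})$. Writing $X=X_{1}+X_{2}+X_{3}$ with $X_{i}\in\g{p}_{i}$, the $\s{T}^{2}$-isotropy rotates the phases of the three components $(\phi_{1},\phi_{2},\phi_{3})$ subject to the relation $\phi_{3}=\phi_{1}+\phi_{2}$; hence we may assume that $X_{1}$ and $X_{2}$ are nonnegative multiples of $e_{1}$ and $e_{3}$, while the phase of $X_{3}$ survives as a residual invariant. The factor $\g{S}_{3}$ of $I(\s{F}(\C^{3}))=\s{PSU}(3)\rtimes(\g{S}_{3}\rtimes\mathbb{Z}_{2})$ permutes the three irreducible submodules, allowing us to order the norms $\lvert X_{1}\rvert\ge\lvert X_{2}\rvert\ge\lvert X_{3}\rvert\ge 0$, and the remaining $\mathbb{Z}_{2}$ reverses the residual phase. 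The problem thus reduces to analyzing $X=a\,e_{1}+b\,e_{3}+c(\cos\theta\,e_{5}+\sin\theta\,e_{6})$ with $a\ge b\ge c\ge 0$, $a^{2}+b^{2}+c^{2}=1$, and $\theta\in[0,\pi]$, split according to how many of $a,b,c$ are nonzero.

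The three cases are then handled individually. If $b=c=0$ so that $X=e_{1}$, a direct evaluation of $\ker C_{e_{1}}$ forces $\g{v}=\g{p}_{1}$, recovering the twistor fiber $\C\s{P}^{1}$. If exactly one of $b,c$ vanishes (say $c=0$ with $a,b>0$), the conjunction of $R_{X}Y=\kappa Y$ and $C_{X}^{j}Y=0$ should pin the ratio $b/a$ to $1$ and leave two possible directions for $Y$, giving up to congruence either the totally real $\R\s{P}^{2}(2\sqrt{2})\subseteq\s{F}(\R^{3})$ with tangent plane $\vecspan\{e_{1}+e_{3},e_{5}\}$, or the $J$-holomorphic sphere $\s{S}^{2}(\sqrt{2})=\s{SO}(3)^{\sigma}\cdot o$ with tangent plane $\vecspan\{e_{1}+e_{3},e_{2}+e_{4}\}$. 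Finally, when $a,b,c$ are all positive, a parallel analysis should force $a=b=c=1/\sqrt{3}$ together with a specific value of $\theta$, yielding $\g{v}=\vecspan\{e_{1}+e_{3}+e_{5},e_{2}+e_{4}-e_{6}\}$ and recovering the flat torus $\s{T}^{2}_{\Lambda}$.

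The main obstacle will be the intermediate subcases, since $\ker C_{X}$ depends continuously on the parameters and typically has dimension strictly greater than two; isolating the discrete parameter values that actually admit a totally geodesic extension requires combining the Jacobi equation with several independent higher-order Cartan conditions, and then verifying Tojo's full criterion (Theorem~\ref{thm:TojoCriterion}) on the resulting candidate $\g{v}$. Once the four candidate subspaces have been identified, matching them with the examples from Subsection~\ref{subsec:exflag} and checking pairwise non-congruence is immediate from the behavior of each surface with respect to $J$ and from their distinct topological types.
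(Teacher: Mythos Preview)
Your overall plan is sensible, but the proposed case split has a structural flaw: the number of nonzero components of the \emph{chosen} vector $X$ is not an invariant of the plane $\g{v}$, and your predicted constraints fail precisely because the family $\R\s{P}^{2}(2\sqrt{2})\subseteq\s{F}(\R^{3})$ shows up in all three of your cases. Any $2$-plane in $\g{so}(3)=\vecspan\{e_{1},e_{3},e_{5}\}$ is tangent to such a projective plane, so for instance $\g{v}=\vecspan\{e_{1},e_{3}\}$ contains $X=e_{1}$ (your one-component case, yet $\g{v}\neq\g{p}_{1}$; in fact $C_{e_{1}}$ vanishes identically and cannot force anything), it also contains $X=2e_{1}+e_{3}$ (your two-component case with $b/a=1/2\neq 1$), and a generic $2$-plane in $\g{so}(3)$ contains vectors with three nonzero components of arbitrary ratios and $\theta=0$, contradicting your assertion that three components force $a=b=c$.

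The paper repairs this in two ways. First, the case parameter is taken to be $r=\max_{X\in\g{v}}\#\{i:X_{\g{p}_{i}}\neq 0\}$, which \emph{is} an invariant of $\g{v}$; then one picks a specific $X$ realizing this maximum. Second, each case needs a further bifurcation that you omitted. When $r=3$, normalize to $X=e_{1}+a_{3}e_{3}+a_{5}e_{5}+a_{6}e_{6}$ and split on whether $a_{6}=0$: if $a_{6}=0$ then $\ker C_{X}=\g{so}(3)$ and one recovers the $\R\s{P}^{2}$s with no constraint on $a_{3},a_{5}$; if $a_{6}\neq 0$ then $\ker C_{X}$ is genuinely two-dimensional and the $\nabla R$ conditions pin the parameters to the torus. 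When $r=2$, take $X=e_{1}+\lambda e_{3}$ with $\lambda>0$ and split on whether $D_{X}Y=0$: if so, the curvature forces $\lambda=1$ and $\g{v}=\vecspan\{e_{1}+e_{3},e_{2}+e_{4}\}$, giving the sphere $\s{S}^{2}(\sqrt{2})$; if $D_{X}Y\neq 0$, combining the Jacobi eigenspace with $\ker C_{X}$ forces $Y\in\vecspan\{-\lambda e_{1}+e_{3},e_{5}\}$, hence $\g{v}\subseteq\g{so}(3)$, again an $\R\s{P}^{2}$---and here $\lambda$ is \emph{not} determined.
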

\begin{proof}
	Let $\g{v}$ be a totally geodesic subspace of dimension $2$ in $\g{p}$.
	We define $r\in\{1,2,3\}$ to be the largest number such that there exists a vector $X\in \g{v}$ that has nontrivial projection onto $r$ of the irreducible submodules of $\g{p}$.
		
	\textit{The case} $r=3$.
	Let $X\in \g{v}$ be a vector that projects nontrivially onto each of the submodules $\g{p}_{i}$.
	Then, by means of the isotropy representation, we know that $X$ is (up to $\mathsf{T}^2$-conjugacy and scaling) of the form
	$X=e_{1}+a_{3}e_{3}+a_{5}e_{5}+a_{6}e_{6}$,
	where $a_{3}$, $a_{5}^{2}+a_{6}^{2}\neq 0$.
		
	First, suppose that $a_{6}\neq 0$.
	Then, one sees that the kernel of the Cartan operator $C_{X}$ is spanned by $X$ and
	$Y=a_{3}a_{5}e_{1}+a_{3}a_{6}e_{2}+a_{5}e_{3}+a_{6}e_{4}+a_{3}e_{5}$.
	As a consequence, if $X$ is tangent to a totally geodesic surface $\Sigma$, then its tangent space $\g{v}=T_{o}\Sigma$ is precisely $\g{v}=\vecspan\{X,Y\}$.
	In particular, since $\Sigma$ is intrinsically homogeneous and therefore of constant sectional curvature, we have the equations
	\begin{equation*}
		\begin{aligned}
			0=&{}\langle (\nabla_{X}R)(X,Y,Y),e_{4} \rangle=\frac{3 a_{3}^{2} a_{6}(1-a_{5}^{2}-a_{6}^{2})}{8\sqrt{2}},\\
			0=&{}\langle (\nabla_{X}R)(X,Y,Y),e_{5} \rangle=\frac{3 a_{3} a_{6}^{2}(a_{3}^{2}-1)}{8\sqrt{2}},
		\end{aligned}
	\end{equation*}
	which imply $a_{3}\in \{\pm 1\}$ and $a_{5}^{2}+a_{6}^{2}=1$.
	Therefore, we may rewrite $\g{v}$ as the span of
	\begin{equation*}
		X=e_{1}+\varepsilon e_{3}+\cos \phi e_{5}+ \sin \phi e_{6}, \quad
		Y=\varepsilon \cos \phi e_{1}+\varepsilon \sin \phi e_{2}+\cos \phi e_{3}+\sin \phi e_{4}+\varepsilon e_{5},
	\end{equation*}
	where $\varepsilon\in\{\pm 1\}$ and $\sin \phi \neq 0$.
	It turns out that $\g{v}=\g{z}_{\g{p}}(X)$ is the centralizer of $X$ in $\g{p}$, and in particular it is (maximal) abelian.
	If $\varepsilon=1$, then the element $k=\operatorname{diag}(e^{i\phi/3},1,e^{-i\phi/3})\in \s{T}^2$
	carries the subspace $\g{p}_{\mathsf{T}^2_{\Lambda}}=\vecspan\{e_{1}+e_{3}+e_{5},e_{2}+e_{4}-e_{6}\}$ to $\g{z}_{\g{p}}(X)=\g{v}$, which means that $\Sigma$ is congruent to $\mathsf{T}^2_{\Lambda}$.
	Similarly, if $\varepsilon=-1$, the element $k=\operatorname{diag}(e^{i\phi/3},e^{i\pi/3},e^{-i(\phi+\pi)/3})\in\s{T}^{2}$
	carries $\g{p}_{\mathsf{T}^2_{\Lambda}}$ to $\g{z}_{\g{p}}(X)=\g{v}$, and thus $\Sigma$ is congruent to $\mathsf{T}^2_{\Lambda}$.
	
	Now, assume that $a_{6}=0$.
	In this case, the kernel of the Cartan operator $C_X$ is equal to $\g{so}(3)=\g{p}_{\s{F}(\C^3)}$.
	Thus, the only totally geodesic surfaces containing $X$ are projective planes contained in $\mathsf{F}(\R^{3})$.
	
	\textit{The case} $r=2$.
	We can find a vector $X\in \g{v}$ that projects nontrivially onto two of the three irreducible submodules.
	By using the full isotropy representation and rescaling, we can assume that $X=e_1+\lambda e_3$ for a certain $\lambda > 0$.
	Take any vector $Y\in \g{v}\ominus \R X$.
		
	On  the one hand if, $D_X Y=0$, then $Y\in \ker D_{X}\ominus \R X=\R(e_{2}+\lambda e_{4})$, so we may directly assume that $Y=e_{2}+\lambda e_{4}$, and we have $\g{v}=\vecspan\{X,Y\}$.
	In particular, observe that $-\lambda e_{2}+e_{4}$ is orthogonal to $\g{v}$, and the condition $0=\langle R(X,Y)X,-\lambda e_{2}+e_{4} \rangle=3\lambda(\lambda^{2}-1)$	forces $\lambda = 1$.
	Thus, $\g{v}=\vecspan\{e_{1}+e_{3},e_{2}+e_{4}\}$, so $\Sigma = \mathsf{SO}(3)^{\sigma}\cdot o$.
		
	On the other hand, if $D_X Y\neq 0$, by Proposition~\ref{prop:TojoSurfaces}, the vectors $D_X^k Y$ (for $k\geq 0$) must lie in a common eigenspace of $R_{X}$ (of dimension greater than one because $Y$ and $D_XY$ are orthogonal) and in the kernel of $C_{X}$.
		
	Moreover, the spectrum of the Jacobi operator $R_{X}$ consists of the (pairwise distinct) eigenvalues
	\begin{equation*}
		0, \quad \frac{\lambda^{2}+1}{8}, \quad \frac{17+17 \lambda ^2+3
			\sqrt{25 \lambda ^4-14 \lambda^2+25}}{16},\quad\frac{17+17 \lambda ^2-3
			\sqrt{25 \lambda ^4-14 \lambda
			^2+25}}{16},		
	\end{equation*}
	and the only eigenspace of dimension greater than one is that of $\frac{\lambda^{2}+1}{8}$, which actually is the direct sum $\R(-\lambda e_{1}+e_{3})\oplus \g{p}_{3}$.
	On the other hand, the kernel of the Cartan operator $C_{X}$ is $\vecspan\{e_{1},e_{3},e_{5},\lambda e_{2}+e_{4}\}$.
	Thus, $Y\in \vecspan\{-\lambda e_{1}+e_{3},e_{5}\}$, and in particular $\g{v}$ is contained in $\g{so}(3)$, so $\Sigma$ is contained in $\mathsf{F}(\R^{3})$.
		
	\textit{The case} $r=1$.
	Here, we simply have $\g{p}=\g{p}_{k}$ for $k\in \{1,2,3\}$, so actually $\Sigma$ is congruent to the fiber $\C\mathsf{P}^{1}$ of the submersion $\mathsf{F}(\C^{3})\to \C \mathsf{P}^{2}$. \qedhere
\end{proof}

\begin{proof}[Proof of Theorem~\ref{th:tg-fc3-classification}]
	The result now follows by combining Theorem~\ref{th:hyp}, Proposition~\ref{prop:4tgflag}, Proposition~\ref{prop:3tgflag}, and Proposition~\ref{prop:2tgflag}.
\end{proof}
	
\subsection{The almost product $\mathsf{S}^{3}\times\mathsf{S}^{3}$}\label{subsec:s3s3proof}
	
\begin{lemma}\label{lemma:S3S3WellPositioned}
	Let $\g{v}\subseteq \g{p}$ be a totally geodesic subspace.
	If $\g{v}$ contains a nonzero vertical or horizontal vector, then $\g{v}$ is well-positioned.
\end{lemma}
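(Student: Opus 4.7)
The plan is to imitate the proof of Lemma~\ref{lemma:CP3WellPositioned}, with the main technical step being the computation of the Jacobi operator $R_{X}$ for a vertical or horizontal vector $X \in \g{v}$ and verifying that its eigenspaces split with respect to $\g{p}=\g{p}_{1}\oplus\g{p}_{2}$.

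Suppose first that $\g{v}$ contains a nonzero vertical vector $X\in\g{p}_{1}$. Since $\Delta\s{SU}(2)$ acts transitively on the unit sphere of $\g{p}_{1}$, we may reduce after rescaling to the case $X=e_{1}$. The key claim is that the Jacobi operator $R_{e_{1}}\colon\g{p}\to\g{p}$ preserves both summands $\g{p}_{1}$ and $\g{p}_{2}$ of the reductive complement. To verify this one uses the curvature formula
\[
R(X,Y)Z=D_{X}D_{Y}Z-D_{Y}D_{X}Z-[[X,Y]_{\g{k}},Z]-D_{[X,Y]_{\g{p}}}Z,
\]
together with the bracket computations in $\g{su}(2)^{3}$ that yield the parity-type relations
\[
[\g{p}_{1},\g{p}_{1}]_{\g{p}}\subseteq\g{p}_{1},\quad [\g{p}_{1},\g{p}_{2}]_{\g{p}}\subseteq\g{p}_{2},\quad [\g{p}_{2},\g{p}_{2}]_{\g{p}}\subseteq\g{p}_{1},\quad [\g{p}_{1},\g{p}_{2}]_{\g{k}}=0.
\]
From these it follows that $D_{X}$ preserves $\g{p}_{1}$ and $\g{p}_{2}$ whenever $X\in\g{p}_{1}$, while $D_{X}$ swaps them whenever $X\in\g{p}_{2}$. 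A short term-by-term analysis of the curvature formula with $X=e_{1}$, performed separately for $Y\in\g{p}_{1}$ and $Y\in\g{p}_{2}$, then shows that $R(e_{1},Y)Z$ lies in $\g{p}_{i}$ whenever $Z\in\g{p}_{i}$; in particular $R_{e_{1}}$ preserves $\g{p}_{1}$ and $\g{p}_{2}$, so each of its eigenspaces is contained in one of these two subspaces. The horizontal case $X\in\g{p}_{2}$ reduces (again by isotropy transitivity) to $X=e_{4}$, and the same bookkeeping shows that $R_{e_{4}}$ also preserves the splitting $\g{p}=\g{p}_{1}\oplus\g{p}_{2}$.

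Once this is established, the lemma follows by the same mechanism as in the $\C\s{P}^{3}$ case. For every $Y\in\g{v}$, the subspace $e^{-D_{Y}}\g{v}\subseteq\g{p}$ is totally geodesic by Corollary~\ref{cor:TojoCongruentSubmanifolds}, and hence invariant under the Jacobi operator $R_{e_{1}}$ (resp.\ $R_{e_{4}}$) by Theorem~\ref{th:CharacterizationTGSubmanifolds}. Applying Remark~\ref{remark:JacobiInvariance} then expresses $e^{-D_{Y}}\g{v}$ as the direct sum of its intersections with the eigenspaces of this operator, each of which lies in $\g{p}_{1}$ or $\g{p}_{2}$; thus $e^{-D_{Y}}\g{v}$ splits with respect to $\mathcal{V}_{o}\oplus\mathcal{H}_{o}$. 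By Lemma~\ref{lemma:HomogeneousFibrationWellPositioned}, $\g{v}$ is well-positioned.

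The principal technical obstacle is the invariance claim for $R_{e_{1}}$ and $R_{e_{4}}$: since $\g{p}_{1}$ and $\g{p}_{2}$ are isomorphic as $\Delta\s{SU}(2)$-modules, a priori Schur's lemma permits off-diagonal blocks of $R_{e_{i}}$ mapping $\g{p}_{1}$ to $\g{p}_{2}$ and vice versa, and one must check by direct calculation that these cancel. The underlying reason they do is the parity pattern of the brackets displayed above, which is a combinatorial consequence of the fact that the three factors $\s{SU}(2)$ are permuted cyclically by the defining automorphism $\Theta$ of the $3$-symmetric structure.
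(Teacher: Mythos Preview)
Your parity analysis showing that $R_{e_{1}}$ and $R_{e_{4}}$ preserve the splitting $\g{p}=\g{p}_{1}\oplus\g{p}_{2}$ is correct and elegant, but two genuine gaps appear when you try to finish.

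First, the inference ``$R_{e_{1}}$ preserves $\g{p}_{1}$ and $\g{p}_{2}$, so each of its eigenspaces is contained in one of these two subspaces'' is false: an operator that respects a direct sum decomposition can perfectly well have the same eigenvalue on both summands. Here that actually happens: $R_{e_{1}}e_{1}=0=R_{e_{1}}e_{4}$, so the $0$-eigenspace is $\spann\{e_{1},e_{4}\}$, which sits in neither $\g{p}_{1}$ nor $\g{p}_{2}$. The paper handles this by computing $R_{e_{1}}=\operatorname{diag}(0,3/4,3/4,0,1/12,1/12)$ explicitly and noting that on $\g{p}\ominus\R e_{1}$ the eigenspaces are $\R e_{4}$, $\spann\{e_{2},e_{3}\}$ and $\spann\{e_{5},e_{6}\}$, each contained in a single summand; since $e_{1}\in\g{v}\cap\g{p}_{1}$, the piece $\g{v}\cap\spann\{e_{1},e_{4}\}$ is either $\R e_{1}$ or all of $\spann\{e_{1},e_{4}\}$ and therefore splits as well.

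Second, and more seriously, your passage through $e^{-D_{Y}}\g{v}$ and Lemma~\ref{lemma:HomogeneousFibrationWellPositioned} is not justified. A totally geodesic subspace $W$ is invariant under $R_{X}$ only for $X\in W$ (Theorem~\ref{th:CharacterizationTGSubmanifolds} gives $R(W,W)W\subseteq W$, nothing more), and there is no reason why $e_{1}$ should lie in $e^{-D_{Y}}\g{v}$ for arbitrary $Y\in\g{v}$. So you cannot conclude that $e^{-D_{Y}}\g{v}$ is $R_{e_{1}}$-invariant. In fact this detour is unnecessary: the lemma, as stated and as used later in the paper, only asserts that the subspace $\g{v}$ itself splits as $(\g{v}\cap\g{p}_{1})\oplus(\g{v}\cap\g{p}_{2})$, and the paper's proof establishes exactly this from the eigenspace decomposition of $R_{e_{1}}$ (resp.\ $R_{e_{4}}$) together with Remark~\ref{remark:JacobiInvariance} applied with $X=e_{1}\in\g{v}$.
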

\begin{proof}
	Suppose that $\g{v}$ contains a nonzero vector $X\in \g{p}_{1}$ (respectively, $X\in \g{p}_{2}$).
	Since the isotropy representation of $\Delta\mathsf{SU}(2)$ is transitive on the unit sphere of $\g{p}_{1}$ (respectively, $\g{p}_{2}$), we may suppose that $X=e_{1}$ (respectively, $X=e_{4}$).
	Note that the matrices of $R_{e_{1}}$ and $R_{e_{4}}$ are given by $R_{e_{1}}=\operatorname{diag}(0,3/4,3/4,0,1/12,1/12)$ and $R_{e_{4}}=\operatorname{diag}(0,1/12,1/12,0,3/4,3/4)$, which means that $\g{v}=(\g{v}\cap\R e_{1})\oplus (\g{v}\cap \vecspan\{e_{2},e_{3}\})\oplus (\g{v}\cap \R e_{4})\oplus (\g{v}\cap \vecspan\{e_{5},e_{6}\})$ in both cases.
	This last equation implies directly that $\g{v}$ is well-positioned. \qedhere
\end{proof}
	
\begin{proposition}
	\label{prop:4tgs3s3}
	The space $M=\mathsf{S}^{3}\times\mathsf{S}^{3}$ does not admit any codimension two totally geodesic submanifolds.
\end{proposition}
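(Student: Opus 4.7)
The plan is to mimic the case analysis used in Propositions~\ref{prop:4tgcp3} and~\ref{prop:4tgflag}, exploiting the vertical/horizontal splitting $\g{p}=\g{p}_1\oplus\g{p}_2$ together with the well-positioning guaranteed by Lemma~\ref{lemma:S3S3WellPositioned}. Suppose, for contradiction, that $\Sigma$ is a totally geodesic submanifold of $\mathsf{S}^3\times \mathsf{S}^3$ of dimension four passing through $o$, with tangent space $\g{v}\subseteq\g{p}$. Since $\dim\g{p}_i=3$ and $\dim\g{v}=4$, we get $\dim(\g{v}\cap\g{p}_i)\geq 4+3-6=1$ for $i=1,2$, so $\g{v}$ contains a nonzero vertical and a nonzero horizontal vector. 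Lemma~\ref{lemma:S3S3WellPositioned} then forces the splitting $\g{v}=(\g{v}\cap\g{p}_1)\oplus(\g{v}\cap\g{p}_2)$.

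Writing $(a,b)=(\dim\g{v}\cap\g{p}_1,\dim\g{v}\cap\g{p}_2)$, only three configurations arise: $(3,1)$, $(2,2)$, and $(1,3)$. In each of them I would use the isotropy representation of $\Delta\mathsf{SU}(2)$, which acts on $\g{p}_1$ and $\g{p}_2$ simultaneously as the same rotation of $\operatorname{SO}(3)$ via the $(H,E,F)$-identification, to put $\g{v}$ into a normal form with as few parameters as possible. For the asymmetric cases $(3,1)$ and $(1,3)$, transitivity of $\Delta\mathsf{SU}(2)$ on unit vectors of $\g{p}_2$ (respectively $\g{p}_1$) lets us assume $\g{v}=\g{p}_1\oplus\R e_4$ (respectively $\g{v}=\R e_1\oplus\g{p}_2$). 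A direct computation of an appropriate component of $R(\cdot,\cdot)\cdot$ or $(\nabla R)(\cdot,\cdot,\cdot,\cdot)$ between basis vectors of $\g{v}$ should produce a vector outside $\g{v}$, contradicting Theorem~\ref{th:CharacterizationTGSubmanifolds}.

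The substantive case is $(2,2)$. Here I would first use $\Delta\mathsf{SU}(2)$ to normalize $\g{v}\cap\g{p}_1=\spann\{e_1,e_2\}$; the residual isotropy is the $\mathsf{U}(1)\subseteq\Delta\mathsf{SU}(2)$ whose image in $\operatorname{SO}(3)$ fixes the $F$-axis, and under this residual action $e_6\in\g{p}_2$ is fixed while $\spann\{e_4,e_5\}$ is rotated. This reduces $\g{v}\cap\g{p}_2$ to a two-parameter family. I would then write the totally geodesic condition (\ref{thItem:TojoHalfInvariance}) of Tojo's criterion as a system of polynomial equations in these parameters, coming from projections of $R(X,Y)Z$ with $X,Y,Z$ running over the chosen basis of $\g{v}$; and, if necessary, add the first Cartan-operator conditions $(\nabla_{X}R)(X,Y,Z)\in\g{v}$ from Proposition~\ref{prop:TojoSurfaces}'s strengthenings. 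The resulting polynomial system is expected to have no real solutions.

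The main obstacle is precisely this case $(2,2)$: unlike the other cases it carries genuine continuous parameters, so algebraic elimination rather than a single curvature check will be required, and one must be careful to exhaust all orbits under the residual $\mathsf{U}(1)$ (and possibly the outer symmetries coming from $\mathfrak{S}_3\subseteq I(\mathsf{S}^3\times\mathsf{S}^3)$, which permute the three $\mathsf{SU}(2)$ factors and may collapse $(3,1)$ with $(1,3)$). Once these computations are carried out, all three configurations are ruled out and we conclude that no four-dimensional totally geodesic subspace of $\g{p}$ exists.
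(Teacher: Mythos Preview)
Your plan is sound and would yield a valid proof, but it is organized more coarsely than the paper's argument. You cite Lemma~\ref{lemma:S3S3WellPositioned} only for its stated conclusion (the $\g{p}_1\oplus\g{p}_2$ splitting of $\g{v}$), whereas the paper exploits what the \emph{proof} of that lemma actually establishes: once $e_1\in\g{v}$, the Jacobi operator $R_{e_1}=\operatorname{diag}(0,3/4,3/4,0,1/12,1/12)$ forces the finer decomposition
\[
\g{v}=(\g{v}\cap\R e_1)\oplus(\g{v}\cap\spann\{e_2,e_3\})\oplus(\g{v}\cap\R e_4)\oplus(\g{v}\cap\spann\{e_5,e_6\}).
\]
The paper then splits only on whether $e_4\in\g{v}$ or $e_4\perp\g{v}$; each branch is finished by one or two explicit identities such as $12R(e_1,e_2)e_4=-5e_5$, $6R(e_1,e_4)e_5=e_2$, and $6\sqrt{3}(\nabla_{e_1}R)(e_1,e_4,e_2)=e_6$, together with a residual rotation in $\s{K}_{e_1}$. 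No polynomial elimination is needed anywhere.

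In your framework this means the $(2,2)$ case you flag as the main obstacle is illusory: after normalizing $\g{v}\cap\g{p}_1=\spann\{e_1,e_2\}$, a single application of $R_{e_1}$-invariance (Remark~\ref{remark:JacobiInvariance}) forces $\g{v}\cap\g{p}_2=(\g{v}\cap\R e_4)\oplus(\g{v}\cap\spann\{e_5,e_6\})$, leaving only the rigid candidate $\spann\{e_1,e_2,e_5,e_6\}$ and the line $\spann\{e_1,e_2,e_4,c_5e_5+c_6e_6\}$, each killed by one check. Two minor corrections: after the residual $\s{U}(1)$ the family of $2$-planes in $\g{p}_2$ is one-parameter, not two; and the outer $\g{S}_3$ does not preserve the pair $\{\g{p}_1,\g{p}_2\}$ (e.g.\ the transposition of the first two $\s{SU}(2)$ factors sends $e_1$ to $-\tfrac{1}{2}e_1-\tfrac{\sqrt{3}}{2}e_4$), so it will not collapse $(3,1)$ with $(1,3)$---though both are one-line computations anyway.
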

\begin{proof}
	Suppose on the contrary that there exists a four-dimensional totally geodesic submanifold $\Sigma$ of $M$, and without loss of generality assume that $\Sigma$ passes through $o$ with tangent space $\g{v}$.
	A dimension argument yields that $\g{v}\cap \g{p}_{1}$ is nonzero.
	Since the isotropy representation is transitive on the unit sphere of $\g{p}_{1}$, we can further assume that $e_{1}\in \g{v}$.
	Because $\g{v}$ is $R_{e_{1}}$-invariant, using the eigenspace decomposition of $R_{e_{1}}$ (obtained in the proof of the previous lemma) we get $\g{v}=(\g{v}\cap\spann\{e_1,e_4\})\oplus(\g{v}\cap\spann\{e_2,e_3\})\oplus(\g{v}\cap\spann\{e_5,e_6\})$.
	In particular it follows that either $e_{4}\in \g{v}$ or $e_{4}\in\g{p}\ominus \g{v}$. 
		
	Firstly, suppose that $e_{4}\in \g{v}$.
	If $e_{2}$ and $e_{3}$ are also tangent to $\g{v}$, then  $\g{v}=\vecspan\{e_{1},e_{2},e_{3},e_{4}\}$, which is a contradiction because $12 R(e_{1},e_{2})e_{4}=-5e_{5}\notin \g{v}$.
	Similarly, if $e_{5}$ and $e_{6}$ are tangent to $\Sigma$ we deduce that $\g{v}=\vecspan\{e_{1},e_{4},e_{5},e_{6}\}$, which is also not possible, as in that case we would have $6R(e_{1},e_{4})e_{5}=e_{2}\notin \g{v}$.
	Thus, we see that $\dim \g{v}\cap \vecspan\{e_{2},e_{3}\}=\dim \g{v}\cap \vecspan\{e_{5},e_{6}\}=1$.
	Conjugating by an adequate element in $\mathsf{K}$, we can assume that $\g{v}\cap \vecspan\{e_{2},e_{3}\}=\R e_{2}$.
	As a consequence, $e_{5}=-6R(e_{1},e_{4})e_{2}\in \g{v}$, and we obtain $\g{v}=\vecspan\{e_{1},e_{2},e_{4},e_{5}\}$.
	However, the equality $6\sqrt{3}(\nabla_{e_{1}}R)(e_{1},e_{4},e_{2})=e_{6}$ implies that $\g{v}$ is not $\nabla R$-invariant, which yields a contradiction once again.
	Therefore, $e_{4}$ is not tangent to $\Sigma$, so it must be normal to $\g{v}$.
		
	We suppose that $e_{1}\in \g{v}$ and $e_{4}\in \g{p}\ominus \g{v}$.
	By dimensional reasons, and using the isotropy representation if necessary, there are $\lambda,c_{5},c_{6}\in \R$ such that $e_2 +\lambda e_3,c_{5} e_5 + c_{6} e_6\in \g{v}$. Moreover,
	\begin{align*}
		0&=\langle R(e_1,e_2 +\lambda e_3)(c_{5}e_5 + c_{6} e_6),e_4\rangle=\frac{5}{12}\left(c_{5}+\lambda c_{6}\right),\\
		0&=\langle (\nabla_{e_1} R)(e_1, e_2 +\lambda e_3, c_{5} e_5 + c_{6} e_6),e_4\rangle=\frac{5}{12\sqrt{3}}\left(\lambda c_{5}-c_{6} \right),
		\end{align*}
	which implies $c_{5}=c_{6}=0$, a contradiction.
We conclude that the existence of $\Sigma$ is not possible. \qedhere
\end{proof}
\begin{proposition}\label{prop:3tgs3s3}
	Let $\Sigma\to\mathsf{S}^{3}\times\mathsf{S}^{3}$ be a three-dimensional complete totally geodesic submanifold. 
	Then $\Sigma$ is congruent to either the round sphere $\mathsf{S}^{3}$ (viewed as the first factor) or the Berger sphere $\mathsf{S}^{3}_{\C,1/3}(2)$.
\end{proposition}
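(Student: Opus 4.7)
The plan is to classify three-dimensional totally geodesic subspaces $\g{v}\subseteq\g{p}$, mirroring the approach of Proposition~\ref{prop:4tgs3s3}. Without loss of generality $\Sigma$ passes through $o$ with tangent space $\g{v}$. I would first rule out the case $\g{v}\cap\g{p}_{1}=0$, in which dimension forces $\g{v}=\g{p}_{2}$: a short calculation using~\eqref{eq:curvaturecone}-free formulas for the curvature on $\g{p}$ shows that $R(\g{p}_{2},\g{p}_{2})\g{p}_{2}$ has a nonzero component in $\g{p}_{1}$ (the O'Neill integrability tensor of the homogeneous submersion $\mathsf{S}^{3}\times\mathsf{S}^{3}\to\mathsf{S}^{3}$ is nonzero), so $\g{p}_{2}$ fails to be $R$-invariant. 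Hence $\g{v}\cap\g{p}_{1}\neq 0$, and by transitivity of the isotropy representation of $\Delta\mathsf{SU}(2)$ on the unit sphere of $\g{p}_{1}$ I may assume $e_{1}\in\g{v}$.

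By Lemma~\ref{lemma:S3S3WellPositioned} the subspace $\g{v}$ splits as
\[
\g{v}=(\g{v}\cap\R e_{1})\oplus (\g{v}\cap\spann\{e_{2},e_{3}\})\oplus(\g{v}\cap\R e_{4})\oplus(\g{v}\cap\spann\{e_{5},e_{6}\}).
\]
Writing $(d_{1},d_{2},d_{3},d_{4})$ for the dimensions of the four summands, the admissible patterns with $d_{1}=1$ and $d_{2}+d_{3}+d_{4}=2$ are $(1,2,0,0)$, $(1,0,0,2)$, $(1,1,1,0)$, $(1,1,0,1)$ and $(1,0,1,1)$. The two extreme patterns recover the claimed examples: $(1,2,0,0)$ gives $\g{v}=\g{p}_{1}$, tangent to the totally geodesic fiber $\mathsf{S}^{3}(2/\sqrt{3})$ of Subsection~\ref{subsec:S3S3RoundS3}, while $(1,0,0,2)$ gives $\g{v}=\spann\{e_{1},e_{5},e_{6}\}=\g{p}_{\mathsf{S}^{3}_{\C,1/3}(2)}$, the Berger sphere of Subsection~\ref{subsec:S3S3BergerS3}. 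Both are already known to be totally geodesic, so no further check is needed.

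For each of the remaining three patterns I exploit the residual isotropy: the subgroup of $\Delta\mathsf{SU}(2)$ fixing $e_{1}$ is the circle $\exp(\R (H,-2H,H))$, which acts as a simultaneous rotation by the same angle on both $\spann\{e_{2},e_{3}\}$ and $\spann\{e_{5},e_{6}\}$. Using this, pattern $(1,1,1,0)$ reduces to the candidate $\spann\{e_{1},e_{2},e_{4}\}$, excluded by the relation $12R(e_{1},e_{2})e_{4}=-5e_{5}$ already used in Proposition~\ref{prop:4tgs3s3}; pattern $(1,0,1,1)$ reduces to $\spann\{e_{1},e_{4},e_{5}\}$, excluded by $6R(e_{1},e_{4})e_{5}=e_{2}$; and pattern $(1,1,0,1)$ reduces to the one-parameter family $\spann\{e_{1},e_{2},\cos\beta\,e_{5}+\sin\beta\,e_{6}\}$. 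For this last family the invariance of $\g{v}$ under $R$ and $\nabla R$ forces both inner products
\[
\langle R(e_{1},e_{2})(\cos\beta\,e_{5}+\sin\beta\,e_{6}),e_{4}\rangle=\tfrac{5}{12}\cos\beta
\]
and $\langle (\nabla_{e_{1}}R)(e_{1},e_{2},\cos\beta\,e_{5}+\sin\beta\,e_{6}),e_{4}\rangle=-\tfrac{5}{12\sqrt{3}}\sin\beta$ to vanish, which is impossible.

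The main obstacle is the intermediate pattern $(1,1,0,1)$: unlike the other two excluded configurations, it depends on a genuine continuous parameter $\beta$ that survives the residual $\mathsf{U}(1)$-action, so contradicting it requires producing two linearly independent obstructions in the direction of $e_{4}$, one from $R$ and one from $\nabla R$, and checking that the resulting trigonometric system in $\beta$ admits no common solution. Once this step is cleared, the five-case analysis closes and the proposition follows.
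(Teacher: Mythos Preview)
Your well-positioned analysis is fine, and your treatment of the pattern $(1,1,0,1)$ via two independent obstructions in the $e_4$-direction is correct. The flaw is earlier: the claim that $\g{v}\cap\g{p}_1=0$ ``by dimension forces $\g{v}=\g{p}_2$'' is false. Both $\g{p}_1$ and $\g{p}_2$ are three-dimensional inside the six-dimensional $\g{p}$, so a three-dimensional $\g{v}$ with $\g{v}\cap\g{p}_1=0$ is merely a complement of $\g{p}_1$, not necessarily $\g{p}_2$. For instance $\spann\{e_1+e_4,\,e_2+e_5,\,e_3+e_6\}$ intersects both $\g{p}_1$ and $\g{p}_2$ trivially. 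So your argument never reaches the case $\g{v}\cap\g{p}_1=\g{v}\cap\g{p}_2=0$, and Lemma~\ref{lemma:S3S3WellPositioned} is of no help there since its hypothesis fails.

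This is not a minor omission: the paper devotes the bulk of its proof to exactly this not-well-positioned case. One must first show (via the Jacobi operator $R_X$ for a generic $X=e_1+\rho\cos\theta\,e_4+\rho\sin\theta\,e_5$) that $\g{v}$ contains a vector of the form $Y=e_1+\lambda e_4$ with $\lambda\neq 0$, then use the eigenspace decomposition of $R_Y$ together with $\nabla R$ to pin down $\lambda^2\in\{3,1/3\}$, and finally invoke the order-three isometry $s_o$ (whose differential sends $e_1$ to a multiple of $e_1\pm\sqrt{3}\,e_4$) to conjugate $\g{v}$ to a subspace containing a purely vertical or purely horizontal vector, at which point Lemma~\ref{lemma:S3S3WellPositioned} applies and your case analysis takes over. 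Without this reduction, your five-pattern argument is incomplete.
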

\begin{proof}
	Suppose that $\Sigma$ is such a submanifold, and assume without loss of generality that $\Sigma$ passes through $o$ with tangent space $\g{v}$.

	We start by assuming that $\g{v}$ is well-positioned, so $\g{v}=(\g{v}\cap\g{p}_{1})\oplus(\g{v}\cap\g{p}_{2})$.
	We consider several cases according to the dimension of $\g{v}\cap \g{p}_{1}$.
		
	If $\g{v}\cap\g{p}_{1}=0$, then $\g{v}=\g{p}_{2}$.
	However, the equation $3\sqrt{3}(\nabla_{e_{4}}R)(e_{4},e_{5},e_{4})=e_{3}$ implies that $\g{v}$ is not $\nabla R$-invariant, a contradiction.
	
	If $\g{v}\cap \g{p}_{1}$ is a one-dimensional subspace, we may use the isotropy representation to assume that this intersection is spanned by $e_{1}$.
	Now, since $\g{v}\cap\g{p}_{2}$ is two-dimensional, it must intersect $\vecspan\{e_{5},e_{6}\}$, and we may use an element of $\mathsf{K}$ that fixes $e_{1}$ if necessary to assume that $e_{5}\in \g{v}$.
	As a consequence, $e_{6}=3\sqrt{3}(\nabla_{e_{5}}R)(e_{5},e_{1},e_{5})$ is also in $\g{v}$, so $\g{v}=\vecspan\{e_{1},e_{5},e_{6}\}$, which implies that $\Sigma=\mathsf{S}^{3}_{\C,1/3}(2)$ is the Berger sphere.
	
	If $\g{v}\cap \g{p}_{1}$ is two-dimensional, then $\g{v}\cap \g{p}_{2}$ is one-dimensional, and we can assume that it is spanned by~$e_{4}$.
	Arguing in an identical fashion as above, we can also assume that $e_{2}\in \g{v}$, and therefore we have $e_{6}=3\sqrt{3}(\nabla_{e_{4}} R)(e_{2},e_{4},e_{4})\in \g{v}$, but this contradicts our hypothesis that $\g{v}\cap\g{p}_{2}$ is one-dimensional.
		
	Lastly, if $\g{v}\cap \g{p}_{1}=\g{p}_{1}$, then we actually have $\g{v}=\g{p}_{1}$, so $\Sigma$ is simply the fiber of the fibration $\mathsf{S}^{3}\times\mathsf{S}^{3}\to\mathsf{S}^{3}$.
		
	Now, suppose that $\g{v}$ is not well-positioned, so that $\g{v}\cap \g{p}_{1}=\g{v}\cap \g{p}_{2}=0$ by Lemma~\ref{lemma:S3S3WellPositioned}.
	We start by proving (up to the isotropy representation) that $\g{v}$ admits a vector of the form $X=e_{1}+\lambda e_{4}$ for a certain $\lambda \in \R\setminus\{0\}$.
		
	Firstly, note that since $\dim \g{v}=3$, the intersection $\g{v}\cap \vecspan\{e_{1},e_{4},e_{5},e_{6}\}$ is nontrivial.
	Conjugating by an adequate element of $\mathsf{K}$ and rescaling, we can assume that a vector of the form $X=e_{1}+\rho \cos \theta e_{4}+\rho \sin \theta e_{5}$ is in $\g{v}$, where $\rho\geq 0$ and $\theta \in [0,2\pi)$.
	The condition $\g{v}\cap \g{p}_{1}=\g{v}\cap \g{p}_{2}=0$ forces $\rho > 0$, and the orthogonal projection maps $\g{v}\to \g{p}_{i}$ are vector space isomorphisms.
	Note that if $\theta \in \{0,\pi\}$, then $X=e_{1}\pm \rho e_{4}$, and our assertion is proved.
	On the other hand, if $\theta\notin\{0,\pi\}$, we define polynomials
	\[
		\begin{aligned}
			f(x)={}&\frac{1}{432} \left(-\rho ^2+12
			x-1\right) \left(-4 \rho ^2 \cos (2
			\theta )+\rho ^2 (4-27 x)+9 x (4
			x-3)\right), \\
			g(x)={}&\frac{1}{144} \left(-32 \rho^2 \cos (2\theta )+9 \rho ^4+50 \rho ^2+144x^2-120 \left(\rho ^2+1\right)x+9\right).
		\end{aligned}
	\]
	One sees that the product $fg$ is precisely the characteristic polynomial of $R_{X}\in\operatorname{End}(\g{p}\ominus \R X)$, so in particular we obtain that $f(R_{X})g(R_{X})=0$.
	Furthermore, $f$ and $g$ are relatively prime.
	As a consequence, because $\g{v}$ is an $R_{X}$-invariant subspace, it may be decomposed as $\g{v}=\R X \oplus (\g{v}\cap \ker f(R_{X}))\oplus (\g{v}\cap \ker g(R_{X}))$.
	It turns out that
	\[
		\ker f(R_{X})=\spann \{e_{2},\rho \sin \theta e_{1}-e_{5},\rho \cos \theta e_{1}-e_{4}\}, \quad
		\ker g(R_{X})=\spann \{e_{3},e_{6}\},
	\]
	and as the projections of $\g{v}$ onto $\R e_{3}$ and $\R e_{6}$ are both nontrivial, we obtain that there is a vector in $\g{v}$ of the form $e_{3}+\lambda e_{6}$ (where $\lambda \neq 0$), and by conjugating via the isotropy representation we can change $\g{v}$ so that $e_{1}+\lambda e_{4}\in \g{v}$.
		
	We now let $Y=e_{1}+\lambda e_{4}\in \g{v}$, where $\lambda \in \R\setminus \{0\}$.
	The Jacobi operator $R_{Y}\in\operatorname{End}(\g{p}\ominus \R Y)$ has three different eigenvalues $0$, $\frac{1+\lambda^{2}}{12}$ and $\frac{3(1+\lambda^{2})}{4}$,	with corresponding eigenspaces $ \R(\lambda e_{4}-e_{1})$, $\vecspan\{\lambda e_{2}-e_{5},\lambda e_{3}-e_{6}\}$ and $\vecspan\{e_{2}+\lambda e_{5},e_{3}+\lambda e_{6}\}$.
	Note that the isotropy subgroup $\mathsf{K}_{e_{1}}$ fixes $Y$ and acts transitively on the set of lines in both $\vecspan\{\lambda e_{2}-e_{5},\lambda e_{3}-e_{6}\}$ and $\vecspan\{e_{2}+\lambda e_{5},e_{3}+\lambda e_{6}\}$.
	Now, the fact that $\dim \g{v}=3$ implies that $\g{v}$ must contain a nonzero eigenvector from one of the last two eigenspaces.
	Our next step is to prove that $\lambda^{2}=3$ or $\lambda^{2}=1/3$.
		
	If $\g{v}\cap\vecspan\{\lambda e_{2}-e_{5},\lambda e_{3}-e_{6}\} \neq 0$, we can conjugate by an element of $\mathsf{K}_{e_{1}}$ to assume that $Z=\lambda e_{2}-e_{5}\in \g{v}$ is tangent to $\Sigma$.
	If we suppose that $\lambda^{2}\notin \left\{3,1/3\right\}$, then we can give a basis of $\g{v}$ with the vectors
	\begin{equation*}
		Y=e_{1}+\lambda e_{4}, \quad
		Z=\lambda e_{2}-e_{5}, \quad
		T=3\sqrt{3} (\nabla_{Z}R)(Z,Y,Z)=\lambda(1-3\lambda^{2})e_{3}+(3\lambda^{2}-1)e_{6}.
	\end{equation*}
	As a consequence, $e_{3}+\lambda e_{6}$ is orthogonal to $\g{v}$, and we deduce that
	\[
		0={3 \sqrt{3}}\langle (\nabla_{Y}R)(Y,Z,Y) ,e_{3}+\lambda e_{6} \rangle=\lambda  \left(3-\lambda^{2}\right) \left(\lambda^2+1\right),
	\]
	a contradiction, so we obtain that $\lambda^{2}\in \left\{3,1/3\right\}$.
		
	Similarly, in the case that $\g{v}\cap \vecspan\{e_{2}+\lambda e_{5},e_{3}+\lambda e_{6}\}\neq 0$, we can use an element of $\mathsf{K}_{e_{1}}$ to assume that $Z=e_{2}+\lambda e_{5}$ also belongs to $\g{v}$.
	Here, if $\lambda^{2}\notin \{3,1/3\}$, then we can construct a basis of $\g{v}$ with the vectors
	\begin{align*}
		Y=e_{1}+\lambda e_{4}, \quad
		Z=e_{2}+\lambda e_{5}, \quad
		T=\frac{3\sqrt{3}}{\lambda}(\nabla_{Y}R)(Y,Z,Y)=\lambda(\lambda^{2}-3)e_{3}+(3-\lambda^{2})e_{6}.
	\end{align*}
	In particular, $e_{3}+\lambda e_{6}\in \g{p}\ominus \g{v}$, and we have
	\[
		0=\langle (\nabla_{T}R)(Y,Z,T),e_{3}+\lambda e_{6} \rangle=\frac{\left(\lambda ^2-3\right)^2 \left(\lambda ^2+1\right) \left(3 \lambda^2-1\right)}{6 \sqrt{3}},
	\]
	contradicting our assumption.
		
	All in all, we have proved that the totally geodesic subspace $\g{v}$ contains $Y=e_{1}+\lambda e_{4}$, and $\lambda^{2}=3$ or $\lambda^{2}=1/3$.
	Now, consider the isometry $s_o\colon \s{S}^{3}\times\s{S}^{3}\to\s{S}^{3}\times\s{S}^{3}$ defined in Subsection~\ref{subsection:S3S3Description}.
	One sees that the differential $(s_o)_{*o}$ satisfies the identities
	\begin{align*}
		(s_{o})_{*o} (e_{1})={}&-\frac{1}{2}(e_{1}+\sqrt{3}e_{4}) & (s_{o})_{*o}^{-1} (e_{1})={}&-\frac{1}{2}(e_{1}-\sqrt{3}e_{4}), \\
		(s_{o})_{*o} (e_{4})={}&\frac{1}{2}(\sqrt{3}e_{1}-e_{4}), & (s_{o})_{*o}^{-1}(e_{4})={}&-\frac{1}{2}(\sqrt{3}e_{1}+e_{4}).
	\end{align*}
	Using these equations, we deduce that either $(s_{o})_{*o} (\g{v})$ or $(s_{o})_{*o}^{2}(\g{v})$ is a totally geodesic subspace that contains either $e_{1}$ or $e_{4}$, so by Lemma~\ref{lemma:S3S3WellPositioned} it is well-positioned, and thus $\Sigma$ is congruent to $\mathsf{S}^{2}(2/\sqrt{3})$ or $\mathsf{S}^{3}_{\C,1/3}(2)$. \qedhere
\end{proof}
	
\begin{proposition}
	\label{prop:2tgs3s3}
	Let $\Sigma\to \mathsf{S}^{3}\times \mathsf{S}^{3}$ be a complete totally geodesic surface.
	Then, either $\Sigma$ is congruent to either $\s{T}^2_{\Gamma}$, the not well-positioned $\s{S}^{3}(\sqrt{3/2})$, or a great sphere inside the round $\s{S}^{3}\left(2/\sqrt{3}\right)$.
\end{proposition}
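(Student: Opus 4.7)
I plan to analyze the tangent plane $\g{v}\subseteq\g{p}=T_{o}\Sigma$ of $\Sigma$ via the decomposition $\g{p}=\g{p}_{1}\oplus\g{p}_{2}$ into vertical and horizontal subspaces for the projection $\s{S}^{3}\times\s{S}^{3}\to\s{S}^{3}$. Since $\Sigma$ is intrinsically homogeneous of dimension two, it carries a metric of constant sectional curvature, and by Lemma~\ref{lemma:S3S3WellPositioned} either $\g{v}$ is well-positioned, or $\g{v}\cap\g{p}_{1}=\g{v}\cap\g{p}_{2}=0$.

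In the well-positioned case I distinguish three subcases. If $\g{v}\subseteq\g{p}_{1}$, then $\g{v}$ is a $2$-plane inside the tangent space at $o$ to the totally geodesic round fiber $\s{S}^{3}(2/\sqrt{3})$ of Subsection~\ref{subsec:S3S3RoundS3}, so $\Sigma$ is a great $2$-sphere inside this fiber. If $\g{v}\subseteq\g{p}_{2}$, the isotropy action of $\Delta\s{SU}(2)$ lets me assume $\g{v}=\spann\{e_{4},e_{5}\}$; a direct computation gives $D_{e_{4}}e_{5}\in \spann\{e_2,e_3\}$, which lies in a different eigenspace of $R_{e_{4}}$ than $e_{5}$, contradicting Proposition~\ref{prop:TojoSurfaces}. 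If instead $\dim(\g{v}\cap\g{p}_{i})=1$ for both $i$, I use the isotropy to assume $e_{1}\in\g{v}$, so $\g{v}=\spann\{e_{1},Y\}$ with $Y\in\g{p}_{2}$ nonzero. Proposition~\ref{prop:TojoSurfaces}, combined with the spectral decomposition of $R_{e_{1}}|_{\g{p}_{2}}$ (whose eigenspaces are $\R e_{4}$ and $\spann\{e_{5},e_{6}\}$), forces $Y$ into one of these two subspaces. Taking $Y=e_{4}$ yields $\g{v}=\spann\{e_{1},e_{4}\}$, which is $D$-invariant by Theorem~\ref{th:CharacterizationDInvariantTGSubmanifolds} and gives the torus $\s{T}^{2}_{\Gamma}$. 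Taking $Y=e_{5}$ (after using the stabilizer of $e_{1}$ to rotate $\spann\{e_{5},e_{6}\}$) will be excluded by computing $(\nabla_{e_{5}}R)(e_{1},e_{5},e_{5})$ and observing that it has a nonzero component along $e_{6}$, so $\g{v}$ fails to be $\nabla R$-invariant.

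For the not-well-positioned case, every nonzero vector in $\g{v}$ has nontrivial projection onto both $\g{p}_{1}$ and $\g{p}_{2}$. I first use the isotropy representation of $\Delta\s{SU}(2)$ to normalize a unit $X\in\g{v}$ to the form $X=\alpha e_{1}+\beta e_{4}+\gamma e_{5}$ with $\alpha>0$, $\gamma\ge 0$, $\alpha^{2}+\beta^{2}+\gamma^{2}=1$ and $\beta^{2}+\gamma^{2}>0$. Then I compute the eigenstructure of $R_{X}$ and the kernel of $C_{X}$ as functions of $(\alpha,\beta,\gamma)$. For a second generator $Y\in\g{v}\ominus\R X$, Proposition~\ref{prop:TojoSurfaces} requires that $\spann\{D_{X}^{k}Y:k\ge 0\}$ lie in a single eigenspace of $R_{X}$ of dimension at least two and inside $\bigcap_{j\ge 1}\ker C_{X}^{j}$. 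A case analysis on which eigenspace of $R_{X}$ contains $Y$, followed by elimination of the parameters via the resulting polynomial equations, will isolate (up to the isotropy of the stabilizer of $X$) the unique configuration $\g{v}=\spann\{e_{1}+e_{5},e_{2}-e_{4}\}$, realizing the sphere $\s{S}^{2}(\sqrt{3/2})$.

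The hardest part will be the bookkeeping in the not-well-positioned case: with three free parameters in $X$, the eigenstructure of $R_{X}$ becomes generically involved, and the interplay between $R$-invariance, $\nabla R$-invariance, and the matching of the intrinsic sectional curvature $\kappa$ with the appropriate eigenvalue of $|X|^{-2}R_{X}$ must be handled carefully to eliminate all but the single admissible family.
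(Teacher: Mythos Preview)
Your treatment of the well-positioned case matches the paper's proof closely (the paper excludes $\g{v}\subseteq\g{p}_{2}$ and $\g{v}=\spann\{e_{1},e_{5}\}$ via $\nabla R$ rather than Proposition~\ref{prop:TojoSurfaces}, but your variant works equally well).

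There is, however, a genuine gap in your plan for the not well-positioned case. Your expectation that the parameter elimination will ``isolate (up to the isotropy of the stabilizer of $X$) the unique configuration $\g{v}=\spann\{e_{1}+e_{5},e_{2}-e_{4}\}$'' is incorrect. When $X$ is proportional to $e_{1}\pm\sqrt{3}\,e_{4}$ (equivalently $\gamma=0$ and $\beta^{2}=3\alpha^{2}$), one computes $C_{X}=0$, so the Cartan operator imposes no constraint at all, and the Jacobi operator alone does not cut the possibilities down to a single plane. In fact there \emph{are} totally geodesic $2$-planes containing such $X$: for instance $\spann\{e_{1}+\sqrt{3}\,e_{4},\,e_{2}+\sqrt{3}\,e_{5}\}$ is one, and it is congruent to a great $\s{S}^{2}\bigl(2/\sqrt{3}\bigr)$ inside the round fiber---but only via the order-three isometry $s_{o}$, which lies in the $\g{S}_{3}$-factor of $I(\s{S}^{3}\times\s{S}^{3})$ and not in the connected isotropy $\Delta\s{SU}(2)$ you are using for normalization. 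Your brute-force elimination will therefore produce extra solutions that must be recognized, after the fact, as congruent to well-positioned examples by an isometry outside $\s{G}=\s{SU}(2)^{3}$.

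The paper avoids this difficulty by first splitting on whether $\g{v}$ is $D$-invariant. If it is, then $\g{v}\subseteq\ker D_{X}$ is two-dimensional, which quickly forces $\g{v}=\spann\{e_{1}+e_{5},e_{2}-e_{4}\}$ up to isotropy and yields $\s{S}^{2}(\sqrt{3/2})$. If it is not, then either $C_{X}=0$ (the degenerate case above, dispatched by applying $s_{o}^{\pm 1}$ to land in the well-positioned regime), or one checks that $\spann\{e_{3},e_{6}\}\subseteq\operatorname{im}C_{X}$, forcing any $Y\in\g{v}\ominus\R X$ to lie in $\spann\{e_{1},e_{2},e_{4},e_{5}\}$; but then $D_{X}Y\in\spann\{e_{3},e_{6}\}\cap\ker C_{X}=0$, contradicting non-$D$-invariance. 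This split both organizes the computation and makes the role of the extra symmetry $s_{o}$ transparent.
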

\begin{proof}
	Let $\Sigma$ be a complete totally geodesic surface, and assume without loss of generality that it passes through $o$ with tangent space $\g{v}$.
	We distinguish two situations for $\g{v}$ according to whether it is well-positioned or not.
		
	First, suppose that $\g{v}$ is well-positioned.
	If $\g{v}\subseteq \g{p}_{1}$, then $\Sigma$ is merely a round sphere inside the round $\mathsf{S}^{3}$.
	If $\g{v}\subseteq \g{p}_{2}$, we may suppose that $\g{v}=\vecspan\{e_{4},e_{5}\}$, but the equation $3\sqrt{3}(\nabla_{e_{4}}R)(e_{4},e_{5},e_{4})=e_{3}$ implies that $\g{v}$ is not $\nabla R$-invariant, a contradiction.
	Now, suppose that $\g{v}\cap \g{p}_{1}$ and $\g{v}\cap \g{p}_{2}$ are both one-dimensional.
	By using the isotropy representation, we may suppose that $\g{v}\cap\g{p}_{1}=\R e_{1}$.
	The Jacobi operator $R_{e_{1}}$ preserves $\g{p}_{2}$ and its restriction to $\g{p}_{2}$ has eigenvalues $0$ (with eigenspace $\R e_{4}$) and $\frac{1}{12}$ (with eigenspace $\vecspan\{e_{5},e_{6}\}$).
	Thus, we can further assume (up to the action of $\mathsf{K}$) that either $\g{v}=\vecspan\{e_{1},e_{4}\}$ or $\g{v}=\vecspan\{e_{1},e_{5}\}$.
	The first case simply yields $\Sigma=\mathsf{T}^2_{\Gamma}$, while the second case gives a contradiction because $3\sqrt{3}(\nabla_{e_{5}}R)(e_{5},e_{1},e_{5})=e_{6}$.
	This completes the case that $\g{v}$ is well-positioned.
		
	Suppose $\g{v}$ is not well-positioned.
	In particular $\g{v}_{\g{p}_{1}}\neq 0$, and this combined with the isotropy representation lets us assume that $\g{v}$ contains a vector of the form $X=e_{1}+\rho \cos \theta e_{4} + \rho \sin \theta e_{5}$, where $\rho > 0$ and $\theta\in [0,2\pi)$.
		
	Suppose $\g{v}$ is invariant under $D$, which yields $\g{v}\subseteq \ker D_{X}$ as it is two-dimensional.
	If $\theta \in \{0,\pi\}$, then $\ker D_{X}=\vecspan \{e_{1},e_{4}\}$, which forces $\g{v}=\vecspan\{e_{1},e_{4}\}$, contradicting our hypothesis that $\g{v}$ is not well-positioned.
	If $\theta\notin\{0,\pi\}$, the kernel of $D_{X}$ is spanned by $X$ and $Y=\rho \cos \theta e_{1}+\rho \sin \theta e_{2}-e_{4}$, so $\g{v}=\vecspan\{X,Y\}$.
	As a consequence, we see that the vector $Z=~-\rho \sin \theta e_{1}+\rho \cos \theta e_{2}+e_{5}$ is orthogonal to $\g{v}$, and thus
	$0=\langle R(X,Y)X,Z \rangle=\frac{4}{3}\rho^{2}\sin 2\theta$,
	which forces $\theta = \pi/2$ or $\theta=3\pi/2$. 
	We group these cases together by writing $X=e_1+t e_5$ and $Y=te_2-e_4$	for a nonzero $t\in \R$.
	As a consequence, $e_{2}+t e_{4}$ is orthogonal to $\g{v}$, and	$0=\langle R(X,Y)X,e_{2}+t e_{4} \rangle=\frac{4}{3}t(t^{2}-1)$,
	so $t=\pm 1$.
	The cases $t=1$ and $t=-1$ give rise to congruent submanifolds.
	Indeed, the element
		$k=(\operatorname{diag}(i,-i),\operatorname{diag}(i,-i),\operatorname{diag}(i,-i))\in\mathsf{K}$
	satisfies $\Ad(k)(e_{1}+e_{5})=e_{1}-e_{5}$ and $\Ad(k)(e_{2}-e_{4})=-e_{2}-e_{4}$.
	As a consequence, we see that $\g{v}$ is conjugate to $\operatorname{span}\{e_{1}+e_{5},e_{2}-e_{4}\}$, so $\Sigma$ is congruent to the not well-positioned $\s{S}^{2}(\sqrt{3/2})$.
		
	Now, assume that $\g{v}$ is not $D$-invariant.
	Arguing as before, we may suppose that $\g{v}$ contains a vector of the form $X=e_{1}+\rho \cos \theta e_{4} + \rho \sin \theta e_{5}$ for $\rho > 0$ and $0 \leq \theta < 2 \pi$.
	On the one hand, one sees that $C_{X}=0$ if $\rho=\sqrt{3}$ and $\theta\in\{0,\pi\}$, but in this case $\g{v}$ is congruent to a well-positioned example.
	Indeed, we argue as in the end of Proposition~\ref{prop:3tgs3s3}.
	Consider the isometry $s_o\colon \mathsf{S}^{3}\times\mathsf{S}^{3}\to\mathsf{S}^{3}\times\mathsf{S}^{3}$ defined as in Subsection~\ref{subsection:S3S3Description}.
	Then, if $\theta=0$ we have $X=-2 (s_{o})_{*o}(e_{1})$, so $\g{v}$ is congruent to $(s_o)_{*o}^{-1}(\g{v})$, and this subspace is well-positioned by Lemma~\ref{lemma:S3S3WellPositioned}, so we may apply the conclusions from the previous case.
	Similarly, if $\theta=\pi$, then $X=-2(s_{o})^{-1}_{*o}(e_{1})$, and $(s_o)_{*o}(\g{v})$ is well-positioned.
	On the other hand, if $\rho \neq \sqrt{3}$ or $\theta\neq \pi$, we have
	\begin{align*}
		C_{X}e_{1}={}&\frac{\rho^{2}\sin 2\theta}{6\sqrt{3}}e_{3}+\frac{\rho^{3}\sin\theta}{3\sqrt{3}}e_{6}, & 
		C_{X}e_{2}={}&\frac{\rho ^2 \sin ^2\theta }{3 \sqrt{3}} e_{3}-\frac{\rho  \left(\rho ^2-3\right) \cos \theta }{3 \sqrt{3}}e_{6}, \\
		C_{X}e_{4}={}&-\frac{\rho  \left(\rho ^2-2\right) \sin\theta }{3 \sqrt{3}}e_{3}-\frac{\rho ^2 \sin 2 \theta }{6 \sqrt{3}}e_{6}, &
		C_{X}e_{5}={}&\frac{\rho  \left(\rho ^2-3\right) \cos\theta}{3 \sqrt{3}}e_{3}-\frac{\rho ^2 \sin ^2 \theta }{3 \sqrt{3}}e_{6}.
	\end{align*}
	A straightforward computation gives that these four vectors span the subspace $\vecspan\{e_{3},e_{6}\}$, so we have $\vecspan\{e_{3},e_{6}\}\subseteq \operatorname{im}C_{X}$.
	In particular, since $C_{X}$ is a symmetric endomorphism, we have $\g{p}=\ker C_{X}\oplus \operatorname{im}C_{X}$ orthogonally, so $\ker C_{X}$ is orthogonal to $e_{3}$ and $e_{6}$.
	As a consequence, $\g{v}$ is spanned by $X$ and a vector of the form $Y=a_{1}e_{1}+a_{2}e_{2}+a_{4}e_{4}+a_{5}e_{5}$,
	where $a_{1}$, $a_{2}$, $a_{4}$, $a_{5}\in\R$.
	However,
	\[
		D_X Y=\frac{a_4 \rho  \sin \theta -a_5 \rho  \cos
			\theta +a_2}{2 \sqrt{3}} e_{3}+\frac{a_1 \rho  \sin \theta -a_2 \rho  \cos
			\theta -a_5}{2 \sqrt{3}}e_{6}
	\]
	is in $\ker C_{X}$ by Proposition~\ref{prop:TojoSurfaces} and in $\vecspan\{e_{3},e_{6}\}\subseteq \g{p}\ominus\ker C_{X}$, which forces $D_X Y=0$, and thus $\g{v}$ is $D$-invariant, contradicting our assumption.
	This finishes the proof. \qedhere
\end{proof}

\begin{proof}[Proof of Theorem~\ref{th:tg-s3s3-classification}]
	The result follows from combining Theorem~\ref{th:hyp}, Proposition~\ref{prop:4tgs3s3}, Proposition~\ref{prop:3tgs3s3}, and Proposition~\ref{prop:2tgs3s3}. 
\end{proof}

Finally, we can also prove Theorem~\ref{th:tg-g2-cones}.
\begin{proof}[Proof of Theorem~\ref{th:tg-g2-cones}]
	Let $M$ be a homogeneous nearly Kähler $6$-manifold with non-constant curvature.
	Since totally geodesic submanifolds are preserved under Riemannian coverings and the universal cover of $\hat{M}$ is the cone of the universal cover of $M$, we may assume that $M$ is simply connected, so $M$ is either $\C\s{P}^3$, $\s{F}(\C^3)$ or $\s{S}^3\times\s{S}^3$.
	
	Let $\Sigma$ be a complete totally geodesic submanifold of the $\s{G}_2$-cone $\hat{M}$.
	By Corollary~\ref{cor:MaximalTGSubmanifoldsOfCones}, $\Sigma$ is either a hypersurface of $\hat{M}$ or the cone of a maximal totally geodesic submanifold of $M$.
	The first case is not possible due to~\cite[Theorem~1.2]{JentschMoroianuSemmelmann}, so we conclude that $\Sigma=\hat{S}$ for a maximal totally geodesic submanifold $S\subseteq M$, which yields the desired result.
\end{proof}
\begin{remark}
	Let $M\in\{\C\s{P}^3,\s{F}(\C^3),\s{S}^3\times\s{S}^3 \}$.
	A look at Tables~\ref{table:CP3},~\ref{table:FC3} and~\ref{table:S3S3} shows that two (complete) totally geodesic submanifolds of $M$ are congruent if and only if they are isometric.
	Combining this with Proposition~\ref{prop:IsometricCones}, we deduce that the congruence classes of maximal totally geodesic submanifolds of $\hat{M}$ are in a bijective correspondence with the congruence classes of maximal totally geodesic submanifolds of $M$.

	We also note that in order to obtain the classification of all totally geodesic submanifolds of $\hat{M}$, it suffices to iterate Corollary~\ref{cor:MaximalTGSubmanifoldsOfCones} and take into account Propositions~\ref{prop:ConstantCurvatureCone} and~\ref{prop:bergercone}.
\end{remark}


\begin{thebibliography}{99}
	\bibitem{AlekseevskyNikonorov}
	D. V. Alekseevski, Y. G. Nikonorov, Compact Riemannian manifolds with homogeneous geodesics, \textit{SIGMA Symmetry Integrability Geom. Methods Appl.} \textbf{5} (2009), Paper 093, 16 pp.

	\bibitem{AlekseevskyDiScala}
	D. V. Alekseevski, A. J. Di Scala, Minimal homogeneous submanifolds of symmetric spaces, Lie groups and symmetric spaces, \textit{Amer. Math. Soc. Transl. Ser. 2} \textbf{210} (2003), 11--25.
	
	\bibitem{Aslan}
	B. Aslan, Special Lagrangians in nearly Kähler $\C\mathbb{P}^{3}$, \textit{J. Geom. Phys.} \textbf{184} (2023), Paper No.~104713, 18 pp. 

	\bibitem{atiyah}
	M.~F.~Atiyah, N.~J.~Hitchin, I.~M.~Singer,  Self-duality in four-dimensional Riemannian geometry, \textit{ Proc. Roy. Soc. London Ser. A} \textbf{362} (1978), no. 1711, 425--461.

	\bibitem{bar}
	C.~Bär,  Real Killing spinors and holonomy, \textit{Comm. Math. Phys.} \textbf{154} (1993) no.~3, 509--521.

	\bibitem{BerndtConsoleOlmos}
	J.~Berndt, S.~Console, C.~E.~Olmos, \textit{Submanifolds and Holonomy (2nd ed.)}, CRC Press, Boca Raton, FL, 2016.
	
	\bibitem{BO1}
	J.~Berndt, C.~Olmos, Maximal totally geodesic submanifolds and index of symmetric spaces, \emph{J.\ Differential Geom.} \textbf{104} (2) (2016), 187--217.

	\bibitem{BoltonDillenDioosVrancken} J. Bolton, F. Dillen, B. Dioos, L. Vrancken, Almost complex surfaces in the nearly Kähler $S^{3}\times S^{3}$, \textit{Tohoku Math. J. (2)} \textbf{67} (2015), no. 1, 1--17.

	\bibitem{bolton} 
	J.~Bolton, G.~R.~ Jensen, M.~Rigoli, L.~M.~Woodward, On conformal minimal immersions of $S^{2}$ into $\C\mathrm{P}^{n}$, \textit{Math. Ann.} \textbf{279} (1988), no. 4, 599--620. 

	\bibitem{boyer}
	C.~P.~Boyer, K.~Galicki, J.~Kollár, Einstein metrics on spheres, \textit{Ann.\ of Math.} (2) \textbf{162} (2005), no. 1, 557--580.

	\bibitem{bryant}
	R.~L.~Bryant, Metrics with exceptional holonomy, \textit{Ann.\ of Math.} (2) \textbf{126} (1987), no. 3, 525--576.

	\bibitem{Butruille}
	J.-B. Butruille, Classification des variétés approximativement kähleriennes homogènes, \textit{Ann.\ Global Anal.\ Geom.} \textbf{27} (2005), no.~3, 201--225. 

	\bibitem{chen1}
	B.~Chen, T.~Nagano, Totally geodesic submanifolds of symmetric spaces, I, \emph{Duke Math.\  J.}~\textbf{44} (4) (1977), 745--755.
		
	\bibitem{chen2}
	B.~Chen, T.~Nagano, Totally geodesic submanifolds of symmetric spaces, II, \emph{Duke Math.\ J.}~\textbf{45} (2) (1978), 405--425.
	
	\bibitem{CwiklinskiVrancken}
	K. Cwiklinski, L. Vrancken, Almost complex surfaces in the nearly Kähler flag manifold, \textit{Results Math.} \textbf{77} (2022), no.~3, Paper no.~134, 17 pp.
		
	\bibitem{dearricott}
	O.~Dearricott,  J.~S.~Eschenburg,  Totally geodesic embeddings of 7-manifolds in positively curved 13-manifolds, \textit{Manuscripta Math.} \textbf{114} (2004), no. 4, 447--456.

	\bibitem{dioos}
	B.~Dioos, Z.~Hu, L.~Vrancken, L. X.~Wang,  Y.~Zhang, Lagrangian submanifolds in the 6-dimensional nearly Kähler manifolds with parallel second fundamental form, \textit{J. Geom. Phys.} \textbf{108} (2016), 21--37.
		
	\bibitem{eels} 
	J.~Eells, S.~Salamon, Twistorial construction of harmonic maps of surfaces into four-manifolds, \textit{Ann. Scuola Norm. Sup. Pisa Cl. Sci.} (4) \textbf{12} (1985), no. 4, 589--640.

	\bibitem{foscolo}
	L.~Foscolo,  M.~Haskins,  New $G_{2}$-holonomy cones and exotic nearly Kähler structures on $S^{6}$ and $S^{3}\times S^{3}$, \textit{Ann. of Math.} (2) \textbf{185} (2017), no. 1, 59--130. 

	\bibitem{gallot}
	S.~Gallot,  Équations différentielles caractéristiques de la sphère, \textit{Ann. Sci. École Norm. Sup.} (4) \textbf{12} (1979), no. 2, 235--267.
		
	\bibitem{GorodskiKollrossRV}
    C.~Gorodski, A.~Kollross, A.~Rodr\'{i}guez-V\'{a}zquez, Totally geodesic submanifolds and polar actions on Stiefel manifolds, \textit{J. Geom. Anal.} \textbf{35} (2025), no. 2, Paper No. 41, 21 pp.

	\bibitem{Gray}
	A.~Gray, Riemannian manifolds with geodesic symmetries of order 3, \textit{J. Differential Geometry} \textbf{7} (1972), 343--369.

	\bibitem{GrayStructure}
	A.~Gray,~The structure of nearly Kähler manifolds, \textit{Math. Ann.} \textbf{223} (1976), no. 3, 233--248.

	\bibitem{grayhervella}
	A.~Gray,  L.~M.~Hervella, The sixteen classes of almost Hermitian manifolds and their linear invariants, \textit{Ann. Mat. Pura Appl.} (4) \textbf{123} (1980), 35--58.
	
	\bibitem{GromollWalschap}
	D.~Gromoll, G.~Walschap, \textit{Metric foliations and curvature}, Birkhäuser Verlag, Basel, 2009.

	\bibitem{HarveyLawson}
	R.~Harvey, H.~B.~Lawson, Calibrated geometries, \textit{Acta Math.} \textbf{148} (1982), 47--157.

	\bibitem{HeintzeLiuOlmos}
	E.~Heintze, X.~Liu, C.~E.~Olmos, Isoparametric submanifolds and a Chevalley-type restriction theorem, in integrable systems, geometry, and topology, 151--190, AMS/IP Stud. Adv. Math., 36, \textit{Amer. Math.	Soc.}, Providence, RI, 2006.

	\bibitem{HermannCompleteTotallyGeodesic}
	R.~Hermann, Existence in the large of totally geodesic submanifolds of Riemannian spaces, \textit{Bull. Amer. Math. Soc.} \textbf{66} (1960), 59--61.

	\bibitem{HermannTotallyGeodesicOrbits} R. Hermann, Totally geodesic orbits of groups of isometries, \textit{Nederl. Akad. Wetensch. Proc. Ser. A} \textbf{65} \textit{Indag. Math.} \textbf{24} (1962), 291--298.

	\bibitem{ihara}
	S.~Ihara, Holomorphic imbeddings of symmetric domains, \textit{J. Math. Soc. Japan} \textbf{19} (1967), 261--302.

	\bibitem{JentschMoroianuSemmelmann}
	T.~Jentsch, A.~Moroianu, U.~Semmelmann, Extrinsic hyperspheres in manifolds with special holonomy, \textit{Differential Geom. Appl.} \textbf{31} (2013), no.1, 104--111.

	\bibitem{joyce} D.~Joyce,  \textit{Riemannian holonomy groups and calibrated geometry}, Oxford Graduate Texts in Mathematics, 12. Oxford University Press, Oxford, 2007.

	\bibitem{nikolayevskytg}
	S.~Kim, Y.~Nikolayevsky, K.~Park, Totally geodesic submanifolds of Damek-Ricci spaces, \textit{Rev. Mat. Iberoam.} \textbf{37} (2021), no. 4, 1321--1332.

	\bibitem{kirchoff}
	A.~Kirchoff, Sur l'existence des certains champs tensoriels sur les sphères à n dimensions, \textit{C. R. Acad. Sci. Paris} \textbf{225} (1947), 1258--1260.

	\bibitem{kleindga}
	S.~Klein, Totally geodesic submanifolds of the complex quadric, \textit{Differential Geom. Appl.} \textbf{26} (2008), 79--96. 

	\bibitem{exceptional}
	A.~Kollross, A.~Rodríguez-Vázquez, Totally geodesic submanifolds in exceptional symmetric spaces, \textit{ Adv. Math.} \textbf{418} (2023), Paper No.~108949, 37 pp.

	\bibitem{KobayashiNomizu}
	S.~Kobayashi, K.~Nomizu, \textit{Foundations of differential geometry. Vol. I}, John Wiley \& Sons, Inc., New York, 1996.
	
		\bibitem{KobayashiNomizu2}
	S.~Kobayashi, K.~Nomizu, \textit{Foundations of differential geometry. Vol. II}, John Wiley \& Sons, Inc., New York, 1996.
	
	\bibitem{Kostant}
	B.~Kostant,	On differential geometry and homogeneous spaces. I, II, \textit{Proc. Nat. Acad. Sci. U.S.A.} \textbf{42} (1956), 258--261, 354--357.
	
	\bibitem{LedgerObata}
	A.~J.~Ledger, M.~Obata, Affine and Riemannian $s$-manifolds, \textit{J. Differential Geometry} \textbf{2} (1968), 451--459.

	\bibitem{Leistner}
	T.~Leistner, Semi-Riemannian cones. \textit{Geometry, Lie Theory and applications--the Abel Symposium 2019}, 193--222.

	\bibitem{leung2}
	D.~S.~P.~Leung, Reflective submanifolds.  IV. Classification of real forms of Hermitian symmetric spaces, \textit{J. Differential Geom.} \textbf{14} (1979),  179--185. 

	\bibitem{liefsoens}
	M.~Liefsoens, Nearly Kähler $\C\s{P}^3$: From the Hopf fibration to Lagrangian submanifolds, M.Sc.\ thesis. June 2022.

	\bibitem{moroianu}
	A.~Moroianu,  U.~Semmelmann, Generalized Killing spinors and Lagrangian graphs, \textit{Differential Geom. Appl.} \textbf{37} (2014), 141--151.
	
	\bibitem{OlmosReggiani}
	C.~E.~Olmos, S.~Reggiani, A note on the uniqueness of the canonical connection of a naturally reductive space, \textit{Monatsh. Math.} \textbf{172} (2013), no. 3-4, 379-386.

	\bibitem{RodriguezVazquezOlmos}
	C.~E.~Olmos, A.~Rodr\'{i}guez-V\'{a}zquez, Hopf fibrations and totally geodesic submanifolds, \verb|arXiv:2302.11711| [math.DG]. To appear in \textit{J. Eur. Math. Soc.}

	\bibitem{Onishchik}
	A.~L.~Onishchik, \textit{Topology of transitive transformation groups},	Johann Ambrosius Barth Verlag, Leipzig, 1994.

	\bibitem{products}
	A.~Rodr\'{i}guez-V\'{a}zquez,
	Totally geodesic submanifolds in products of rank one symmetric spaces, \verb|arXiv:2205.14720| [math.DG].
	To appear in \textit{Ann. Sc. Norm. Super. Pisa Cl. Sci.}

	\bibitem{Sagle}
	A.~Sagle, A note on triple systems and totally geodesic submanifolds in a homogeneous space, \textit{Nagoya Math. J.} \textbf{32} (1968), 5--20.

	\bibitem{DO-nullity}
	A.~J. Di Scala, C.~Olmos, F.~Vittone, Homogeneous Riemannian manifolds with non-trivial nullity, \textit{ Transform. Groups} \textbf{27} (2022), no. 1, 31--72.
	
	\bibitem{ShankarIsometryGroups}
	K.~Shankar, Isometry groups of homogeneous spaces with positive sectional curvature, \textit{Differential Geom. Appl.} \textbf{14} (2001), no.1, 57--78.

	\bibitem{Solonenko}
	I.~Solonenko, Homogeneous hypersurfaces in Riemannian symmetric spaces, Doctoral thesis, 1 Feb 2024.
	
	\bibitem{Storm}
	R.~Storm, Lagrangian submanifolds of the nearly Kähler full flag manifold $F_{1,2}(\C^{3})$, \textit{J. Geom. Phys.} \textbf{158} (2020), 103844, 16~pp.


	\bibitem{TojoTotallyGeodesic}
	K.~Tojo, Totally geodesic submanifolds of naturally reductive homogeneous spaces, \textit{Tsukuba J. Math.} \textbf{20} (1996), no.1, 181--190.
	
	\bibitem{TojoLagrangian}
	K.~Tojo, Totally real totally geodesic submanifolds of compact 3-symmetric spaces, \textit{Tohoku Math. J.} (2) \textbf{53} (2001), no.1, 131--143.
	
	\bibitem{Tsukada}
	K.~Tsukada, Totally geodesic submanifolds of Riemannian manifolds and curvature-invariant subspaces, \textit{Kodai Math. J.} \textbf{19}, no. 3, 395--437.
	
	\bibitem{VasquezIsospectral}
	J.~J.~Vásquez, Isospectral nearly Kähler manifolds, \textit{Abh. Math. Semin. Univ. Hambg.} \textbf{88} (2018), no.1, 23--50.
	
	\bibitem{WangZiller}
	M.~Y.~Wang, W.~Ziller, On normal homogeneous Einstein manifolds, \textit{Ann. Sci. École Norm. Sup.} (4) \textbf{18} (1985), no.4, 563--633.
	
	\bibitem{GrayWolfI}
	J.~A.~Wolf, A.~Gray, Homogeneous spaces defined by Lie group automorphisms. I. \textit{J. Differential Geometry} \textbf{2} (1968), 77--114. \textit{IRMA Lect. Math. Theor. Phys.} \textbf{16}, European Mathematical Society (EMS), Zürich, 2010.

	\bibitem{ZhangDioosHuVranckenWang}
	Y.~Zhang, B.~Dioos, Z.~Hu, L.~Vrancken, X.~Wang, Lagrangian submanifolds in the 6-dimensional nearly Kähler manifolds with parallel second fundamental form, \textit{J. Geom. Phys.} \textbf{108} (2016), 21-37.

	\end{thebibliography}
\end{document}